\documentclass[12pt,twoside]{article}
\usepackage{amsmath,amssymb}
\usepackage{amsthm}
\usepackage{upref}
\usepackage{citeref}
\usepackage{color}
\numberwithin{equation}{section}
\usepackage{enumerate}
\usepackage{graphicx}

\theoremstyle{plain}
\newtheorem{theorem}{Theorem}[section]
\newtheorem{proposition}[theorem]{Proposition}
\newtheorem{definition}[theorem]{Definition}
\newtheorem{corollary}[theorem]{Corollary}
\newtheorem{conjecture}[theorem]{Conjecture}
\newtheorem{slope}[theorem]{Slope Rules}
 
\theoremstyle{definition}
\newtheorem{remark}[theorem]{Remark}

\newtheorem{example}[theorem]{Example}

\pagestyle{headings}
\sloppy

\begin{document}

\newcommand{\eq}{equation}
\newcommand{\real}{\ensuremath{\mathbb R}}
\newcommand{\comp}{\ensuremath{\mathbb C}}
\newcommand{\rn}{\ensuremath{{\mathbb R}^n}}
\newcommand{\tn}{\ensuremath{{\mathbb T}^n}}
\newcommand{\rnp}{\ensuremath{\real^n_+}}
\newcommand{\rnn}{\ensuremath{\real^n_-}}
\newcommand{\Rn}{\ensuremath{{\mathbb R}^{n-1}}}
\newcommand{\Zn}{\ensuremath{{\mathbb Z}^{n-1}}}
\newcommand{\no}{\ensuremath{\nat_0}}
\newcommand{\ganz}{\ensuremath{\mathbb Z}}
\newcommand{\zn}{\ensuremath{{\mathbb Z}^n}}
\newcommand{\zom}{\ensuremath{{\mathbb Z}_{\Om}}}
\newcommand{\zOm}{\ensuremath{{\mathbb Z}^{\Om}}}
\newcommand{\As}{\ensuremath{A^s_{p,q}}}
\newcommand{\Ae}{\ensuremath{A^{s_1}_{p_1,q_1}}}
\newcommand{\Az}{\ensuremath{A^{s_2}_{p_2,q_2}}}
\newcommand{\Bs}{\ensuremath{B^s_{p,q}}}
\newcommand{\Be}{\ensuremath{B^{s_1}_{p_1,q_1}}}
\newcommand{\Bz}{\ensuremath{B^{s_2}_{p_2,q_2}}}
\newcommand{\Fs}{\ensuremath{F^s_{p,q}}}
\newcommand{\Fe}{\ensuremath{F^{s_1}_{p_1,q_1}}}
\newcommand{\Fz}{\ensuremath{F^{s_2}_{p_2,q_2}}}
\newcommand{\At}{\ensuremath{A^{s,\tau}_{p,q}}}
\newcommand{\Bt}{\ensuremath{B^{s,\tau}_{p,q}}}
\newcommand{\Ft}{\ensuremath{F^{s,\tau}_{p,q}}}
\newcommand{\MA}{\ensuremath{{\mathcal A}^s_{u,p,q}}}
\newcommand{\MB}{\ensuremath{{\mathcal N}^s_{u,p,q}}}
\newcommand{\MF}{\ensuremath{{\mathcal E}^s_{u,p,q}}}
\newcommand{\Fsr}{\ensuremath{F^{s,\rloc}_{p,q}}}
\newcommand{\nat}{\ensuremath{\mathbb N}}
\newcommand{\Om}{\ensuremath{\Omega}}
\newcommand{\di}{\ensuremath{{\mathrm d}}}
\newcommand{\Dd}{\ensuremath{{\mathrm D}}}
\newcommand{\sn}{\ensuremath{{\mathbb S}^{n-1}}}
\newcommand{\Ac}{\ensuremath{\mathcal A}}
\newcommand{\Acs}{\ensuremath{\Ac^s_{p,q}}}
\newcommand{\Bc}{\ensuremath{\mathcal B}}
\newcommand{\Cc}{\ensuremath{\mathcal C}}
\newcommand{\cc}{{\scriptsize $\Cc$}${}^s (\rn)$}
\newcommand{\ccd}{{\scriptsize $\Cc$}${}^s (\rn, \delta)$}
\newcommand{\Fc}{\ensuremath{\mathcal F}}
\newcommand{\Lc}{\ensuremath{\mathcal L}}
\newcommand{\Mc}{\ensuremath{\mathcal M}}
\newcommand{\Ec}{\ensuremath{\mathcal E}}
\newcommand{\Pc}{\ensuremath{\mathcal P}}
\newcommand{\Efr}{\ensuremath{\mathfrak E}}
\newcommand{\Mfr}{\ensuremath{\mathfrak M}}
\newcommand{\Mbf}{\ensuremath{\mathbf M}}
\newcommand{\Dbb}{\ensuremath{\mathbb D}}
\newcommand{\Lbb}{\ensuremath{\mathbb L}}
\newcommand{\Pbb}{\ensuremath{\mathbb P}}
\newcommand{\Qbb}{\ensuremath{\mathbb Q}}
\newcommand{\Rbb}{\ensuremath{\mathbb R}}
\newcommand{\vp}{\ensuremath{\varphi}}
\newcommand{\hra}{\ensuremath{\hookrightarrow}}
\newcommand{\supp}{\ensuremath{\mathrm{supp \,}}}
\newcommand{\ssupp}{\ensuremath{\mathrm{sing \ supp\,}}}
\newcommand{\dist}{\ensuremath{\mathrm{dist \,}}}
\newcommand{\unif}{\ensuremath{\mathrm{unif}}}
\newcommand{\ve}{\ensuremath{\varepsilon}}
\newcommand{\vk}{\ensuremath{\varkappa}}
\newcommand{\vr}{\ensuremath{\varrho}}
\newcommand{\pa}{\ensuremath{\partial}}
\newcommand{\oa}{\ensuremath{\overline{a}}}
\newcommand{\ob}{\ensuremath{\overline{b}}}
\newcommand{\of}{\ensuremath{\overline{f}}}
\newcommand{\LA}{\ensuremath{L^r\!\As}}
\newcommand{\LcA}{\ensuremath{\Lc^{r}\!A^s_{p,q}}}
\newcommand{\LcdA}{\ensuremath{\Lc^{r}\!A^{s+d}_{p,q}}}
\newcommand{\LcB}{\ensuremath{\Lc^{r}\!B^s_{p,q}}}
\newcommand{\LcF}{\ensuremath{\Lc^{r}\!F^s_{p,q}}}
\newcommand{\Lf}{\ensuremath{L^r\!f^s_{p,q}}}
\newcommand{\La}{\ensuremath{\Lambda}}
\newcommand{\Lob}{\ensuremath{L^r \ob{}^s_{p,q}}}
\newcommand{\Lof}{\ensuremath{L^r \of{}^s_{p,q}}}
\newcommand{\Loa}{\ensuremath{L^r\, \oa{}^s_{p,q}}}
\newcommand{\Lcoa}{\ensuremath{\Lc^{r}\oa{}^s_{p,q}}}
\newcommand{\Lcob}{\ensuremath{\Lc^{r}\ob{}^s_{p,q}}}
\newcommand{\Lcof}{\ensuremath{\Lc^{r}\of{}^s_{p,q}}}
\newcommand{\Lca}{\ensuremath{\Lc^{r}\!a^s_{p,q}}}
\newcommand{\Lcb}{\ensuremath{\Lc^{r}\!b^s_{p,q}}}
\newcommand{\Lcf}{\ensuremath{\Lc^{r}\!f^s_{p,q}}}
\newcommand{\id}{\ensuremath{\mathrm{id}}}
\newcommand{\tr}{\ensuremath{\mathrm{tr\,}}}
\newcommand{\trd}{\ensuremath{\mathrm{tr}_d}}
\newcommand{\trL}{\ensuremath{\mathrm{tr}_L}}
\newcommand{\ext}{\ensuremath{\mathrm{ext}}}
\newcommand{\re}{\ensuremath{\mathrm{re\,}}}
\newcommand{\Rea}{\ensuremath{\mathrm{Re\,}}}
\newcommand{\Ima}{\ensuremath{\mathrm{Im\,}}}
\newcommand{\loc}{\ensuremath{\mathrm{loc}}}
\newcommand{\rloc}{\ensuremath{\mathrm{rloc}}}
\newcommand{\osc}{\ensuremath{\mathrm{osc}}}
\newcommand{\pr}{\pageref}
\newcommand{\wh}{\ensuremath{\widehat}}
\newcommand{\wt}{\ensuremath{\widetilde}}
\newcommand{\ol}{\ensuremath{\overline}}
\newcommand{\os}{\ensuremath{\overset}}
\newcommand{\Li}{\ensuremath{\overset{\circ}{L}}}
\newcommand{\Ai}{\ensuremath{\os{\, \circ}{A}}}
\newcommand{\Ci}{\ensuremath{\os{\circ}{\Cc}}}
\newcommand{\dom}{\ensuremath{\mathrm{dom \,}}}
\newcommand{\SA}{\ensuremath{S^r_{p,q} A}}
\newcommand{\SB}{\ensuremath{S^r_{p,q} B}}
\newcommand{\SF}{\ensuremath{S^r_{p,q} F}}
\newcommand{\Hc}{\ensuremath{\mathcal H}}
\newcommand{\Nc}{\ensuremath{\mathcal N}}
\newcommand{\Lci}{\ensuremath{\overset{\circ}{\Lc}}}
\newcommand{\bmo}{\ensuremath{\mathrm{bmo}}}
\newcommand{\BMO}{\ensuremath{\mathrm{BMO}}}
\newcommand{\cm}{\\[0.1cm]}
\newcommand{\Aa}{\ensuremath{\os{\, \ast}{A}}}
\newcommand{\Ba}{\ensuremath{\os{\, \ast}{B}}}
\newcommand{\Fa}{\ensuremath{\os{\, \ast}{F}}}
\newcommand{\Aas}{\ensuremath{\Aa{}^s_{p,q}}}
\newcommand{\Bas}{\ensuremath{\Ba{}^s_{p,q}}}
\newcommand{\Fas}{\ensuremath{\Fa{}^s_{p,q}}}
\newcommand{\Ca}{\ensuremath{\os{\, \ast}{{\mathcal C}}}}
\newcommand{\Cas}{\ensuremath{\Ca{}^s}}
\newcommand{\Car}{\ensuremath{\Ca{}^r}}
\newcommand{\bl}{$\blacksquare$}
\newcommand{\bli}{\begin{enumerate}[{\upshape\bfseries (i)}]}
\newcommand{\eli}{\end{enumerate}}
\newcommand{\ignore}[1]{}
\newcommand{\rhoAs}{\ensuremath\vr\text{-} \!\As}
\newcommand{\rhoAe}{\ensuremath\vr\text{-} \!\Ae}
\newcommand{\rhoAz}{\ensuremath\vr\text{-} \!\Az}
\newcommand{\rhoeAe}{\ensuremath\vr_1\text{-} \!\Ae}
\newcommand{\rhozAz}{\ensuremath\vr_2\text{-} \!\Az}
\newcommand{\rhoBs}{\ensuremath\vr\text{-} \!\Bs}
\newcommand{\rhoBe}{\ensuremath\vr\text{-} \!\Be}
\newcommand{\rhoBz}{\ensuremath\vr\text{-} \!\Bz}
\newcommand{\bs}{\ensuremath{b^s_{p,q}}}
\newcommand{\rhobs}{\ensuremath\vr\text{-} \!\bs}
\newcommand{\rhoxAx}[5]{\ensuremath\vr_{#1}\text{-} \!{#2}^{#3}_{#4,#5}}
\newcommand{\rhoAx}[3]{\ensuremath\vr\text{-} \!A^{#1}_{#2,#3}}
\newcommand{\rhoFs}{\ensuremath\vr\text{-} \!\Fs}
\newcommand{\rhoFe}{\ensuremath\vr\text{-} \!\Fe}
\newcommand{\rhoFz}{\ensuremath\vr\text{-} \!\Fz}
\newcommand{\Lloc}{L_1^{\mathrm{loc}}}

\newcommand{\ddh}[1]{{\color{red} #1}}%
\newcommand{\new}[1]{{\color{blue} #1}}%
\renewcommand{\new}[1]{{#1}}%
\newcommand{\open}[1]{\smallskip\noindent\fbox{\parbox{\textwidth}{\color{blue}\bfseries\begin{center}
      #1 \end{center}}}\\ \smallskip}

\newcommand{\unterbild}[1]{{\noindent\refstepcounter{figure}\upshape\bfseries
    Figure {\thefigure}{\label{#1}}}\addcontentsline{lof}{figure}{Figure \ref{#1}}}%
\newcommand{\egv}[1]{\ensuremath{{\mathcal E}\rule[-0.7ex]{0em}{2.4ex}_{\!\mathsf G}^{#1}}}
\newcommand{\egX}{\egv{X}}
\newcommand{\fundfn}[1]{\varphi\rule[-0.6ex]{0ex}{2ex}_{#1}}
\newcommand{\M}{\ensuremath{{\cal M}_{u,p}}}
\newcommand{\at}{{A}_{p,q}^{s,\tau}}
\newcommand{\envg}{\ensuremath{{\mathfrak E}\rule[-0.5ex]{0em}{2.3ex}_{\!\mathsf G}}}
\newcommand{\uGindex}[1]{u\rule[-0.6ex]{0ex}{1.2ex}_{\mathsf G}^{#1}}
\newcommand{\uGx}{\uGindex{X}}


\thispagestyle{empty}

\begin{center}
{\Large Morrey smoothness spaces: A new approach}
\\[0.5cm]
\today
\\[1cm]
{\large Dorothee D. Haroske and Hans Triebel}
\end{center}

\begin{abstract}
  \new{
    In the recent years so-called Morrey smoothness spaces attracted a lot of interest. They can (also) be understood as generalisations of the classical spaces $\As (\rn)$, $A\in \{B,F\}$, in \rn, where the parameters satisfy 
    $s\in \real$ (smoothness), $0<p \le \infty$ (integrability) and $0<q \le
    \infty$ (summability). In the case of Morrey smoothness spaces additional parameters are involved. In our opinion, among the various approaches at least two scales enjoy special attention, also in view of applications: the scales $\Ac^s_{u,p,q} (\rn)$, with $\Ac\in \{\Nc, \Ec\}$, $u\geq p$, and $A^{s, \tau}_{p,q} (\rn)$, with $\tau\geq 0$.

    We reorganise these two prominent types of Morrey smoothness  spaces by adding to $(s,p,q)$ the so--called slope parameter $\vr$, preferably 
    (but not exclusively) with $-n \le \vr <0$.} 
  It comes out that $|\vr|$ replaces $n$, and $\min
(|\vr|,1)$ replaces 1 in slopes of (broken) lines in the $\big( \frac{1}{p}, s\big)$--diagram characterising distinguished properties 
of the spaces $\As (\rn)$ \new{and their Morrey counterparts}. Special attention will be paid to low--slope spaces with $-1 <\vr <0$, where corresponding properties are 
quite often independent  of $n\in \nat$. 
    
Our aim is two--fold. On the one hand we reformulate  some assertions already available in the literature (many of them  are quite
recent). On the other hand we establish on this basis new properties, a few of them became visible only in the context of the offered
new approach, governed, now, by the four parameters $(s,p,q,\vr)$.
\end{abstract}


\newpage

\tableofcontents


\section{Introduction}    \label{S1}
The nowadays classical spaces
\begin{\eq}   \label{1.1}
\As(\rn) \qquad \text{with} \quad A\in \{B,F \}, \quad s \in \real \quad \text{and} \quad 0<p,q \le \infty,
\end{\eq}
have been extended in several directions, most notably into two types of Morrey smoothness spaces,
\begin{\eq}  \label{1.2}
\Nc^s_{u,p,q} (\rn) \quad \text{and} \quad \Ec^s_{u,p,q} (\rn) \qquad \text{with} \quad p \le u <\infty,
\end{\eq}
on the one hand and
\begin{\eq}  \label{1.3}
A^{s, \tau}_{p,q} (\rn), \quad A\in \{B,F\}, \qquad \text{with} \quad 0 \le \tau <\infty,
\end{\eq}
on the other hand. With $p=u$ in \eqref{1.2} and $\tau =0$, $p<\infty$ in \eqref{1.3} one obtains the corresponding spaces in 
\eqref{1.1}.

\new{These scales of spaces \eqref{1.2}, \eqref{1.3} were investigated intensively in recent years, we add some further historic remarks below. A strong motivation to study extensions from the scales of function spaces \eqref{1.1} to this Morrey setting came from possible applications to PDEs, as it was already the case for the `basic' Morrey spaces extending $L_p$. Some Besov-Morrey spaces were first introduced by Netrusov in \cite{Ne-10} by means  of differences.  One of the milestones in this direction is the famous paper by Kozono and Yamazaki  \cite{KY} where they used spaces of type \eqref{1.2} to study Navier-Stokes equations; we also refer in this context to the papers by Mazzucato \cite{Maz03}, Ferreira and Postigo \cite{FP}, or by Yang, Fu, and Sun \cite{YFS}, by Lemari\'{e}-Rieusset \cite{LMR-5,LMR-4}, as well as to the monographs \cite{FHS-MS-1,FHS-MS-2,YSY10}.

}

It is natural to ask for counterparts of distinguished properties of  the spaces $\As (\rn)$ in the context of these
Morrey smoothness spaces. Typical examples are embeddings in $L_\infty (\rn)$ and in $L^{\loc}_1 (\rn)$ (regular distributions)
or traces on hyper--planes. One may also be interested to find out those spaces to which the $\delta$--distribution  or the 
characteristic function $\chi_Q$ of the unit cube $Q = (0,1)^n$, $n\in \nat$, belong. Some final answers have been obtained  in recent
times. But the related conditions for the above questions are not very appealing, producing quite often curved lines in the well--known
$\big( \frac{1}{p},s \big)$--diagram, \new{where any space of type $\As$ is indicated by its smoothness parameters $s$ and the integrability $p$, neglecting the fine index $q$ for the moment; we refer to \cite{HMS16} for such examples}. This suggests to search for a re--parametrisation of the spaces in \eqref{1.2}, \eqref{1.3} such that the outcome produces natural and transparent conditions for distinguished properties of these spaces. \new{It turns out that the previous, separately obtained results, based on independent arguments, can thus not only be understood in a better way, detached from the (sometimes quite involved) technical requirements. But one might also observe more intrinsic reasons for common phenomena. This will also constitute some basis for unified results and, occasionally, lead to appropriate conjectures.}

The classical parameters $s$ 
(smoothness), $p$ (integrability) and $q$ (summability) in \eqref{1.1}, but also in \eqref{1.2}, \eqref{1.3}, are untouchable. But
we replace $u$ in \eqref{1.2} and $\tau$ in \eqref{1.3} by the  common parameter $\vr$, typically (but not exclusively) with $-n \le
\vr <0$. The corresponding spaces
\begin{\eq}   \label{1.4}
\rhoAs (\rn) = \big\{   \La^{\vr} \As (\rn),  \La_{\vr} \As (\rn): \  A\in \{B,F\}\big\},
\end{\eq}
cover all space in \eqref{1.2}, \eqref{1.3} with
\begin{\eq}   \label{1.5}
\La^{-n} \As (\rn) = \La_{-n} \As (\rn) = \As (\rn).
\end{\eq}
We call $\vr$ the slope parameter because $|\vr|$ quite often takes over the r\^ole of the slope $n$ in distinguished (broken) lines in
the $\big( \frac{1}{p},s\big)$--diagram. For example, the sharp embedding
\begin{\eq}   \label{1.6}
\As (\rn) \hra C(\rn) \qquad \text{if} \quad s > \frac{n}{p}, \quad 0<p<\infty,
\end{\eq}
as far as the breaking line is concerned, has now the sharp counterpart
\begin{\eq}   \label{1.7}
\rhoAs(\rn) \hra C(\rn) \quad \text{if} \quad s > \frac{|\vr|}{p}, \quad 0<p<\infty.
\end{\eq}
\new{We refer to Theorems~\ref{T5.3} and \ref{T5.18} and Figure~\ref{fig-1a} on p.~\pageref{fig-1a} below.}

This applies also to other distinguished properties, many of them are discovered only recently. We touch on two of them. Let
\begin{\eq}   \label{1.8}
\sigma^t_p = t \Big( \max \left( \frac{1}{p},1 \right) - 1 \Big), \qquad t\ge 0, \quad 0<p<\infty.
\end{\eq}
Then the sharp inclusion
\begin{\eq}   \label{1.9}
\As (\rn) \subset L^{\loc}_1 (\rn) \qquad \text{if} \quad s > \sigma^n_p, \quad 0<p<\infty,
\end{\eq}
as far as the breaking line is concerned, has now the sharp counterpart
\begin{\eq}   \label{1.10}
\rhoAs(\rn) \subset L^{\loc}_1 (\rn)  \quad \text{if} \quad s>  \sigma^{|\vr|}_p, \quad 0<p<\infty.
\end{\eq}
\new{This can be found in Theorem~\ref{T5.7} and Figure~\ref{fig-1b} on p.~\pageref{fig-1b} below.}

For the characteristic function $\chi_Q$ of the cube $Q = (0,1)^n$ the sharp assertion 
\begin{\eq}  \label{1.11}
\chi_Q \in \As (\rn) \qquad \text{if} \quad s < \frac{1}{p}, \quad 0<p<\infty,
\end{\eq}
as far the breaking line is concerned, has now the sharp counterpart
\begin{\eq}   \label{1.12}
\chi_Q \in \rhoAs(\rn) \qquad \text{if} \quad s < \frac{1}{p} \min (|\vr|, 1 ), \quad 0<p<\infty.
\end{\eq}
The generalisation of the slope $n$ in \eqref{1.6}, \eqref{1.9} by $|\vr|$ in \eqref{1.7}, \eqref{1.10} obeys the so--called 
{\em Slope--$n$--Rule}, whereas the replacement of $1$ in \eqref{1.11} by $\min(|\vr|,1)$ in \eqref{1.12} is a typical example of the
{\em Slope--$1$--Rule}, \new{as formulated in Section~\ref{S2.2} below}. It is one of the main aims of this paper  to reformulate already existing assertions for the spaces in \eqref{1.2},
\eqref{1.3} in terms of these slope rules for the spaces in \eqref{1.4}. This will be complemented by some new properties. Then 
detailed proofs will be given.

The paper is organized as follows. In Section~\ref{S2} we introduce the above--mentioned spaces in \eqref{1.1}--\eqref{1.4} and 
discuss in detail their interrelations, including coincidences and diversities. It is central for our approach to structure the
somewhat bewildering  plethora of the Morrey smoothness spaces in common use according to \eqref{1.2}, \eqref{1.3} into $\vr$--clans
consisting, roughly speaking, of the spaces in \eqref{1.4} such that the above--mentioned slope rules can be applied. But the
reorganisation of the above Morrey smoothness spaces into $\vr$--clans is not only an efficient technical device. Any $\vr$--clan  is
an organic entity on equal footing with the classical spaces in \eqref{1.1}. Its families in \eqref{1.4} have different abilities
complementing each other symbiotically. In Section~\ref{S3} we collect some tools: wavelet characterisations, interpolations, lifts,
characterisations by derivatives and the useful Fatou property. Section~\ref{S4} deals with the so--called key properties (smooth
pointwise multipliers, diffeomorphisms, extensions of corresponding spaces in $\real^n_+$ to their counterparts in \rn, and traces on
hyper--planes). First we try to justify in Section~\ref{S4.1} why the classical spaces in \eqref{1.1} and now their Morrey 
generalisations in \eqref{1.4} (as reformulations of \eqref{1.2}, \eqref{1.3}) deserve to be studied. It comes out that the first
three of the above--mentioned 
key properties can be treated rather quickly based on what is already known. The situation is different as far as traces of
suitable spaces in \eqref{1.4} on $\Rn$ are concerned. This gives us the possibility  to show how our approach can be used to obtain
new substantial assertions about traces and related extensions. \new{In Section~\ref{S5} we discuss essential features like the precise
versions of \eqref{1.7}, \eqref{1.10}, \eqref{1.12}. This will be complemented by some further topics such as truncations for spaces 
on $\rn$, expansions by Haar wavelets, Faber expansions. Finally, in Section~\ref{S7} we concentrate on embeddings again, beginning with the situation of embeddings of spaces on bounded domains, including related compactness and entropy number results. We comment on growth envelope functions which represent some tool to `measure' unboundedness in function spaces. In particular, we always pay attention to these results formulated 
in the context of the $\vr$-clans according to the slope rules. We end our paper by an outlook when -- different from our approach so far -- we consider different $\vr$-clans, for instance, embeddings between spaces belonging to the $\vr_1$- and $\vr_2$-clan with $\vr_1\neq \vr_2$. In that sense we `cross borders' between the $\vr$-clan structure we propose here. In our opinion the results, also illustrated in some further diagrams there, are very much convincing to use the new $\vr$-clan setting and follow this line of research further.
}

\section{Definitions and basic properties}   \label{S2}
\subsection{Definitions}    \label{S2.1}
We use standard notation. Let $\nat$ be the collection of all natural numbers and $\no = \nat \cup \{0 \}$. Let $\rn$ be the Euclidean $n$-space, where $n \in \nat$. Furthermore, we set $\real = \real^1$, and $\comp$ is the complex plane. Let $S(\rn)$ be the usual Schwartz space of all complex-valued rapidly decreasing
infinitely differentiable functions on $\rn$, and let $S' (\rn)$ be the  dual space of all
tempered distributions on \rn. Let $L_p (\rn)$ with $0<p \le \infty$ be the standard complex quasi-Banach space with respect to the Lebesgue measure in \rn, quasi-normed by
\begin{\eq}   \label{2.1}
\| f \, | L_p (\rn) \| = \Big( \int_{\rn} |f(x)|^p \, \di x \Big)^{1/p}
\end{\eq}
with the natural modification for $p= \infty$. Similarly we define $L_p (M)$ where $M$ is a Lebesgue--measurable subset of \rn.
As usual $\ganz$ is the collection of all integers; and $\zn$ where $n\in \nat$ denotes the lattice of all points $m = (m_1, \ldots, m_n) \in \rn$ with $m_j \in \ganz$. If $\vp \in S(\rn)$, then
\begin{\eq}   \label{2.2}
\wh{\vp} (\xi) = (F \vp)(\xi) = (2 \pi)^{-n/2} \int_{\rn} e^{-ix \xi} \, \vp (x) \, \di  x, \qquad \xi \in \rn,
\end{\eq}
denotes the Fourier transform of \vp. As usual, $F^{-1} \vp$ and $\vp^\vee$ stand for the inverse Fourier transform, which is given by the right-hand side of 
\eqref{2.2} with $i$ instead of $-i$. Note that $x \xi = \sum^n_{j=1} x_j \xi_j$ 
stands for the scalar product in \rn. Both $F$ and $F^{-1}$ are extended to $S' (\rn)$ in the standard way. 

Let $\vp_0 \in S(\rn)$ with
\begin{\eq}   \label{2.3}
\vp_0 (x) =1 \ \text{if $|x| \le 1$} \qquad \text{and} \qquad \vp_0 (x) =0 \ \text{if $|x| \ge 3/2$},
\end{\eq}
and let
\begin{\eq}   \label{2.4}
\vp_k (x) = \vp_0 \big( 2^{-k} x) - \vp_0 \big( 2^{-k+1} x \big), \qquad x \in \rn, \quad k \in \nat.
\end{\eq}
Since
\begin{\eq}    \label{2.5}
\sum^\infty_{j=0} \vp_j (x) = 1 \qquad \text{for} \quad x\in \rn,
\end{\eq}
the $\vp_j$ form a dyadic resolution of unity. The entire analytic functions $(\vp_j \wh{f} )^\vee (x)$ make sense pointwise in $\rn$ for any $f \in S' (\rn)$. Let
\begin{\eq}   \label{2.6}
Q_{J,M} = 2^{-J} M + 2^{-J} (0,1)^n, \qquad J \in \ganz, \quad M \in \zn,
\end{\eq}
be the usual dyadic cube in \rn, $n\in \nat$, with sides of length $2^{-J}$ parallel to the coordinate axes and with $2^{-J}M$
as lower left corner. If $Q$ is a cube in $\rn$ and $d>0$, then $dQ$ is the cube in $\rn$ concentric with $Q$ and whose the side--length is
$d$ times the side--length of $Q$. Let $|\Om|$ be the Lebesgue measure of the Lebesgue measurable  set $\Om$ in \rn. Let $a^+ =
\max (a,0)$ for $a\in \real$. Furthermore
\begin{\eq}  \label{2.7}
a_i \sim b_i \qquad \text{for} \quad i \in I \quad \text{(equivalence)}
\end{\eq}
for two sets of positive numbers $\{a_i : i\in I \}$ and $\{b_i : i\in I \}$ means that there are two positive numbers $c_1$ and $c_2$
such that
\begin{\eq}   \label{2.8}
c_1 a_i \le b_i \le c_2 \new{a_i} \qquad \text{for all} \quad i\in I.
\end{\eq}

\begin{definition}   \label{D2.1}
Let $n\in \nat$ and let $\vp = \{ \vp_j \}^\infty_{j=0}$ be the above dyadic resolution  of unity.
\bli
\item Let
\begin{\eq}   \label{2.9}
0<p \le \infty, \qquad 0<q \le \infty, \qquad s \in \real.
\end{\eq}
Then $\Bs (\rn)$ is the collection of all $f \in S' (\rn)$ such that
\begin{\eq}   \label{2.10}
\| f \, | \Bs (\rn) \|_{\vp} = \Big( \sum^\infty_{j=0} 2^{jsq} \big\| (\vp_j \widehat{f})^\vee \, | L_p (\rn) \big\|^q \Big)^{1/q} 
\end{\eq}
is finite $($with the usual modification if $q= \infty)$. 
\item
Let
\begin{\eq}   \label{2.11}
0<p<\infty, \qquad 0<q \le \infty, \qquad s \in \real.
\end{\eq}
Then $F^s_{p,q} (\rn)$ is the collection of all $f \in S' (\rn)$ such that
\begin{\eq}   \label{2.12}
\| f \, | F^s_{p,q} (\rn) \|_{\vp} = \Big\| \Big( \sum^\infty_{j=0} 2^{jsq} \big| (\vp_j \wh{f})^\vee (\cdot) \big|^q \Big)^{1/q} \big| L_p (\rn) \Big\|
\end{\eq}
is finite $($with the usual modification if $q=\infty)$.
\item
Let $0<q \le \infty$ and $s\in \real$. Then $F^s_{\infty,q} (\rn)$ is the collection of all $f\in S'(\rn)$ such that
\begin{\eq}  \label{2.13}
\| f \, | F^s_{\infty,q} (\rn) \|_{\vp} = \sup_{J\in \ganz, M\in \zn} 2^{Jn/q} \Big(\int_{Q_{J,M}} \sum_{j \ge J^+} 2^{jsq} \big|
(\vp_j \wh{f} )^\vee (x) \big|^q \, \di x \Big)^{1/q}
\end{\eq}
is finite $($with the usual modification if $q=\infty$ as explained below$)$.
\eli
\end{definition}

\begin{remark}    \label{R2.2}
These are the classical spaces $\As (\rn)$ according to \eqref{1.1}. The above definition coincides with {\cite[Definition 
1.1, p.\,2]{T20}}, {including} part (iii). There one finds also some discussions and (historical) references, especially about the
spaces $F^s_{\infty,q} (\rn)$. \new{Let us mention here, in particular, the series of monographs \cite{T83,T92,T06} and the long paper \cite{FrJ90}.}
In particular, it is well known that the spaces in the above definition are independent of the chosen
resolution of unity $\vp$ (equivalent quasi--norms). This justifies our omission of the subscript $\vp$ in \eqref{2.10}, \eqref{2.12}
and \eqref{2.13} in the sequel. They are quasi--Banach spaces (and Banach spaces if $p \ge 1$, $q \ge 1$). As remarked in 
\cite[p.\,3]{T20} one can replace $J \in \ganz$ in \eqref{2.13} by $J \in \no$ (equivalent quasi--norms),
\begin{\eq}  \label{2.14}
\| f \, | F^s_{\infty,q} (\rn) \| = \sup_{J\in \no, M\in \zn} 2^{Jn/q} \Big(\int_{Q_{J,M}} \sum_{j \ge J} 2^{jsq} \big|
(\vp_j \wh{f} )^\vee (x) \big|^q \, \di x \Big)^{1/q}.
\end{\eq}
If $q= \infty$, then \eqref{2.13} and \eqref{2.14} must be understood as
\begin{\eq}   \label{2.15}
\| f\, | F^s_{\infty, \infty} (\rn) \| \sim \sup_{j\in \no, x\in \rn} 2^{js} \big| \big(\vp_j \wh{f} \big)^\vee (x) \big| = \|f \, |
B^s_{\infty, \infty} (\rn) \|.
\end{\eq}
In other words, the {\em H\"{o}lder--Zygmund spaces} $\Cc^s (\rn) = B^s_{\infty, \infty} (\rn)$ can be incorporated into the $F$--scale as
\begin{\eq}  \label{2.16}
F^s_{\infty, \infty} (\rn) = B^s_{\infty, \infty} (\rn) = \Cc^s (\rn), \qquad s\in \real.
\end{\eq}
\end{remark}

The spaces in \eqref{1.2} generalise the above spaces $\As (\rn)$ with $p<\infty$ replacing there the Lebesgue spaces $L_p (\rn)$ by
the well--known Morrey spaces which we adapt to our later purposes as follows. As usual, $f\in L^{\loc}_p (\rn)$ means that the
restriction of $f$ to any bounded Lebesgue--measurable set $M$ in $\rn$ belongs to $L_p (M)$, $0<p \le \infty$. Let again $Q_{J,M}$ be
the cubes in \eqref{2.6}.

\begin{definition}   \label{D2.3}
Let $n\in \nat$, $0<p<\infty$ and $-n \le \vr <0$. Then $\La^{\vr}_p (\rn)$ collects all $f \in L^{\loc}_p (\rn)$ such that
\begin{\eq}   \label{2.17}
\| f \, | \La^{\vr}_p (\rn) \| = \sup_{J\in \ganz, M\in \zn} 2^{\frac{J}{p} (n + \vr)} \| f \, | L_p (Q_{J,M} ) \|
\end{\eq}
is finite.
\end{definition}

\begin{remark}  \label{R2.4}
Obviously, $\La^{\vr}_p (\rn)$ are quasi--Banach spaces (and Banach spaces if $p \ge 1$). Furthermore,
\begin{\eq}   \label{2.18}
\La^{-n}_p (\rn) = L_p (\rn), \qquad 0<p<\infty.
\end{\eq}
The {\em Morrey space} $\Mc^{u}_p (\rn)$ with $0<p \le u <\infty$ in common use collects all $f\in L^{\loc}_p (\rn)$ such that
\begin{\eq}   \label{2.19}
\| f \, | \Mc^{u}_p (\rn) \|= \sup_{J\in \ganz, M\in \zn} 2^{Jn (\frac{1}{p} - \frac{1}{u})} \| f \, |L_p (Q_{J,M}) \|
\end{\eq}
is finite. Compared with \eqref{2.17} one obtains
\begin{\eq}    \label{2.20}
\La^{\vr}_p (\rn) = \Mc^{u}_p (\rn) \qquad \text{with} \quad 0<p<\infty, \quad  u \vr  + np =0.
\end{\eq}
\end{remark}

\begin{remark}
  The idea of Morrey spaces $\Mc^{u}_p (\rn)$, $0< p \le u < \infty$, goes back to Morrey in \cite{Mor}, dealing with the regularity
of solutions of some partial differential equations. They are part of a wider class of Morrey-Campanato spaces, cf. \cite{Pee}, and can be seen as a complement to $L_p$ spaces, since $\Mc^{p}_p (\rn)= L_p(\rn)$. In an analogous way, one can define the spaces $\Mc^\infty_{p}(\rn)$, $p\in(0, \infty)$, but using the Lebesgue differentiation theorem, one can easily prove  that
	$\Mc^\infty_p(\rn) = L_\infty(\rn)$. So we usually restrict ourselves to $u<\infty$.
  \end{remark}

We replace $L_p (\rn)$ with $p<\infty$ in Definition~\ref{D2.1} by the above spaces $\La^{\vr}_p (\rn)$ whereas all other notation
have the same meaning as there.

\begin{definition}    \label{D2.5}
Let $n\in \nat$, $s\in \real$, $0<p<\infty$, $0<q \le \infty$ and $-n \le \vr <0$. Then $\La_{\vr} \Bs (\rn)$ is the collection of all $f \in S' (\rn)$ such that
\begin{\eq}   \label{2.21}
\|f\, | \La_{\vr} \Bs (\rn) \| = \Big( \sum^\infty_{j=0} 2^{jsq} \big\| (\vp_j \wh{f} \big)^\vee \, | \La^{\vr}_p (\rn) \big\|^q \Big)^{1/q}
\end{\eq}
is finite $($with the usual modification if $q=\infty)$ and $\La_{\vr} \Fs (\rn)$ is the collection of all $f\in S'(\rn)$ such that
\begin{\eq}   \label{2.22}
\|f \, | \La_{\vr} \Fs (\rn) \| = \Big\| \Big( \sum^\infty_{j=0} 2^{jsq} \big| \big( \vp_j \wh{f} \big)^\vee (\cdot) \big|^q \Big)^{1/q} \, | \Lambda^{\vr}_p (\rn) \Big\|
\end{\eq}
is finite $($with the usual modification if $q= \infty)$.
\end{definition}

\begin{remark}   \label{R2.6}
By \eqref{2.18} one has
\begin{\eq}    \label{2.23}
\La_{-n} \As (\rn) = \As (\rn), \qquad A \in \{ B,F \}.
\end{\eq}
Using \eqref{2.20} and the standard definitions in the literature  it comes out that
\begin{\eq}   \label{2.24}
\Lambda_{\vr} \Bs (\rn) = \Nc^s_{u,p,q} (\rn), \qquad u \vr + np =0,
\end{\eq}
and
\begin{\eq}   \label{2.25}
\La_{\vr} \Fs (\rn) = \Ec^s_{u,p,q} (\rn), \qquad u \vr + np =0.
\end{\eq}
These spaces attracted some attention in the last decades. In particular they are quasi--Banach spaces which are independent of $\vp =
\{\vp_j \}^\infty_{j=0}$ (equivalent quasi--norms).

\new{Besov-Morrey spaces $\Nc^s_{u,p,q} (\rn)$ were introduced by Kozono and Yamazaki in
	\cite{KY}. They studied semi-linear heat equations and Navier-Stokes
	equations with initial data belonging to  Besov-Morrey spaces.  The
	investigations were continued by Mazzucato \cite{mazzu-2}, where one can find the
	atomic decomposition of some spaces. The Triebel-Lizorkin-Morrey spaces
        $\Ec^s_{u,p,q} (\rn)$	were later introduced by  Tang and Xu \cite{TX}. The ideas were further developed by Sawano and Tanaka \cite{SawTan,SawTan-2,Saw08,Saw09}. The most systematic and general approach to the spaces of this type  can  be found in the monograph \cite{YSY10} or in the  survey papers by Sickel \cite{Sic12,Sic13}, which we also recommend for further up-to-date references on this subject. We refer to the recent monographs \cite{FHS-MS-1,FHS-MS-2} for applications.

        It turned out that many of the results from the classical situation have their  counterparts for the spaces $\mathcal{A}^s_{u,p,q}(\rn)$, e.g. in view of elementary embeddings. However, there also exist some differences.
Sawano proved in \cite{Saw08} that, for $s\in\real$ and $0<p< u<\infty$,
\begin{equation}\label{elem}
{\cal N}^s_{u,p,\min\{p,q\}}(\rn)\, \hookrightarrow \, {\cal E}^s_{u,p,q}(\rn)\, \hookrightarrow \,{\cal N}^s_{u,p,\infty}(\rn),
\end{equation}
where in the latter embedding $\infty$ cannot be improved -- unlike
in the classical case of $u=p$. 
On the other hand, Mazzucato has shown in \cite[Proposition~4.1]{mazzu-2} that
\[
\mathcal{E}^0_{u,p,2}(\rn)=\mathcal{M}^u_p(\rn),\quad 1<p\leq u<\infty.
\]
}
\end{remark}

Next we introduce a second type of Morrey smoothness spaces. Let $\vp = \{\vp_j \}^\infty_{j=0}$ and $Q_{J,M}$ be as in
\eqref{2.3}--\eqref{2.6}.

\begin{definition}    \label{D2.7}
Let $n\in \nat$, $s\in \real$ and $0<p <\infty$, $0<q \le \infty$. Let $-n \le \vr <\infty$. Then $\La^{\vr} \Bs (\rn)$ is the 
collection of all $f\in S'(\rn)$ such that
\begin{\eq}   \label{2.26}
\| f \, | \La^{\vr} \Bs (\rn) \| = \sup_{J\in \ganz, M\in \zn} 2^{\frac{J}{p} (n+ \vr)} \Big( \sum_{j \ge J^+} 2^{jsq} \big\|
\big( \vp_j \wh{f}\, \big)^\vee \, | L_p (Q_{J,M} ) \big\|^q \Big)^{1/q}
\end{\eq}
is finite and $\La^{\vr} \Fs (\rn)$ is the collection of all $f\in S'(\rn)$ such that
\begin{\eq}   \label{2.27}
\begin{aligned}
&\| f \, | \La^{\vr} \Fs (\rn) \| \\
& = \sup_{J\in \ganz, M\in \zn} 2^{\frac{J}{p} (n + \vr)}\Big\| \Big( \sum_{j \ge J^+} 2^{jsq}
\big| \big( \vp_j \wh{f}\, \big)^\vee (\cdot) \big|^q \Big)^{1/q} \, | L_p (Q_{J,M}) \Big\|
\end{aligned}
\end{\eq}
is finite $($usual modification if $q=\infty)$.
\end{definition}

\begin{remark}  \label{R2.8}
By measure--theoretical arguments one has
\begin{\eq}   \label{2.28}
\La^{-n} \As (\rn) = \As (\rn), \qquad A\in \{B,F \},
\end{\eq}
for all admitted $s,p,q$. Otherwise these spaces are reformulations of corresponding Morrey smoothness spaces $A^{s,\tau}_{p,q} (\rn)$
which attracted a lot of attention,
\begin{\eq}   \label{2.29}
\La^{\vr} \As (\rn) = A^{s, \tau}_{p,q} (\rn), \qquad s\in \real, \quad 0<p<\infty, \quad 0<q \le \infty
\end{\eq}
where
\begin{\eq}   \label{2.30}
\tau = \frac{1}{p} \Big( 1 + \frac{\vr}{n} \Big), \qquad -n \le \vr <\infty.
\end{\eq}
\new{
  The so-called Besov-type spaces spaces $B^{s, \tau}_{p,q}$ and Triebel-Lizorkin-type spaces $F^{s, \tau}_{p,q}$, commonly denoted by $A^{s, \tau}_{p,q}$ nowadays with $A\in\{B,F\}$, were introduced in \cite{YSY10} and proved therein to be quasi-Banach spaces. In the Banach case  the scale of  Besov type spaces had already been introduced and investigated in \cite{baraka-1,baraka-2,baraka-3}, and also by Yuan and Yang \cite{YY,YY-2}. We refer again to the monograph  \cite{YSY10} and the fine survey papers \cite{Sic12,Sic13} for further background information and references. In \cite{LYYSU} an even more general approach was studied.
}

These spaces coincide for all admitted
parameters with the hybrid spaces
\begin{\eq}   \label{2.31}
L^r \As (\rn)  = \La^{\vr} \As (\rn), \qquad -n \le \vr <\infty, \quad 0 <p<\infty, \quad r= \frac{\vr}{p}, 
\end{\eq}
$0<q \le \infty$, according to \cite[Definition 1.6, p.\,6]{T20} and the references given there to \cite[Definition 3.36, 
  pp.\,68--69]{T14} and \cite{T13}. \new{We  refer to \cite{YSY13} for some discussion of the different approaches.} There and in \cite{T13,T20} one finds also detailed (historical) references. In contrast to Definition~\ref{D2.5} we
admitted in Definition~\ref{D2.7} also $\vr \ge 0$ (in good agreement with $\tau \ge 0$ in \eqref{2.30}). But Proposition~\ref{P2.9} below makes clear why we concentrate later on in both Definitions~\ref{D2.5} and \ref{D2.7} on $-n <\vr <0$.
\end{remark}

\new{
\begin{remark}\label{N-Bt-spaces}
	We briefly compare the two scales $\mathcal{A}^s_{u,p,q}(\rn)$ and $ A^{s, \tau}_{p,q} (\rn)$. It is known  that 
	\begin{equation}
	{\cal N}^{s}_{u,p,q}(\rn) \hookrightarrow  {B}_{p,q}^{s,\tau}(\rn) \qquad \text{with}\qquad \tau={1}/{p}- {1}/{u}, 
	\label{N-BT-emb}
	\end{equation}
    cf. \cite[Corollary~3.3]{YSY10}.
	Moreover, this embedding is proper if $\tau>0$ and $q<\infty$. If $\tau=0$ or $q=\infty$, then both spaces coincide with each other, that is, $	\mathcal{N}^{s}_{u,p,\infty}(\rn)  =  B^{s,\frac{1}{p}- \frac{1}{u}}_{p,\infty}(\rn)$. As for the $F$-spaces, if $0\le \tau <{1}/{p}$, then
	\begin{equation}\label{fte}
	F^{s,\tau}_{p,q}(\rn)\, = \, \mathcal{E}^s_{u,p,q}(\rn)\quad\text{with }\quad \tau =
	{1}/{p}-{1}/{u}\, ,\quad 0 < p\le u < \infty\, ;
	\end{equation}
	cf. \cite[Corollary~3.3]{YSY10}, \cite[Theorem 1.1(ii), p.\,74]{SYY10}, and \cite[Theorem 6.35, p.\,794]{Saw18}.
  Moreover, if $p\in(0,\infty)$ and $q\in (0,\infty)$, then
	\begin{equation}\label{ftbt}
	F^{s,\, \frac{1}{p} }_{p\, ,\,q}(\rn) \, = \, F^{s}_{\infty,\,q}(\rn)\, = \, B^{s,\, \frac1q }_{q\, ,\,q}(\rn) \, ;
	\end{equation}
	cf. \cite[Propositions~3.4,~3.5]{Sic12} and \cite[Remark~10]{Sic13}.
\end{remark}

\begin{remark}\label{bmo-def}
Recall that the space $\bmo(\rn)$ is covered by the above scales: Consider the local (non-homogeneous) space of functions of bounded mean oscillation, $\bmo(\rn)$, consisting of all locally integrable
functions $\ f\in \Lloc(\rn) $ satisfying that
\begin{equation*}
 \left\| f \right\|_{\bmo}:=
\sup_{|Q|\leq 1}\; \frac{1}{|Q|} \int\limits_Q |f(x)-f_Q| \di x + \sup_{|Q|>
1}\; \frac{1}{|Q|} \int\limits_Q |f(x)| \di x<\infty,
\end{equation*}
where $ Q $ appearing in the above definition runs over all cubes in $\rn$, and $ f_Q $ denotes the mean value of $ f $ with
respect to $ Q$, namely, $ f_Q := \frac{1}{|Q|} \;\int_Q f(x)\di x$,
cf. \cite[2.2.2(viii)]{T83}. The space $\bmo(\rn)$ coincides with $F^{0}_{\infty, 2}(\rn)$,  cf. \cite[Thm.~2.5.8/2]{T83}. 
Hence the above result \eqref{ftbt} implies, in particular,
$\bmo(\rn)= F^{0}_{\infty,2}(\rn)= F^{0, 1/p}_{p, 2}(\rn)= {B^{0, 1/2}_{2, 2}(\rn)}$, $ 0<p<\infty$.
\end{remark}
}


Recall that 
$\Cc^\sigma (\rn) = B^\sigma_{\infty, \infty} (\rn)$, $\sigma \in \real$, are the above H\"{o}lder--Zygmund spaces in \eqref{2.16}.
A continuous embedding, indicated as usual by $\hra$, is called strict if the two spaces involved do not coincide.

\begin{proposition}   \label{P2.9}
Let $s\in \real$, $0<p<\infty$, $0<q\le \infty$ and $A\in \{B,F \}$. 
\bli
\item
Then
\begin{\eq}   \label{2.32}
\La_{-n} \As (\rn) = \La^{-n} \As (\rn) = \As (\rn).
\end{\eq}
\item 
  Let $\vr >0$. Then 
\begin{\eq}   \label{2.33}
\La^{\vr} \As (\rn) = \Cc^{s+ \frac{\vr}{p}} (\rn).
\end{\eq}
\item Let in addition $0<q_1 <q_2 <\infty$. Then
\begin{\eq}    \label{2.34}
B^s_{\infty,q_1} (\rn) \hra \La^0 B^s_{p,q_1} (\rn) \hra \La^0 B^s_{p,q_2} (\rn) \hra \La^0 B^s_{p,\infty} (\rn) = \Cc^s (\rn).
\end{\eq}
\eli
All embeddings are strict. Furthermore,
\begin{\eq}   \label{2.35}
\La^0 \Fs (\rn) = F^s_{\infty,q} (\rn)
\end{\eq}
and
\begin{\eq}   \label{2.36}
\La^0 F^s_{p,p} (\rn) = \La^0 B^s_{p,p} (\rn) = F^s_{\infty,p} (\rn).
\end{\eq}
\end{proposition}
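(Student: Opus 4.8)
The plan is to derive Proposition~\ref{P2.9} from the already-established dictionary between the $\La^\vr \As(\rn)$-spaces and the $\At(\rn)$-scale given in \eqref{2.29}--\eqref{2.31}, together with known facts about the $\At$-scale collected in Remarks~\ref{N-Bt-spaces} and \ref{bmo-def}. Part~(i) is immediate: \eqref{2.32} is just the conjunction of \eqref{2.23} and \eqref{2.28}, which were already recorded. For part~(ii), I would note that $\vr>0$ corresponds via \eqref{2.30} to $\tau = \frac1p(1+\vr/n) > \frac1p$; in this regime the Besov-type and Triebel-Lizorkin-type spaces are known to collapse to a H\"older--Zygmund space, $\At(\rn) = \Cc^{s+n(\tau-1/p)}(\rn)$, and substituting $n(\tau - 1/p) = \vr/p$ gives \eqref{2.33}. (Equivalently, one can argue directly from the hybrid-space formulation \eqref{2.31} with $r=\vr/p>0$, invoking the corresponding statement in \cite{T14,T20}.) The only subtlety here is to make sure the borderline $\tau = 1/p$, i.e. $\vr = 0$, is excluded — that case is precisely \eqref{2.35}, treated separately.

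For \eqref{2.35} and \eqref{2.36}, the key input is \eqref{ftbt} from Remark~\ref{N-Bt-spaces}: $F^{s,1/p}_{p,q}(\rn) = F^s_{\infty,q}(\rn) = B^{s,1/q}_{q,q}(\rn)$ for $0<p,q<\infty$. Since $\vr=0$ means $\tau = 1/p$ by \eqref{2.30}, the first equality is exactly $\La^0\Fs(\rn) = F^s_{\infty,q}(\rn)$. For \eqref{2.36} I would apply \eqref{ftbt} with $q=p$: this gives $F^{s,1/p}_{p,p}(\rn) = F^s_{\infty,p}(\rn) = B^{s,1/p}_{p,p}(\rn)$, and translating back through \eqref{2.29}--\eqref{2.30} yields $\La^0 F^s_{p,p}(\rn) = \La^0 B^s_{p,p}(\rn) = F^s_{\infty,p}(\rn)$. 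One should also record $\La^0 B^s_{\infty,\infty}$-type consistency, but that is the $p=\infty$ edge and not claimed.

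For part~(iii), the chain \eqref{2.34}, I would read off the last identity $\La^0 B^s_{p,\infty}(\rn) = \Cc^s(\rn)$ from the fact (Remark~\ref{N-Bt-spaces}, the statement around \eqref{N-BT-emb}) that $\Nc^s_{u,p,\infty}(\rn) = B^{s,1/p-1/u}_{p,\infty}(\rn)$ coincides with a H\"older--Zygmund space when the summability index is $\infty$; since $\La^0 B^s_{p,\infty} = B^{s,1/p}_{p,\infty}$ and by \eqref{ftbt}-type reasoning at $q=\infty$ this is $B^s_{\infty,\infty}(\rn) = \Cc^s(\rn)$. The two middle embeddings $\La^0 B^s_{p,q_1} \hra \La^0 B^s_{p,q_2}$ are the monotonicity of these spaces in the summability parameter $q$, a standard property inherited from the analogous monotonicity for $\Bt$-spaces (or directly from the definition \eqref{2.26}, since $\ell_{q_1}\hra\ell_{q_2}$). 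The leftmost embedding $B^s_{\infty,q_1}(\rn) \hra \La^0 B^s_{p,q_1}(\rn)$ follows because $\La^0_p(\rn)$ is the $\vr=0$ Morrey space, which under the normalisation in \eqref{2.19}--\eqref{2.20} is essentially $L_\infty$-like on dyadic cubes; one estimates $\|g \mid \La^0_p(Q_{J,M})\| \lesssim \|g \mid L_\infty(\rn)\|$ and sums the resulting dyadic-block inequalities.

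The main obstacle I anticipate is the strictness claims: one must exhibit, for each embedding, a function in the larger space but not the smaller one. For the $q$-monotonicity embeddings this is classical (lacunary Fourier series with coefficients in $\ell_{q_2}\setminus\ell_{q_1}$ supported in spectral annuli). For $B^s_{\infty,q_1}(\rn) \subsetneq \La^0 B^s_{p,q_1}(\rn)$ and for $\La^0 B^s_{p,q_2}(\rn) \subsetneq \Cc^s(\rn)$ one needs examples detecting the genuinely Morrey-type enlargement — here I would either cite the known properness of $\Nc^s_{u,p,\infty} \hra \Nc^s_{u,p,\infty}$-type chains and of \eqref{N-BT-emb} from \cite{YSY10,Sic12}, or construct a self-similar/spatially-localised bump family whose $\La^0_p$-norm stays bounded while the $L_\infty$-norm blows up. Collecting these counterexamples cleanly — rather than the soft embedding inequalities — is where the real work lies; everything else is a translation exercise through \eqref{2.29}--\eqref{2.31} and \eqref{ftbt}.
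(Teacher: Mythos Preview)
Your proposal is correct and follows essentially the same route as the paper: part~(i) from \eqref{2.23}, \eqref{2.28}; part~(ii) via the hybrid identification \eqref{2.31} and \cite{T14}; the chain \eqref{2.34} and identity \eqref{2.35} via the dictionary \eqref{2.29}--\eqref{2.30} combined with \eqref{ftbt}, with strictness deferred to wavelet arguments (the paper cites \cite[Proposition~1.18]{T20} here rather than constructing explicit counterexamples). The only minor deviation is \eqref{2.36}: the paper obtains it directly by comparing the definitions \eqref{2.26}, \eqref{2.27} at $q=p$ with \eqref{2.13}, whereas you route through \eqref{ftbt} with $q=p$ --- both arguments are immediate.
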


\begin{proof}
Part (i) summarises \eqref{2.23}, \eqref{2.28}. Part (ii) is covered by \cite[Proposition 3.54, p.\,92]{T14} and \eqref{2.31}. The
first strict embedding in \eqref{2.34}, the last coincidence  in \eqref{2.34} and also \eqref{2.35} follow from  \eqref{2.29} with \eqref{2.30} together with \eqref{ftbt}, and may also be found in \cite[Proposition 1.18, pp.\,12--13]{T20}. The related proofs rely on wavelet arguments. This can also be used  to justify that the remaining (rather
obvious) embeddings in \eqref{2.34} are also strict. Finally  one obtains \eqref{2.36} from \eqref{2.26}, \eqref{2.27} with $q=p$
and \eqref{2.13}.
\end{proof}

\new{
\begin{remark}
  Note that \eqref{2.33}, using the coincidence 
  \eqref{2.29} with \eqref{2.30}, was already obtained in \cite{YY-4} as 
  $A^{s,\tau}_{p,q}(\rn) = B^{s+n(\tau-\frac1p)}_{\infty,\infty}(\rn)$ whenever  $\tau>\frac1p$ or $\tau=\frac1p$ and $q=\infty$. In general, for arbitrary $\tau\geq 0$, then $A^{s,\tau}_{p,q}(\rn) \hookrightarrow B^{s+n(\tau-\frac1p)}_{\infty,\infty}(\rn)$, cf. \cite[Proposition~2.6]{YSY10}.
\end{remark}
}


\begin{remark}   \label{R2.10}
It makes sense to extend the definition of the spaces $\La^{\vr} \Bs (\rn)$ to $p=\infty$. But it follows from \eqref{2.26} that the
related spaces coincide with $B^s_{\infty,q} (\rn)$, $s\in \real$, $0<q \le \infty$. The situation is somewhat different as far as
the corresponding hybrid spaces
\begin{\eq}   \label{2.37}
L^r B^s_{\infty,q} (\rn), \qquad r \ge 0, \quad s\in \real, \quad 0<q \le \infty,
\end{\eq}
are concerned. For this purpose one has first to replace the factor $2^{\frac{J}{p} (n + \vr)}$ in \eqref{2.26} by $2^{J(\frac{n}{p}
+r)}$ and to choose  afterwards $p=\infty$. This gives the factor $2^{Jr}$. If $r=0$, then one has again $B^s_{\infty,q} (\rn)$. If
$r>0$, then it  follows from \cite[Proposition 3.54, p.\,92]{T14} that the corresponding spaces coincide with $\Cc^{s+r} (\rn)$.
\end{remark}

\subsection{The $\vr$--clans}    \label{S2.2}
\new{Based on the above definitions and coincidences it is obvious that the 
classical spaces 
\begin{\eq}   \label{2.38}
\As (\rn) \quad \text{with} \quad A \in \{B, F \}, \quad s\in \real, \quad \text{and} \quad 0<p,q \le \infty
\end{\eq}
(including $F^s_{\infty,q} (\rn)$) have to be complemented by 
\begin{\eq}   \label{2.39}
\La^{\vr} \As (\rn) \quad \text{and} \quad\La_{\vr} \As (\rn) \quad \text{with} \quad A\in \{B,F \},
\end{\eq}
\begin{\eq}   \label{2.40}
s\in \real, \quad 0<p<\infty, \quad 0<q \le \infty, \quad -n <\vr <0,
\end{\eq}
on the one hand, and
\begin{\eq}   \label{2.41}
\La^0 \Bs (\rn) \quad \text{with} \quad s\in \real \quad \text{and} \quad 0<p,q <\infty,
\end{\eq}
on the other hand, as far as the most prominent scales of Morrey smoothness spaces are concerned. This can be seen from Definitions~\ref{D2.5} and \ref{D2.7} together with \eqref{2.24}, \eqref{2.25} for the spaces of type $\MA(\rn)$, from \eqref{2.29} with \eqref{2.30} for the spaces of type $\At(\rn)$, and from Remark~\ref{R2.10} for the hybrid spaces. According to Proposition~\ref{P2.9} we regain the classical spaces \eqref{2.38} from $\La^{\vr} \As (\rn)$  and $\La_{\vr} \As (\rn)$ for $\vr=-n$ and $\vr=0$, respectively.}  
For the somewhat peculiar spaces in \eqref{2.41} one has \eqref{2.34}, \eqref{2.36} indicating that they are closely related to the spaces $A^s_{\infty,q} (\rn)$. We return to them occasionally, but concentrate  otherwise on the spaces in 
\eqref{2.39}, \eqref{2.40}. One may ask how these spaces are related to each other. This attracted some attention in the {literature}
and will be discussed later on in some details. Above all we rely on the coincidence
\begin{\eq}   \label{2.42}
\La_{\vr} \Fs (\rn) = \La^{\vr} \Fs (\rn), \quad s\in \real, \ 0<p<\infty, \ 0<q\le \infty, \ -n \le \vr <0,
\end{\eq}
extending \eqref{2.32} to the related $F$--spaces with $-n <\vr <0$. This follows from \eqref{2.25} and \eqref{2.29}, \eqref{2.30},
inserted in the well--known coincidence \eqref{fte}.
 In other words, one has only one $\La_{\vr}F =\La^{\vr}F$ scale in
\eqref{2.39}, \eqref{2.40}, but two $\La_{\vr} B$ and $\La^{\vr} B$ scales.

\begin{definition}   \label{D2.11}
Let $n\in \nat$.
\bli 
\item The $n$--clan consists of the spaces according to  \eqref{2.38}.
\item
For $-n < \vr <0$ the $\vr$--clan $\rhoAs(\rn)$ consists of the three families
\begin{\eq}   \label{2.44}
\La_{\vr} \Bs(\rn), \quad \La^{\vr} \Bs (\rn) \quad \text{and} \quad \La_{\vr} \Fs (\rn) = \La^{\vr} \Fs (\rn)
\end{\eq}
with
\begin{\eq}   \label{2.45} 
s\in \real, \quad   0<p<\infty, \quad 0<q \le \infty.
\end{\eq}
\item The $0$--clan  consists of the spaces according to \eqref{2.41}.
\eli
\end{definition}

For convenience, we have sketched the situation for the new spaces defined above in Remark~\ref{R-schema} below.


\begin{remark}    \label{R2.12}
As already said we are mainly interested in the $\vr$--clans in part (ii) of the above definition. The theory of the classical spaces
in \eqref{2.38}, now called the $n$--clan, has been developed over decades. This is well reflected in many books, including 
\cite{T83}, \cite{T92}, \cite{T06} and \cite{T20}. In particular, \cite{T20} may be considered as a collection of final answers about
distinguished problems for these spaces, including $F^s_{\infty,q} (\rn)$. This can be taken as a guide asking the same questions for
the larger community covered by the above definition. It is the main aim of this paper to make clear that the above classification
of the Morrey smoothness spaces in common use produces transparent answers having the same impressive beauty as their ancestors in
the $n$--clan. First examples had already been described in the Introduction, \eqref{1.7} compared with \eqref{1.6}, \eqref{1.10}
compared with \eqref{1.9}, but also \eqref{1.12} compared with \eqref{1.11}. We formulate these expectations as follows.
\end{remark}

\new{
\begin{slope}\label{slope_rules}
  Let $n\in \nat$ and $-n <\vr <0$.
  \bli
\item
  An extension of an adequate assertion for the spaces $\As (\rn)$ of the $n$--clan to the spaces \eqref{2.44}, \eqref{2.45} of the
$\vr$--clan  $\rhoAs (\rn)$
is subject to the {\em Slope--$1$--Rule} if it depends on $\frac{1}{p}\min(|\vr|,1)$ $($instead of $\frac{1}{p})$.
\item
An extension of an adequate assertion for the spaces $\As (\rn)$ of the $n$--clan to the spaces \eqref{2.44}, \eqref{2.45} of the
$\vr$--clan  $\rhoAs (\rn)$
is subject to the {\em Slope--$n$--Rule} if it depends on $\frac{|\vr|}{p}$ $($instead of $\frac{n}{p})$.
\eli
\end{slope}
}

\ignore{\begin{itemize}
\item[] {\bfseries Slope Rules}. {\em Let $n\in \nat$ and $-n <\vr <0$.
An extension of an adequate assertion for the spaces $\As (\rn)$ of the $n$--clan to the spaces \eqref{2.44}, \eqref{2.45} of the
$\vr$--clan  $\rhoAs (\rn)$
is subject to the Slope--$1$--Rule if it depends on $\frac{1}{p}\min(|\vr|,1)$} ({\em instead of $\frac{1}{p}$}) {\em 
and subject to the Slope--$n$--Rule if it depends on $\frac{|\vr|}{p}$} ({\em instead of $\frac{n}{p}$}).
\end{itemize}
}

\begin{remark}   \label{R2.13}
These slope rules are illustrated by the examples in the Introduction. We call $\vr$ the slope parameter although  not $\vr$ itself 
but $|\vr|$ is the slope of the corresponding  lines in the $\big( \frac{1}{p},s \big)$--diagram, \new{as it can be seen exemplarily in Figure~\ref{fig-1a} on p.~\pageref{fig-1a} below}. This suggest to replace $-n <\vr <0$
by $0<\vr <n$. But we stick at the usual habit that larger values of a fixed parameter produce smaller spaces. This applies to the
smoothness $s$, but also to the integrability $p$ for related spaces on bounded domains, \new{but not to the fine index $q$}. It is also in good agreement  with the
parameters $\tau$ in \eqref{2.29}, \eqref{2.30}, $r$ in \eqref{2.31} and $\vr >0$ in \eqref{2.33}.
\end{remark}

\subsection{Relations and coincidences}    \label{S2.3}
The already indicated sharp embeddings in \eqref{1.7} and \eqref{1.10} (as far as breaking lines in the 
$\big(\frac{1}{p},s\big)$--diagrams \new{in Figure~\ref{fig-1a} on p.~\pageref{fig-1a} and Figure~\ref{fig-1b} on p.~\pageref{fig-1b}} are concerned) suggest that, in general, spaces belonging to different $\vr$--clans do not 
coincide. At least it seems to be reasonable to discuss relations and coincidences within a fixed $\vr$--clan. Again one can take the
$n$--clan, consisting of the spaces $\As (\rn)$ in \eqref{2.38} as a guide. We repeat the final answer as it may be found in \cite[Section~2.3.9, pp. 61-62]{T83} (under the additional assumption $p_1,p_2<\infty$ in the $F$-case), and in \cite[Theorem 2.10, p.\,28]{T20} in the above complete form.

\begin{proposition}   \label{P2.14}
Let $n\in \nat$.
Let $0<p_1, p_2, q_1, q_2 \le \infty$ and $s_1 \in \real$, $s_2 \in \real$. Then
\begin{\eq}   \label{2.46}
B^{s_1}_{p_1, q_1} (\rn) = B^{s_2}_{p_2, q_2} (\rn) \quad \text{if, and only if, $s_1 = s_2$, $p_1 = p_2$, $q_1 = q_2$},
\end{\eq}
\begin{\eq}  \label{2.47}
F^{s_1}_{p_1, q_1} (\rn) = F^{s_2}_{p_2, q_2} (\rn) \quad \text{if, and only if, $s_1 = s_2$, $p_1 = p_2$, $q_1 = q_2$},
\end{\eq}
and
\begin{\eq}   \label{2.48}
F^{s_1}_{p_1, q_1} (\rn) = B^{s_2}_{p_2, q_2} (\rn) \quad \text{if, and only if, $s_1 = s_2$, $p_1 = p_2 =q_1 = q_2$}.
\end{\eq}
\end{proposition}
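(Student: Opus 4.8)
The statement to prove is Proposition~\ref{P2.14}, the classification of coincidences among the classical spaces $B^{s_i}_{p_i,q_i}(\rn)$ and $F^{s_i}_{p_i,q_i}(\rn)$ (including the endpoint cases $p=\infty$). Since the ``if'' directions are trivial, the entire content is the ``only if'' direction, and the plan is to extract each of the three parameters $s$, $p$, $q$ from intrinsic invariants of the space that can be compared across two coinciding spaces. I would first recall that a coincidence of spaces in the sense of equal sets of tempered distributions automatically upgrades, via the closed graph theorem (both are quasi-Banach spaces continuously embedded in $S'(\rn)$), to an equivalence of quasi-norms; so all the detection below may use norm estimates freely.

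\textbf{Key steps.} First I would pin down the smoothness $s$. The cleanest device is to test on dilated bump functions or, even better, on lacunary Fourier series / single building blocks $f_j$ with $\supp \wh{f_j} \subset \{|\xi|\sim 2^j\}$: for such $f_j$ one has $\|f_j \mid A^s_{p,q}(\rn)\| \sim 2^{js}\|f_j\mid L_p\|$ with the $q$ dependence washed out, and letting $j\to\infty$ forces $s_1=s_2$ (a mismatch would make the ratio of norms blow up or vanish). Alternatively one invokes the fact that $A^{s}_{p,q}(\rn)\hra A^{s'}_{p,q}(\rn)$ is never an equality for $s\neq s'$, which itself follows from such test functions; either way $s_1=s_2=:s$. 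Second, with $s$ fixed, I would detect $p$ by testing on a family of functions that are dilates or translates realizing the scaling behaviour of $L_p$: e.g. $f_\lambda = \varphi(\lambda\,\cdot)$ for $\lambda\to 0$ or $\to\infty$, whose $A^s_{p,q}$-norm scales like a fixed power of $\lambda$ determined by $p$ (and $s$, $n$); comparing exponents gives $p_1=p_2=:p$. (In the $F_{\infty,q}$ endpoint the scaling argument must be replaced by the characterization \eqref{2.13}, but the same monotone family still separates $p=\infty$ from $p<\infty$.) Third, with $s$ and $p$ fixed, the fine index $q$ is isolated by the standard ``sequence of building blocks with disjoint frequency support'' construction: take $f = \sum_{j} a_j f_j$ with $\supp\wh{f_j}$ in disjoint dyadic annuli and $\|f_j\mid L_p\|\sim 1$; then $\|f\mid B^s_{p,q}\|\sim \|(2^{js}a_j)_j\mid \ell_q\|$, so $B^s_{p,q_1}=B^s_{p,q_2}$ forces $\ell_{q_1}=\ell_{q_2}$ as sets, hence $q_1=q_2$. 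For the $F$-case the analogous statement uses the $\ell_q$-valued maximal / square-function characterization, choosing the $f_j$ with spatially separated supports so that the $\ell_q$ sum inside $L_p$ again reduces to an $\ell_q$ norm of scalars. This proves \eqref{2.46} and \eqref{2.47}.

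\textbf{The mixed case \eqref{2.48}.} Once $s_1=s_2=s$ and $p_1=p_2=p$ are forced as above (these arguments do not care whether the letter is $B$ or $F$), and once $q_1=q_2=:q$ follows from the $B$-side fine-index detection applied to $B^{s}_{p,q_1}=F^{s}_{p,q_2}$ — here one uses the sandwich $F^{s}_{p,\min(p,q)}\hra \cdots$, more precisely the elementary embeddings $B^s_{p,\min(p,q)}\hra F^s_{p,q}\hra B^s_{p,\max(p,q)}$, to transfer the $\ell_q$-separation — it remains to see that a $B$-space can equal an $F$-space only when additionally $p=q$. This is the one genuinely non-formal point: I would argue that if $p\neq q$ then the Fubini-type mismatch between ``$\ell_q$ then $L_p$'' (for $F$) and ``$L_p$ then $\ell_q$'' (for $B$) is detected by a two-parameter family of building blocks $f = \sum_{j,k} f_{j,k}$ indexed by frequency level $j$ and spatial location $k$, with $f_{j,k}$ having frequency support in the $j$-th annulus and unit-$L_p$ norm on disjoint cubes; for such $f$ the $F$-norm behaves like a mixed $\ell_q(L_p)$-type expression and the $B$-norm like $L_p(\ell_q)$, and by Minkowski's integral inequality these are genuinely inequivalent unless $p=q$ (choosing the coefficients to be, say, an appropriate tensor that saturates the inequality in one direction). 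Thus $p=q=q_1=q_2$ and \eqref{2.48} follows.

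\textbf{Main obstacle.} The routine parts are $s$ and $p$ (pure scaling/lacunary tests) and $q$ within a single scale. The delicate part — and the one I would expect to need the most care, including the correct handling of the endpoint $p=\infty$ via \eqref{2.13} — is the separation of $B$ from $F$ when $p\neq q$, i.e. building the two-index test family and verifying rigorously (not just heuristically) that the $\ell_q(L_p)$ versus $L_p(\ell_q)$ behaviours cannot be comparable; this is where one must be honest about quasi-norm constants and about the maximal-function characterization of $F^s_{\infty,q}$. Since this is a classical result, I would most likely just cite \cite[Section~2.3.9]{T83} and \cite[Theorem~2.10]{T20} for the full statement and present only the skeleton above, as the excerpt itself does.
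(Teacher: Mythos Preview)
The paper does not give its own proof of Proposition~\ref{P2.14}: it is stated as a known fact and immediately attributed to \cite[Section~2.3.9]{T83} (for $p<\infty$ in the $F$--case) and \cite[Theorem~2.10]{T20} (for the full statement including $F^s_{\infty,q}$). Your final sentence anticipates this correctly, and your sketch is the standard route followed in those references --- lacunary building blocks to detect~$s$, scaling or disjoint translates to detect~$p$, and $\ell_q$--separation via functions with disjoint dyadic Fourier supports to detect~$q$.

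There is, however, a genuine logical slip in your treatment of the mixed case~\eqref{2.48}. After reducing to $F^s_{p,q_1}(\rn)=B^s_{p,q_2}(\rn)$ you claim that the sandwich $B^s_{p,\min(p,q_1)}\hra F^s_{p,q_1}\hra B^s_{p,\max(p,q_1)}$ together with ``$B$--side fine-index detection'' yields $q_1=q_2$. It does not: it only gives $\min(p,q_1)\le q_2\le \max(p,q_1)$, which is vacuous if, say, $q_1<p$ and $q_2$ lies strictly between. The correct order of argument is the one you describe afterwards, not before: take building blocks $f_j$ with Fourier support in the $j$-th annulus \emph{and} pairwise disjoint spatial supports, so that for $f=\sum_j a_j f_j$ one has $\|f\,|\,B^s_{p,q_2}\|\sim \|(2^{js}a_j)\,|\,\ell_{q_2}\|$ while $\|f\,|\,F^s_{p,q_1}\|\sim \|(2^{js}a_j)\,|\,\ell_{p}\|$ (the inner $\ell_{q_1}$ collapses because at each point at most one term contributes). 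Equality of the spaces then forces $q_2=p$, whence $F^s_{p,q_1}=B^s_{p,p}=F^s_{p,p}$ and~\eqref{2.47} gives $q_1=p$. So the ingredients you list are right, but Step~2 of your mixed-case argument should be deleted and Step~3 promoted to do the actual work.
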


\begin{remark}   \label{R2.15}
By {\cite[Theorem 2.9, p.\,26]{T20}} one has for  $s\in \real$ and $0<p,q,u,v \le \infty$ that
\begin{\eq}  \label{2.49}
B^s_{p,u} (\rn) \hra \Fs (\rn) \hra B^s_{p,v} (\rn)
\end{\eq}
if, and only if,
\begin{\eq}   \label{2.50}
0<u \le \min (p,q) \qquad \text{and} \qquad \max (p,q) \le v \le \infty.
\end{\eq}
If $p<\infty$, then this is just the well-known result in \cite[Theorem~3.1.1]{SiT95}.
\end{remark}

One may ask for counterparts within the fixed $\vr$--clan $\rhoAs (\rn)$. 
Instead of the two families $\Bs (\rn)$ and $\Fs (\rn)$ for the $n$--clan one
has now to cope with the three families in \eqref{2.44}. By \eqref{2.21} and \eqref{2.26} the corresponding $B$--spaces are 
quasi--normed by
\begin{\eq}   \label{2.51}
\| f \, | \La_{\vr} \Bs (\rn) \| = \Big( \sum^\infty_{j=0} 2^{jsq} \sup_{J \in \ganz, M \in \zn} 2^{\frac{J}{p} (n+\vr)q}
\big\| \big( \vp_j \wh{f} \big)^\vee | L_p (Q_{J,M} ) \big\|^q \Big)^{1/q}
\end{\eq}
and
\begin{\eq}   \label{2.52}
\| f \, | \La^{\vr} \Bs (\rn) \| = \sup_{J\in \ganz, M\in \zn} 2^{\frac{J}{p} (n+ \vr)} \Big( \sum_{j \ge J^+} 2^{jsq} \big\|
\big( \vp_j \wh{f}\, \big)^\vee \, | L_p (Q_{J,M} ) \big\|^q \Big)^{1/q}
\end{\eq}
(usual modification if $q=\infty$). These explicit versions will be of some use, now and later on. We collect  a few more or less
known assertions and indicate how some of them can be proved based on properties which will be justified later on. Recall that the
continuous embedding $\hra$ is called strict if the two spaces in question do not coincide.

\begin{theorem}   \label{T2.16}
Let $n\in \nat$, $-n <\vr <0$ and \new{$s, s_1, s_2 \in\real$}. Let $0<p,p_1, p_2 <\infty$ and $0<q, q_1, q_2 \le \infty$. 
\bli
\item
The spaces $\La_{\vr} B^{s_1}_{p_1,q_1} (\rn)$ and $\La_{\vr} B^{s_2}_{p_2,q_2} (\rn)$ coincide if, and only if, $p_1 = p_2$,
$q_1 = q_2$, $s_1 = s_2$. Similarly the spaces $\La_{\vr} F^{s_1}_{p_1,q_1} (\rn)$ and $\La_{\vr} F^{s_2}_{p_2,q_2} (\rn)$ coincide if, and only if, $p_1 = p_2$, $q_1 = q_2$, $s_1 = s_2$. 
\item The spaces $\La_{\vr} B^{s_1}_{p_1,q_1} (\rn)$ and $\La_{\vr} F^{s_2}_{p_2,q_2} (\rn)$ do not coincide. Furthermore,
\begin{\eq}   \label{2.53}
\La_{\vr} B^s_{p, \min(p,q)} (\rn) \hra \La_{\vr} \Fs (\rn) \hra \La_{\vr} B^s_{p,\infty} (\rn).
\end{\eq}
\item
The spaces $\La^{\vr} F^{s_1}_{p_1, q_1} (\rn)$ and $\La^{\vr} F^{s_2}_{p_2, q_2} (\rn)$ coincide if, and only if, $p_1 = p_2$, $q_1 =q_2$, $s_1 = s_2$. Furthermore,
\begin{\eq}   \label{2.54}
\La^{\vr} B^s_{p, \min(p,q)} (\rn) \hra \La^{\vr} \Fs (\rn) \hra \La^{\vr} B^s_{p, \max(p,q)} (\rn)
\end{\eq}
and
\begin{\eq}   \label{2.55}
\La^{\vr} B^s_{p,p} (\rn) = \La^{\vr} F^s_{p,p} (\rn).
\end{\eq}
\item If $q<\infty$, then
\begin{\eq}   \label{2.56}
\La_{\vr} \Bs (\rn) \hra \La^{\vr} \Bs (\rn)
\end{\eq}
is a strict embedding. Furthermore,
\begin{\eq}   \label{2.57}
\La_{\vr} B^s_{p,\infty} (\rn) = \La^{\vr} B^s_{p,\infty} (\rn)
\end{\eq}
and
\begin{\eq}   \label{2.58}
\La_{\vr} \Fs (\rn) = \La^{\vr} \Fs (\rn).
\end{\eq}
\eli
\end{theorem}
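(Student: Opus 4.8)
The plan is to route all four parts through two channels. The first is the dictionary \eqref{2.24}, \eqref{2.25}, \eqref{2.29}, \eqref{2.30} identifying the three families of the $\vr$-clan with the scales $\Nc^s_{u,p,q}(\rn)$, $\Ec^s_{u,p,q}(\rn)$ on the one hand and $B^{s,\tau}_{p,q}(\rn)$, $F^{s,\tau}_{p,q}(\rn)$ on the other (recall $-n<\vr<0 \Leftrightarrow 0<\tau<1/p \Leftrightarrow p<u<\infty$, with $u=np/|\vr|$ and $\tau=\tfrac1p-\tfrac1u$), which lets us import the pertinent results already available in the literature. The second is the wavelet characterisation of the $\vr$-clan established in Section~\ref{S3}, which reduces every coincidence and embedding question to the matching sequence spaces and thereby makes the ``only if'' verifications transparent. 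I would start by clearing away what is already at hand: \eqref{2.58} merely restates \eqref{2.42} (equivalently \eqref{fte}), and \eqref{2.55} is its diagonal case $q=p$ — for $A^{s,\tau}_{p,q}$ one still has $B^{s,\tau}_{p,p}=F^{s,\tau}_{p,p}$ — so both need only a pointer.

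Next, the embedding chains \eqref{2.53} and \eqref{2.54}. Written out in the explicit norms \eqref{2.51}, \eqref{2.52}, all four inclusions are elementary: the left ones follow from Minkowski's inequality applied in the ground quasi-norm cube by cube, and the right ones from keeping a single frequency term (for the $\infty$ on the right of \eqref{2.53}) resp.\ from the same Minkowski estimate (for the $\max(p,q)$ on the right of \eqref{2.54}). The substance — and the reason the two chains look different — is sharpness at the upper end: for the $\La_{\vr}B$, i.e.\ $\Nc/\Ec$, scale one has $\La_{\vr}\Fs(\rn)\not\hookrightarrow\La_{\vr}B^s_{p,v}(\rn)$ for every $v<\infty$, so $\infty$ in \eqref{2.53} cannot be lowered, whereas for the $\La^{\vr}B$, i.e.\ $A^{s,\tau}_{p,q}$, scale $\La^{\vr}\Fs(\rn)\hookrightarrow\La^{\vr}B^s_{p,\max(p,q)}(\rn)$ holds with $\max(p,q)$ sharp, exactly as in the classical case \eqref{2.49}, \eqref{2.50}. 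Read through the dictionary these are precisely Sawano's sharp embedding \eqref{elem} and the known $A^{s,\tau}_{p,q}$-analogue of \eqref{2.49}; I expect the sharpness half of \eqref{elem} — exhibiting an $f$ with finite $\Ec$-norm but infinite $\Nc^s_{u,p,v}$-norm for all $v<\infty$, built from a lacunary wavelet sequence spread over a thin family of dyadic cubes — to be the single genuinely non-trivial ingredient, and I would either reproduce it or quote \cite{Saw08}.

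For the uniqueness assertions in (i) and the $F$-part of (iii) I would pass to sequence spaces via the wavelet characterisation and show that two such sequence quasi-norms (built on $\La^{\vr}_p$, or on its $\La_{\vr}$-averaged variant) are equivalent only when $s$, $p$ and $q$ all agree: $s$ (more precisely the combination $s+\vr/p$) is pinned down by the scaling behaviour under dilations $f\mapsto f(\lambda\cdot)$, which multiplies the $\La^{\vr}_p$-norm by $\lambda^{\vr/p}$; $p$, and with it $u=np/|\vr|$ since $\vr$ is fixed inside the clan, by testing on $N$ mutually disjointly supported wavelets at a fixed scale as $N\to\infty$; and $q$ by spreading wavelets over many frequency levels inside one fixed cube. (Alternatively one quotes the known parameter-uniqueness for the $\Nc$-, $\Ec$- and $A^{s,\tau}_{p,q}$-scales.) The non-coincidence of a $\La_{\vr}B$ space with a $\La_{\vr}F$ space in (ii) then follows by contradiction: an identity $\La_{\vr}\Fs(\rn)=\La_{\vr}B^{s'}_{p',q'}(\rn)$ inserted into \eqref{2.53} forces, by the embedding counterpart of the uniqueness just discussed, $s'=s$, $p'=p$ and $\min(p,q)\le q'\le\infty$; for $p\neq q$ this is ruled out by the sequence-space separation between a genuine $F$-space and a $B$-space at the same $(s,p)$ (the analogue of \eqref{2.48} with $L_p$ replaced by $\La^{\vr}_p$), and for $p=q$ by the fact that $\La_{\vr}F^s_{p,p}(\rn)\neq\La_{\vr}B^s_{p,p}(\rn)$ whenever $-n<\vr<0$ — itself a consequence of the non-improvability of $\infty$ in \eqref{2.53} — in contrast with \eqref{2.55} on the $\La^{\vr}$ side.

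Finally, part (iv): the embedding \eqref{2.56} with its strictness is \eqref{N-BT-emb} read through \eqref{2.24} and \eqref{2.29}, \eqref{2.30}; since $-n<\vr<0$ gives $\tau=\tfrac1p(1+\tfrac{\vr}{n})\in(0,\tfrac1p)$, one has $\tau>0$, so $\Nc^s_{u,p,q}(\rn)\hookrightarrow B^{s,\tau}_{p,q}(\rn)$ is proper exactly when $q<\infty$; and \eqref{2.57} is its endpoint $q=\infty$, where $\Nc^s_{u,p,\infty}(\rn)=B^{s,\frac1p-\frac1u}_{p,\infty}(\rn)$, while \eqref{2.58} has already been recorded. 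The main obstacle, as flagged above, is the sharpness of the upper fine index in \eqref{2.53} together with the genuine non-coincidence in (ii); both rest on the single phenomenon that in the $\Nc/\Ec$ scale the three families are strictly nested and the $F$-member is not a $B$-member, a phenomenon most cleanly seen on explicit lacunary sequences in the wavelet model.
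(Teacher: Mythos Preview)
Your proposal is correct and, like the paper, routes most assertions through the dictionary \eqref{2.24}, \eqref{2.25}, \eqref{2.29}, \eqref{2.30} to the existing literature (in particular \cite{Saw08} for parts (i)--(ii) and \cite{YSY10} for \eqref{2.54}, \eqref{2.55}). The one genuine divergence is in part (iv). For the strictness of \eqref{2.56} you invoke the properness statement attached to \eqref{N-BT-emb} (which the paper records earlier as a known fact from \cite{YSY10}); the paper instead gives an internal argument by forward reference: Theorem~\ref{T5.18} shows that the $\delta$-distribution lies in $\La^{\vr}B^{|\vr|/p-n}_{p,q}(\rn)$ for all $q$ but in $\La_{\vr}B^{|\vr|/p-n}_{p,q}(\rn)$ only for $q=\infty$, and lifting (Theorem~\ref{T3.8}) transports this to arbitrary $s$. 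Similarly, for \eqref{2.57} you take the $q=\infty$ endpoint of the $\Nc=B^{\,\cdot,\tau}$ identification in Remark~\ref{N-Bt-spaces}, whereas the paper argues directly from the Fourier-analytic norms \eqref{2.51}, \eqref{2.52} after replacing $j\ge J^+$ by $j\in\no$ via \cite[Proposition~3.1]{Sic12}. Your route is arguably cleaner since it avoids forward references to Sections~\ref{S3} and \ref{S5}; the paper's route has the virtue of exhibiting a concrete witness ($\delta$) for the strictness, which it later reuses (cf.\ Remark~\ref{R5.21}).
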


\begin{proof}
{\em Step 1.} The parts (i) and (ii)  are covered by \cite[Proposition 1.3, Theorem 1.7, pp.\,96, 98]{Saw08}. The property 
\eqref{2.58} has already been mentioned in \eqref{2.42}. There one finds also related references. Then it follows from part (i) that
the spaces $\La^{\vr} F^{s_1}_{p_1, q_1} (\rn)$ and $\La^{\vr} F^{s_2}_{p_2, q_2} (\rn)$ coincide only trivially. The well--known
embeddings \eqref{2.54} and also \eqref{2.55} can be obtained by the same arguments as for the classical spaces $\As (\rn)$, based on
the Definitions~\ref{D2.1}, \ref{D2.7} and also \eqref{2.49}, \eqref{2.50}. \new{We refer to \cite[Proposition~2.1]{YSY10}.}
\cm
{\em Step 2.} The embedding \eqref{2.56} follows from \eqref{2.51}, \eqref{2.52}. According to \cite[Proposition 3.1, p.\,121]{Sic12} one can replace $j \ge J^+$
in \eqref{2.52} by $j \in \no$ (equivalent quasi--norms). Then \eqref{2.51} and \eqref{2.52} with $q=\infty$
coincide. This proves \eqref{2.57}. It remains to prove that the embedding \eqref{2.56} is strict if $q< \infty$. This follows 
from Theorem~\ref{T5.18} below if, in addition, $s = \frac{|\vr|}{p} -n$, and can be extended to all $s\in \real$
by lifting according to Theorem~\ref{T3.8} below.
\end{proof}


\begin{remark}    \label{R2.38}
The assertions of the above theorem are more or less known. But they are somewhat scattered in the literature and not related to the
$\vr$--clans. Furthermore we wanted to show that Theorem~\ref{T5.18} can be used to justify that the embedding \eqref{2.56}
is strict. As far as the coincidences of the spaces $\La_{\vr} \As (\rn)$ are concerned we refer the reader again to 
\cite{Saw08} for a more general version. In particular there is no need  to fix $\vr$ from the very beginning. The homogeneous
counterpart of part (iv) may be found in \cite[Theorem 1.1, p.\,74]{SYY10}. The justification of \eqref{2.57} for the inhomogeneous
version requires that one can replace the natural condition $j \ge J^+$ in \eqref{2.52}
by $j \in \no$. This has been used in Step 2 with a reference to \cite{Sic12}.
\new{We would like to point to the difference between \eqref{2.53} and \eqref{2.54} concerning the last embedding: while in case of spaces $\La^\vr\As(\rn)$ the well known behaviour as described in Remark~\ref{R2.15} is  preserved in view of the fine index $q$, this is no longer true for spaces $\La_\vr\As(\rn)$: here $q=\infty$ cannot be improved -- unlike
in case of $\vr=-n$, cf.~\cite{Saw08}. }
\end{remark} 

\new{
  \begin{remark}\label{R-schema}
In view of the coincidences and embeddings within the $B$- and $F$-scales according to Theorem~\ref{T2.16} we may illustrate the situation for the new spaces $\rhoAs(\rn)$ with $-n<\vr<0$ defined in Definition~\ref{D2.11} as follows: 
\begin{align*}
  \begin{array}{c} \rhoAs(\rn) \\ \overbrace{\hspace*{25em}}\\
      \La_{\vr} \As(\rn) \ \begin{array}{c}\nearrow \\ \searrow \end{array}
\begin{array}{ccc}
 \La_{\vr} \Bs(\rn) & \hookrightarrow &  \La^{\vr} \Bs (\rn)   \\[2ex]
 \La_{\vr} \Fs(\rn) & = &  \La^{\vr} \Fs (\rn) \end{array} \begin{array}{c}\nwarrow \\ \swarrow \end{array} \ 
 \La^{\vr} \As(\rn). 
  \end{array}
  \end{align*}
\end{remark}
}%
\smallskip~

By Proposition~\ref{P2.14} there are no other coincidences within the $n$--clan, the classical spaces  in \eqref{2.38}, than the
obvious ones. This suggests to ask for coincidences within a fixed $\vr$--clan as introduced in Definition~\ref{D2.11}(ii) or within
all spaces covered by Definition~\ref{D2.11} for fixed $n\in \nat$. We fix the expected ideal outcome that within a fixed $\vr$--clan
there are no further coincidences than those ones listed in Theorem~\ref{T2.16}.

\begin{conjecture}    \label{C2.18}
Let $n\in \nat$, $-n <\vr <0$ and \new{$s, s_1, s_2 \in\real$}. Let $0<p, p_1, p_2 <\infty$ and $0<q, q_1, q_2 \le \infty$.
\bli
\item
  Part {\em (i)} of Theorem~\ref{T2.16} remains valid if one replaces $\La_{\vr}$ by $\La^{\vr}$.
\item There are no further coincidences for the spaces belonging to the same $\vr$--clan than \eqref{2.55}, \eqref{2.57} and
\eqref{2.58}.
\eli
\end{conjecture}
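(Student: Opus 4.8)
The plan is to prove Conjecture~\ref{C2.18} by reducing it, via the lifting operator of Theorem~\ref{T3.8}, to the single smoothness line $s=\tfrac{|\vr|}{p}-n$ (for the $\Li$-scale) and then using the sharp limiting embeddings of Section~\ref{S5} --- in particular Theorems~\ref{T5.18} and the growth-envelope material of Section~\ref{S7} --- as separating invariants. First I would observe that for part~(i) the inclusion $\La^{\vr}B^{s_1}_{p_1,q_1}(\rn)=\La^{\vr}B^{s_2}_{p_2,q_2}(\rn)$ forces, by taking dilations $f\mapsto f(\lambda\cdot)$ and tracking the homogeneity of the quasi-norm \eqref{2.52}, the relation $s_1+\tfrac{n+\vr}{p_1}=s_2+\tfrac{n+\vr}{p_2}$ and $s_1=s_2$ simultaneously (the differentiation/Fourier-support scaling gives two independent homogeneity exponents because the factor $2^{\frac J p(n+\vr)}$ and the factor $2^{jsq}$ scale differently), whence $p_1=p_2$ and then $q_1=q_2$ follows by testing on a single Weierstrass-type lacunary series $\sum 2^{-js}(\vp_j)^\vee$ as in the classical proof of \eqref{2.46}. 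The $F$-case of~(i) is already in Theorem~\ref{T2.16}(iii), so nothing new is needed there.

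For part~(ii) I would argue clan-internally and by elimination. Fix $\vr$. The three families are $\La_{\vr}B$, $\La^{\vr}B$ and $\La_{\vr}F=\La^{\vr}F$. Within each family Theorem~\ref{T2.16}(i) (together with~(i) of the conjecture for $\La^{\vr}B$) rules out non-trivial coincidences. So one must exclude cross-family coincidences other than \eqref{2.55}, \eqref{2.57}, \eqref{2.58}. By lifting I may assume $s_1=s_2=s$ and, rescaling $p$, reduce to a convenient normalisation. The case $\La_{\vr}B$ versus $\La_{\vr}F$ is handled by Theorem~\ref{T2.16}(ii): an equality would squeeze $\La_{\vr}\Fs$ between $\La_{\vr}B^s_{p,\min(p,q)}$ and $\La_{\vr}B^s_{p,\infty}$, and by part~(i) of the conjecture / Theorem~\ref{T2.16}(i) these $B$-spaces are pairwise distinct unless $q=p=\infty$, which is excluded since $q<\infty$ here; more precisely one invokes the remark after Theorem~\ref{T2.16} that for $\La_{\vr}$ the index $\infty$ in the right embedding of \eqref{2.53} cannot be improved. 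The genuinely new cases are $\La_{\vr}B$ versus $\La^{\vr}B$ and $\La^{\vr}B$ versus $\La^{\vr}F=\La_{\vr}F$; for the former, \eqref{2.57} already identifies the only coincidence ($q_1=q_2=\infty$), and strictness of \eqref{2.56} for $q<\infty$ is Theorem~\ref{T2.16}(iv); for the latter one reuses \eqref{2.54} together with Theorem~\ref{T2.16}(iii) exactly as in the classical argument, the only coincidence being $q=p$, i.e.\ \eqref{2.55}.

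The hard part will be excluding the "diagonal" cross-family coincidences in which the \emph{parameters differ}, e.g.\ ruling out $\La_{\vr}B^{s}_{p_1,q_1}(\rn)=\La^{\vr}F^{s}_{p_2,q_2}(\rn)$ with $(p_1,q_1)\neq(p_2,q_2)$ --- the sandwich inequalities \eqref{2.53}, \eqref{2.54} only immediately control the case of \emph{equal} $p$. Here the strategy is to use a genuinely asymmetric invariant: the failure (resp.\ validity) of the embeddings into $L_\infty(\rn)$ and into $L_1^{\loc}(\rn)$ from Theorems~\ref{T5.18}, \ref{T5.7} pins down the line $s=\tfrac{|\vr|}{p}$ (and $s=\sigma^{|\vr|}_p$), forcing $p_1=p_2$ once $s_1=s_2$; and on that line one separates the remaining fine-index possibilities by comparing growth-envelope functions from Section~\ref{S7}, which for these spaces depend on $q$ in a known monotone way. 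I expect that the only case genuinely left open --- and hence the reason this is a Conjecture and not a Theorem --- is a residual pair on the critical line where the available envelope and embedding invariants coincide, so the plan is to reduce the conjecture to exactly that residual separation and record which sharp two-sided embedding (or sharp entropy-number asymptotics, cf.\ Section~\ref{S7}) would close it.
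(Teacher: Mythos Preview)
The statement you are attempting to prove is explicitly labelled a \emph{Conjecture} in the paper, and the paper provides no proof of it. Remark~\ref{R2.19} makes this clear: the authors ``did not deal with coincidences, diversities and strict embeddings within a fixed $\vr$--clan'' beyond those in Theorem~\ref{T2.16}, and present Conjecture~\ref{C2.18} as ``a request to pay more attention to questions of this type.'' So there is no paper proof to compare against; your proposal must stand or fall on its own.

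As a strategy, several steps do not go through. First, your reduction ``By lifting I may assume $s_1=s_2=s$'' is unjustified: the lift $I_\sigma$ of Theorem~\ref{T3.8} shifts the smoothness of \emph{both} spaces by the same $\sigma$, so an equality $\La_{\vr}B^{s_1}_{p_1,q_1}=\La^{\vr}F^{s_2}_{p_2,q_2}$ with $s_1\neq s_2$ is preserved under lifting with $s_1+\sigma\neq s_2+\sigma$. You would first need an independent argument that equal spaces force equal differential dimension $s-\tfrac{|\vr|}{p}$ (Theorem~\ref{T5.18} gives this) \emph{and} a second, transversal invariant to then pin down $s$ and $p$ separately; Theorem~\ref{T5.24} supplies one only when $|\vr|>1$, since for $|\vr|\le 1$ the $\chi_Q$--threshold and the $\delta$--threshold lie on the same line. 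Second, your homogeneity argument for part~(i) via dilations $f\mapsto f(\lambda\cdot)$ is delicate: the quasi--norm \eqref{2.52} involves $j\ge J^+$, not $j\ge J$, so the spaces are genuinely inhomogeneous and do not carry a clean two--parameter scaling. Third, the growth--envelope separation you invoke for the fine index $q$ does not work: Theorem~\ref{eg-rhoAs} shows that $\egv{\rhoAs(\Omega)}(t)\sim t^{-1/p+s/|\vr|}$ is \emph{independent} of $q$, so it cannot distinguish different $q$--values. In short, your outline correctly identifies which known ingredients are relevant, but the gluing steps (equal smoothness by lifting; homogeneity; $q$--separation by envelopes) each fail as stated, which is consistent with the authors leaving this as an open conjecture.
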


\begin{remark}   \label{R2.19}
We did not deal with coincidences, diversities and strict embeddings within a fixed $\vr$--clan than those ones stated in Theorem
\ref{T2.16}. The above conjecture may be considered as a request to pay more attention to questions of this type. One could try to 
use wavelet expansions as described in Section~\ref{S3.1} for all spaces covered by Definition~\ref{D2.7} to prove or disprove the
above conjecture or parts of it. Then $\vr <0$ has to play a decisive r\^ole. If $\vr \ge 0$, then Proposition~\ref{P2.9} shows that 
there are many coincidences. For example it follows from \eqref{2.33} that
\begin{\eq}  \label{2.59}
\La^{\vr} F^{s_1}_{p_1,q_1} (\rn) = \La^{\vr} F^{s_2}_{p_2,q_2} (\rn) \quad \text{if} \quad \vr >0 \quad \text{and} \quad s_1 + 
\frac{\vr}{p_1} = s_2 + \frac{\vr}{p_2}.
\end{\eq}
One could also discuss the above conjecture in the limelight of distinguished properties of the above spaces. For example, if $-1 <
\vr <0$, then it follows from Theorem~\ref{T4.12} below that spaces belonging to the same $\vr$--clan cannot coincide if their
differential dimensions $s - \frac{|\vr|}{p}$ are different. It is an even more challenging request to extend Proposition~\ref{P2.14}
from the $n$--clan to all clans covered  by Definition~\ref{D2.11}. Then part (iii) of Proposition~\ref{P2.9} shows that there are
further coincidences.
\end{remark}

\section{Tools}   \label{S3}
\subsection{Wavelets}    \label{S3.1}
Some proofs below rely on wavelet arguments. We collect what we need restricting us to the bare minimum. We follow \cite[Section 1.2,
  pp.\,7--12]{T20} where one finds further (more comprehensive) assertions, explanations and references. \new{Let us, in particular, refer to the standard monographs \cite{Dau92,Mal98,Mey92,Woj97}.}

As usual, $C^{u} (\real)$ with $u\in
\nat$ collects all bounded complex-valued continuous functions on $\real$ having continuous bounded derivatives up to order $u$ inclusively. Let
\begin{\eq}   \label{3.1}
\psi_F \in C^{u} (\real), \qquad \psi_M \in C^{u} (\real), \qquad u \in \nat,
\end{\eq}
be real compactly supported Daubechies wavelets with
\begin{\eq}   \label{3.2}
\int_{\real} \psi_M (t) \, t^v \, \di t =0 \qquad \text{for all $v\in \no$ with $v<u$.}
\end{\eq}
Recall that $\psi_F$ is called the {\em scaling function} (father wavelet) and $\psi_M$ the associated {\em wavelet} 
(mother wavelet). We extend these wavelets from $\real$ to $\rn$ by the usual multiresolution procedure. Let $n\in \nat$, and let
\begin{\eq}   \label{3.3}
G = (G_1, \ldots, G_n) \in G^0 = \{F,M \}^n
\end{\eq}
which means that $G_r$ is either $F$ or $M$. Furthermore, let
\begin{\eq}   \label{3.4}
G= (G_1, \ldots, G_n) \in G^* = G^j = \{F, M \}^{n*}, \qquad j \in \nat,
\end{\eq}
which means that $G_r$ is either $F$ or $M$, where $*$ indicates that at least one of the components of $G$ must be an $M$. Hence $G^0$ has $2^n$ elements, whereas $G^j$ with $j\in \nat$ and $G^*$ have $2^n -1$ elements. Let
\begin{\eq}   \label{3.5}
\psi^j_{G,m} (x) = \prod^n_{l=1} \psi_{G_l} \big(2^j x_l -m_l \big), \qquad G\in G^j, \quad m \in \zn, \quad x\in \rn,
\end{\eq}
where (now) $j \in \no$. We always assume that $\psi_F$ and $\psi_M$ in \eqref{3.1} have $L_2$--norm 1. Then 
\begin{\eq}   \label{3.6}
\big\{ 2^{jn/2} \psi^j_{G,m}: \ j \in \no, \ G\in G^j, \ m \in \zn \big\}
\end{\eq}
is an  orthonormal basis in $L_2 (\rn)$ (for any $u\in \nat$) and
\begin{\eq}   \label{3.7}
f = \sum^\infty_{j=0} \sum_{G \in G^j} \sum_{m \in \zn} \lambda^{j,G}_m \, \psi^j_{G,m}
\end{\eq}
with
\begin{\eq}   \label{3.8}
\lambda^{j,G}_m = \lambda^{j,G}_m (f) = 2^{jn} \int_{\rn} f(x) \, \psi^j_{G,m} (x) \, \di x = 2^{jn} \big(f, \psi^j_{G,m} \big)
\end{\eq}
is the corresponding expansion. Recall that $f\in S'(\rn)$ is an element of $\Bs (\rn)$ where $s\in \real$ and $0<p,q \le \infty$
if, and only if, it can be expanded by \eqref{3.7}, \eqref{3.8} (with sufficiently large $u$ in \eqref{3.1}, \eqref{3.2}) such that
\begin{\eq}  \label{3.9}
\|f \, | \Bs (\rn) \| \sim
\Big( \sum^\infty_{j=0} 2^{j(s- \frac{n}{p})q} \sum_{G \in G^j} \Big( \sum_{m \in \zn} |\lambda^{j,G}_m (f)|^p \Big)^{q/p} \Big)^{1/q}
\end{\eq}
(usual modification if $\max(p,q) =\infty$) is finite (equivalent quasi--norms). Explanations and references may be found in 
\cite[Section 1.2.1, pp.\,7--10]{T20} and \cite[Section 3.2.3, pp.\,51--54]{T14}. The  related counterparts for the spaces $\Fs (\rn)$
do not play a substantial r\^ole here. This wavelet expansion can be extended to the $B$--spaces of the $\vr$--clan in \eqref{2.44} as
follows. 

\begin{proposition}   \label{P3.1}
Let $n\in \nat$, $-n < \vr <0$, $s\in \real$, $0<p<\infty$ and $0<q \le \infty$. Then $f\in S'(\rn)$ is an element of $\La^{\vr} \Bs
(\rn)$ if, and only if, it can be expanded by \eqref{3.7}, \eqref{3.8} $($with sufficiently large $u$ in \eqref{3.1}, \eqref{3.2}$)$
such that
\begin{\eq}    \label{3.10}
\begin{aligned}
&\| f \, | \La^{\vr} \Bs (\rn) \| \\
&\sim \sup_{J\in \ganz, M\in \zn} 2^{\frac{J}{p} (n+ \vr)} \bigg( \sum_{j \ge J^+} 2^{j(s- \frac{n}{p})q} 
\Big( \sum_{\substack{m:Q_{j,m}
\subset Q_{J,M}, \\ G\in G^j}} \big| \lambda^{j,G}_m (f) \big|^p \Big)^{q/p} \bigg)^{1/q}
\end{aligned}
\end{\eq}
$($usual modification if $q=\infty)$ is finite $($equivalent quasi--norms$)$. Similarly, $f\in S'(\rn)$ is an element of $\La_{\vr}
\Bs (\rn)$ if, and only if, it can be expanded by \eqref{3.7}, \eqref{3.8} $($with sufficiently large  $u$ in \eqref{3.1}, 
\eqref{3.2}$)$ such that 
\begin{\eq}    \label{3.11}
\begin{aligned}
&\| f \, | \La_{\vr} \Bs (\rn) \| \\
&\sim \bigg( \sum^\infty_{j=0} 2^{j(s- \frac{n}{p})q} \sup_{J\in \ganz, M \in \zn} 2^{\frac{J}{p} (n+\vr)q}
\Big( \sum_{\substack{m:Q_{j,m}
\subset Q_{J,M}, \\ G\in G^j}} \big| \lambda^{j,G}_m (f) \big|^p \Big)^{q/p} \bigg)^{1/q}
\end{aligned}
\end{\eq}
$($usual modification if $q=\infty)$ is finite $($equivalent quasi--norms$)$.
\end{proposition}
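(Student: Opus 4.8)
The strategy is the standard one for wavelet characterisations: pass to sequence spaces and split the assertion into a \emph{synthesis} and an \emph{analysis} estimate. Write $\La^\vr\bs$ (resp. $\La_\vr\bs$) for the solid sequence space of all $\lambda=\{\lambda^{j,G}_m: j\in\no,\ G\in G^j,\ m\in\zn\}$ for which the right-hand side of \eqref{3.10} (resp. \eqref{3.11}) is finite. Since \eqref{3.6} is an orthonormal basis of $L_2(\rn)$ and, by \eqref{3.7}--\eqref{3.8}, for $u$ large the expansion of any admitted $f$ is unique and converges in $S'(\rn)$, it suffices to show that the analysis map $f\mapsto\{\lambda^{j,G}_m(f)\}$ is bounded from $\La^\vr\Bs(\rn)$ into $\La^\vr\bs$, and that the synthesis map $\lambda\mapsto\sum_{j,G,m}\lambda^{j,G}_m\psi^j_{G,m}$ is bounded from $\La^\vr\bs$ into $\La^\vr\Bs(\rn)$ (and likewise with $\vr$ lowered); these two maps are then mutually inverse. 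The classical case $\vr=-n$ is \eqref{3.9}, and the structural observation is that passing from $n$ to $|\vr|$ only changes the solid sequence-space norm, not the analysis/synthesis mechanism.

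\textbf{Synthesis.} Up to a fixed constant, the $2^{-j(s-n/p)}\psi^j_{G,m}$ are normalised smooth atoms supported in $cQ_{j,m}$: because $\psi_F,\psi_M\in C^u(\real)$ are compactly supported, all derivatives up to order $u$ are controlled by $2^{j|\alpha|}$, and for $G\neq(F,\dots,F)$ the vanishing moments \eqref{3.2} supply cancellation up to order $u-1$, which for $u$ large is more than sufficient for all admitted $(s,p,q,\vr)$. Hence, by the atomic decomposition already available for $\La^\vr\As(\rn)=A^{s,\tau}_{p,q}(\rn)$ (via \eqref{2.29}, \eqref{2.30}; see \cite{YSY10}, \cite{Sic12}) and for $\La_\vr\As(\rn)=\MA(\rn)$ (via \eqref{2.24}, \eqref{2.25}; see \cite{Saw08}), the series $\sum_{j,G,m}\lambda^{j,G}_m\psi^j_{G,m}$ converges in $S'(\rn)$ to some $f$ with $\|f\,|\La^\vr\Bs(\rn)\|\lesssim\|\lambda\,|\La^\vr\bs\|$, and similarly in the $\La_\vr$ case; the atomic sequence spaces there are precisely the right-hand sides of \eqref{3.10} and \eqref{3.11}. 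Alternatively this estimate can be obtained directly by summing the well-localised building blocks cube by cube, the support condition decoupling disjoint subcubes.

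\textbf{Analysis.} This is the step where the actual work lies. One reads $\lambda^{j,G}_m(f)=2^{jn}(f,\psi^j_{G,m})$ as a \emph{local mean}: $\psi^j_{G,m}$ is supported in $cQ_{j,m}$, and for $G\neq(F,\dots,F)$ has vanishing moments. Decomposing $f=\sum_{\ell\ge0}(\vp_\ell\wh f)^\vee$ and pairing, the kernel estimates underlying the classical characterisation \eqref{3.9} (moment cancellation against low frequencies, support localisation and band-limitedness against high ones) give a pointwise bound
\[
 |\lambda^{j,G}_m(f)|\;\lesssim\;\sum_{\ell\ge0}2^{-|j-\ell|N}\,2^{jn}\!\int_{cQ_{j,m}}\big|(\vp_\ell\wh f)^\vee(y)\big|\,\di y
\]
with $N$ as large as we please (governed by $u$). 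Raising to the $p$-th power, summing over $m$ with $Q_{j,m}\subset Q_{J,M}$, and using that the dilated cubes $\{cQ_{j,m}\}$ have bounded overlap and lie in $c'Q_{J,M}$, one gets
\[
 \Big(\sum_{m:\,Q_{j,m}\subset Q_{J,M}}|\lambda^{j,G}_m(f)|^p\Big)^{1/p}\;\lesssim\;2^{jn/p}\sum_{\ell\ge J^+}2^{-|j-\ell|N}\,\big\|(\vp_\ell\wh f)^\vee\,|\,L_p(c'Q_{J,M})\big\|.
\]
Multiplying by $2^{j(s-n/p)}$, taking $\ell^q$ in $j$, and absorbing the convolution in $j$ with $2^{-|\cdot|N}$ by Young's inequality, one recovers the form \eqref{2.52} after enlarging the cube — which changes $\|\cdot\,|\La^\vr\Bs(\rn)\|$ only to an equivalent quasi-norm, by the usual overlap argument. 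Hence $\|\lambda(f)\,|\La^\vr\bs\|\lesssim\|f\,|\La^\vr\Bs(\rn)\|$. For $\La_\vr\Bs(\rn)$ the supremum over $(J,M)$ sits \emph{inside} the $j$-summation, cf. \eqref{2.51}, and the same single-scale estimates apply term by term; the decisive point is precisely that compactly supported wavelets localise $\lambda^{j,G}_m(f)$ to $cQ_{j,m}$, so the cube-wise weights $2^{\frac{J}{p}(n+\vr)}$ attach to the correct pieces. Note that, since we are dealing with $B$-spaces, no Fefferman--Stein vector-valued maximal inequality in Morrey spaces is needed: the Peetre maximal function can be avoided entirely in favour of the crude $L_p$-localisation above. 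I expect the only genuine obstacles to be (a) making the $|j-\ell|$-decay uniform in the cube and (b) the bookkeeping of the enlarged overlapping cubes for the two different orders of the supremum and the $j$-sum — both routine once the locality of the wavelet system is exploited.
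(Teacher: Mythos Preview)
The paper does not actually prove Proposition~\ref{P3.1}: Remark~\ref{R3.2} simply records that the $\La^{\vr}\Bs$ part is covered by \cite[Proposition~1.16]{T20} via \cite[Theorem~3.26]{T14} and \eqref{2.31}, while the $\La_{\vr}\Bs$ part goes back to \cite[Theorem~1.5]{Saw08}. Your synthesis/analysis outline (wavelets as smooth atoms with moments for one direction, wavelets as local-mean kernels with almost-diagonal decay for the other) is precisely the machinery behind those cited theorems, so you are not taking a different route but rather unpacking the references the paper invokes.

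Two small points worth cleaning up before you would call this a proof. First, in your second display the range $\ell\ge J^+$ is not what falls out of the decomposition $f=\sum_{\ell\ge0}(\vp_\ell\wh f)^\vee$; the terms with $0\le\ell<J^+$ are still present and must be absorbed either via the extra decay $2^{-|j-\ell|N}$ (since $j\ge J^+$) or by appealing to the equivalent quasi-norm with $j\ge0$ in place of $j\ge J^+$ mentioned in Remark~\ref{R2.38}. Second, for $p<1$ the passage from $\big(\sum_m(\sum_\ell\ldots)^p\big)^{1/p}$ to $\sum_\ell(\sum_m\ldots)^{1/p}$ is not Minkowski but the $p$-triangle inequality, which costs a geometric factor that has to be compensated by taking $N$ larger---this is standard but should be said. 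With these two adjustments your sketch is correct and matches the literature the paper relies on.
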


\begin{remark}   \label{R3.2}
The assertion about the spaces $\La^{\vr} \Bs (\rn)$ is covered by \cite[Proposition 1.16, p.\,11]{T20} with a reference
to \cite[Theorem 3.26, p.\,64]{T14} and \eqref{2.31}. It remains valid with $\vr =0$ for the spaces $\La^0 \Bs (\rn)$ in \eqref{2.41}.
Its counterpart for the spaces $\La_{\vr} \Bs (\rn)$ goes back to \cite[Theorem 1.5, p.\,97]{Saw08}, appropriately reformulated, \new{see also \cite{MR-1}}. 
There are related  $F$--counterparts. They play only a marginal r\^ole in this paper. If needed we give precise references. 
  The above formulations are sufficient for our purpose. But they are a little bit
sloppy from a technical point of view. More precisely, one justifies first that the expansion \eqref{3.7} converges unconditionally 
in $S'(\rn)$ if the coefficients $\{\lambda^{j,G}_m \}$ belong to the sequence spaces in \eqref{3.9}, \eqref{3.10}, \eqref{3.11} with
$\lambda^{j,G}_m$ in place of $\lambda^{j,G}_m (f)$. Afterwards one ensures that $f$ belongs to the related spaces with the uniquely
determined coefficients $\lambda^{j,G}_m = \lambda^{j,G}_m (f)$ in \eqref{3.8}. One may consult {\cite[Theorem 3.26, p.\,64]{T14}}  and the 
(technical) explanations given there. We return to this point later on in Section~\ref{S6.1} in connection with Haar wavelets.
\end{remark}

\subsection{Interpolation}   \label{S3.2}

{If one wishes to study interpolation of the Morrey spaces in \eqref{2.20}, or for the Morrey smoothness spaces in \eqref{2.24}, \eqref{2.25} and \eqref{2.29}, \eqref{2.30}, then one finds interesting and deep results in the literature, we refer to \cite{KY,Maz03,SawTan-2} and recommend \cite[Section~2]{Sic13} for a more detailed discussion. Such interpolation formulas fit well in the above concept with $\vr$ as the decisive parameter.} But this is not our topic {here in its full generality}. We concentrate on very few assertions which will be of some use later on. Let $(A_0, 
A_1)_{\theta,q}$ with $0<\theta <1$ and $0<q \le \infty$ be the classical real interpolation for interpolation couples $\{A_0, A_1 \}$
of quasi--Banach spaces, we refer to the standard literature \cite{BL,T78}.  First we deal with the real interpolation of the three families belonging to a
fixed $\vr$--clan according to Definition~\ref{D2.11}(ii). 

\begin{proposition}   \label{P3.3}
Let  $n\in \nat$ and $-n < \vr <0$. Let $0<p<\infty$ and $0<q, q_1, q_2 \le \infty$. Let \new{ $0<\theta<1$ and}
\begin{\eq}   \label{3.12}
-\infty <s_1 <s< s_2 <\infty, \qquad s= (1-\theta)s_1 + \theta s_2.
\end{\eq}
Then
\begin{\eq}    \label{3.13}
\La_{\vr} \Bs (\rn) = \big( \La_{\vr} B^{s_1}_{p, q_1} (\rn), \La_{\vr} B^{s_2}_{p, q_2} (\rn) \big)_{\theta,q}.
\end{\eq}
\end{proposition}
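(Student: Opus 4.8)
The plan is to prove the real interpolation formula \eqref{3.13} by reducing it, via the wavelet isomorphism of Proposition~\ref{P3.1}, to a corresponding statement about sequence spaces, and then to invoke the known interpolation behaviour of such sequence spaces together with the general retraction--coretraction principle. Concretely, I would first fix a Daubechies system with $u$ large enough to characterise all three spaces $\La_{\vr} B^{s}_{p,q}(\rn)$, $\La_{\vr} B^{s_1}_{p,q_1}(\rn)$, $\La_{\vr} B^{s_2}_{p,q_2}(\rn)$ simultaneously (possible since $u$ only has to exceed a bound depending on $s_1,s_2,p,q_i$). By \eqref{3.11} the map $f \mapsto \{\lambda^{j,G}_m(f)\}$ is an isomorphism of each $\La_{\vr} B^{\sigma}_{p,r}(\rn)$ onto the sequence space, call it $\ell_{\vr} b^{\sigma}_{p,r}$, consisting of sequences $\lambda = \{\lambda^{j,G}_m\}$ with
\[
\|\lambda \, | \ell_{\vr} b^{\sigma}_{p,r}\| = \bigg( \sum^\infty_{j=0} 2^{j(\sigma- \frac{n}{p})r} \sup_{J\in \ganz, M \in \zn} 2^{\frac{J}{p} (n+\vr)r}
\Big( \sum_{\substack{m:Q_{j,m}\subset Q_{J,M}, \\ G\in G^j}} | \lambda^{j,G}_m |^p \Big)^{r/p} \bigg)^{1/r}
\]
finite (usual modification if $r=\infty$), and the synthesis operator $\lambda \mapsto \sum \lambda^{j,G}_m \psi^j_{G,m}$ together with the analysis operator $f\mapsto \{\lambda^{j,G}_m(f)\}$ provide a retraction/coretraction pair that is simultaneously bounded on all members of the couple. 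By the standard interpolation--commutes--with--retractions lemma (see \cite{T78,BL,T14}), it then suffices to identify
\[
\ell_{\vr} b^{s}_{p,q} = \big( \ell_{\vr} b^{s_1}_{p, q_1}, \ell_{\vr} b^{s_2}_{p, q_2} \big)_{\theta,q}.
\]

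The second step is to prove this sequence-space identity. The key observation is that $\ell_{\vr} b^{\sigma}_{p,r}$ is, up to the fixed weight $2^{-j n/p}$ and the fixed ``inner'' quasi-norm
\[
\mu_j(\lambda) := \sup_{J\in \ganz, M \in \zn} 2^{\frac{J}{p} (n+\vr)}\Big( \sum_{\substack{m:Q_{j,m}\subset Q_{J,M}, \\ G\in G^j}} | \lambda^{j,G}_m |^p \Big)^{1/p},
\]
simply a weighted $\ell_r$-space over $j\in\no$ of the scalars $a_j := \mu_j(\lambda)$: indeed $\|\lambda \, | \ell_{\vr} b^{\sigma}_{p,r}\| = \big\| \{2^{j(\sigma-n/p)} a_j\}_j \, | \ell_r \big\|$. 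Since the inner functional $\mu_j$ does not depend on $\sigma$ or $r$, the couple $\big( \ell_{\vr} b^{s_1}_{p, q_1}, \ell_{\vr} b^{s_2}_{p, q_2} \big)$ is isometric (via $\lambda\mapsto\{a_j\}_j$, which is not linear but is a lattice-type map to which the scalar theory applies, or more cleanly: it is the image of a weighted $\ell_r(\mu)$-construction over the fixed Banach lattice defined by $\mu_j$) to the couple of weighted sequence spaces $\big(\ell_{q_1}(2^{j s_1 q_1}), \ell_{q_2}(2^{j s_2 q_2})\big)$ after pulling out the common weight $2^{-jn/p}$. For these, the classical result on real interpolation of weighted $\ell_r$-spaces with distinct weight exponents (see \cite[Theorem 1.18.5]{T78}, or \cite{BL}) gives exactly $\ell_q(2^{j s q}) = \big(\ell_{q_1}(2^{j s_1 q_1}), \ell_{q_2}(2^{j s_2 q_2})\big)_{\theta,q}$ whenever $s = (1-\theta)s_1+\theta s_2$ and $s_1\neq s_2$, which is precisely the hypothesis \eqref{3.12}. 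Transporting this back yields the desired sequence-space identity, and hence \eqref{3.13}.

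The main obstacle I anticipate is the careful handling of the ``inner'' Morrey-type functional $\mu_j$ when passing to interpolation: the map $\lambda \mapsto \{\mu_j(\lambda)\}_j$ is sublinear rather than linear, so one cannot literally apply the linear retraction machinery to it. The clean way around this is either (a) to observe that for a fixed $j$ the quantity $\mu_j$ is itself a quasi-norm on a fixed (i.e. $\sigma$- and $r$-independent) quasi-Banach space $X_j$ of sequences indexed by $(G,m)$, so that $\ell_{\vr} b^{\sigma}_{p,r}$ is the mixed-norm space $\ell_r\big(2^{j(\sigma-n/p)}; X_j\big)$, and then to quote the vector-valued version of the weighted-$\ell_r$ interpolation theorem (valid since only the outer weight and summation exponent vary), or (b) to run the standard $K$-functional estimate directly on the sequence side, using that the $K$-functional of $\lambda$ with respect to the couple $(\ell_{\vr} b^{s_1}_{p,q_1},\ell_{\vr} b^{s_2}_{p,q_2})$ splits over $j$ because the decompositions $\lambda = \lambda^{(0)}+\lambda^{(1)}$ may be taken to respect the direct-sum structure in $j$. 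Either route is routine once the reduction is in place; the only genuinely non-formal input is the distinctness $s_1 < s < s_2$, which is exactly what makes the weighted $\ell_r$ interpolation nontrivial and forces the fine index of the interpolation space to be the chosen $q$ rather than being pinned down by $p$. Note that the $F$-scale is not claimed here, consistent with the fact that $\La_{\vr} F$ behaves differently in its fine index (cf.\ Remark~\ref{R2.38}).
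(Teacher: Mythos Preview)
The paper does not supply its own proof of this proposition; Remark~\ref{R3.4} simply attributes the result to \cite[Theorem~2.2, p.\,91]{Sic13}. Your wavelet-based argument is a correct and standard route to this kind of real interpolation formula, and is very much in the spirit of how such results are usually established (and plausibly how the cited reference proceeds).

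A couple of minor comments. First, your option~(a) is the right way to phrase things: $\ell_{\vr} b^{\sigma}_{p,r}$ is exactly the space $\ell_r\big(2^{j(\sigma-n/p)}; X_j\big)$ with $X_j$ a fixed ($\sigma$- and $r$-independent) quasi-Banach space, and the relevant interpolation theorem is the vector-valued version for weighted $\ell_r$-sums with varying but \emph{common} component spaces; the hypothesis $s_1\neq s_2$ is precisely what makes the weight ratio $2^{j(s_1-s_2)}$ tend to $0$ or $\infty$ so that the fine index of the output is the prescribed $q$. Your option~(b), computing the $K$-functional directly by splitting level-by-level in $j$, is equally valid and perhaps cleaner in the quasi-Banach range $p<1$. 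Second, your closing remark about the $F$-scale is slightly off: Remark~\ref{R3.4} in the paper points out that one \emph{can} replace either or both $B$-spaces on the right of \eqref{3.13} by the corresponding $\La_{\vr} F$-spaces, simply by sandwiching via \eqref{2.53}; the output is still $\La_{\vr}\Bs$. This does not affect your proof of the proposition as stated.
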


\begin{remark}   \label{R3.4}
This assertion goes back to \cite[Theorem 2.2, p.\,91]{Sic13}. As mentioned there one can replace $\La_{\vr} B^{s_1}_{p, q_1} (\rn)$
and $\La_{\vr} B^{s_2}_{p, q_2} (\rn)$ in \eqref{3.13} independently by $\La_{\vr} F^{s_1}_{p, q_1} (\rn)$ and/or $\La_{\vr} F^{s_2}_{p, q_2} (\rn)$. This follows immediately from \eqref{3.13} and \eqref{2.53}. Using in addition \eqref{2.58} one obtains
\begin{\eq}   \label{3.14}
\La_{\vr} \Bs (\rn) = \big( \La^{\vr} F^{s_1}_{p,q_1} (\rn), \La^{\vr} F^{s_2}_{p, q_2} (\rn) \big)_{\theta,q}.
\end{\eq}
This relation will be of some service for us later on. It shows that properties already proved for the spaces $\La^{\vr} \As (\rn)$
can be transferred to the third family $\La_{\vr} \Bs (\rn)$ of the $\vr$--clan (if interpolation can be applied).
\end{remark}

Quite obviously, \eqref{3.13}, \eqref{3.14} extends the well--known real interpolation for the classical spaces in \eqref{2.38} from
the $n$--clan to the $\vr$--clan. One may ask what happens with other interpolation methods, in particular diverse types of complex
interpolation. This attracted some attention in the literature, both for the Morrey spaces in \eqref{2.20} but also for their smooth
generalisations in \eqref{2.24}, \eqref{2.25} and \eqref{2.29}, \eqref{2.30}. Satisfactory results can only be obtained within a fixed
$\vr$--clan. As far as the complex interpolation of the Morrey spaces $\La^{\vr}_p (\rn)$ are concerned one may consult \cite{Lem13},
\cite{Lem14}, \new{see also \cite{HaSa-2,HaSa-3,MaSa} for some more recent contributions}. The complex interpolation especially of $\La_{\vr} \Fs (\rn)$ and their restrictions $\La_{\vr} \Fs (\Om)$ to bounded
Lipschitz domains $\Om$ has been studied in \cite{HNaS,HNoS,YYZ} and, quite 
recently, in \cite{ZHS20,Z21}. There one finds also further discussions and references. In any 
case it seems to be a rather tricky undertaking. Fortunately enough there is a very efficient interpolation method in the context of
the above spaces which will be of some service. We give a description.

The so--called $\pm$--method goes back to \cite{GuP77}. A description and further references  may be found in \cite[Section 2.2,
p.\,1842]{YSY15}. It applies to arbitrary interpolation couples $\{A_1, A_2 \}$ of quasi--Banach spaces and produces interpolation
spaces $\langle A_1, A_2, \theta \rangle$, $0<\theta <1$, having the desired interpolation property. We concentrate on the $\vr$--clan
according to Definition~\ref{D2.11}(ii) consisting of the three families according to \eqref{2.44}, \eqref{2.45}.

\begin{theorem}   \label{T3.5}
Let $n\in \nat$, $-n <\vr <0$, $s_1 \in \real$, $s_2 \in \real$. Let $0<p_1, p_2 <\infty$ and $0<q_1, q_2 \le \infty$. Let $0<\theta <1$ and
\begin{\eq}   \label{3.15}
s= (1- \theta)s_1 + \theta s_2, \qquad \frac{1}{p} = \frac{1-\theta}{p_1} + \frac{\theta}{p_2}, \qquad 
\frac{1}{q} = \frac{1-\theta}{q_1} + \frac{\theta}{q_2}.
\end{\eq}
Then
\begin{\eq}   \label{3.16}
\big\langle \La_{\vr} B^{s_1}_{p_1, q_1} (\rn), \La_{\vr} B^{s_2}_{p_2, q_2} (\rn), \theta \big\rangle = \La_{\vr} \Bs (\rn),
\end{\eq}
\begin{\eq}   \label{3.17}
\big\langle \La^{\vr} B^{s_1}_{p_1, q_1} (\rn), \La^{\vr} B^{s_2}_{p_2, q_2} (\rn), \theta \big\rangle = \La^{\vr} \Bs (\rn),
\end{\eq}
and
\begin{\eq}   \label{3.18}
\big\langle \La_{\vr} F^{s_1}_{p_1, q_1} (\rn), \La_{\vr} F^{s_2}_{p_2, q_2} (\rn), \theta \big\rangle = \La_{\vr} \Fs (\rn).
\end{\eq}
\end{theorem}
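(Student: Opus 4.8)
\emph{Strategy and Step 1 (reduction to sequence spaces).} The plan is to move everything to the level of wavelet coefficient sequences, where the $\pm$-method of \cite{GuP77} interacts transparently with weighted $\ell_p$-, $\ell_q$- and $\ell_\infty$-structures, and then to read the three identities off from the corresponding sequence-space computations. First I would fix a Daubechies system with smoothness parameter $u$ large enough to serve simultaneously for all the parameters $(s_i,p_i,q_i)$, $i=1,2$, and for $(s,p,q)$. By Proposition~\ref{P3.1} for the $B$-families, and by its $F$-analogue (available by the same wavelet techniques, cf. Remark~\ref{R3.2}), the analysis map $S\colon f\mapsto\{\lambda^{j,G}_m(f)\}$ and the synthesis map $T\colon\{\lambda^{j,G}_m\}\mapsto\sum\lambda^{j,G}_m\psi^j_{G,m}$ are bounded between each space of the $\vr$-clan in \eqref{2.44} and its sequence-space counterpart, with $T\circ S=\id$; hence each such space is a \emph{retract} of its sequence space, simultaneously for the two endpoints of each couple. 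Since the $\pm$-method is an exact interpolation functor on arbitrary quasi-Banach couples, it commutes with retracts, so it suffices to identify $\langle\,\cdot,\cdot,\theta\,\rangle$ for the three pairs of sequence spaces, after which $S$ and $T$ transport the identity back to the function-space level.

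\emph{Step 2 (the sequence-space identities).} Writing the three sequence spaces explicitly from the right-hand sides of \eqref{3.10}, \eqref{3.11} and the analogous $F$-expression, each quasi-norm is an iterated weighted norm assembled from $\ell_p$ (over $m$ with $Q_{j,m}\subset Q_{J,M}$), $\ell_q$ (over the level $j$), and $\ell_\infty$ (over the dyadic cubes $(J,M)$), the three layers occurring in different orders for the three families. All the weights are powers $2^{\alpha j}$ or $2^{\beta J/p}$ whose exponents become affine in $\theta$ once \eqref{3.15} is used -- note $s-\frac{n}{p}=(1-\theta)(s_1-\frac{n}{p_1})+\theta(s_2-\frac{n}{p_2})$ and $\frac{n+\vr}{p}=(1-\theta)\frac{n+\vr}{p_1}+\theta\frac{n+\vr}{p_2}$ -- so the weights multiply correctly under the $\pm$-method. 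I would then iterate, through the layers in the relevant order, the two basic facts: (a) the Gustavsson--Peetre computation $\langle\ell_{r_1}(w_1),\ell_{r_2}(w_2),\theta\rangle=\ell_r(w_1^{1-\theta}w_2^\theta)$ with $\frac{1}{r}=\frac{1-\theta}{r_1}+\frac{\theta}{r_2}$, valid also for $r_i=\infty$ and in vector-valued form; and (b) the compatibility of the $\pm$-method with the passage to the vector-valued sequences $\ell_r(\{X_k\})$ occurring here. For the $F$-family this yields \eqref{3.18} and for the $\La^{\vr}B$-family \eqref{3.17}; via the coincidences \eqref{fte}, \eqref{2.24}, \eqref{2.25} these are essentially the computations already carried out in \cite{YSY15} (see also \cite{YSY10}). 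For $\La_{\vr}B$ the same scheme gives \eqref{3.16}, the only structural change being that the $\sup_{J,M}$ now sits \emph{inside} the $\ell_q$-sum over $j$.

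\emph{Main obstacle.} The hard part will be the outer $\ell_\infty$-layer over the dyadic cubes combined with the fact that the integrability exponent $p$ -- which appears both in the Morrey weight $2^{\frac{J}{p}(n+\vr)}$ and in the inner $\ell_p$-sum -- varies across the couple. In contrast to the real or complex methods, the $\pm$-method is well adapted to such $\ell_\infty$-over-cubes constructions (this is precisely why it is the right tool here, as exploited systematically in \cite{YSY15}), but making it rigorous requires controlling the defining $\pm$-decomposition $a=\sum_k a_k$ uniformly in $(J,M)$ and distributing the $\theta$-splitting -- a H\"older argument with exponents $1-\theta$ and $\theta$ -- simultaneously through the inner $\ell_p$- and $\ell_q$-sums; here the product structure of the index set $\{(j,m):Q_{j,m}\subset Q_{J,M}\}$ and the affine-in-$\theta$ form of the exponents are exactly what is used. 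The remaining points -- unconditional convergence of \eqref{3.7} in $S'(\rn)$ for elements of the interpolation spaces, and the density needed to run the $\pm$-method -- are dealt with as in Remark~\ref{R3.2}.
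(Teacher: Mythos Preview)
Your proposal is correct and follows the approach of \cite{YSY15}, which is exactly what the paper does: its entire proof consists of the sentence ``This remarkable assertion is covered by \cite[Theorem 2.12, p.\,1843]{YSY15} reformulated according to \eqref{2.24}, \eqref{2.25} and \eqref{2.29}, \eqref{2.30}.'' So you have in effect sketched the proof of the cited result rather than diverged from the paper. One minor caveat: your closing remark about ``density needed to run the $\pm$-method'' is slightly off, since the spaces of the $\vr$-clan with $-n<\vr<0$ are not separable (cf.\ Remark~\ref{R3.13}); fortunately the $\pm$-method of \cite{GuP77} is defined for arbitrary quasi-Banach couples and does not require a density hypothesis, so this does not affect the argument.
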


\begin{proof}
This remarkable assertion is covered by \cite[Theorem 2.12, p.\,1843]{YSY15} reformulated according to \eqref{2.24}, \eqref{2.25} and
\eqref{2.29}, \eqref{2.30}.
\end{proof}

\begin{remark}   \label{R3.6}
According to \cite{YSY15} one can extend
the above assertions to the $n$--clan in Definition~\ref{D2.11}(i) consisting of the classical
spaces in \eqref{2.38} with the following outcome.
Let $n\in \nat$, $s_1 \in \real$, $s_2 \in \real$ and $0<p_1, p_2, q_1, q_2 \le \infty$. Let $0<\theta <1$ and $s,p,q$ be as in 
\eqref{3.15}. Then
\begin{\eq}   \label{3.19}
\big\langle  B^{s_1}_{p_1, q_1} (\rn),  B^{s_2}_{p_2, q_2} (\rn), \theta \big\rangle = \Bs (\rn)
\end{\eq}
and
\begin{\eq}   \label{3.20}
\big\langle  F^{s_1}_{p_1, q_1} (\rn),  F^{s_2}_{p_2, q_2} (\rn), \theta \big\rangle =  \Fs (\rn).
\end{\eq}
The remarkable incorporation of $p_1 = \infty$ and/or $p_2 = \infty$ in \eqref{3.20} goes back to \cite[Theorem 8.5, pp.\,98, 
134]{FrJ90}. Otherwise \eqref{3.16}--\eqref{3.18} compared with \eqref{3.19}, \eqref{3.20} fit in the
Slope--$n$--Rule as formulated in \ignore{Section~\ref{S2.2}} \new{Slope Rules~\ref{slope_rules}(ii)}. On the other hand nothing seems to be known if one wishes to interpolate spaces
belonging to different $\vr$--clans.
\end{remark} 

\begin{remark}    \label{R3.7}
It has been shown in \cite[Corollary 2.14, p.\,1844]{YSY15} that also the basic spaces underlying the above smoothness spaces 
interpolate as expected. Let $-n \le \vr <0$,
\begin{\eq}   \label{3.21}
0<p_1 \le p_2 <\infty, \qquad 0<\theta <1, \qquad \frac{1}{p} = \frac{1-\theta}{p_1} + \frac{\theta}{p_2}.
\end{\eq}
Then
\begin{\eq}   \label{3.22}
\big\langle \La^{\vr}_{p_1} (\rn), \La^{\vr}_{p_2} (\rn), \theta \big\rangle = \La^{\vr}_p (\rn)
\end{\eq}
extends
\begin{\eq}   \label{3.23}
\big\langle L_{p_1} (\rn), L_{p_2} (\rn), \theta \big\rangle = L_p (\rn)
\end{\eq}
from the Lebesgue spaces $L_p (\rn)$ 
to the Morrey spaces $\La^{\vr}_p (\rn)$ as introduced in Definitions~\ref{D2.3}.
\end{remark}

\subsection{Lifts}   \label{S3.3}
It is well known that the classical lift operator 
\begin{\eq}   \label{3.24}
I_\sigma: \quad f \mapsto \big( \langle \xi \rangle^{-\sigma} \wh{f} \,\big)^\vee \quad \text{with} \quad \langle \xi \rangle = (1 + 
|\xi|^2 )^{1/2}, \quad \xi \in \rn, \quad \sigma \in \real,
\end{\eq}
maps the space $\As (\rn)$ isomorphically  onto $A^{s+\sigma}_{p,q} (\rn)$,
\begin{\eq}   \label{3.25}
\| I_\sigma f \, | A^{s+\sigma}_{p,q} (\rn) \| \sim \| f \, | \As (\rn) \|, \qquad f \in \As (\rn).
\end{\eq}
This goes back to \cite[Theorem 2.3.8, pp.\,58--59]{T83} with $p<\infty$ for the $F$--spaces and has been extended in \cite[Theorem
1.22, p.\,16]{T20} to all members $\As (\rn)$ in \eqref{2.38} of the $n$--clan in Definition~\ref{D2.11}(i) (now including 
$F^s_{\infty,q} (\rn)$). We fix the  more or less obvious counterpart for spaces belonging to a $\vr$--clan according to Definition
\ref{D2.11}(ii).

\begin{theorem}   \label{T3.8}
Let $n\in \nat$, $-n <\vr <0$ and
\begin{\eq}   \label{3.26}
s \in \real, \qquad 0<p<\infty, \qquad 0<q \le \infty.
\end{\eq}
Let $\sigma \in \real$. Then $I_\sigma$ maps $\La^{\vr} \As (\rn)$ isomorphically onto $\La^{\vr} A^{s+\sigma}_{p,q} (\rn)$, \new{ and $\La_{\vr} \As (\rn)$ isomorphically onto $\La_{\vr} A^{s+\sigma}_{p,q} (\rn)$, $A \in
\{B,F \}$,}
\begin{\eq}   \label{3.27}
I_\sigma \La^{\vr} \As (\rn) = \La^{\vr} A^{s+\sigma}_{p,q} (\rn), \qquad A \in \{B,F \},
\end{\eq}
\begin{\eq}   \label{3.28}
\new{I_\sigma \La_{\vr} \As (\rn) = \La_{\vr} A^{s+\sigma}_{p,q} (\rn), \qquad A \in \{B,F \}.}
\end{\eq}
\end{theorem}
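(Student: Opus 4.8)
The plan is to reduce everything to the known lift property for the classical spaces, stated in \eqref{3.24}, \eqref{3.25}, by exploiting the Fourier-analytic structure of the lift operator $I_\sigma$ together with the fact that $I_\sigma$ interacts well with the Paley--Littlewood blocks $(\vp_j\wh f)^\vee$. The key observation is that the operator $I_\sigma$ is, modulo the Paley--Littlewood decomposition, a sequence of Fourier multipliers with symbols $\langle \xi\rangle^{-\sigma}$ localised (up to tails) to the dyadic annuli $\{|\xi|\sim 2^j\}$. More precisely, fix an auxiliary resolution $\{\Phi_j\}$ with $\Phi_j \vp_j = \vp_j$ and $\supp \Phi_j \subset \{2^{j-1}\le |\xi|\le 2^{j+1}\}$ for $j\ge 1$. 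Then for $f = \sum_j (\vp_j\wh f)^\vee$ one writes
\begin{\eq}\label{eq:liftblock}
\big(\vp_j \wh{I_\sigma f}\,\big)^\vee = 2^{-j\sigma}\, m_j * (\vp_j \wh f)^\vee, \qquad m_j = \big( \Phi_j(\xi)\,2^{j\sigma}\langle\xi\rangle^{-\sigma}\big)^\vee,
\end{\eq}
and the functions $2^{j\sigma}\langle\xi\rangle^{-\sigma}\Phi_j(\cdot)$, rescaled to the unit annulus, form a bounded set in, say, $C^\infty$ with uniformly compact support, so the $m_j$ (after the natural dilation) have $L_1$-norms bounded independently of $j$.

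Next I would invoke a vector-valued convolution / maximal-function estimate adapted to the Morrey building block $\La^{\vr}_p(\rn)$. The point is that inequalities of the form $\|g * m_j \,|\, \La^{\vr}_p(\rn)\| \lesssim \|g\,|\,\La^{\vr}_p(\rn)\|$ with a constant uniform in $j$ hold whenever $\{m_j\}$ satisfy the above uniform regularity and support conditions; this is a standard consequence of the Fefferman--Stein / Peetre maximal function machinery, which is available for the Morrey scale and underlies the very definition of $\La^{\vr}\As(\rn)$ and $\La_{\vr}\As(\rn)$ (it is, e.g., implicit in the wavelet characterisations of Proposition~\ref{P3.1} and in the equivalences cited from \cite{T14,T20,Saw08}). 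Combining this with \eqref{eq:liftblock} gives, for each block,
\begin{\eq}\label{eq:blockbound}
\big\| \big(\vp_j \wh{I_\sigma f}\,\big)^\vee \,\big|\, \La^{\vr}_p(\rn)\big\| \sim 2^{-j\sigma}\, \big\| \big(\vp_j \wh f\,\big)^\vee \,\big|\, \La^{\vr}_p(\rn)\big\|,
\end{\eq}
the reverse inequality following by applying the same argument to $I_{-\sigma}$, which is the inverse of $I_\sigma$. Substituting \eqref{eq:blockbound} into the norms \eqref{2.26}, \eqref{2.27} for the $\La^{\vr}$-spaces and into \eqref{2.21}, \eqref{2.22} for the $\La_{\vr}$-spaces, the factor $2^{-j\sigma}$ simply shifts the weight $2^{jsq}$ to $2^{j(s+\sigma)q}$ inside the $\ell_q$-sum (note that $j\ge J^+$ is preserved), which yields \eqref{3.27} and \eqref{3.28} at once; for the $F$-case one uses \eqref{2.42} (equivalently \eqref{2.58}) so that only the $B$-type argument and the Fefferman--Stein inequality in its $L_p(\ell_q)$-Morrey form are needed. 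Since $I_\sigma$ is a bijection on $S'(\rn)$ with inverse $I_{-\sigma}$, the norm equivalence upgrades to the claimed isomorphisms.

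The main obstacle is the uniform-in-$j$ convolution estimate on the Morrey block $\La^{\vr}_p(\rn)$ (and its $\ell_q$-valued version needed for the $F$-spaces): one must check that multiplying $(\vp_j\wh f)^\vee$ by the localised symbol $\langle\xi\rangle^{-\sigma}$ is bounded on $\La^{\vr}_p$ with constant independent of $j$, which requires the Morrey-adapted Fefferman--Stein vector-valued maximal inequality rather than its classical $L_p$ counterpart. This is known for the Morrey scale, but one has to be a little careful about the range of parameters (no restriction beyond $0<p<\infty$, $0<q\le\infty$, $-n<\vr<0$ is imposed) and about the fact that the support of $\Phi_0$ — the low-frequency block — is a ball rather than an annulus, so $j=0$ must be handled separately (trivially, since it is a single fixed multiplier). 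Alternatively, one can bypass part of this by citing the corresponding lift theorems for the spaces $\Nc^s_{u,p,q}$, $\Ec^s_{u,p,q}$ and $A^{s,\tau}_{p,q}$ already in the literature — e.g. the hybrid-space version \cite[Theorem~1.22]{T20} together with \eqref{2.31} covers \eqref{3.27}, and the Besov--Triebel--Lizorkin--Morrey version covers \eqref{3.28} — and simply reformulate them in the $\vr$-clan notation, which is in the spirit of the rest of Section~\ref{S2}; I would present the self-contained Paley--Littlewood argument as the main line and mention these references as an alternative.
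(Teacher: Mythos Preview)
Your direct Fourier-multiplier approach is viable and genuinely different from the paper's proof, which is a two-line affair: \eqref{3.27} is simply quoted from \cite[Theorem~3.72, p.\,102]{T14} via the identification \eqref{2.31}; then \eqref{3.28} for $A=B$ is obtained by the real interpolation formula \eqref{3.14} (interpolating between $\La^{\vr}F$-spaces, for which the lift is already known, and landing in $\La_{\vr}\Bs$), while $A=F$ follows from the coincidence \eqref{2.42}. Your alternative of citing the literature is in the same spirit, but note that \cite[Theorem~1.22]{T20} treats only the classical $\As(\rn)$; the hybrid-space lift is in \cite{T14}, and \eqref{3.28} for $\La_{\vr}B$ is already in \cite{TX} (see Remark~\ref{R3.9}).

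There is, however, a genuine slip in your main line. Your block estimate \eqref{eq:blockbound} is stated as an equivalence on the \emph{global} Morrey space $\La^{\vr}_p(\rn)$, and this plugs cleanly into the $\La_{\vr}$-norms \eqref{2.21}, \eqref{2.22}, which are built from $\|\cdot\,|\,\La^{\vr}_p(\rn)\|$ block by block. But the $\La^{\vr}$-norms \eqref{2.26}, \eqref{2.27} are \emph{not} of this form: they involve the \emph{localised} norms $\|\cdot\,|\,L_p(Q_{J,M})\|$, with the supremum over $J,M$ taken outside the $\ell_q$-sum. Convolution with $m_j$ does not preserve restriction to $Q_{J,M}$, so you cannot substitute \eqref{eq:blockbound} into \eqref{2.26} as written. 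What you actually need is a localised estimate of the type $\|m_j * g\,|\,L_p(Q_{J,M})\| \lesssim \|g\,|\,L_p(cQ_{J,M})\|$ (plus a controlled tail), uniformly in $j\ge J^+$; this follows from the Peetre maximal function machinery and the fact that $m_j$ lives at scale $2^{-j}\le 2^{-J}$, but it is a separate statement that must be made and proved (or cited, e.g.\ from the multiplier lemmas in \cite{YSY10} or \cite{T14}) rather than conflated with the global Morrey bound. Once this is in place your argument goes through; the paper simply sidesteps the issue by quoting the finished result for $\La^{\vr}\As$ and then using interpolation rather than any block-level analysis for $\La_{\vr}\Bs$.
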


\begin{proof}
The assertion \eqref{3.27} follows from \cite[Theorem 3.72, p.\,102]{T14} and \eqref{2.31}. The real interpolation \eqref{3.14} shows
that this property can be extended to the spaces \new{$\La_{\vr} \Bs(\rn)$ in \eqref{3.28}, while the result for $\La_{\vr} \Fs(\rn)$ is a consequence of the coincidence \eqref{2.42} and \eqref{3.27}}.
\end{proof}

  
\begin{remark}  \label{R3.9}
  \new{
    In view of Definition~\ref{D2.11} we can summarise \eqref{3.27}, \eqref{3.28} as
    \[
    I_\sigma \left(\rhoAs (\rn)\right) = \rhoAx{s+\sigma}{p}{q} (\rn), \qquad A \in \{B,F \}.
\]
The result \eqref{3.28} for spaces $\La_{\vr}\Bs(\rn)$ can be found in \cite{TX} already, while \eqref{3.27} is covered by \cite[Proposition~5.1]{YSY10}, see also \cite[Corollary~3.2]{Sic12}. Again, using the identity \eqref{2.42}, this finally covers the case $\La_{\vr}\Fs(\rn)$ in \eqref{3.28}.  }
  
  The related mapping property
\begin{\eq}   \label{3.29}
I_\sigma \La^0 \Bs (\rn) = \La^0 B^{s+\sigma}_{p,q} (\rn)
\end{\eq}
of the members \eqref{2.41} of the $0$--clan in Definition~\ref{D2.11}(iii) is also covered by \cite[Theorem 3.72, p.\,102]{T14} and
\eqref{2.31} with $r= \vr =0$.
\end{remark}

\subsection{Derivatives}    \label{S3.4}
Derivatives are denoted as usual as $\Dd^\alpha = \pa^{\alpha_1}_1 \cdots \pa^{\alpha_n}_n$, $\pa^{\alpha_j}_j =
\pa^{\alpha_j}/ \pa x^{\alpha_j}_j$, $\alpha = (\alpha_1, \ldots , \alpha_n)$, $\alpha_j \in \no$, $|\alpha| =
\sum^n_{j=1} \alpha_j$. The classical assertion 
\begin{\eq}    \label{3.30}
\| f \, | \As (\rn) \| \sim \sup_{0 \le |\alpha| \le m} \| \Dd^\alpha f \, | A^{s-m}_{p,q} (\rn) \|
\end{\eq}
(equivalent  quasi--norms) for all spaces of the $n$-clan in \eqref{2.38} (including $F^s_{\infty,q} (\rn)$)
may be found in  {\cite[Theorem 1.24, p.\,17]{T20}}. We are interested in an extension of this assertion to all
families of the $\vr$--clan in Definition~\ref{D2.11}(ii). We agree that \eqref{3.30} (and its counterparts in
the following theorem) includes the assertion that $f \in \As (\rn)$ if, and only if, $\Dd^\alpha f \in
A^{s-m}_{p,q} (\rn)$ for all $\alpha$ with $0 \le |\alpha| \le m$. 

\begin{theorem}   \label{T3.10}
Let $n\in \nat$, $-n <\vr <0$ and
\begin{\eq}   \label{3.30a}
s\in \real, \qquad 0<p<\infty, \qquad 0<q \le \infty.
\end{\eq}
Let $m\in \nat$. Then
\begin{\eq}   \label{3.31}
\| f \, | \La^{\vr} \As (\rn) \| \sim \sup_{0 \le |\alpha| \le m} \| \Dd^\alpha f \, | \La^{\vr} A^{s-m}_{p,q} (\rn) \|, \quad A\in \{B,F \},
\end{\eq}
$($equivalent quasi--norms$)$ and
\begin{\eq}   \label{3.32}
\new{\| f \, | \La_{\vr} \As (\rn) \| \sim \sup_{0 \le |\alpha| \le m} \| \Dd^\alpha f \, | \La_{\vr} A^{s-m}_{p,q} (\rn) \|, \quad A\in \{B,F \},}
\end{\eq}
$($equivalent quasi--norms$)$.
\end{theorem}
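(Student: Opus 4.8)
The idea is to reduce the derivative characterisation for the $\vr$--clan to the known lifting property (Theorem~\ref{T3.8}) together with the purely Fourier-analytic fact that $\Dd^\alpha$ corresponds, on the level of the Littlewood--Paley pieces, to a Fourier multiplier of order $|\alpha|$, combined in the $\La_\vr B$-case with the interpolation identity \eqref{3.14}. So the first step is to treat the spaces $\La^\vr\As(\rn)$ in \eqref{3.31}. Here one direction is immediate: if $f\in\La^\vr\As(\rn)$ then $\Dd^\alpha f$ is obtained from $f$ by applying the Fourier multiplier $(i\xi)^\alpha$, which on the support of each $\vp_j$ (for $j\ge 1$) is, after rescaling, a smooth symbol with the appropriate $2^{j|\alpha|}$-homogeneity; a standard Fourier-multiplier (Mikhlin/Hörmander-type) argument for the $\As$-building blocks $L_p$ — which carries over verbatim to the Morrey blocks $\La^\vr_p(\rn)$ since the relevant multiplier estimates only use $L_p$-boundedness of convolution with $L_1$-normalised kernels, a property shared by $\La^\vr_p$ — gives $\|\Dd^\alpha f\,|\,\La^\vr A^{s-m}_{p,q}(\rn)\|\lesssim\|f\,|\,\La^\vr\As(\rn)\|$ for $|\alpha|\le m$. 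Alternatively, and more cleanly, one invokes the already-cited result \cite[Theorem 3.72, p.\,102]{T14} (the source of Theorem~\ref{T3.8}), which contains exactly the derivative characterisation for the hybrid spaces $L^r\As(\rn)$, and translates it via \eqref{2.31}.

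For the converse direction in \eqref{3.31}, the standard device is a parametrix: writing $1=\vp_0(\xi)+\sum_{j\ge1}\vp_j(\xi)$ and using that $\langle\xi\rangle^m$ is, up to lower-order smooth corrections, comparable to $\sum_{|\alpha|=m}|\xi^\alpha|$ away from the origin, one reconstructs $f$ (modulo the harmless low-frequency term $(\vp_0\wh f)^\vee$, which is controlled by $\|f\,|\,\La^\vr A^{s-m}_{p,q}\|$ itself once we know $f\in S'$) from the $\Dd^\alpha f$, $|\alpha|=m$, by applying homogeneous-degree-$(-m)$ Fourier multipliers to each dyadic block; these are again multipliers of the type handled above on $\La^\vr_p$. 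This yields $\|f\,|\,\La^\vr\As(\rn)\|\lesssim\sup_{|\alpha|\le m}\|\Dd^\alpha f\,|\,\La^\vr A^{s-m}_{p,q}(\rn)\|$, completing \eqref{3.31} for both $A=B$ and $A=F$.

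It remains to prove \eqref{3.32}. For $A=F$ this is free: by \eqref{2.42} we have $\La_\vr\Fs(\rn)=\La^\vr\Fs(\rn)$ for all admitted parameters (and likewise $\La_\vr F^{s-m}_{p,q}=\La^\vr F^{s-m}_{p,q}$), so the $F$-case of \eqref{3.32} is literally the $F$-case of \eqref{3.31}. For $A=B$ I would deduce it from \eqref{3.31} by real interpolation, exactly in the spirit of the proof of Theorem~\ref{T3.8}. Choose $s_1<s<s_2$ with $s=(1-\theta)s_1+\theta s_2$; then by Proposition~\ref{P3.3} (and Remark~\ref{R3.4}) one has $\La_\vr\Bs(\rn)=\big(\La^\vr F^{s_1}_{p,q_1}(\rn),\La^\vr F^{s_2}_{p,q_2}(\rn)\big)_{\theta,q}$ as in \eqref{3.14}, and $\La_\vr B^{s-m}_{p,q}(\rn)=\big(\La^\vr F^{s_1-m}_{p,q_1}(\rn),\La^\vr F^{s_2-m}_{p,q_2}(\rn)\big)_{\theta,q}$. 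Since $\Dd^\alpha$ maps $\La^\vr F^{s_i}_{p,q_i}(\rn)$ boundedly into $\La^\vr F^{s_i-m}_{p,q_i}(\rn)$ for $|\alpha|\le m$ by the already-established \eqref{3.31} (in its $F$-form), the interpolation property of $(\cdot,\cdot)_{\theta,q}$ gives that $\Dd^\alpha$ maps $\La_\vr\Bs(\rn)$ boundedly into $\La_\vr B^{s-m}_{p,q}(\rn)$, i.e. the estimate "$\gtrsim$" in \eqref{3.32}. The reverse estimate, that control of all $\Dd^\alpha f$ with $|\alpha|\le m$ in $\La_\vr B^{s-m}_{p,q}$ forces $f\in\La_\vr\Bs$ with comparable norm, is obtained the same way: the reconstruction (parametrix) operator is a finite sum of fixed Fourier multipliers of order $-m$, which by \eqref{3.31} already map $\La^\vr F^{s_i-m}_{p,q_i}\to\La^\vr F^{s_i}_{p,q_i}$, hence by interpolation map $\La_\vr B^{s-m}_{p,q}\to\La_\vr\Bs$.

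\textbf{Main obstacle.} The genuinely load-bearing ingredient is the Fourier-multiplier theory on the Morrey blocks $\La^\vr_p(\rn)$ underlying \eqref{3.31}; everything else (the $F=B$ reduction and the interpolation step for $B$) is formal once that is in place. In the paper this is not re-proved but imported: \eqref{3.31} is exactly the content of \cite[Theorem 3.72, p.\,102]{T14} via \eqref{2.31}, just as in the proof of Theorem~\ref{T3.8}. So in practice the "hard part" has been outsourced to the literature, and the only thing one must check carefully is that the interpolation couples in \eqref{3.14} are the correct ones for the shifted smoothness $s-m$ as well — which they are, since Proposition~\ref{P3.3} holds for every real smoothness triple with $s_i-m$ in place of $s_i$ — and that the low-frequency term is handled so that the equivalence \eqref{3.32} includes, as stated, the membership assertion "$f\in\La_\vr\As(\rn)$ iff $\Dd^\alpha f\in\La_\vr A^{s-m}_{p,q}(\rn)$ for all $|\alpha|\le m$".
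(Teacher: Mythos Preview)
Your proposal is correct and, in its overall architecture, matches the paper: \eqref{3.31} is imported from \cite{T14} (the paper cites Corollary~3.66 there, not Theorem~3.72---the latter is the lift, so your reference is off by one result), the $\La_\vr F$-case of \eqref{3.32} is read off from the coincidence \eqref{2.42}, and the forward estimate in \eqref{3.32} for $A=B$ is obtained by interpolating via \eqref{3.14}.

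The one substantive difference is in the \emph{reverse} inequality of \eqref{3.32} for $A=B$. You build an explicit parametrix: a finite sum of order-$(-m)$ Fourier multipliers $T_\alpha$ plus a low-frequency correction, each bounded on the endpoint spaces $\La^\vr F^{s_i-m}_{p,q_i}\to\La^\vr F^{s_i}_{p,q_i}$, and then interpolate. This is clean and uniform in $m$, but it does require a genuine Fourier-multiplier theorem for the hybrid spaces (not literally \eqref{3.31}, as you write at the end---\eqref{3.31} only gives the isomorphism on the \emph{range} of the derivative map, not boundedness of the individual $T_\alpha$; you need the Mikhlin-type input you sketched earlier). The paper instead avoids this by a more elementary two-step trick: for even $m$ the reverse inequality follows directly from the lift \eqref{3.28} with $\sigma=m$ (since $(1-\Delta)^{m/2}$ is then a differential operator), and for odd $m$ one argues by contradiction, bootstrapping from the already-established cases $m+1$ (even) and the forward inequality for order~$1$. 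Your route is more conceptual and handles all $m$ at once; the paper's route trades the multiplier theorem for the lift and a short indirect argument.
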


\begin{proof}
The equivalence \eqref{3.31}, including the above agreement, follows from \cite[Corollary 3.66, p.\,98]{T14}
and again \eqref{2.31}. Then the real interpolation formula \eqref{3.14} shows that also
\begin{\eq}   \label{3.33}
\sup_{0 \le |\alpha| \le m} \| \Dd^\alpha f \, | \La_{\vr} B^{s-m}_{p,q} (\rn) \| \le c\,\| f \, | \La_{\vr} \Bs (\rn) \|, \quad f\in \La_{\vr} \Bs (\rn),
\end{\eq}
$m\in \nat$. The reverse inequality can be based on the lifting \eqref{3.28},
\begin{\eq} \label{3.34}
\big\| (1- \Delta)^{\sigma/2} f \, | \La_{\vr} B^{s-\sigma}_{p,q} (\rn) \big\| \sim \| f \, | \La_{\vr} \Bs (\rn) \|, \qquad \sigma \in \real.
\end{\eq}
If $m\in \nat$ is even, then the converse of \eqref{3.33} follows from \eqref{3.34} with $\sigma =m$. Let $m\in
\nat$ be odd. \new{Let  $f\in \La_{\vr} \Bs (\rn)$}. We assume that for each $\ve >0$ there is an $f_{\ve} \in \La_{\vr} \Bs (\rn)$ such that
\begin{\eq}   \label{3.35}
  \sup_{0 \le |\alpha| \le m} \| \Dd^\alpha f_{\ve} \, | \La_{\vr} B^{s-m}_{p,q} (\rn) \| \le \ve\,\| f_{\ve} \, | \La_{\vr} \Bs (\rn) \|.
\end{\eq}
Application of what one already knows shows that there is a corresponding inequality with $m+1$ in place of
$m$. But this is a contradiction. This proves  \eqref{3.32} for all $m\in \nat$ \new{for spaces $\La_\vr\Bs(\rn)$}, \new{while the result for $\La_\vr\Fs(\rn)$ follows from \eqref{2.42} and \eqref{3.31} again}. 
\end{proof}

\begin{remark}   \label{R3.11}
  \new{We refer to \cite{TX}, for first results of type \eqref{3.31} in case of spaces $\Ft(\rn)=\La^{\vr} \Fs(\rn)$, recall \eqref{2.29}, \eqref{2.30} and for spaces of type $\mathcal{N}^s_{u,p,q}(\rn)=\La_\vr\Bs(\rn)$, recall \eqref{2.24}; see also \cite{SawTan}. Again we may summarise \eqref{3.31} and \eqref{3.32} by
    \[
\| f \, | \rhoAs (\rn) \| \sim \sup_{0 \le |\alpha| \le m} \| \Dd^\alpha f \, | \rhoAx{s-m}{p}{q} (\rn) \|, \quad A\in \{B,F \},
    \]
in the sense of $($equivalent quasi--norms$)$, recall Definition~\ref{D2.11}.
  }
  
Based on \cite[Corollary 3.66, p.\,98]{T14} one can extend \eqref{3.31} to the $0$--clan in Definition 
\ref{D2.11}(iii),
\begin{\eq}   \label{3.36}
\| f \, | \La^0 \Bs (\rn) \| \sim \sup_{0 \le |\alpha| \le m} \| \Dd^\alpha f \, | \La^0 B^{s-m}_{p,q} (\rn) \|,
\quad f\in \La^0 \Bs (\rn).
\end{\eq}
\end{remark}

\begin{remark}   \label{R3.11a}
We need Theorem~\ref{T3.10} as a tool in our later considerations. On the other hand, derivatives and 
differences played a decisive r\^ole in the theory of function spaces from the very beginning. One may expect
that characterisations  of some spaces $\As (\rn)$ in \eqref{2.38}, the $n$--clan, in terms of the differences
\begin{\eq}   \label{3.36a}
\Delta^1_h f (x) = f(x+h) - f(x), \qquad \big( \Delta^{k+1}_h f \big)(x) = \Delta^1_h \big( \Delta^k_h f \big) (x),
\end{\eq}
$x\in \rn$, $h\in \rn$, $k\in \nat$, and their ball means
\begin{\eq}   \label{3.36b}
d^m_{t,v} f(x) = \Big( t^{-n} \int_{\{h: |h| \le t\}} \big| \Delta^m_h f(x) \big|^v \, \di h \Big)^{1/v}, \quad x\in \rn, \quad t>0,
\end{\eq}
$0<v \le \infty$, have suitable counterparts for the spaces in \eqref{2.44}, \eqref{2.45} of the $\vr$--clan,
$-n< \vr <0$, subject to the (modified) Slope--$n$-Rule, see \new{Slope Rules~\ref{slope_rules}(ii)}. The related theory for the 
spaces $\As (\rn)$ with $p<\infty$ for $F$--spaces in \cite[Theorem 3.5.3, p.\,194]{T92} has been complemented
quite recently by corresponding assertions for $F^s_{\infty,q} (\rn)$. Discussions and detailed references 
may be found in {\cite[Section 4.3.3, pp.\,134--135]{T20}}. First substantial results about characterisations of
some spaces in \eqref{2.44}, \eqref{2.45} in terms of \eqref{3.36a}, \eqref{3.36b} may be found in \cite{HoS20} and \cite{Hov20b}. \new{In \cite{WYY} further equivalent characterisations of the spaces $\La^\vr\As(\rn)$ via derivatives can be found.}
\end{remark}

\subsection{Fatou property}   \label{S3.5}
Let $A(\rn)$ be a quasi--normed space in $S'(\rn)$ with $A(\rn) \hra S'(\rn)$ (continuous embedding). Then $A(\rn)$ is said to have
the  Fatou property if there is a positive constant $c$ such that from
\begin{\eq}   \label{3.37}
\sup_{j \in \nat} \| g_j \, | A(\rn) \| <\infty \qquad \text{and} \quad \text{$g_j \to g$ in $S'(\rn)$}
\end{\eq}
it follows that $g\in A(\rn)$ and
\begin{\eq}   \label{3.38}
\| g \, | A(\rn) \| \le c \, \sup_{j\in \nat} \| g_j \, | A(\rn) \|.
\end{\eq}
We took over the above formulation from \cite[Section 1.3.4, pp.\,18--19]{T20}. But the Fatou property for
this type of function spaces has some history which may be found there. The surprisingly simple argument
ensuring that all spaces in \eqref{2.38} of the $n$--clan have the Fatou property applies equally to all spaces
in Definition~\ref{D2.11}.

\begin{theorem}  \label{T3.12}
Let $-n <\vr <0$. Then all spaces \eqref{2.44}, \eqref{2.45} of the $\vr$--clan according to  Definition~\ref{D2.11}{\em(ii)} have the Fatou property.
\end{theorem}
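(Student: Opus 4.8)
The plan is to reduce the Fatou property for all three families of the $\vr$--clan to the most elementary object, namely the building blocks $\La^{\vr}_p(\rn)$ of Definition~\ref{D2.3}, and then to lift this to the smoothness spaces by means of the Fourier--analytic definition. First I would recall the well--known strategy used for the classical spaces $\As(\rn)$ (see \cite[Section 1.3.4, pp.\,18--19]{T20}): fix a dyadic resolution of unity $\vp = \{\vp_j\}_{j=0}^\infty$; if $g_j \to g$ in $S'(\rn)$, then for each fixed $k\in\no$ one has $(\vp_k \wh{g_j})^\vee \to (\vp_k \wh{g})^\vee$ in $S'(\rn)$, and since these are entire analytic functions of exponential type, convergence in $S'(\rn)$ forces \emph{pointwise} convergence on $\rn$ (this is the Paley--Wiener--Schwartz mechanism, already implicit in the observation after \eqref{2.5} that $(\vp_j\wh f)^\vee(x)$ makes sense pointwise). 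Thus it suffices to know that each $L_p$--type quasi--norm appearing in \eqref{2.21}, \eqref{2.22}, \eqref{2.26}, \eqref{2.27} enjoys a Fatou--type lower semicontinuity under pointwise convergence.

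The key step is therefore the following elementary lemma about the basic spaces: if $\{h_j\}$ is a sequence of locally integrable functions with $h_j \to h$ pointwise a.e.\ on $\rn$ and $\sup_j \|h_j \mid \La^{\vr}_p(\rn)\| < \infty$, then $h \in \La^{\vr}_p(\rn)$ with $\|h\mid\La^{\vr}_p(\rn)\| \le \sup_j\|h_j\mid\La^{\vr}_p(\rn)\|$. This is immediate from the classical Fatou lemma applied on each dyadic cube $Q_{J,M}$:
\begin{equation*}
\int_{Q_{J,M}} |h(x)|^p\,\di x \le \liminf_{j\to\infty} \int_{Q_{J,M}} |h_j(x)|^p\,\di x,
\end{equation*}
so after multiplying by the fixed weight $2^{\frac{J}{p}(n+\vr)\cdot p}$ and taking the supremum over $J\in\ganz$, $M\in\zn$, the bound is preserved (the supremum of a liminf is at most the liminf of the supremum, hence at most the uniform bound). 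The same computation, carried out with the $\ell_q$--sum over $j\ge J^+$ or the mixed $L_p(\ell_q)$ quantity inside the integral over $Q_{J,M}$, gives the analogous statement for the vector--valued expressions, using Fatou's lemma in the form for series and in the form for the $L_p(Q_{J,M})$--quasi--norm of a liminf of nonnegative functions; here one also uses that for fixed $x$ the partial sums $\sum_{j} 2^{jsq}|(\vp_j\wh{g_j})^\vee(x)|^q$ converge to the corresponding sum for $g$ by the pointwise convergence already established (applying monotone convergence in the truncation of the $j$--sum if $q<\infty$, and the obvious bound if $q=\infty$).

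Assembling these pieces: given \eqref{3.37} for $A(\rn)$ equal to any of $\La_{\vr}\Bs(\rn)$, $\La^{\vr}\Bs(\rn)$, or $\La_{\vr}\Fs(\rn)=\La^{\vr}\Fs(\rn)$, pass to the pieces $(\vp_k\wh{g_j})^\vee$, use pointwise convergence, apply the basic Fatou lemma above cube by cube and frequency by frequency, and take the relevant supremum/sum; this yields $g\in A(\rn)$ together with \eqref{3.38} (with $c=1$, in fact). Alternatively, and more briefly, one could invoke the wavelet characterisations of Proposition~\ref{P3.1}: convergence $g_j\to g$ in $S'(\rn)$ forces $\lambda^{k,G}_m(g_j)\to\lambda^{k,G}_m(g)$ for each fixed $(k,G,m)$ since $\psi^k_{G,m}\in S(\rn)$ (indeed it is compactly supported and smooth), and then the discrete Fatou lemma applied to the sequence spaces in \eqref{3.10}, \eqref{3.11} gives the claim directly; this avoids the Paley--Wiener remark at the cost of invoking the wavelet theorem. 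Either route works for all three families simultaneously. I do not expect a genuine obstacle: the only point requiring a little care is the interchange of limits in the doubly--indexed expressions, which is handled uniformly by noting that Fatou's lemma is valid for sums, integrals, and suprema of nonnegative quantities, and that the supremum over the frozen parameters $(J,M)$ of a quantity bounded by a constant is bounded by that same constant.
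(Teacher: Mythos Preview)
Your proposal is correct and follows essentially the same route as the paper: reduce to pointwise convergence of the building blocks $(\vp_k\wh{g_j})^\vee(x)$ and then apply the classical measure--theoretic Fatou lemma inside each cube $Q_{J,M}$, followed by the appropriate suprema and $\ell_q$--sums. The paper compresses the pointwise--convergence step into the single formula $(\psi\wh f)^\vee(x)=c\,(f,\psi^\vee(x-\cdot))$, which makes immediate that each evaluation is a continuous linear functional on $S'(\rn)$; this is exactly the content of what you call the ``Paley--Wiener--Schwartz mechanism'', so the arguments coincide.
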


\begin{proof}
Let $\psi \in S(\rn)$. Then
\begin{\eq}   \label{3.39}
(\psi \wh{f} )^\vee (x) = c \, \big(f, \psi^\vee (x - \cdot) \big), \qquad f \in S'(\rn), \quad x\in \rn.
\end{\eq}
This reduces the Fatou property for the Fourier--analytically defined spaces in \eqref{2.21}, \eqref{2.22}
based on \eqref{2.17} and \eqref{2.26}, \eqref{2.27}
to the classical measure--theoretical Fatou property for $L_p$--spaces.
\end{proof}

\begin{remark}    \label{R3.13}
This argument applies also to the spaces of the $n$--clan in \eqref{2.38}, as stated in \cite[Theorem 1.25,
p.\,18]{T20}, and to the spaces of the $0$--clan in \eqref{2.41}. Recall  that $S(\rn)$ is dense in the 
classical spaces $\As (\rn)$ if $\max(p,q) <\infty$. Then quite often properties can be proved first for
$S(\rn)$ and extended by completion to $\As (\rn)$. This argument cannot be applied to spaces belonging to
the $\vr$--clan, $-n<\vr<0$, or $0$--clan. These spaces are not separable and $S(\rn)$ is not dense in them.
Then the Fatou property is an effective substitute.
\end{remark}

\section{Key problems}    \label{S4}
\subsection{Motivations}    \label{S4.1}
The spaces $\Bs (\rn)$ and $\Fs (\rn)$ with $0<p,q \le \infty$ ($p<\infty$ for $F$--spaces) and $s \in \real$ according to the parts
(i) and (ii) of Definition~\ref{D2.1}  were introduced from the mid 1960s to the mid 1970s. Related (historical) references may be
found in {\cite[Section 4.3.4, pp.\,135--136]{T20}}. The incorporation of the spaces $F^s_{\infty,q} (\rn)$ in part (iii) of Definition
\ref{D2.1} came somewhat later and had been discussed in detail in \cite[pp.\,3--5]{T20}. At the end of the 1970s the question arose
whether these spaces are worth to be studied, especially their extensions from $p\ge 1$ to $p<1$. As a good criterion at this time (and
maybe up to now) one may ask whether the restriction $\As (\Om)$ of $\As (\rn)$, $A \in \{B,F \}$, to bounded smooth domains $\Om$
in $\rn$ can be used to study elliptic boundary value problems in \Om. This requires that these spaces have the following 
distinguished properties, called key problems in \cite{T92} (but treated already in \cite{T83}):
\begin{enumerate}
\item pointwise (smooth) multipliers in $\As (\rn)$,
\item diffeomorphisms of $\As (\rn)$,
\item extensions of corresponding spaces $\As (\real^n_+)$ to $\As (\rn)$,
\item traces of $\As (\rn)$ on \Rn, $2\le n \in \nat$.
\end{enumerate}
Here
\begin{\eq}   \label{4.1}
\real^n_+ = \big\{ x= (x_1, \ldots, x_n )\in \rn: \ x_n >0 \big\}, \qquad n\in \nat.
\end{\eq}
Final assertions and (historical) references may be found in {\cite{T20}}. Beginning with the early 1990s the classical spaces $\As
(\rn)$ have been extended by the two type of Morrey smoothness spaces \eqref{1.2}, \eqref{1.3}. They have been studied in detail in
numerous papers. An examination of the obtained assertions (some of them are quite recent) suggests not to look at these two types of
Morrey smoothness spaces separately, but to unite and reorganise them into the clans as described in Definition~\ref{D2.11} based on
\eqref{2.24}, \eqref{2.25} and \eqref{2.29}, \eqref{2.30}. The emerging fourth parameters $\vr$ is now on equal footing with the
classical parameters $s$ (smoothness), $p$ (integrability) and $q$ (summability) where $|\vr|$ is quite often the slope of 
distinguished (broken) lines in the $\big( \frac{1}{p}, s \big)$--diagram. In addition, the three families in \eqref{2.44} of a fixed
$\vr$--clan are symbiotically related to each other, where the real interpolation \eqref{3.14} is a good and useful example. In what
follows we reformulate many already existing assertions in terms of the $\vr$--clans and complement them by some new properties which
illuminate our approach. It is quite natural to deal first with the above key problems. It comes out that the first three of them can 
be treated rather quickly using already existing properties. But related assertions about traces in the literature are not really
satisfactory. We deal with them in detail in the context of our approach. A new phenomenon comes out (here and at other occasions).
The behaviour of the low slope spaces, that means $0<|\vr| <1$, is more or less the same in all dimensions $n$. In particular, there
is no breaking point at $p=1$ as usual for the spaces $\As (\rn)$, $2 \le n \in \nat$, and also for high slope spaces with $1<|\vr|
<n$. This is in good agreement with the Slope--1--Rule as formulated in \new{Slope Rules~\ref{slope_rules}(i)}\ignore{at the end of Section~\ref{S2.2}}.

\subsection{Pointwise multipliers}    \label{S4.2}
Pointwise multipliers for the classical spaces $\As (\rn)$ in \eqref{2.38}, the $n$--clan according to Definition~\ref{D2.11}, have
been studied with great intensity over decades. Rather final results have been obtained (now including $F^s_{\infty,q} (\rn)$). One
may consult {\cite[Section 2.4]{T20}} where one finds also detailed references. It might be a challenging task to develop a comparable
theory for the spaces within the $\vr$--clans as introduced in Definition~\ref{D2.11}. Our aim here is rather modest. We deal with 
smooth pointwise multipliers as requested in the first of the four key problems listed above in Section~\ref{S4.1}.

Let $A(\rn)$ be a quasi--Banach space in $\rn$ with the continuous embedding
\begin{\eq}   \label{4.2}
S(\rn) \hra A(\rn) \hra S'(\rn).
\end{\eq}
Then $g\in L_\infty (\rn)$ is said to be a pointwise multiplier for $A(\rn)$ if
\begin{\eq}   \label{4.3}
f \mapsto gf \qquad \text{generates a bounded map in $A(\rn)$}.
\end{\eq}
Of course one has to say what this multiplication means in this generality. References for related detailed discussions in connection
with the spaces $\As (\rn)$ may be found {\cite[Section 2.4.1, pp.\,40--43]{T20}}. We  reduce smooth pointwise multipliers for the 
spaces covered by Definition~\ref{D2.11} to related assertions for the spaces $\As (\rn)$ (as already done in \cite[Section 3.6.4, 
pp.\,99--101]{T14}) and interpolation. In particular there is no need for further technical explanations. Recall that $C^k (\rn)$
with $k\in \no$ is the collection of all complex--valued continuous functions in $\rn$ having classical continuous derivatives up to
order $k$ inclusively with
\begin{\eq}   \label{4.4}
\| g \, | C^k (\rn) \| = \sup_{|\alpha| \le k, x\in \rn} | \Dd^\alpha g(x) | < \infty.
\end{\eq}
For our purpose it is sufficient to know that for any classical space $\As (\rn)$ in \eqref{2.38} there is a natural number $k\in 
\nat$ and a constant $c>0$ such  that
\begin{\eq}  \label{4.5}
\| g f \, | \As (\rn) \| \le c \, \|g \, | C^k (\rn) \| \cdot \|f \, | \As (\rn) \|
\end{\eq}
for all $g\in C^k (\rn)$ and $f\in \As (\rn)$.

\begin{theorem}   \label{T4.1}
For any space in \eqref{2.44} with $-n < \vr <0$, $s\in \real$, $0<p<\infty$, $0<q \le \infty$, there is a natural number $k\in \nat$
and a constant $c>0$ such that
\begin{\eq}  \label{4.6}
\| g f \, | \La^{\vr}\As (\rn) \| \le c \, \|g \, | C^k (\rn) \| \cdot \|f \, | \La^{\vr}\As (\rn) \|
\end{\eq}
for all $g\in C^k (\rn)$ and $f\in \La^{\vr}\As (\rn)$, $A \in \{B,F \}$, and
\begin{\eq}  \label{4.7}
\new{\| g f \, | \La_{\vr}\As (\rn) \| \le c \, \|g \, | C^k (\rn) \| \cdot \|f \, | \La_{\vr}\As (\rn) \|}
\end{\eq}
for all $g\in C^k (\rn)$ and \new{$f\in \La_{\vr}\As (\rn)$, $A \in \{B,F \}$}.
\end{theorem}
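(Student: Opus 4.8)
The plan is to reduce everything to the classical pointwise multiplier assertion \eqref{4.5} for the $n$-clan together with the structural results already collected in Section~\ref{S3}. First I would prove \eqref{4.6} for the spaces $\La^{\vr}\As(\rn)$, $A\in\{B,F\}$. By \eqref{2.31} these coincide with the hybrid spaces $L^r\As(\rn)$, $r=\vr/p$, so the estimate is already available from \cite[Section~3.6.4, pp.\,99--101]{T14}; alternatively one argues directly from the wavelet characterisation in Proposition~\ref{P3.1} (for the $B$-case) and its $F$-counterpart, reducing the action of a smooth multiplier to a localised sequence-space estimate exactly as in the classical situation. In either case one obtains a natural number $k=k(s,p,q,\vr)\in\nat$ and a constant $c>0$ with
\begin{\eq}\label{pm-Bvr}
\| g f \, | \La^{\vr}\As (\rn) \| \le c \, \|g \, | C^k (\rn) \| \cdot \|f \, | \La^{\vr}\As (\rn) \|, \qquad g\in C^k(\rn),\ f\in \La^{\vr}\As(\rn),
\end{\eq}
which is \eqref{4.6}. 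Note that this simultaneously yields \eqref{4.7} for the $F$-family, because $\La_{\vr}\Fs(\rn)=\La^{\vr}\Fs(\rn)$ by \eqref{2.42}.

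It remains to establish \eqref{4.7} for $\La_{\vr}\Bs(\rn)$. Here I would exploit the real interpolation identity \eqref{3.14}, namely
\begin{\eq}\label{pm-interp}
\La_{\vr} \Bs (\rn) = \big( \La^{\vr} F^{s_1}_{p,q_1} (\rn), \La^{\vr} F^{s_2}_{p, q_2} (\rn) \big)_{\theta,q}
\end{\eq}
with $s_1<s<s_2$, $s=(1-\theta)s_1+\theta s_2$, $0<\theta<1$. Fix a smooth multiplier $g\in C^K(\rn)$ where $K=\max\{k(s_1,p,q_1,\vr),k(s_2,p,q_2,\vr)\}$ is large enough that \eqref{pm-Bvr} (in its $F$-form) applies to both endpoint spaces. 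Then $f\mapsto gf$ is bounded on each of $\La^{\vr}F^{s_i}_{p,q_i}(\rn)$, $i=1,2$, with operator norm $\lesssim \|g\,|C^K(\rn)\|$, so by the interpolation property of $(\cdot,\cdot)_{\theta,q}$ it is bounded on the interpolated space $\La_{\vr}\Bs(\rn)$ with the same control on the norm. This gives \eqref{4.7} for the $B$-family, and, as noted, the $F$-family was already covered by \eqref{2.42}. One should remark that, since the constant $c$ in \eqref{4.5} depends on the target space, the value of $k$ in the statement genuinely depends on $(s,p,q,\vr)$, which is harmless as the theorem only asserts existence of such a $k$.

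The main obstacle is a bookkeeping one rather than a deep one: making sure the multiplication $f\mapsto gf$ is well defined on these non-separable spaces (where $S(\rn)$ is not dense) and is compatible across the interpolation couple. This is handled exactly as in \cite[Section~3.6.4, pp.\,99--101]{T14} --- one first defines the product via the Fourier-analytic (or wavelet) representation, checks it agrees with the usual product on the common dense part of the endpoint couple, and then the abstract interpolation theorem applies verbatim; the Fatou property (Theorem~\ref{T3.12}) can be invoked if one prefers to pass from Schwartz functions to general $f$ by an approximation-and-Fatou argument instead. There is no genuinely new difficulty: both ingredients --- the classical smooth-multiplier bound and real interpolation within a $\vr$-clan --- are already in place, and the proof is essentially a two-line combination of \eqref{pm-Bvr} and \eqref{pm-interp}.
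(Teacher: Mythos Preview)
Your proposal is correct and follows essentially the same route as the paper: establish \eqref{4.6} for $\La^{\vr}\As(\rn)$ via the hybrid-space identification \eqref{2.31} and the known result in \cite[Section~3.6.4]{T14}, then obtain \eqref{4.7} for $\La_{\vr}\Bs(\rn)$ by the real interpolation \eqref{3.14}, with the $\La_{\vr}\Fs$ case covered by the coincidence \eqref{2.42}. The only cosmetic difference is that the paper spells out the localisation argument behind the $\La^{\vr}\As$ case (using the equivalent quasi-norm \eqref{4.8} and applying \eqref{4.5} cube by cube), whereas you cite the reference directly.
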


\begin{proof}
  Let $Q_{J,M}$ be as in \eqref{2.6} and $2 Q_{J,M}$ be as explained there. Let $\La^{\vr} \As (\rn)$ be again the spaces as introduced
in Definition~\ref{D2.7} (now with $-n <\vr <0$). Let $s <m \in \no$. Then it follows from \cite[Corollary 3.66, p.\,98]{T14} and
\eqref{2.31} that $f \in \La^{\vr} \As (\rn)$ if, and only if,
\begin{\eq}    \label{4.8}
\sup_{\substack{J\in \ganz, M \in \zn, \\0\le |\alpha| \le m}} 2^{\frac{J}{p} (n + \vr)} \| \Dd^\alpha f \, | A^{s-m}_{p,q} (2 Q_{J,M})
\|
\end{\eq}
is finite (equivalent quasi--norms). Here $\As (\Om)$ is the usual restriction of $\As (\rn)$ to the domain (= open set) $\Om$ in \rn.
In particular \eqref{4.5} remains valid with the same $c$ and $g$ if one replaces there $\As (\rn)$ by $\As (\Om)$ (independently of
\Om). Then \eqref{4.6} follows from \eqref{4.8} and \eqref{4.5} (with a different $k\in \nat$). \new{Now, in view of the coincidence \eqref{2.42}, it is sufficient to prove \eqref{4.7} in case of $A=B$. This can} be obtained by the real interpolation \eqref{3.14}.
\end{proof}

\begin{remark}   \label{R4.2}
We refer the reader to \cite[Theorem 3.69, Remarks 3.70, 3.71, p.\,101]{T14}. There one finds \eqref{4.6} based on the same arguments
as above, further explanations and references. In particular, \eqref{4.6} can be extended to the spaces in \eqref{2.41} of the
$0$--clan in Definition~\ref{D2.11}(iii). The simple proof of \eqref{4.7} based  on \eqref{4.6} underlines again the close relationship
of the three families in \eqref{2.44} of a {\vr}--clan among each other.
\end{remark}

\subsection{Diffeomorphisms}   \label{S4.3}
A continuous  one--to--one map of $\rn$, $n\in \nat$,  onto itself,
\begin{align}  \label{4.9}
y &= \psi (x) = \big(\psi_1 (x), \ldots, \psi_n (x) \big), && \text{$x\in \rn$}, &&  \\  \label{4.10}
x &= \psi^{-1} (y) = \big( \psi^{-1}_1 (y), \ldots, \psi^{-1}_n (y) \big), && \text{$y\in \rn$}, &&
\end{align}
is called a diffeomorphism if all components $\psi_j (x)$ and $\psi^{-1}_j (y)$ are real $C^\infty$ functions on $\rn$ and
for $j=1, \ldots,n$,
\begin{\eq}   \label{4.11}
\sup_{x\in \rn} \big( | \Dd^\alpha \psi_j (x) | + | \Dd^\alpha \psi^{-1}_j (x)| \big) < \infty \quad  \text{for all $\alpha \in \nat^n_0$
with $|\alpha| >0$}.
\end{\eq}
We used again standard notation. In particular, $\nat^n_0$ collects all $\alpha = (\alpha_1, \ldots, \alpha_n)$, $\alpha_j \in \no$,
and $|\alpha| = \sum^n_{j=1} \alpha_j$. 
Then $\vp \mapsto \vp \circ \psi$, given by $(\vp \circ \psi )(x) = \vp \big( \psi(x) \big)$, is an one--to--one map of $S(\rn)$ onto
itself. This can be extended by standard arguments to an one--to--one map of $S'(\rn)$ onto itself. Some related details may be found 
in {\cite[Section 2.3, pp.\,39--40]{T20}}.

\begin{proposition}   \label{P4.3}
Let $\psi$ be the above diffeomorphic map of $\rn$ onto itself. Then
\begin{\eq}   \label{4.12}
D_\psi: \quad \As (\rn) \hra \As (\rn), \qquad D_\psi f = f \circ \psi
\end{\eq}
is an isomorphic map for all spaces in \eqref{2.38}, the $n$--clan.
\end{proposition}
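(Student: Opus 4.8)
\emph{Plan.} Since $\psi^{-1}$ is again a diffeomorphism of $\rn$ in the sense of \eqref{4.9}--\eqref{4.11} and $D_{\psi^{-1}}=D_\psi^{-1}$, it suffices to prove that $D_\psi$ is bounded from $\As (\rn)$ into itself for every admitted $(s,p,q)$ and $A\in\{B,F\}$; applying this to $\psi$ and to $\psi^{-1}$ then gives the asserted isomorphism \eqref{4.12}. We first record that \eqref{4.11} forces $\psi$ to be bi-Lipschitz with uniform constants (a lower bound for $|D\psi|$ follows from the boundedness of $D\psi^{-1}$) and that $\Dd^\alpha\psi$, $\Dd^\alpha\psi^{-1}$ are bounded for all $\alpha$ with $|\alpha|\ge1$. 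Hence $\psi$ maps a dyadic cube $Q_{j,m}$ onto a set of diameter $\sim 2^{-j}$, and for fixed $j\in\no$ the families $\{\psi(Q_{j,m})\}_m$ and $\{\psi^{-1}(Q_{j,m})\}_m$ have bounded overlap.

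The computation I would carry out is on the wavelet side. Fix $u$ in \eqref{3.1}, \eqref{3.2} large, depending only on $s,p,q$, and expand $f\in\As (\rn)$ according to \eqref{3.7}, \eqref{3.8}. Then $D_\psi f=\sum_{j,G,m}\lambda^{j,G}_m(f)\,(\psi^j_{G,m}\circ\psi)$, and the heart of the matter is that, after multiplication by a fixed factor $2^{-j\delta}$ with a suitable small $\delta>0$, each $\psi^j_{G,m}\circ\psi$ is a \emph{molecule} (in the sense of Frazier--Jawerth, resp. Triebel, cf. \cite{FrJ90,T83,T92,T20}) associated with a dyadic cube $Q_{j,m'}$ whose centre lies within $c\,2^{-j}$ of $\psi^{-1}(2^{-j}m)$. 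Indeed, the chain rule and the bounded derivatives of $\psi$ give $|\Dd^\alpha(\psi^j_{G,m}\circ\psi)(x)|\lesssim 2^{j|\alpha|}$ for $|\alpha|\le u$; the support (or rapid decay) of $\psi^j_{G,m}\circ\psi$ is concentrated in a $c\,2^{-j}$-neighbourhood of $\psi^{-1}(2^{-j}m)$ by bi-Lipschitz continuity; and for $G\in G^j$ with $j\ge1$ the exact vanishing moments \eqref{3.2} of $\psi^j_{G,m}$ become, after the change of variables, the approximate cancellation required of a molecule, provided $u$ was taken large relative to $|s|$, $n\max(1/p-1,0)$ (and $n\max(1/q-1,0)$ in the $F$-case). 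Since $m\mapsto m'$ is finite-to-one for fixed $j$ and the $Q_{j,m'}$ have bounded overlap, the rearranged coefficient array has, up to constants, the same $\bs$-- (resp. $f^s_{p,q}$--) quasi-norm as $\{\lambda^{j,G}_m(f)\}$. The molecular synthesis theorem for $\As (\rn)$ --- the analogue of \eqref{3.9} with molecules in place of the $\psi^j_{G,m}$, valid for all spaces of the $n$-clan including $F^s_{\infty,q}(\rn)$ (there with the defining dyadic boxes of \eqref{2.13} mapped to comparable boxes under $\psi$) --- then yields $\|D_\psi f\,|\,\As (\rn)\|\lesssim\|\{\lambda^{j,G}_m(f)\}\,|\,a^s_{p,q}\|\sim\|f\,|\,\As (\rn)\|$, and in particular shows that the series for $D_\psi f$ converges in $S'(\rn)$ to $f\circ\psi$.

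The main obstacle is precisely this moment/cancellation bookkeeping: a diffeomorphism does not preserve vanishing moments, so one cannot reuse the wavelet (hence atomic) reconstruction directly but must pass through molecular decompositions and verify that $\psi^j_{G,m}\circ\psi$ retains enough approximate cancellation; this is where the loss $2^{-j\delta}$ and the need to take $u$ large relative to the integrability and smoothness parameters enter, and also where the quasi-Banach range $0<p<1$ costs something. The remaining points --- the chain-rule bounds for $\Dd^\alpha(\psi^j_{G,m}\circ\psi)$, the overlap estimates for the distorted cubes, and the adjustments for $F^s_{\infty,q}(\rn)$ --- are routine. An alternative, more elementary route, closer to the classical treatment in \cite{T83,T92}, proves the claim first for $B^s_{p,q}(\rn)$ with $0<s<1$ by a change of variables in the ball-means characterisation \eqref{3.36b}, and then extends it to all admitted parameters by combining the derivative characterisation \eqref{3.30} (noting that $\Dd^\alpha(f\circ\psi)$ is a finite sum of terms $c_{\alpha\beta}\cdot\big((\Dd^\beta f)\circ\psi\big)$ with smooth bounded coefficients $c_{\alpha\beta}$, which are pointwise multipliers by \eqref{4.5}), real interpolation of type \eqref{3.13}, and duality in the reflexive range; this is, however, more cumbersome than the molecular argument sketched above.
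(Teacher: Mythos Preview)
The paper does not actually prove Proposition~\ref{P4.3}; it states it as a known result and, in Remark~\ref{R4.4}, refers to \cite[Section~1.3.8]{T19} for a ``streamlined proof on less than one page'' for the spaces with $p<\infty$ in the $F$--case, and to \cite[Theorem~2.25]{T20} for the extension to $F^s_{\infty,q}(\rn)$ via the localisation formula \eqref{4.8} with $\vr=0$. So there is no self-contained argument in the paper to compare against.

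Your sketch is mathematically sound and describes one of the standard routes (the Frazier--Jawerth molecular approach: pull back wavelets through $\psi$, check they become molecules, invoke molecular synthesis). Your alternative --- ball means for $0<s<1$, then lift via the derivative characterisation \eqref{3.30} together with the multiplier estimate \eqref{4.5} --- is closer in spirit to the classical arguments in \cite{T83,T92} and to what the paper's references actually do. The approach in \cite{T19} to which the paper points is based on local means rather than wavelets/molecules, and the $F^s_{\infty,q}$ case in \cite{T20} is handled not by a direct molecular argument but by the reduction \eqref{4.8}, which expresses $\La^0 \Fs(\rn)=F^s_{\infty,q}(\rn)$ through local $\Fs$-norms with $p<\infty$ over dyadic cubes and then uses the already-established case together with the controlled distortion of cubes under $\psi$. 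Your molecular route works too, but the paper's referenced method avoids molecular cancellation bookkeeping altogether by localising and reducing smoothness below zero (so no moment conditions are needed for atoms), which is why it fits on one page.
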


\begin{remark}   \label{R4.4}
This diffeomorphism is a tricky problem and attracted some attention over decades. Related references may be found in 
{\cite[p.\,39]{T20}}. Finally emerged in \cite[Section 1.3.8, pp.\,66--67]{T19} a streamlined proof on less than one page for all spaces
with $p<\infty$ for $F$--spaces. The incorporation of the spaces $F^s_{\infty,q} (\rn)$ in \cite[Theorem 2.25, p.\,39]{T20} had been
based on \eqref{2.35} and \eqref{4.8} with $\vr =0$ and $A^{s-m}_{p,q} (\rn) = F^{s-m}_{p,q} (\rn)$, $p<\infty$, $s <m \in \no$. But
this reduction works for all spaces $\La^{\vr} \As (\rn)$ and had already been used in \cite[Theorem 3.69, p.\,101]{T14} in connection
with the above problem. We formulate the outcome.
\end{remark}

\begin{theorem}   \label{T4.5}
Let $\psi$ be the above diffeomorphic map of $\rn$ onto itself. For $-n < \vr <0$ let \new{$\La^{\vr} \As (\rn)$ and
$\La_{\vr} \As (\rn)$, $A\in \{B,F \}$, be} the three families of the $\vr$--clan $\rhoAs (\rn)$ according to Definition~\ref{D2.11}{\em (ii)}. Then
\begin{\eq}   \label{4.13}
D_\psi : \quad \La^{\vr} \As (\rn) \hra \La^{\vr} \As (\rn), \qquad D_\psi f = f\circ \psi,
\end{\eq}
and
\begin{\eq}   \label{4.14}
\new{D_\psi : \quad \La_{\vr} \As (\rn) \hra \La_{\vr} \As (\rn), \qquad D_\psi f = f\circ \psi,}
\end{\eq}
are isomorphic maps.
\end{theorem}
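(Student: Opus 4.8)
The plan is to follow the same route as for Theorem~\ref{T4.1}, exploiting the characterisation of the spaces $\La^{\vr}\As(\rn)$ by local derivatives together with the corresponding result for the classical spaces of the $n$-clan, and then to transfer the outcome to the third family $\La_{\vr}\Bs(\rn)$ by real interpolation. First I would recall from Proposition~\ref{P4.3} (the diffeomorphism result for the $n$-clan, including $F^s_{\infty,q}(\rn)$) that $D_\psi$ acts isomorphically on every space $\As(\rn)$, and that, as noted in Remark~\ref{R4.4}, the same proof works unchanged for the restrictions $\As(\Om)$ to open sets $\Om$, with constants independent of $\Om$. Since $\psi$ and $\psi^{-1}$ are diffeomorphisms of $\rn$, they map dyadic cubes $2Q_{J,M}$ into a bounded number of comparable cubes (at the same dyadic level, up to a fixed dilation factor depending only on the $C^1$ norms in \eqref{4.11}); this is the geometric fact that lets one compare the local quantities \eqref{4.8} for $f$ and for $f\circ\psi$.

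The key step is then the following. Fix $s<m\in\no$. By \cite[Corollary 3.66, p.\,98]{T14} and \eqref{2.31}, $f\in\La^{\vr}\As(\rn)$ if and only if the quantity \eqref{4.8}, namely
\[
\sup_{\substack{J\in\ganz,\,M\in\zn,\\ 0\le|\alpha|\le m}} 2^{\frac{J}{p}(n+\vr)}\,\|\Dd^\alpha f\,|\,A^{s-m}_{p,q}(2Q_{J,M})\|,
\]
is finite. Applying the chain rule, $\Dd^\alpha(f\circ\psi)$ is a finite linear combination of terms $(\Dd^\beta f)\circ\psi$ with $|\beta|\le|\alpha|\le m$, multiplied by bounded $C^\infty$ factors built from derivatives of $\psi$; by Theorem~\ref{T4.1} these smooth factors are harmless pointwise multipliers on $A^{s-m}_{p,q}$ of a cube (with constants uniform in the cube). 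Combining this with the $n$-clan diffeomorphism bound on each cube $2Q_{J,M}$ and the covering property above, one gets that \eqref{4.8} for $f\circ\psi$ is controlled by a constant times \eqref{4.8} for $f$ (and conversely, using $\psi^{-1}$). This proves \eqref{4.13} for $A\in\{B,F\}$. The case $A=F$ of \eqref{4.14} is then immediate from the coincidence \eqref{2.42}, $\La_{\vr}\Fs(\rn)=\La^{\vr}\Fs(\rn)$.

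It remains to handle $\La_{\vr}\Bs(\rn)$ in \eqref{4.14}. Here I would invoke the real interpolation formula \eqref{3.14}: choosing $s_1<s<s_2$ with $s=(1-\theta)s_1+\theta s_2$, one has $\La_{\vr}\Bs(\rn)=\big(\La^{\vr}F^{s_1}_{p,q}(\rn),\La^{\vr}F^{s_2}_{p,q}(\rn)\big)_{\theta,q}$, and since $D_\psi$ is already known to be bounded on each endpoint $\La^{\vr}F^{s_i}_{p,q}(\rn)$ by the case just proved, the interpolation property yields boundedness of $D_\psi$ on $\La_{\vr}\Bs(\rn)$; applying the same argument to $D_{\psi^{-1}}$ gives the inverse, so $D_\psi$ is an isomorphism. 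The main obstacle I anticipate is the bookkeeping in the chain-rule step: one must check that after the change of variables the family of cubes $\{2Q_{J,M}\}$ is transformed into a controlled family so that the supremum defining \eqref{4.8} is genuinely preserved (not merely bounded on each fixed cube), and that the smooth coefficients from the chain rule can be absorbed uniformly in $J,M$ — but this is exactly the localised version of the classical argument and, as Remark~\ref{R4.4} indicates, presents no new difficulty beyond the $n$-clan case.
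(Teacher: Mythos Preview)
Your proposal is correct and follows essentially the same route as the paper: the local characterisation \eqref{4.8}, the controlled distortion of dyadic cubes under $\psi$, the classical diffeomorphism result (Proposition~\ref{P4.3}) together with the smooth multiplier bound \eqref{4.5} applied cube by cube, then \eqref{2.42} for $\La_{\vr}F$ and real interpolation \eqref{3.14} for $\La_{\vr}B$. The only cosmetic point is that for absorbing the smooth chain-rule coefficients on each cube you should cite the classical bound \eqref{4.5} rather than Theorem~\ref{T4.1}, since you are working in the classical space $A^{s-m}_{p,q}(2Q_{J,M})$ at that stage.
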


\begin{proof}
One has for some $d>0$ the controlled distortion
\begin{\eq}    \label{4.15}
\psi (Q_{J,M}) \subset d \, Q_{J, M'} \qquad \text{for all} \quad J\in \ganz, \quad M\in \zn,
\end{\eq}
and $M' =M' (J,M) \in \zn$. Then application of \eqref{4.5} and Proposition~\ref{P4.3} to \eqref{4.8} with $f \circ \psi$ in place of
$f$ show that $D_\psi$ in \eqref{4.13} maps $\La^{\vr} \As (\rn)$ into itself. This argument can also be applied to $\psi^{-1}$. This
ensures that $D_\psi$ is a map onto. The incorporation of $D_\psi$ in \eqref{4.14} is again a matter of the real interpolation 
\eqref{3.14}, \new{where again we may restrict ourselves to the case $A=B$ in view of \eqref{2.42}}.
\end{proof}

\begin{remark}   \label{R4.6}
It follows by the same argument that
\begin{\eq}   \label{4.16}
D_\psi : \quad \La^0 \Bs (\rn) \hra \La^0 \Bs (\rn), \qquad D_\psi f = f\circ \psi,
\end{\eq}
is an isomorphic map for the spaces in \eqref{2.41} of the $0$--clan according to Definition~\ref{D2.11}(iii).
\end{remark}

\subsection{Extensions}    \label{S4.4}
Next we deal with the third of the four key problems described in Section~\ref{S4.1}. Let again
\begin{\eq}   \label{4.17}
\rnp = \{ x= (x_1, \ldots, x_n) \in \rn: \ x_n >0 \}, \qquad n\in \nat.
\end{\eq}
As usual, $D'(\rnp)$ denotes the set of all distributions in $\rnp$. Furthermore, $g|\rnp \in D'(\rnp)$ stands for
the restriction of $g\in S' (\rn)$ to $\rnp$.

\begin{definition}   \label{D4.7}
Let $A(\rn)$ be a space covered by Definition~\ref{D2.11}. Then
\begin{\eq}   \label{4.18}
A(\rnp) = \big\{ f \in D' (\rnp): \ \text{$f= g|\rnp$ for some $g\in A(\rn)$} \big\},
\end{\eq}
\begin{\eq}   \label{4.19}
\| f \, | A(\rnp) \| = \inf \| g\,| A(\rn) \|,
\end{\eq}
where the infimum  is taken over all $g\in A(\rn)$ with $g|\rnp =f$.
\end{definition}

\begin{remark}   \label{T4.8}
It follows from standard arguments that $A(\rnp)$ is a quasi--Banach space (and a Banach space for $p \ge 1$, $q \ge 1$), continuously
embedded in $D' (\rnp)$ or the restriction of $S'(\rn)$ to \rnp.
\end{remark}

The restriction operator $\re$,
\begin{\eq}   \label{4.20}
\re f = f|\rnp: \quad S'(\rn) \hra D'(\rnp)
\end{\eq}
generates a linear and bounded map from $A(\rn)$ onto $A (\rnp)$. One asks for a linear and bounded extension operator $\ext$,
\begin{\eq}   \label{4.21}
\ext: \quad A(\rnp) \hra A(\rn)
\end{\eq}
such that
\begin{\eq}   \label{4.22}
\re \circ \ext = \id \qquad \text{identity in $A(\rnp)$}.                            
\end{\eq}
Of interest are common extension operators for substantial parts of the spaces in Definition~\ref{D2.11}. In the case of the 
$\vr$--clan with $-n <\vr <0$ there are for any $0<\ve <1$ common extension operators for all related spaces restricted by
\begin{\eq}   \label{4.23}
\ve <p<\infty, \quad \ve <q \le \infty, \quad |s| < \ve^{-1}, \quad -n < \vr < -\ve,
\end{\eq}
where the $n$--clan, consisting of the classical spaces in \eqref{2.38}, and the $0$--clan can be incorporated. This ensures that
interpolation can be applied. This will be of some use for us. The extension as described above for the classical spaces $\As (\rn)$
in \eqref{2.38} was a central topic in the theory of these function spaces from the very beginning. In {\cite[Section 2.5, 
pp.\,57--62]{T20}} one finds final assertions, including in particular $F^s_{\infty,q} (\rn)$, and related references which will not
be repeated here. But the incorporation of $F^s_{\infty,q} (\rn)$ into the already existing theory for the spaces $\As (\rn)$ (with
$p<\infty$ for $F$--spaces) is based on \eqref{2.35},
\begin{\eq}   \label{4.24}
\La^0 \Fs (\rn) = F^s_{\infty,q} (\rn), \qquad s\in \real, \quad 0<p<\infty, \quad 0<q \le \infty,
\end{\eq}
and rather peculiar lifting properties for the spaces $\As (\rn)$ and their restrictions to \rnp. A detailed description may be found
in {\cite[Proposition 2.68, p.\,59]{T20}}. The method itself goes back to \cite[Section 2.10.3, pp.\,231--233]{T78}. It has been 
extended in \cite{Saw10} and \cite{MNS19} to all spaces $\La^{\vr} \As (\rn)$ covered by Definition~\ref{D2.7}. One may also consult
\cite[Section 6.4.1, pp.\,168--172]{YSY10}. We fix the outcome.

\begin{theorem}    \label{T4.9}
Let
\begin{\eq}   \label{4.25}
\La_{\vr} \Bs(\rnp), \quad \La^{\vr} \Bs (\rnp) \quad \text{and} \quad \La_{\vr} \Fs (\rnp) = \La^{\vr} \Fs (\rnp)
\end{\eq}
be the restrictions of the spaces \eqref{2.44}, \eqref{2.45} on $\rnp$ of the $\vr$--clan $\rhoAs (\rn)$ 
with $-n <\vr <0$. Then for any $0 <\ve <1 $
and $p,q,s, \vr$ as in \eqref{4.23} there is a common extension operator from these spaces to their $\rn$--counterparts.
\end{theorem}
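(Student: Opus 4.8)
The plan is to reduce everything to a single, parameter-independent extension operator for the two $A^{s,\tau}_{p,q}$-type families $\La^{\vr}\As(\rnp)$, $A\in\{B,F\}$ (which, via the coincidence \eqref{2.42}, already carries along $\La_{\vr}\Fs(\rnp)$), and then to recover the remaining family $\La_{\vr}\Bs(\rnp)$ of the $\vr$--clan by real interpolation, using \eqref{3.14}. The feature one must insist on is that one and the same operator serves every admissible quadruple $(s,p,q,\vr)$ in \eqref{4.23}; this is exactly what makes interpolation applicable and what lets the $n$--clan and $0$--clan be swept up along the way.

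First I would invoke the universal extension operator for the spaces $\La^{\vr}\As(\rn)=A^{s,\tau}_{p,q}(\rn)$ with $\tau$ as in \eqref{2.30}, constructed in \cite{Saw10} and \cite{MNS19} for all spaces of Definition~\ref{D2.7}; a detailed account is also in \cite[Section~6.4.1, pp.\,168--172]{YSY10}, and the underlying Rychkov-type scheme goes back to \cite[Section~2.10.3, pp.\,231--233]{T78}. This $\ext$ is built from a single compactly supported kernel with sufficiently many vanishing moments and sufficient smoothness, both quantities being governed only by the $\ve$-bounds $|s|<\ve^{-1}$, $p,q>\ve$, $\vr<-\ve$, so for a fixed $\ve$ it may be chosen once and for all. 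It satisfies $\ext\colon\La^{\vr}\As(\rnp)\hra\La^{\vr}\As(\rn)$ together with $\re\circ\ext=\id$ in the sense of \eqref{4.22}. By \eqref{2.42}, $\La_{\vr}\Fs=\La^{\vr}\Fs$, so this already yields the required common operator for both $F$-families and for $\La^{\vr}\Bs$.

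Next I would treat $\La_{\vr}\Bs(\rnp)$ by interpolation. Given $s$ with $|s|<\ve^{-1}$, I pick $s_1<s<s_2$ still inside the range, $\theta\in(0,1)$ with $s=(1-\theta)s_1+\theta s_2$, and set $q_1=q_2=q$; formula \eqref{3.14} then reads $\La_{\vr}\Bs(\rn)=\big(\La^{\vr}F^{s_1}_{p,q}(\rn),\La^{\vr}F^{s_2}_{p,q}(\rn)\big)_{\theta,q}$. Since $\re$ and the operator $\ext$ from the first step make $\rnp$ a retract of $\rn$ for each of the two endpoint spaces, the interpolation functor produces the same retract structure between the interpolation spaces, whence $\big(\La^{\vr}F^{s_1}_{p,q}(\rnp),\La^{\vr}F^{s_2}_{p,q}(\rnp)\big)_{\theta,q}=\re\big[\La_{\vr}\Bs(\rn)\big]=\La_{\vr}\Bs(\rnp)$ (equivalent quasi-norms). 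The operator $\ext$ is bounded between the two endpoint pairs, hence by the interpolation property of $(\cdot,\cdot)_{\theta,q}$ it is bounded from $\La_{\vr}\Bs(\rnp)$ into $\La_{\vr}\Bs(\rn)$; and $\re\circ\ext=\id$ persists as an operator identity on the sum of the endpoint spaces, in particular on $\La_{\vr}\Bs(\rnp)$. Together with the first step this gives one common $\ext$ for all three families \eqref{4.25}.

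The main obstacle is the uniformity assertion in the first step: one must verify that the construction of \cite{Saw10,MNS19} genuinely yields a single operator valid on the whole parameter box \eqref{4.23} rather than one depending on $(s,p,q,\vr)$, which comes down to fixing the kernel's moment order and smoothness large enough in terms of $\ve$ alone. Once that is secured, the rest — the retract description of the restricted spaces and the interpolation of $\ext$ and $\re$ — is routine. I would also note that the same operator incorporates the classical spaces of the $n$--clan, cf.\ \cite[Proposition~2.68, p.\,59]{T20}, as well as the $0$--clan, so that interpolation across these clans (not needed here) is available too.
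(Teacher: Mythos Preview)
Your proposal is correct. The paper's own proof is a one-line citation of \cite[Theorems~3.7, 3.15]{MNS19}, which already contains the common extension operator for all three families \eqref{4.25} directly, in the original Morrey notation \eqref{2.24}, \eqref{2.25}, \eqref{2.29}, \eqref{2.30}; no further argument is given there. Your route differs in that you only invoke the extension for $\La^{\vr}\As(\rnp)$ (from \cite{Saw10}, \cite{MNS19}, \cite{YSY10}) and then recover the remaining family $\La_{\vr}\Bs(\rnp)$ via the real interpolation formula \eqref{3.14} together with a retract argument. This is precisely the mechanism the paper uses repeatedly elsewhere (Theorems~\ref{T3.8}, \ref{T3.10}, \ref{T4.1}, \ref{T4.5}, \ref{T4.12}, \ref{T4.13}) to transfer properties from $\La^{\vr}\As$ to $\La_{\vr}\Bs$, so it fits the paper's methodology well; indeed Remark~\ref{R4.10} records exactly the consequence you are deriving, namely that \eqref{3.14} passes to $\rnp$ once the extension is in hand. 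One small technical point: when $|s|$ is close to $\ve^{-1}$ you cannot keep both interpolation endpoints $s_1<s<s_2$ inside the same $\ve$-box, so you should construct $\ext$ once and for all for a slightly smaller $\ve'<\ve$ (say $\ve'=\ve/2$) to leave room; this costs nothing since the statement only asks for one operator per fixed $\ve$.
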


\begin{proof}
This follows from \cite[Theorems 3.7, 3.15, pp.\,327, 336]{MNS19}, reformulated according to \eqref{2.24}, \eqref{2.25} and 
\eqref{2.29}, \eqref{2.30}.
\end{proof}

\begin{remark}   \label{R4.10}
In \cite[Section 1.11.8, pp.\,69--72]{T06} it had been explained how restrictions and extensions can be used to shift interpolation
formulas from $\rn$ to bounded Lipschitz domains \Om. The arguments work equally well for $\rnp$ in place of \Om. In particular, 
Proposition~\ref{P3.3}, the useful interpolation formula \eqref{3.14} and Theorem~\ref{T3.5} remain valid if one replaces there
$\rn$ by \rnp.
\end{remark}

\begin{remark}   \label{R4.11}
By the given references one can extend Theorem~\ref{T4.9} to the spaces of the $0$--clan in Definition~\ref{D2.11}(iii) and their
restrictions to \rnp.
\end{remark}

\subsection{Traces}    \label{S4.5}
We come to the fourth and last key problem as listed in Section~\ref{S4.1}. Let $n\in \nat$ and $ n \ge 2$. Let $x =(x', x_n)$, $x'
\in \Rn$, $x_n \in \real$. The question arises for which spaces covered by Definition~\ref{D2.11} the trace $\tr$,
\begin{\eq}   \label{4.25a}
\tr f = f(x', 0), \quad x' \in \Rn, \quad f\in \La^{\vr} \As (\rn) \ \text{or} \  f\in \La_{\vr} \As (\rn)
\end{\eq}
generates a linear and bounded operator into (or better onto) some related spaces on \Rn. For the classical spaces in \eqref{2.38}, the
$n$--clan, one has rather final answers. One may consult {\cite[Section 2.2, pp.\,29--38]{T20}} and the references {therein}, covering also
some technical explanations which will not be repeated here. It was the main aim of \cite[Theorem 2.13, p.\,32]{T20} to complement
the  already existing assertions for the spaces $\As (\rn)$ with $p<\infty$ for the $F$--spaces by
\begin{\eq}   \label{4.26}
\tr: \quad F^s_{\infty,q} (\rn) \hra \Cc^s (\Rn), \qquad 0<q \le \infty, \quad s>0,
\end{\eq}
and
\begin{\eq}   \label{4.27}
\ext: \quad \Cc^s (\Rn) \hra F^s_{\infty,q} (\rn), \qquad 0<q \le \infty, \quad s>0,
\end{\eq}
with
\begin{\eq}   \label{4.28}
\tr \circ \ext = \id, \qquad \text{identity in $\Cc^s (\Rn)$}.
\end{\eq}
It comes out that the underlying arguments based on \eqref{4.24} can also be used to study traces of the spaces $\La^{\vr} \As (\rn)$.
This will be done below. But first we discuss some assertions about traces available in the literature. 

Let $n \ge 2$, $0<p<\infty$, $0<q \le \infty$, $-n \le \vr <-1$ and 
\begin{\eq}   \label{4.29}
s- \frac{1}{p} > \sigma^{n-1}_p = (n-1) \Big( \max \big( \frac{1}{p}, 1 \big) - 1 \Big).
\end{\eq}
Then $\tr$ is a linear and bounded operator such that
\begin{\eq}   \label{4.30}
\tr \La^{\vr} \Bs (\rn) = \La^{\vr+1} B^{s- \frac{1}{p}}_{p,q} (\Rn)
\end{\eq}
and
\begin{\eq}   \label{4.31}
\tr \La^{\vr} \Fs (\rn) = \La^{\vr+1} B^{s- \frac{1}{p}}_{p,p} (\Rn).
\end{\eq}
These assertions are covered by \cite[Theorem 3.10.8, p.\,144]{Sic12} based on \cite[Theorem 1.3, p.\,75]{SYY10} and \cite[Theorem 6.8, p.\,163]{YSY10} reformulated according to \eqref{2.29}, \eqref{2.30}. The related counterpart
\begin{\eq}   \label{4.32}
\tr \new{\La_{\vr} \Bs (\rn)} = \La_{\vr+1} B^{s- \frac{1}{p}}_{p,q} (\Rn)
\quad\text{and}\quad \tr \La_{\vr} \Fs (\rn) = \La^{\vr+1} B^{s- \frac{1}{p}}_{p,p} (\Rn)
\end{\eq}
under the same restrictions for the parameters $p,q, \vr$ and $s$ as above goes back to \cite[Theorem 3.8, p.\,329]{MNS19}. \new{Again the case $A=F$ in \eqref{4.32} follows from \eqref{4.31} due to the coincidence \eqref{2.42}, whereas Theorem~\ref{T2.16}(iv) shows that one cannot replace there $\La^{\vr+1}$ by $\La_{\vr+1}$ as one might expect.} It follows
from \eqref{2.32} that one recovers the classical assertions if one chooses $\vr = -n$ in \eqref{4.30}--\eqref{4.32}. The 
Slope--$n$--Rule, see \new{Slope Rules~\ref{slope_rules}(ii)}, suggests that $s - \frac{|\vr|}{p}$ is the so--called differential dimension for the
spaces on the left--hand sides in \eqref{4.30}--\eqref{4.32}. Similarly for the spaces on the related right--hand sides. It follows
from
\begin{\eq}   \label{4.33}
s - \frac{|\vr|}{p} = s- \frac{1}{p} + \frac{\vr +1}{p} = s- \frac{1}{p} - \frac{|\vr + 1|}{p}
\end{\eq}
that they are the same both for the original spaces in $\rn$ and the related target spaces in \Rn. But the above assertions for the
spaces \new{$\rhoAs(\rn)$} in \eqref{2.44} of the $\vr$--clan with $-n < \vr <0$ are less complete than their classical counterparts in \cite[Theorem 2.13, p.\,32]{T20}. There arise several questions. The condition \eqref{4.29} is natural for the classical spaces $\As (\rn)$, but not for
the spaces in \eqref{2.44} subject to the \new{Slope Rules~\ref{slope_rules}}. The above assertions exclude spaces with $-1 \le \vr <0$,
deserving special attention as will be clear later on. The (preferably atomic) arguments in the papers mentioned above do not produce
linear and bounded extension operators in generalisation of \eqref{4.27}, \eqref{4.28}. But this is crucial both for the theory itself
and also for applications, for example to boundary value problems for elliptic differential operators in, say, smooth bounded domains
in \rn. We try to answer at least some of these questions and to raise the trace theory for the spaces in \eqref{2.44} to the same
level as the other key problems treated above.

Let again $\Cc^s (\rn)$, $s\in \real$, be the H\"{o}lder--Zygmund spaces as introduced in \eqref{2.16}. Let $\La^{\vr} \As (\rn)$, $\La_{\vr} \As (\rn)$, 
$A \in \{B,F\}$, be the spaces in \eqref{2.44} of the related $\vr$--clan with $-n < \vr <0$. From \cite[Proposition 3.54, 
p.\,92]{T14} and \eqref{2.31} it follows that
\begin{\eq}   \label{4.34}
\La^{\vr} \As (\rn) \hra \Cc^{s- \frac{|\vr|}{p}} (\rn).
\end{\eq}
If $s > \frac{|\vr|}{p}$, then $\Cc^{s- \frac{|\vr|}{p}} (\rn) \hra C(\rn)$, where $C(\rn) = C^0 (\rn)$ is the usual space of all
bounded continuous functions in $\rn$ normed according to \eqref{4.4}. Then the trace on $\Rn$ makes sense pointwise. Combined with
the (almost but not totally obvious) restriction
\begin{\eq}  \label{4.35}
\tr \Cc^\sigma (\rn) = \Cc^\sigma (\Rn), \qquad \sigma >0,
\end{\eq}
\cite[Theorem 2.13, p.\,32]{T20}, one obtains
\begin{\eq}   \label{4.36}
\tr: \quad \La^{\vr} \As (\rn) \hra \Cc^{s- \frac{|\vr|}{p}} (\Rn), \quad 0<p<\infty, \ 0<q \le \infty, \ s> |\vr|/p.
\end{\eq}
We wish to show that the space on the right--hand side is the related trace space and that there is a common linear and bounded
extension operator which does not only apply to a fixed space $\La^{\vr_0} A^{s_0}_{p_0,q_0} (\rn)$ but also to spaces $\La^{\vr}
\As (\rn)$ with \new{parameters $(s,p,q,\vr)$ close to $(s_0, p_0, q_0, \vr_0)$}\ignore{neighbouring $(s,p,q,\vr)$ compared with $(s_0, p_0, q_0, \vr_0)$}. We will not stress this point, but it justifies
interpolation ensuring below that the extension operator applies also to related spaces $\La_{\vr} \Bs (\rn)$. 

\begin{theorem}   \label{T4.12}
Let $n \ge 2$, $-1 <\vr <0$, 
\begin{\eq}   \label{4.37}
0<p<\infty, \qquad 0<q \le \infty \qquad \text{and} \qquad s > |\vr|/p.
\end{\eq}
Then
\begin{\eq}   \label{4.38}
\tr: \quad \La^{\vr} \As (\rn) \hra \Cc^{s- \frac{|\vr|}{p}} (\Rn ),\quad \new{A\in\{B,F\},}
\end{\eq}
\begin{\eq}   \label{4.39}
\tr: \quad \La_{\vr} \Bs (\rn) \hra B^{s- \frac{|\vr|}{p}}_{\infty, q} (\Rn),
\end{\eq}
\new{
and
  \begin{\eq}   \label{4.39a}
\tr: \quad \La_{\vr} \Fs (\rn) \hra \Cc^{s- \frac{|\vr|}{p}} (\Rn).
\end{\eq}}%
Furthermore there are linear and  bounded extension operators $\ext$ with
\begin{\eq}   \label{4.40}
\tr \circ \ext = \id \quad \text{identity in} \quad \Cc^{s- \frac{|\vr|}{p}} (\Rn) \ \text{and} \ B^{s- \frac{|\vr|}{p}}_{\infty,q} 
(\Rn)
\end{\eq}
such that
\begin{\eq}   \label{4.41}
\ext: \quad \Cc^{s- \frac{|\vr|}{p}} (\Rn) \hra \La^{\vr} \As (\rn),
\end{\eq}
\begin{\eq}   \label{4.42}
\ext: \quad B^{s- \frac{|\vr|}{p}}_{\infty, q} (\Rn) \hra \La_{\vr} \Bs (\rn),
\end{\eq}
\new{and
\begin{\eq}   \label{4.42a}
\ext: \quad \Cc^{s- \frac{|\vr|}{p}} (\Rn) \hra \La_{\vr} \Fs (\rn).
\end{\eq}}
\end{theorem}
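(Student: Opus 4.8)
The two boundedness statements \eqref{4.38} and \eqref{4.39a} cost nothing new: \eqref{4.38} is exactly \eqref{4.36}, already obtained from the embedding \eqref{4.34} together with the classical trace identity \eqref{4.35}, and \eqref{4.39a} is the case $A=F$ of \eqref{4.38}, since $\La_{\vr}\Fs(\rn)=\La^{\vr}\Fs(\rn)$ by \eqref{2.42}. For \eqref{4.39} the plan is to pass through the real interpolation formula \eqref{3.14}: choose $s_1<s<s_2$ with $s_1,s_2>|\vr|/p$ (possible because $s>|\vr|/p$) and $\theta\in(0,1)$ with $s=(1-\theta)s_1+\theta s_2$, so that $\La_{\vr}\Bs(\rn)=\big(\La^{\vr}F^{s_1}_{p,q_1}(\rn),\La^{\vr}F^{s_2}_{p,q_2}(\rn)\big)_{\theta,q}$ for any admissible $q_1,q_2$. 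On each endpoint \eqref{4.38} gives $\tr: \La^{\vr}F^{s_i}_{p,q_i}(\rn)\hra\Cc^{s_i-|\vr|/p}(\Rn)=B^{s_i-|\vr|/p}_{\infty,\infty}(\Rn)$, the trace being the honest pointwise restriction and hence consistent on the couple (both endpoints embed into $C(\rn)$). Real interpolation of the targets, $\big(B^{\sigma_1}_{\infty,\infty}(\Rn),B^{\sigma_2}_{\infty,\infty}(\Rn)\big)_{\theta,q}=B^{s-|\vr|/p}_{\infty,q}(\Rn)$, then delivers \eqref{4.39}.

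The substance of the theorem is the construction of one (manifestly linear) extension operator serving all three families at once, which moreover applies unchanged to all spaces with parameters $(s,p,q,\vr)$ near a prescribed one (so that interpolation may be invoked). I would take the Littlewood--Paley extension
\[
(\ext g)(x',x_n)=\sum_{j=0}^{\infty}\big(\vp_j\wh{g}\,\big)^{\vee}(x')\,\omega\big(2^{j}x_n\big),\qquad x'\in\Rn,\ x_n\in\real,
\]
where $\{\vp_j\}$ is a dyadic resolution of unity on $\Rn$ and $\omega\in S(\real)$ is a fixed function with $\wh{\omega}$ supported in $\{|\eta|\le2\}$ and $\omega(0)=1$. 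Writing $g_j=(\vp_j\wh{g})^{\vee}$ and $h_j=g_j\,\omega(2^{j}\cdot)$, one checks that $\supp\wh{h_j}$ lies in a dyadic annulus at scale $2^{j}$ in $\rn$ (a ball for $j=0$); that $\|g_j\|_{\infty}\le 2^{-j(s-|\vr|/p)}\|g\,|\,\Cc^{s-|\vr|/p}(\Rn)\|$ and $s>|\vr|/p$, so the series converges uniformly and $\ext g\in C(\rn)\subset S'(\rn)$; and that, since $\omega(0)=1$ and $\sum_j g_j=g$, one has $\tr\circ\ext=\id$ on $\Cc^{s-|\vr|/p}(\Rn)$, hence also on $B^{s-|\vr|/p}_{\infty,q}(\Rn)$. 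To estimate the quasi-norms I would use that, because the $h_j$ have annular Fourier supports, the $\La^{\vr}\Bs(\rn)$- resp. $\La_{\vr}\Bs(\rn)$-quasi-norm of $\sum_j h_j$ is equivalent to the corresponding $\ell_q\big(L_p(Q_{J,M})\big)$-type expression formed from the $h_j$ (a standard Fourier-multiplier reduction, cf. \cite{T14}), and then split a dyadic cube $Q_{J,M}\subset\rn$ as $Q'_{J,M'}\times I_{J,M_n}$ with $Q'_{J,M'}$ an $(n-1)$-dimensional cube of side $2^{-J}$ and $I_{J,M_n}$ an interval of length $2^{-J}$. From $\int_{Q'_{J,M'}}|g_j|^p\le 2^{-J(n-1)}\|g_j\|_{\infty}^p$ and, $\omega$ being Schwartz, $\int_{I_{J,M_n}}|\omega(2^jt)|^p\,dt\lesssim 2^{-j}$ when $j\ge J$ and $\lesssim 2^{-J}$ when $j<J$, everything reduces to geometric sums in $j$.

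This is exactly where the low-slope hypothesis $-1<\vr<0$, i.e. $|\vr|<1$, is indispensable. For $\La^{\vr}\Bs(\rn)$ the relevant $j$-sum runs over $j\ge J^{+}\ge J$; the governing exponent $s-(s-|\vr|/p)-1/p=(|\vr|-1)/p$ is strictly negative, so the sum converges and is controlled by its first term, and the residual power of $2^{J}$ collapses to $2^{(1-|\vr|)(J-J^{+})/p}$, which is $1$ for $J\ge0$ and $\le1$ for $J<0$; hence $\|\ext g\,|\,\La^{\vr}\Bs(\rn)\|\lesssim\|g\,|\,\Cc^{s-|\vr|/p}(\Rn)\|$ with an arbitrary fine index, which gives \eqref{4.41} for $A=B$ and, via the sandwich \eqref{2.54}, also for $A=F$, and then \eqref{4.42a} by \eqref{2.42}. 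For $\La_{\vr}\Bs(\rn)$ the same splitting with the supremum over $Q_{J,M}$ now inside the $j$-summation yields, after a short case distinction $j\gtrless J$, that $\sup_{J,M}2^{\frac{J}{p}(n+\vr)}\|h_j\,|\,L_p(Q_{J,M})\|\sim 2^{-j|\vr|/p}\|g_j\|_{\infty}$ — and here $|\vr|<1$ is precisely what keeps this supremum finite as $J\to-\infty$ — so that $\sum_j 2^{jsq}\big(2^{-j|\vr|/p}\|g_j\|_{\infty}\big)^q=\|g\,|\,B^{s-|\vr|/p}_{\infty,q}(\Rn)\|^q$, which is \eqref{4.42}. (Alternatively, \eqref{4.42} can be deduced from \eqref{4.41} with $A=F$ by interpolating via \eqref{3.14}, using that $\ext$ does not depend on the parameters.) I expect the main obstacle to be conceptual rather than computational: it is the sign of $(|\vr|-1)/p$. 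For $|\vr|\ge1$ the construction fails, in accordance with \eqref{4.32}, where for $\vr<-1$ the trace space is a Morrey--Besov space on $\Rn$ and not a H\"older--Zygmund space, so $\Cc^{s-|\vr|/p}(\Rn)$ cannot be the trace space at all.
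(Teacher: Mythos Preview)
Your proposal is correct and follows essentially the same architecture as the paper: \eqref{4.38} from \eqref{4.36}, \eqref{4.39} and \eqref{4.42} by real interpolation \eqref{3.14}, and \eqref{4.39a}, \eqref{4.42a} via the coincidence \eqref{2.42}. The one substantive difference is the construction of $\ext$. The paper expands $g\in\Cc^{s-|\vr|/p}(\Rn)$ in Daubechies wavelets $\psi^j_{G,m}$ and sets $(\ext g)(x',x_n)=\sum\lambda^{j,G}_m(g)\,\psi^j_{G,m}(x')\,\chi(2^j x_n)$ with $\chi\in D(\real)$, so that the resulting $n$-dimensional building blocks are atoms with all needed moment conditions; the estimate is then an atomic-coefficient calculation (their \eqref{4.49}--\eqref{4.50}) that is line-for-line parallel to your Littlewood--Paley computation, and the crucial use of $|\vr|<1$ appears at exactly the same place.

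Your Fourier-analytic extension (Littlewood--Paley pieces $g_j$ times $\omega(2^j x_n)$ with $\wh{\omega}$ compactly supported) is a legitimate alternative: the pieces $h_j$ have the right dyadic Fourier localisation, so the Morrey--Besov quasi-norms reduce to the $h_j$ directly, avoiding the atomic machinery. What the paper's wavelet/atom route buys is that it plugs straight into the existing atomic theorems for $\La^{\vr}\As(\rn)$ in \cite{T14}; what your route buys is a self-contained estimate, and as a bonus you obtain \eqref{4.42} directly (not only by interpolation). Both approaches need the same ``Fourier-multiplier reduction'' or its atomic analogue, and both collapse to the same geometric sum governed by the exponent $(|\vr|-1)/p<0$.
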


\begin{proof}
  {\em Step 1.} By \eqref{4.36} one has \eqref{4.38}. Then \eqref{3.14} and and its classical counterpart prove \eqref{4.39}
 by real interpolation. 
\cm
{\em Step 2.} We prove \eqref{4.41} with $A=B$. For this purpose we rely on the same wavelet arguments as in the proof of
\cite[Theorem 2.13, pp.\,32--34]{T20}. We expand $f\in \Cc^\sigma (\Rn)$, $\sigma = s - \frac{|\vr|}{p} >0$ in $\Rn$ similarly as in
\eqref{3.7}--\eqref{3.9},
\begin{\eq}   \label{4.43}
f = \sum_{\substack{j \in \no, G\in G^j, \\ m\in \Zn}} \lambda^{j,G}_m \, \psi^j_{G,m} (x'), \quad \psi^j_{G,m} (x') = 
\prod^{n-1}_{l=1} \psi_{G_l} \big( 2^j x_l - m_l \big),
\end{\eq}
where $x' = (x_1, \ldots, x_{n-1})$,
\begin{\eq}   \label{4.44}
\lambda^{j,G}_m = \lambda^{j,G}_m (f) = 2^{j(n-1)} \big( f, \psi^j_{G,m} \big), \qquad j \in \no, \quad m\in \Zn,
\end{\eq}
with $G^0 = \{F,M \}^{n-1}$, $G^j = \{F,M \}^{n-1, *}$ for $j\in \nat$ and
\begin{\eq}   \label{4.45}
\| f \, | \Cc^\sigma (\Rn) \| \sim \sup_{\substack{j \in \no, G\in G^j, \\ m\in \Zn}} 2^{j\sigma} \big| \lambda^{j,G}_m (f) \big|.
\end{\eq}
We use now a simplified version of the wavelet--friendly extension as constructed in \cite[Section 5.1.3, pp.\,139--147]{T08}. Let
\begin{\eq}   \label{4.46}
\chi \in D(\real) = C^\infty_0 (\real), \quad \supp \chi \subset (-1,1), \quad \chi (t) =1 \ \text{if} \ |t| \le 1/2,
\end{\eq}
and $x= (x', x_n)$, $x' \in \Rn$, $x_n \in \real$. Then
\begin{\eq}   \label{4.47}
g = \ext f = \sum_{\substack{j \in \no, G\in G^j, \\ m\in \Zn}} \lambda^{j,G}_m (f) \, \psi^j_{G,m} (x') \, \chi \big(2^j x_n \big)
\end{\eq}
with
\begin{\eq}    \label{4.48}
g(x',0) = f(x'), \qquad x' \in \Rn,
\end{\eq}
is an atomic expansion for the spaces $\La^{\vr} \Bs (\rn)$ as described in \cite[Theorem 3.33, p.\,67]{T14} (including all requested
moment conditions, if required). In particular, according to \eqref{3.10} one has to estimate
\begin{\eq}    \label{4.49}
\begin{aligned}
&\| g \, | \La^{\vr} \Bs (\rn) \| \\
&\le c \, \sup_{J\in \ganz, M\in \zn} 2^{\frac{J}{p} (n+ \vr)} \bigg( \sum_{j \ge J^+} 2^{j(s- \frac{n}{p})q} 
\Big( \sum_{\substack{m: Q_{j,m}
\subset Q_{J,M}, \\ G\in G^j}} \big| \lambda^{j,G}_m (f) \big|^p \Big)^{q/p} \bigg)^{1/q}.
\end{aligned}
\end{\eq}
We may assume $q<\infty$. By \eqref{4.43} and \eqref{4.45} with $\sigma = s - \frac{|\vr|}{p}$ only $\sim 2^{(n-1)(j-J)}$ terms
contribute to the last sum. Then
\begin{\eq}   \label{4.50}
\begin{aligned}
& \| g \, | \La^{\vr} \Bs (\rn) \|^q \\
&\le c \, \|f \, | \Cc^{s- \frac{|\vr|}{p}} (\Rn) \|^q \, \sup_{J\in \ganz, M \in \zn} 2^{\frac{Jq}{p} (n+\vr)} \sum_{j \ge J}
2^{j(s- \frac{n}{p})q - j(s- \frac{|\vr|}{p})q} \, 2^{(n-1) \frac{q}{p} (j- J)} \\
& \le c' \, \|f \, | \Cc^{s- \frac{|\vr|}{p}} (\Rn) \|^q \, \sup_{J\in \ganz, M\in \zn} 2^{\frac{Jq}{p} (1+\vr)} \sum_{j \ge J}
2^{-j \frac{q}{p} ( 1 - |\vr|)} \\
& \le c'' \|f \, | \Cc^{s- \frac{|\vr|}{p}} (\Rn) \|^q
\end{aligned}
\end{\eq}
where we used $0<|\vr| <1$. This proves \eqref{4.41} with $A = B$.
\cm
{\em Step 3.} The corresponding assertion for the spaces in \eqref{4.41} with $A=F$ follows from \eqref{2.54}. The real interpolation
\eqref{3.14} and its classical counterpart prove \eqref{4.42}. 
\new{Clearly, \eqref{4.39a} and \eqref{4.42a} are consequences of \eqref{4.38} and \eqref{4.41} together with the coincidence \eqref{2.42}.}
\end{proof}

The above arguments can also be used to justify \eqref{4.30}--\eqref{4.32} and to show that $\ext$ as constructed in \eqref{4.47} is also
an extension operator for the related trace spaces. As there it applies  not only to a fixed space $\La^{\vr_0} A^{s_0}_{p_0, q_0}
(\rn)$ or $\La_{\vr_0} B^{s_0}_{p_0, q_0} (\rn)$ but also to corresponding spaces with neighbouring $(s,p,q,\vr)$. We formulate the
outcome and indicate the respective modifications. We concentrate again on the three families in \eqref{2.44} of the related 
$\vr$--clan. Let $\sigma^{n-1}_p$ be as in \eqref{4.29}.

\begin{theorem}  \label{T4.13}
Let $n \ge 2$, $-n < \vr <-1$,
\begin{\eq}   \label{4.51}
0<p<\infty, \quad 0<q \le \infty \quad \text{and} \quad s - \frac{1}{p} > \sigma^{n-1}_p.
\end{\eq}
Then
\begin{\eq}   \label{4.52}
\tr:\quad \La^{\vr} \Bs (\rn) \hra \La^{\vr +1} B^{s- \frac{1}{p}}_{p,q} (\Rn),
\end{\eq}
\begin{\eq}   \label{4.53}
\tr:\quad \La^{\vr} \Fs (\rn) \hra \La^{\vr +1} B^{s- \frac{1}{p}}_{p,p} (\Rn),
\end{\eq}
\begin{\eq}   \label{4.54}
\tr:\quad \La_{\vr} \Bs (\rn) \hra \La_{\vr +1} B^{s- \frac{1}{p}}_{p,q} (\Rn).
\end{\eq}
\new{and
\begin{\eq}   \label{4.54a}
\tr:\quad \La_{\vr} \Fs (\rn) \hra \La^{\vr +1} B^{s- \frac{1}{p}}_{p,p} (\Rn).
\end{\eq}}
Furthermore there are linear and bounded extension operators $\ext$ with
\begin{\eq}   \label{4.55}
\tr \circ \ext = \id \quad \text{identity in} \quad \La^{\vr+1} B^{s- \frac{1}{p}}_{p,q} (\Rn) \ \text{and} \ \La_{\vr +1} B^{s- 
\frac{1}{p}}_{p,q} (\Rn)
\end{\eq} 
such that
\begin{\eq}  \label{4.56}
\ext: \quad \La^{\vr +1} B^{s- \frac{1}{p}}_{p,q} (\Rn) \hra \La^{\vr} \Bs (\rn),
\end{\eq}
\begin{\eq}  \label{4.57}
\ext: \quad \La^{\vr +1} B^{s- \frac{1}{p}}_{p,p} (\Rn) \hra \La^{\vr} \Fs (\rn),
\end{\eq}
\begin{\eq}  \label{4.58}
\ext: \quad \La_{\vr +1} B^{s- \frac{1}{p}}_{p,q} (\Rn) \hra \La_{\vr} \Bs (\rn),
\end{\eq}
\new{and
\begin{\eq}  \label{4.58a}
\ext: \quad \La^{\vr +1} B^{s- \frac{1}{p}}_{p,p} (\Rn) \hra \La_{\vr} \Fs (\rn),
\end{\eq}  }
\end{theorem}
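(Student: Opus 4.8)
The proof splits into the trace direction, which is essentially already in the literature, and the construction of a common extension operator, which is the new contribution. On the trace side, \eqref{4.52} and \eqref{4.53} are the boundedness halves of \eqref{4.30} and \eqref{4.31}, \eqref{4.54} is the first half of \eqref{4.32}, and \eqref{4.54a} coincides with \eqref{4.53} once one invokes the identity \eqref{2.42}, $\La_{\vr}\Fs(\rn)=\La^{\vr}\Fs(\rn)$; so here nothing new is needed. Likewise, on the extension side \eqref{4.58a} is just \eqref{4.57} read through \eqref{2.42}, so it remains to establish \eqref{4.56}, \eqref{4.57}, \eqref{4.58} together with \eqref{4.55}.

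The heart of the matter is the single wavelet--friendly operator $\ext$ from \eqref{4.47}, which does not depend on $s,p,q,\vr$. Given $f\in\La^{\vr+1}B^{s-\frac1p}_{p,q}(\Rn)$, I would expand it in $(n-1)$--dimensional Daubechies wavelets as in \eqref{4.43}; since $-n<\vr<-1$ forces $-(n-1)<\vr+1<0$, Proposition~\ref{P3.1}, applied in dimension $n-1$ with $(s-\frac1p,\vr+1)$ in place of $(s,\vr)$, is legitimate and controls the coefficients $\{\lambda^{j,G}_m(f)\}$ by the analogue of \eqref{3.10}. The function $g=\ext f$ is built from the products $\psi^j_{G,m}(x')\,\chi(2^jx_n)$, each of which, after the obvious normalisation, is a $C^u$--atom in $\rn$ supported within $O(1)$ dyadic cubes at level $j$ abutting the hyperplane $\{x_n=0\}$; the moment conditions demanded by \cite[Theorem~3.33]{T14} hold automatically, in the $x'$--directions because for $j\ge1$ every $G\in G^j$ carries at least one factor $\psi_M$ with its vanishing moments \eqref{3.2} (choose $u$ large relative to the moment order forced by \eqref{4.51}), while the $j=0$ atoms require none. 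Thus $g$ is an atomic decomposition, and by \eqref{3.10} I would estimate $\|g \, | \La^{\vr}\Bs(\rn)\|$ by the associated sequence--space norm exactly as in \eqref{4.49}.

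The decisive computation then parallels \eqref{4.50}, but keeping a Besov--Morrey target on $\Rn$ in place of the H\"older--Zygmund one: since the atoms live within distance $\sim 2^{-j}$ of $\{x_n=0\}$, only cubes $Q_{J,M}$ touching the hyperplane contribute, and for such a cube the inner double sum collapses, up to an $O(1)$ factor, to the $(n-1)$--dimensional sum over $\{m':Q_{j,m'}\subset Q_{J,M'}\}$. The two elementary identities
\[
(n-1)+(\vr+1)=n+\vr,\qquad \Big(s-\tfrac1p\Big)-\tfrac{n-1}{p}=s-\tfrac np
\]
make the resulting quantity literally equal to the right--hand side of \eqref{3.10} for $f$; this is the Slope--$n$--Rule in action, and it yields \eqref{4.56}. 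For \eqref{4.57} I would run the same scheme against the atomic characterisation of $\La^{\vr}\Fs(\rn)$: at a point $(x',x_n)$ with $|x_n|\sim 2^{-l}$ only the levels $J^+\le j\lesssim l$ survive, integration over the dyadic shell in $x_n$ produces a factor $\sim 2^{-l}$, and a discrete Hardy inequality in the shell index $l$ (valid for every $0<q\le\infty$; for $q<p$ via the usual $\ve$--shift) reduces the inner expression to $\sum_{j\ge J^+}2^{-j}\,2^{jsp}|\lambda^{j,G}_m(f)|^p$, i.e.\ absorbs the fine index of the source into the value $p$; the same two identities then identify the outcome with $\|f \, | \La^{\vr+1}B^{s-\frac1p}_{p,p}(\Rn)\|$. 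Finally \eqref{4.58} follows from \eqref{4.57} by real interpolation: by \eqref{3.14} one has $\La_{\vr}\Bs(\rn)=\big(\La^{\vr}F^{s_1}_{p,q_1}(\rn),\La^{\vr}F^{s_2}_{p,q_2}(\rn)\big)_{\theta,q}$, while $\La^{\vr+1}B^{s-\frac1p}_{p,q}(\Rn)$ is the $(\cdot,\cdot)_{\theta,q}$--interpolation space of $\La^{\vr+1}B^{s_1-\frac1p}_{p,p}(\Rn)$ and $\La^{\vr+1}B^{s_2-\frac1p}_{p,p}(\Rn)$ (the $\Rn$--analogue of \eqref{3.13}), onto which the single operator $\ext$ acts boundedly by \eqref{4.57} applied with neighbouring parameters, as flagged in the paragraph preceding the theorem. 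That $\tr\circ\ext=\id$, hence \eqref{4.55}, is immediate from \eqref{4.46}: since $\chi(0)=1$, $g(x',0)=\sum\lambda^{j,G}_m(f)\,\psi^j_{G,m}(x')=f(x')$ by the wavelet reconstruction \eqref{4.43}.

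As for the difficulties, the $B$-- and $F$--atomic estimates are routine once the set--up is in place, so the real care lies in two points. First, the atomic bookkeeping itself: verifying that the products $\psi^j_{G,m}(x')\chi(2^jx_n)$ are genuinely admissible atoms (enough smoothness, enough moments) for \cite[Theorem~3.33]{T14} throughout the parameter range allowed by \eqref{4.51}; this is precisely where the hypothesis $s-\frac1p>\sigma^{n-1}_p$ enters, in the role played by $0<|\vr|<1$ at the end of the proof of Theorem~\ref{T4.12}. Second, making $\ext$ well defined on the non--separable spaces $\La^{\vr+1}B^{s-\frac1p}_{p,q}(\Rn)$: here one cannot argue by density of $S(\Rn)$ but must rely on the unconditional $S'(\Rn)$--convergence of the wavelet series \eqref{4.43} together with the Fatou property of Theorem~\ref{T3.12}, which upgrades the a priori coefficient bound to genuine membership of $g$ in the target space.
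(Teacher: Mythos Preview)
Your proposal is essentially correct and follows the paper's own strategy: the same wavelet--friendly extension operator \eqref{4.47}, the atomic estimate via \cite[Theorem~3.33]{T14} together with the index identities \eqref{4.59} for \eqref{4.56}, and real interpolation \eqref{3.14} for \eqref{4.58}; the trace side is indeed already covered by \eqref{4.30}--\eqref{4.32}, which the paper also re-proves in its Step~1 via the same wavelet/atomic mechanism.

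Two small corrections are in order. First, in your interpolation step for \eqref{4.58} you write that the $(\cdot,\cdot)_{\theta,q}$--interpolation of $\La^{\vr+1}B^{s_1-\frac1p}_{p,p}(\Rn)$ and $\La^{\vr+1}B^{s_2-\frac1p}_{p,p}(\Rn)$ is $\La^{\vr+1}B^{s-\frac1p}_{p,q}(\Rn)$; this is false as stated. By \eqref{2.55}, \eqref{2.58} these endpoint spaces coincide with $\La_{\vr+1}F^{s_j-\frac1p}_{p,p}(\Rn)$, and then \eqref{3.14} (or equivalently \eqref{3.13}) gives $\La_{\vr+1}B^{s-\frac1p}_{p,q}(\Rn)$, with subscript $\vr+1$, which is precisely the source space in \eqref{4.58}; so your argument is right once the subscript is fixed. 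Second, your treatment of \eqref{4.57} differs from the paper's: rather than a direct Hardy--type shell computation, the paper invokes the structural fact (from \cite[p.\,145, (5.98)]{T08}) that for wavelet expansions supported near the hyperplane the $\La^{\vr}f^s_{p,q}$--sequence norm is equivalent to the $\La^{\vr}b^s_{p,p}$--sequence norm, which immediately reduces \eqref{4.57} to the $B$--case with $q=p$. Your sketch is a plausible alternative but would need the Hardy step made precise; the paper's route is shorter. Finally, note that the hypothesis $s-\tfrac1p>\sigma^{n-1}_p$ is actually used on the \emph{trace} side (atoms on $\Rn$ without moment conditions), not in the extension estimate, where the atoms in \eqref{4.47} carry all required moments.
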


\begin{proof}
{\em Step 1.} We prove \eqref{4.52} and expand $f\in \La^{\vr} \Bs (\rn)$ as in \eqref{3.7}, \eqref{3.8} with the equivalent 
quasi--norm \eqref{3.10}.
Then $f(x',0)$ looks like \eqref{4.43}. But one must be aware that
there are terms with $G_l =F$ for all $l= 1,\ldots,n-1$. 
Then one has no moment conditions and the outcome must be considered as an
expansion in $\Rn$ by atoms without moment conditions. The prototypes of the remaining terms on the right--hand side in \eqref{3.10}
are now $M = (M',0)$, $M' \in \Zn$ and related counterparts of $Q_{j,m} \subset Q_{J,M}$. Using
\begin{\eq}   \label{4.59}
n+ \vr = n-1 + (\vr +1) \qquad \text{and} \qquad s - \frac{n}{p} = s- \frac{1}{p} - \frac{n-1}{p}
\end{\eq}
then $-n +1 \le \vr +1 <0$ and $s - \frac{1}{p} > \sigma^{n-1}_p$ ensure that $f(x',0)$ is an expansion  in $\Rn$ by atoms without
moment conditions. Now it follows from \cite[Theorem 3.33, p.\,67]{T14} that $ f(x',0) \in \La^{\vr+1} B^{s- \frac{1}{p}}_{p,q} (\Rn)$
and
\begin{\eq}   \label{4.60}
\| f(x',0) \, | \La^{\vr+1} B^{s- \frac{1}{p}}_{p,q} (\Rn) \| \le c \, \| f \, | \La^{\vr} \Bs (\rn) \|.
\end{\eq}
This proves \eqref{4.52}. As far as \eqref{4.53} is concerned we rely on the wavelet expansion for $f\in \La^{\vr} \Fs (\rn) =
L^r \Fs (\rn)$ with $r = \vr/p$
in Theorem \cite[Theorem 3.26, p.\,64]{T14}. Then the sequence space $\La^{\vr} b^s_{p,q} (\rn)$ on the right--hand side of
\eqref{3.10} must be replaced by its counterpart $\La^{\vr} f^s_{p,q} (\rn) = L^{\vr/p} f^s_{p,q} (\rn)$ in \cite[Definition 3.24,
p.\,63]{T14}. But for the restriction of these wavelet expansions to terms contributing to $f(x',0)$, $x' \in \Rn$, one is in the same
situation as in \cite[p.\,145, (5.98)]{T08} and the references given there. For these terms the corresponding quasi--norm $\La^{\vr}
f^s_{p,q} (\rn)$ is equivalent to $\La^{\vr} b^s_{p,p} (\rn)$, which means the right--hand side of \eqref{3.10} with $q=p$. But then
\eqref{4.53} follows from the above arguments applied to $\La^{\vr} B^s_{p,p} (\rn)$. The interpolation 
\eqref{3.14} extends  the above assertion to \eqref{4.54}.
\cm
{\em Step 2.} Let $\ext$ be the same extension operator as in \eqref{4.46}--\eqref{4.48} (common for neighbouring spaces) with $f\in
\La^{\vr+1} B^{s- \frac{1}{p}}_{p,q} (\Rn)$. Then \eqref{4.56} follows from \eqref{4.59} applied to \eqref{3.10} with $g$ in place
of $f$ and its counterpart on $\Rn$ for $f$. Recall that the atoms in \eqref{4.47} have all moment conditions one needs. Then 
\eqref{4.57} follows again from \eqref{3.10} with $q=p <\infty$ and the comments at the end of the preceding Step 1. Finally 
\eqref{4.58} is a matter of interpolation in the same way as above.
\new{Again \eqref{4.54a} and \eqref{4.58a} result from \eqref{4.53}, \eqref{4.57}, and the coincidences \eqref{2.42}.}
\end{proof}

\begin{remark}   \label{R4.14}
The proof of the atomic expansions for the spaces $\La^{\vr}\As (\rn) = L^{\vr/p} \As (\rn)$ 
in \cite[Theorem 3.33, pp.\,67/68]{T14} based on \cite[Theorem
1.37, pp.\,28--31]{T13} relies on the related technicalities for the classical spaces $\As (\rn)$. This explains that one still
requires $s> \sigma^n_p$ with $t=n$ in
\begin{\eq}   \label{4.61}
\sigma^t_p = t \Big( \max \big(\frac{1}{p}, 1 \big) - 1 \Big), \qquad t \ge 0, \quad 0<p<\infty,
\end{\eq}
to ensure atomic expansions in $\La^{\vr} \Bs (\rn)$ without moment conditions. One may ask whether this
condition can be replaced by $s > \sigma^{|\vr|}_p$  as suggested by the Slope-$n$-Rule, see \new{Slope Rules~\ref{slope_rules}(ii)}. It would be desirable
to confirm this replacement. If so, then $s -
\frac{1}{p} > \sigma^{n-1}_p$ in \eqref{4.51} originating from the right--hand sides of \eqref{4.52}, \eqref{4.53}, could be 
replaced by $s - \frac{1}{p} > \sigma^{|\vr| -1}_p$ where $-n < \vr < -1$. Combined with Theorem~\ref{T4.12} one obtains the 
reasonable restriction
\begin{\eq}   \label{4.62}
s > \frac{1}{p} \min \big( |\vr|,1 \big) + \sigma^{\max (|\vr|,1) -1}_p, \qquad 0<p<\infty, \quad -n <\vr <0,
\end{\eq}
for traces of the spaces in \eqref{2.44} of the $\vr$--clan in \rn. If confirmed it would be an outstanding example of the 
Slope--1--Rule, see \new{Slope Rules~\ref{slope_rules}(i)}, in the interpretation that $|\vr|=1$ is a breaking point. Conditions of type \eqref{4.62} seem
to be quite natural. They appear also in other occasions, for example in Theorem~\ref{T5.29} below.
\end{remark}

The Theorems~\ref{T4.12} and \ref{T4.13} apply to all $\vr$--clans, $-n < \vr <0$, 
according to Definition~\ref{D2.11}(ii) with exception of $\vr = -1
$. It is quite clear that $|\vr|=1$ is a breaking point for  trace spaces. Theorem~\ref{T4.13} cannot be extended to $\La_{-1} \Bs 
(\rn)$ because possible trace spaces $\La_0 B^{s- \frac{1}{p}}_{p,q} (\Rn)$ are not covered by Definition~\ref{D2.5}. The situation
is better for the spaces $\La^{\vr} \As (\rn)$ as introduced in Definition~\ref{D2.7} for all $\vr$ with $-n \le \vr <\infty$. Furthermore one can extend the arguments in the proof of Theorem~\ref{T4.13} to corresponding spaces  with $\vr =-1$. We formulate
the outcome.

\begin{corollary}   \label{C4.15}
Let $n \ge 2$,
\begin{\eq}   \label{4.63}
0<p<\infty, \quad 0<q \le \infty \quad \text{and} \quad s- \frac{1}{p} > \sigma^{n-1}_p.
\end{\eq}
Then
\begin{\eq}  \label{4.64}
\tr: \quad \Lambda^{-1} \Bs (\rn) \hra \Lambda^0 B^{s- \frac{1}{p}}_{p,q} (\Rn)
\end{\eq}
and
\begin{\eq}  \label{4.65}
\tr: \quad \Lambda^{-1} \Fs (\rn) \hra \Lambda^0 B^{s- \frac{1}{p}}_{p,p} (\Rn).
\end{\eq}
Furthermore there are linear and bounded extension operators $\ext$ with
\begin{\eq}   \label{4.66}
\tr \circ \ext = \id \qquad \text{identity in} \quad \Lambda^0 B^{s- \frac{1}{p}}_{p,q} (\Rn)
\end{\eq}
such that
\begin{\eq}   \label{4.67}
\ext: \quad \Lambda^0 B^{s- \frac{1}{p}}_{p,q} (\Rn) \hra \La^{-1} \Bs (\rn)
\end{\eq}
and
\begin{\eq}   \label{4.68}
\ext: \quad  \Lambda^0 B^{s- \frac{1}{p}}_{p,p} (\Rn) \hra \La^{-1} \Fs (\rn).
\end{\eq}
\end{corollary}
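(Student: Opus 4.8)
The plan is to run the proof of Theorem~\ref{T4.13} with $\vr=-1$. The only structural change is that now $\vr+1=0$, so the trace spaces land in the $0$-clan, i.e. are of the form $\La^0 B^{s-\frac{1}{p}}_{p,q}(\Rn)$ rather than $\La^{\vr+1}B^{s-\frac{1}{p}}_{p,q}(\Rn)$ with $-1<\vr+1<0$; this is also why only the family $\La^{-1}\As(\rn)$ of Definition~\ref{D2.7} can be treated (a target of type $\La_0 B^{s-\frac1p}_{p,q}(\Rn)$ is not covered by Definition~\ref{D2.5}). The endpoint $\vr+1=0$ causes no trouble because, as recorded in Remark~\ref{R3.2}, the wavelet characterisation of Proposition~\ref{P3.1} --- and with it the atomic representation theorem \cite[Theorem 3.33, p.\,67]{T14} on which it rests --- remains valid for the $0$-clan spaces $\La^0 \Bs(\rn)$ (equivalently on $\Rn$).

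For the trace assertions \eqref{4.64}, \eqref{4.65} I would expand $f\in\La^{-1}\Bs(\rn)$ by wavelets with the equivalent quasi-norm \eqref{3.10} (taken with $\vr=-1$). Restricting to $x_n=0$ keeps only the terms indexed by $m=(m',0)$ and turns $f(x',0)$ into an expansion on $\Rn$ in the wavelets $\psi^j_{G,m}(x')$; exactly as in the proof of Theorem~\ref{T4.13}, the surviving terms for which $G_l=F$ for every $l=1,\dots,n-1$ produce atoms carrying no moment conditions, which is why the hypothesis $s-\frac{1}{p}>\sigma^{n-1}_p$ in \eqref{4.63} is imposed. Feeding the splittings
\[
 n+(-1)=(n-1)+0,\qquad s-\frac{n}{p}=\Big(s-\frac{1}{p}\Big)-\frac{n-1}{p}
\]
into \eqref{3.10} one recognises the resulting expression as the $\Rn$-analogue of \eqref{3.10} for $\La^0 B^{s-\frac{1}{p}}_{p,q}(\Rn)$; the atomic theorem (with $\vr=0$) then yields $f(x',0)\in\La^0 B^{s-\frac{1}{p}}_{p,q}(\Rn)$ together with the accompanying norm estimate, which is \eqref{4.64}. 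For \eqref{4.65} one starts instead from the wavelet expansion of $\La^{-1}\Fs(\rn)=L^{-1/p}\Fs(\rn)$ in \cite[Theorem 3.26, p.\,64]{T14}; for the subfamily of terms that survive the restriction to $x_n=0$ the $F$-type sequence quasi-norm collapses, as in \cite[p.\,145, (5.98)]{T08}, to the $B$-type one with $q=p$, so that the trace lands in $\La^0 B^{s-\frac{1}{p}}_{p,p}(\Rn)$.

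For the extension I would take $\ext$ to be the same wavelet-friendly operator $g=\ext f=\sum \lambda^{j,G}_m(f)\,\psi^j_{G,m}(x')\,\chi(2^j x_n)$ as in \eqref{4.46}--\eqref{4.48}, whose atoms carry all moment conditions one could need and which satisfies $g(x',0)=f(x')$. One bounds $\|g\,|\,\La^{-1}\Bs(\rn)\|$ through \eqref{3.10} with $\vr=-1$: the factor $\chi(2^j x_n)$ confines each level-$j$ term to a single dyadic slice in $x_n$, so the supremum over $J,M$ in \eqref{3.10} effectively reduces to $M=(M',0)$, and the two splittings above convert the expression into the $\Rn$-version of \eqref{3.10} for $\La^0 B^{s-\frac{1}{p}}_{p,q}(\Rn)$. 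This gives $\|g\,|\,\La^{-1}\Bs(\rn)\|\sim\|f\,|\,\La^0 B^{s-\frac{1}{p}}_{p,q}(\Rn)\|$, hence \eqref{4.67}; \eqref{4.68} follows by the same $q=p$ reduction as above, and \eqref{4.66} is immediate from $g(x',0)=f(x')$.

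The one point that needs care --- and the natural place for the argument to break if one is careless --- is the availability of the atomic machinery of \cite{T14} at the endpoint $\vr=0$ on $\Rn$: one needs both atomic synthesis (so the trace, presented as an atomic sum, is genuinely an element of $\La^0 B^{s-\frac{1}{p}}_{p,q}(\Rn)$) and atomic analysis (so the extension estimate is sharp). Remark~\ref{R3.2} supplies both. No new analytic difficulty appears: $\vr=-1$ is special only in that the trace space changes clan, and the $\chi(2^j x_n)$-extension still works because the target here is a full Besov-type space on $\Rn$, not merely a H\"older--Zygmund space --- in contrast to the coarser geometric-series estimate \eqref{4.50} used in Theorem~\ref{T4.12}, which is precisely what would fail at $|\vr|=1$.
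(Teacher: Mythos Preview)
Your proposal is correct and follows essentially the same route as the paper: invoke Remark~\ref{R3.2} to secure the wavelet/atomic machinery for the endpoint $\vr+1=0$ on $\Rn$, and then rerun the proof of Theorem~\ref{T4.13} verbatim with $\vr=-1$. The paper's own proof is just this, stated in two sentences; your version is a more detailed unpacking of the same argument.
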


\begin{proof}
 As mentioned in Remark~\ref{R3.2}, Proposition~\ref{P3.1} remains valid for the spaces $\La^0 \Bs (\rn)$. Then the
arguments in the proof of Theorem~\ref{T4.13} apply also to the above spaces, including the references about atomic expansions and the
indicated technicalities as far as the $F$--spaces are concerned.
\end{proof}

\begin{remark}   \label{R4.16}
Some consequences of the above corollary might be of interest. It follows from \eqref{2.34} and \eqref{2.36} that
\begin{\eq}   \label{4.69}
\tr \La^{-1} B^s_{p,\infty} (\rn) = \Cc^{s- \frac{1}{p}} (\Rn), \qquad 0<p<\infty, \quad s > \frac{1}{p}
\end{\eq}
and
\begin{\eq}   \label{4.70}
\tr \Lambda^{-1} \Fs (\rn) = F^{s- \frac{1}{p}}_{\infty,p} (\Rn), \quad 0<p<\infty, \quad 0<q\le \infty, \quad s - \frac{1}{p} 
>\sigma^{n-1}_p.
\end{\eq}
In \eqref{4.69} one does not need the sharper restriction $s - \frac{1}{p} > \sigma^{n-1}_p$ because no moment conditions for
atomic expansions in $\Cc^\sigma (\Rn)$ with $\sigma >0$ are requested. Furthermore, \eqref{4.50} appropriately modified can also be
applied to $|\vr| =1$ and $q=\infty$. In other words, \eqref{4.69} complements Theorem~\ref{T4.12}.
\end{remark}

\section{Essential features}    \label{S5}
\subsection{Embeddings in $L_\infty (\rn)$ and $C(\rn)$}     \label{S5.1}
First we deal with embeddings of the spaces covered by Definition~\ref{D2.11} in $L_\infty (\rn)$ and $C(\rn)$ as target spaces. We
always assume $f(x)=0$ if $x \in \rn$ is not a Lebesgue point of the locally Lebesgue--integrable function function $f(x)$ in \rn. Then
\begin{\eq}  \label{5.1}
\| f \, | L_\infty (\rn) \| = \sup_{x\in \rn} |f(x)|
\end{\eq}
is the norm in the usual space $L_\infty (\rn)$ of (essentially) bounded complex--valued functions in \rn. Let $C(\rn)$ be the space of
all continuous bounded complex--valued functions in $\rn$ normed by \eqref{5.1}. Both $L_\infty (\rn)$ and $C(\rn)$ are considered as
subspaces of $S'(\rn)$. As above $\hra$ indicates continuous embedding. We concentrate on the $\vr$--clan with $-n <\vr <0$
consisting of the three
families \eqref{2.44}, \eqref{2.45}. But it comes out that there are some peculiarities compared with the related embeddings of the
classical spaces \eqref{2.38}, the $n$--clan. This may justify that we first recall the corresponding assertion.

\begin{proposition}   \label{P5.1}
Let $\As (\rn)$ with $A \in \{B,F \}$, $s\in \real$, $0<p\le \infty$ and $0<q\le \infty$ be the spaces according to Definition
\ref{D2.1}, the $n$--clan \eqref{2.38}. Then
\begin{\eq}   \label{5.2}
\Bs (\rn) \hra L_\infty (\rn)
\end{\eq}
if, and only if,
\begin{\eq} \label{5.3}
\begin{cases}
\text{either} &s> \frac{n}{p}, \ 0<q\le \infty, \\
\text{or}     & s = \frac{n}{p}, \ 0<q \le 1,
\end{cases}
\end{\eq}
and
\begin{\eq}   \label{5.4}
\Fs (\rn) \hra L_\infty (\rn)
\end{\eq}
if, and only if,
\begin{\eq} \label{5.5}
\begin{cases}
\text{either} &s > \frac{n}{p}, \ 0<q \le \infty, \\
\text{or}     & s= \frac{n}{p}, \ 0<p \le 1, \ 0<q\le \infty.
\end{cases}
\end{\eq}
Furthermore one can replace $L_\infty (\rn)$ in \eqref{5.2} and \eqref{5.4} by $C(\rn)$.
\end{proposition}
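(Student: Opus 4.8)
\emph{Strategy.} The plan is to work throughout with the Littlewood--Paley decomposition $f=\sum_{j\ge0}f_j$, $f_j=(\vp_j\wh f)^\vee$, from Definition~\ref{D2.1}, and to exploit that each $f_j$ is an entire function of exponential type with spectrum in a ball of radius $c\,2^j$; the Nikol'skij inequality then gives $\|f_j\,|L_\infty(\rn)\|\le c\,2^{jn/p}\|f_j\,|L_p(\rn)\|$ with $c$ independent of $j$ and $f$. I would first settle the $B$--scale and then transfer to the $F$--scale via the elementary embeddings recalled in Remark~\ref{R2.15} (that is, \eqref{2.49}, \eqref{2.50}) together with the Jawerth--Franke embedding. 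Since every $f_j$ is continuous, whenever the estimates below actually yield $\sum_j\|f_j\,|L_\infty(\rn)\|<\infty$ (or comparable uniform convergence of $\sum_jf_j$), the sum represents a continuous function; this simultaneously gives the replacement of $L_\infty(\rn)$ by $C(\rn)$.

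\emph{Sufficiency.} For the $B$--scale, the triangle inequality in $L_\infty$ and the Nikol'skij inequality give
\[
\|f\,|L_\infty(\rn)\|\le\sum_{j\ge0}\|f_j\,|L_\infty(\rn)\|\le c\sum_{j\ge0}2^{-j(s-\frac np)}\bigl(2^{js}\|f_j\,|L_p(\rn)\|\bigr).
\]
If $s>\frac np$ the coefficients $2^{-j(s-n/p)}$ decay geometrically, so by Hölder's inequality (valid for every $q\in(0,\infty]$) the right-hand side is $\le c'\|f\,|\Bs(\rn)\|$. If $s=\frac np$ these coefficients equal $1$, and for $0<q\le1$ the embedding $\ell_q\hra\ell_1$ gives $\sum_j2^{js}\|f_j\,|L_p(\rn)\|\le\|f\,|\Bs(\rn)\|$. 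In either case $\sum_jf_j$ converges in $L_\infty(\rn)$, hence (continuity of the $f_j$) in $C(\rn)$. For the $F$--scale with $s>\frac np$ one uses $\Fs(\rn)\hra B^s_{p,\max(p,q)}(\rn)$ from \eqref{2.49}, \eqref{2.50} and the $B$--case; for $s=\frac np$ with $0<p\le1$ one fixes any $p_1\in(p,\infty)$ and applies the Jawerth--Franke embedding $F^{n/p}_{p,q}(\rn)\hra B^{n/p_1}_{p_1,p}(\rn)$ followed by $B^{n/p_1}_{p_1,p}(\rn)\hra C(\rn)$ --- the latter being the $B$--statement \eqref{5.3} applied with smoothness $n/p_1$, integrability $p_1$, summability $p\le1$.

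\emph{Necessity.} Here I would produce, for every constellation violating \eqref{5.3} resp.\ \eqref{5.5}, a member of the space that is not (a.e.\ equal to) a bounded function. If $s<\frac np$, fix $0<a<\frac np-s$ and $\chi\in C_0^\infty(\rn)$ with $\chi\equiv1$ near $0$; then $g(x)=|x|^{-a}\chi(x)$ has an unbounded singularity at the origin yet belongs to $B^{s'}_{p,p}(\rn)$ for $s<s'<\frac np-a$, hence to $\Bs(\rn)$ and, by \eqref{2.49}, \eqref{2.50}, to $\Fs(\rn)$. For the borderline $s=\frac np$ of the $B$--scale with $q>1$, take the lacunary sum $g=\sum_{j\ge2}j^{-1}\psi(2^j\cdot)$ with $\psi\in S(\rn)$, $\wh\psi\ge0$, $\supp\wh\psi\subset\{1/2\le|\xi|\le2\}$, so $\psi(0)>0$; then the $j$-th Littlewood--Paley block of $g$ is, up to boundedly many neighbours, $j^{-1}\psi(2^j\cdot)$, whence $\|g\,|B^{n/p}_{p,q}(\rn)\|\sim(\sum_{j\ge2}j^{-q})^{1/q}<\infty$, while $g$ has a logarithmic blow-up at $0$. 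For the $F$--scale the embedding fails exactly when $p>1$; if moreover $q>1$ this follows from the $B$--case through $B^{n/p}_{p,\min(p,q)}(\rn)\hra F^{n/p}_{p,q}(\rn)$ (see \eqref{2.49}, \eqref{2.50}), since $B^{n/p}_{p,\min(p,q)}(\rn)\not\hra L_\infty(\rn)$ once $\min(p,q)>1$.

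\emph{Main obstacle.} The sufficiency estimates and the two easy necessity examples are soft; the delicate point is the necessity on the critical line $s=n/p$, and specifically the genuinely $F$--specific case $p>1$, $0<q\le1$, where the counterexample cannot be obtained from an embedding into a Besov space and instead requires a careful superposition of dilated and translated bumps --- for this I would invoke the classical constructions, e.g.\ \cite{SiT95} or \cite[Section~2.8]{T83}. A secondary subtlety is that in the case $s=n/p$ the sufficiency argument must give honest uniform convergence of $\sum_jf_j$, not merely $L_\infty$--membership of the limit, which is exactly why the $\ell_q\hra\ell_1$ step (for $q\le1$) and the Jawerth--Franke route (for $0<p\le1$) are used above. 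All of this is classical and may be found in the references just cited; here it is recorded only because the $\vr$--clan counterparts in the sequel are modelled on it.
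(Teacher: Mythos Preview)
The paper itself does not prove this proposition: Remark~\ref{R5.2} simply records that it coincides with \cite[Theorem 2.3]{T20}, with the case $p<\infty$ going back to \cite[Theorem 3.3.1]{SiT95}. Your sketch is precisely the classical approach underlying those references --- Nikol'skij's inequality for the sufficiency in the $B$--scale, the Jawerth embedding for the critical $F$--case $s=n/p$, $0<p\le1$, and explicit unbounded functions for the necessity --- and it is correct as far as it goes, including your honest deferral of the genuinely delicate constellation $s=n/p$, $p>1$, $0<q\le1$ in the $F$--scale to \cite{SiT95}. One tiny inaccuracy: your lacunary sum $\sum_{j\ge2}j^{-1}\psi(2^j\cdot)$ has a $\log\log(1/|x|)$ singularity at the origin (since $\sum_{j\le N}j^{-1}\sim\log N$ and $N\sim\log(1/|x|)$), not a logarithmic one; but it is unbounded, which is all that is needed.
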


\begin{remark}    \label{R5.2}
This coincides with {\cite[Theorem 2.3, pp.\,22--23]{T20}} where we complemented already known assertions for the spaces $\As (\rn)$,
$A \in \{B,F \}$, with $p<\infty$ for the $F$--spaces by $F^s_{\infty,q} (\rn)$. There one finds also related references, like \cite[Theorem~3.3.1]{SiT95}.
\end{remark}

For the spaces in \eqref{2.44}, \eqref{2.45} of the $\vr$--clan according to Definition~\ref{D2.11}(ii) with $-n < \vr <0$ one has the
following counterpart.

\begin{theorem}   \label{T5.3}
Let
\begin{\eq}  \label{5.6}
s \in \real, \quad 0<p<\infty, \quad 0<q \le \infty \quad \text{and} \quad -n < \vr <0.
\end{\eq}
Then
\begin{\eq}  \label{5.7}
\La^{\vr} \As (\rn) \hra L_\infty(\rn) \quad \text{if, and only if,} \quad s > \frac{|\vr|}{p},
\end{\eq}
$A \in \{B,F \}$, and
\begin{\eq}   \label{5.8}
\La_{\vr} \Bs (\rn) \hra L_\infty(\rn) \quad \text{if, and only if,} \quad
\begin{cases}
s> \frac{|\vr|}{p}, & 0<q \le \infty, \quad\text{or}\\
s= \frac{|\vr|}{p}, & 0< q \le 1,
\end{cases}
\end{\eq}
\begin{\eq}  \label{5.8a}
\new{\La_{\vr} \Fs (\rn) \hra L_\infty(\rn) \quad \text{if, and only if,} \quad s > \frac{|\vr|}{p}.}
\end{\eq}
Furthermore, one can replace $L_\infty (\rn)$ in \eqref{5.7}--\eqref{5.8a} by $C(\rn)$.
\end{theorem}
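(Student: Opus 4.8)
The plan is to treat each equivalence by proving the two directions separately, reducing as much as possible to the relations among the three families of the $\vr$--clan that have already been recorded.

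\emph{Positive direction.} If $s>|\vr|/p$, then $s-|\vr|/p>0$, hence $\Cc^{s-\frac{|\vr|}{p}}(\rn)\hra C(\rn)\hra L_\infty(\rn)$, and combined with \eqref{4.34} this gives $\La^{\vr}\As(\rn)\hra C(\rn)$ for $A\in\{B,F\}$, i.e.\ \eqref{5.7} together with its $C(\rn)$--version. By the coincidence \eqref{2.42} the same reasoning yields \eqref{5.8a}, and for $\La_{\vr}\Bs(\rn)$ with $s>|\vr|/p$ one invokes the embedding \eqref{2.56} (respectively \eqref{2.57} if $q=\infty$) into $\La^{\vr}\Bs(\rn)$ and then the case just settled. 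The only genuinely new point here is the borderline case $s=|\vr|/p$, $0<q\le1$ of \eqref{5.8}: I would run the classical Littlewood--Paley argument $\|f\,|\,L_\infty(\rn)\|\le\sum_{j\ge0}\|(\vp_j\wh f)^\vee\,|\,L_\infty(\rn)\|$, bound each summand by the Morrey analogue of the Nikolskii inequality, $\|(\vp_j\wh f)^\vee\,|\,L_\infty(\rn)\|\le c\,2^{j|\vr|/p}\,\|(\vp_j\wh f)^\vee\,|\,\La^{\vr}_p(\rn)\|$ (legitimate since $(\vp_j\wh f)^\vee$ has Fourier support in a ball of radius $\sim2^j$ and $|\vr|/p$ plays the role of $n/u$ for the underlying Morrey space), and then sum via the trivial $\ell_q\hra\ell_1$ for $q\le1$ and Definition~\ref{D2.5}. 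The resulting series converges uniformly, so $f\in C(\rn)$, which also gives the $C(\rn)$--version.

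\emph{Negative direction.} Everything reduces to one borderline counterexample built from the wavelet characterisation of Proposition~\ref{P3.1}. Fix $G_0=(M,\dots,M)$ and, for each $j\in\no$, a lattice point $m_j\in\zn$ chosen so that the dilated bumps line up: there are a point $x^0$ (independent of $j$) and $\delta>0$ with $|\psi^j_{G_0,m_j}(x)|\ge\delta$ for all $x$ in a $2^{-j}$--neighbourhood of $x^0$. Then $f=\sum_{j\ge0}\lambda_j\,\psi^j_{G_0,m_j}$ has essential supremum near $x^0$ at least $\gtrsim\sum_{j\ge0}|\lambda_j|$, so $f\notin L_\infty(\rn)$ once $\sum_j\lambda_j=\infty$. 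Put $\beta=(n+\vr)/p>0$ (recall $\vr>-n$) and $s=|\vr|/p$, so $s-n/p=-\beta$. A direct evaluation of \eqref{3.10} — only the dyadic cubes near $x^0$ contribute — gives
\[
\|f\,|\,\La^{\vr}B^{|\vr|/p}_{p,q}(\rn)\|\ \sim\ \sup_{J\ge0}2^{J\beta}\Big(\sum_{j\ge J}2^{-j\beta q}|\lambda_j|^q\Big)^{1/q},
\]
which is \emph{finite for $\lambda_j\equiv1$ and every} $0<q\le\infty$; hence $\La^{\vr}\As(\rn)$ (and, through \eqref{2.54} and \eqref{2.42}, also $\La^{\vr}\Fs(\rn)$ and $\La_{\vr}\Fs(\rn)$) fails to embed into $L_\infty(\rn)$ at $s=|\vr|/p$. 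The parallel evaluation of \eqref{3.11} gives
\[
\|f\,|\,\La_{\vr}B^{|\vr|/p}_{p,q}(\rn)\|\ \sim\ \Big(\sum_{j\ge0}|\lambda_j|^q\Big)^{1/q},
\]
finite exactly when $(\lambda_j)\in\ell_q$; choosing $(\lambda_j)\in\ell_q\setminus\ell_1$ (possible precisely for $q>1$, e.g.\ $\lambda_j=(1+j)^{-1}$) shows $\La_{\vr}\Bs(\rn)$ fails for $s=|\vr|/p$, $q>1$. The remaining cases $s<|\vr|/p$ follow from the elementary smoothness embeddings $\La^{\vr}B^{|\vr|/p}_{p,\infty}(\rn)\hra\La^{\vr}B^{s}_{p,q}(\rn)$ and $\La_{\vr}B^{|\vr|/p}_{p,\infty}(\rn)\hra\La_{\vr}B^{s}_{p,q}(\rn)$, which place the $\lambda_j\equiv1$ example inside the target space. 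Finally, since $C(\rn)\subset L_\infty(\rn)$, each failure of an $L_\infty(\rn)$--embedding is a failure of the corresponding $C(\rn)$--embedding, so no separate work is needed there.

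\emph{Main obstacle.} I expect the two delicate points to be: on the positive side, the Morrey version of the Nikolskii inequality underlying the $q\le1$ borderline of \eqref{5.8}; and on the negative side, arranging the wavelet alignment in the borderline counterexample so that $f$ is genuinely essentially unbounded while the supremum over dyadic cubes in \eqref{3.10} (respectively the outer sum in \eqref{3.11}) stays finite. Once these are in place, the rest is bookkeeping with the family relations \eqref{2.42}, \eqref{2.54}, \eqref{2.56}, \eqref{2.57} and smoothness monotonicity.
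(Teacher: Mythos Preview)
The paper itself does not prove Theorem~\ref{T5.3}; Remark~\ref{R5.4} simply records that \eqref{5.7}, \eqref{5.8}, \eqref{5.8a} are reformulations of results in \cite{YHSY15}, \cite{HaS13}, \cite{HaS14}. So there is no in-paper argument to compare against, and your sketch is a genuine attempt at a self-contained proof. The positive direction is correct; for the borderline $s=|\vr|/p$, $q\le1$ in \eqref{5.8} you may prefer to phrase your Nikolskii step as the embedding $\La_{\vr}\Bs(\rn)\hra B^{\,s-|\vr|/p}_{\infty,q}(\rn)$ (the $\Nc$--counterpart of \eqref{4.34}) followed by the classical $B^0_{\infty,q}(\rn)\hra C(\rn)$ for $q\le1$, which is exactly the same computation packaged as a known embedding.

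The negative direction has the right architecture but two points need tightening. First, you ignore signs: $|\psi^j_{G_0,m_j}(x^0)|\ge\delta$ does not by itself give $|f(x^0)|\gtrsim\sum_j|\lambda_j|$ because the terms could cancel; you must also choose the sign of $\lambda_j$ so that $\lambda_j\,\psi^j_{G_0,m_j}(x^0)\ge0$, which is harmless for the norm estimates since only $|\lambda_j|$ enters \eqref{3.10}, \eqref{3.11}. Second, and more substantively, in these non-separable spaces the wavelet series converges only in $S'(\rn)$, not in norm, so extracting ``$f\notin L_\infty(\rn)$'' directly from the infinite sum is awkward. Test the putative embedding on the \emph{partial sums} $f_N=\sum_{j\le N}\lambda_j\,\psi^j_{G_0,m_j}$ instead: your own computations already show that $\|f_N\,|\,\La^{\vr}B^{|\vr|/p}_{p,q}(\rn)\|$ (respectively $\|f_N\,|\,\La_{\vr}B^{|\vr|/p}_{p,q}(\rn)\|$) stays bounded uniformly in $N$, while $|f_N(x^0)|\to\infty$, which contradicts any continuous embedding into $L_\infty(\rn)$ and sidesteps all questions about the limiting distribution. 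The alignment issue you flag can be handled by a nested-interval construction around any $t_0$ with $\psi_M(t_0)\ne0$.
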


\begin{remark}   \label{R5.4}
The embedding \eqref{5.7} is covered by \cite[Proposition 5.4, p.\,334]{YHSY15} whereas \eqref{5.8} goes back to \cite[Proposition
  5.5, p.\,140]{HaS13}, \new{and \eqref{5.8a} to \cite[Proposition~3.8]{HaS14}, all} reformulated according to \eqref{2.24} and \eqref{2.29}, \eqref{2.30}. They may also be found in 
\cite[Proposition 3.1, p.\,225]{HMS16}. These embeddings, compared with their classical counterparts in Proposition~\ref{P5.1}, are
examples of the Slope--$n$--Rule, see \new{Slope Rules~\ref{slope_rules}(ii)}. But there are also some peculiarities as far as limiting
embeddings with $s = \frac{n}{p}$ and $s= \frac{|\vr|}{p}$ are concerned. Further related discussions  especially about limiting
embeddings may be found in \cite{HaS14}, \cite{YHMSY15} and \cite{HMS20}.
\new{One may extend \eqref{5.7} to $\vr=0$ in the following way. The $0$--clan consisting of the spaces $\La^0\Bs(\rn)$ in \eqref{2.41}, and also the spaces $\La^0\Fs(\rn)$ given by \eqref{2.35}, they can be incorporated: using the coincidences  \eqref{2.29} with \eqref{2.30}, as well as \eqref{ftbt} and \eqref{2.35}, then \cite[Proposition~4.1]{YHMSY15} yields
  \[
  \La^0 \As(\rn) \hra L_\infty(\rn) \quad\text{if, any only if,}\quad s>0,
  \]
where again $A\in\{B,F\}$ and $L_\infty(\rn) $ can be replaced by $C(\rn)$.} %
  The limiting embedding in \eqref{5.8} compared with \eqref{5.7} illuminates
also the strict embedding \eqref{2.56}.
\end{remark}

\begin{minipage}{\textwidth}
\hfill  \input{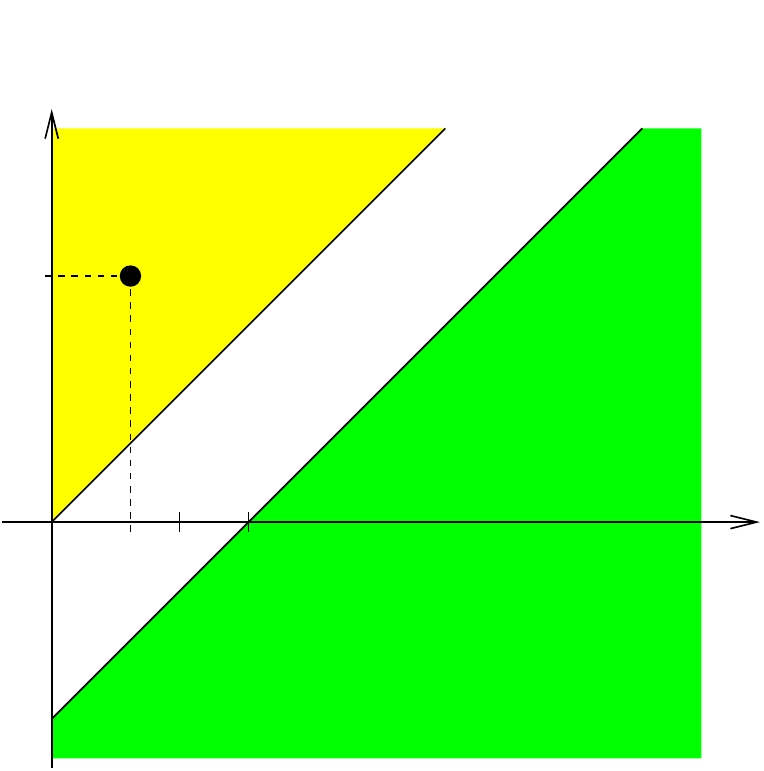_t}\hfill~\\  
~\hspace*{\fill}\unterbild{fig-1a}
\end{minipage}

\subsection{Embeddings in the space of locally integrable functions}   \label{S5.2}
Let again $A(\rn)$ be a quasi--Banach space with
\begin{\eq}   \label{5.9}
S(\rn) \hra A(\rn) \hra S'(\rn).
\end{\eq}
Recall that $L^{\loc}_1 (\rn)$ collects all Lebesgue--measurable (complex--valued) functions in $\rn$ which are integrable on any
bounded domain in \rn. Then
\begin{\eq}   \label{5.10}
A(\rn) \subset L^{\loc}_1 (\rn)
\end{\eq}
means that any $f\in A(\rn)$ can be represented as a locally integrable function such that
\begin{\eq}   \label{5.11}
f(\vp) = \int_{\rn} f(x) \, \vp(x) \, \di x, \qquad \vp \in D(\rn) = C^\infty_0 (\rn).
\end{\eq}
Let as before
\begin{\eq}   \label{5.12}
\sigma^t_p = t \Big( \max \big( \frac{1}{p}, 1 \big) -1 \Big), \qquad  t \ge 0, \quad 0<p \le \infty.
\end{\eq}
First we recall the final conditions ensuring \eqref{5.10} for the $n$--clan, consisting of the classical spaces in \eqref{2.38}.

\begin{proposition}   \label{P5.5}
Let $\As (\rn)$ with $A \in \{B,F \}$, $s\in \real$, $0<p \le \infty$, $0<q \le \infty$, be the spaces according Definition 
\ref{D2.1}, the $n$--clan \eqref{2.38}. Then
\begin{\eq}   \label{5.13}
\Bs (\rn) \subset L^{\loc}_1 (\rn)
\end{\eq}
if, and only if,
\begin{\eq}   \label{5.14}
\begin{cases}
\text{either} &0<p \le \infty, \ s> \sigma^{n}_p, \ 0<q\le \infty, \\
\text{or}     &0<p \le 1, \ s= \sigma^{n}_p, \ 0<q \le 1, \\
\text{or}     &1<p \le \infty, \ s=0, \ 0<q \le \min(p,2), 
\end{cases}
\end{\eq}
and
\begin{\eq}    \label{5.15}
\Fs (\rn) \subset L^{\loc}_1 (\rn)
\end{\eq}
if, and only if,
\begin{\eq}   \label{5.16}
\begin{cases}
\text{either} &0<p<1, \ s \ge \sigma^{n}_p, \ 0<q\le \infty, \\
\text{or}     &1\le p \le \infty, \ s>0, \ 0<q \le \infty, \\
\text{or}     &1\le p \le \infty, \ s=0, \ 0<q \le 2.
\end{cases}
\end{\eq}
\end{proposition}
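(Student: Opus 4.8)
The plan is to treat sufficiency and necessity separately: the first by a chain of embeddings into a handful of well-understood endpoint spaces that are visibly contained in $L^{\loc}_1(\rn)$, the second by explicit counterexamples built from atoms concentrated at one point. The assertion for the spaces $\As(\rn)$ with $p<\infty$ in the $F$-case is classical and may be found in \cite[Section 3.3]{SiT95} (see also \cite[Section 1.11.1]{T06}, \cite{T92} and the references in \cite{T20}); the remaining cases $F^s_{\infty,q}(\rn)$ reduce via \eqref{2.16} and $F^s_{\infty,q}(\rn)\hookrightarrow B^s_{\infty,\infty}(\rn)=\Cc^s(\rn)$ to H\"older--Zygmund spaces. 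So I only indicate the structure.

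For the \emph{sufficiency} of the listed conditions I would reduce each admissible triple $(s,p,q)$ to one of the spaces $L_1(\rn)$, $L_p(\rn)=F^0_{p,2}(\rn)$ with $1<p<\infty$, the local Hardy space $h_1(\rn)=F^0_{1,2}(\rn)$, $\bmo(\rn)=F^0_{\infty,2}(\rn)$, or a space $\Cc^{\varepsilon}(\rn)$ with $\varepsilon>0$, each of which lies in $L^{\loc}_1(\rn)$. Concretely: if $s>\sigma^n_p$ one uses the Sobolev-type embedding $\Bs(\rn)\hookrightarrow B^{s-n(1/p-1/p_1)}_{p_1,q}(\rn)$ with $p_1=\max(p,1)$ followed by $B^{\varepsilon}_{p_1,q}(\rn)\hookrightarrow B^0_{p_1,1}(\rn)\hookrightarrow L_{p_1}(\rn)$, and analogously for $F$-spaces via \eqref{2.49}, \eqref{2.50}. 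The endpoint $B$-case $s=\sigma^n_p$, $0<p\le1$, $0<q\le1$ follows from $B^{\sigma^n_p}_{p,q}(\rn)\hookrightarrow B^{\sigma^n_p}_{p,1}(\rn)\hookrightarrow B^0_{1,1}(\rn)\hookrightarrow L_1(\rn)$; the endpoint $F$-case $s=\sigma^n_p$, $0<p<1$, $0<q\le\infty$ is the only one requiring the sharper Jawerth--Franke embedding $F^{\sigma^n_p}_{p,q}(\rn)\hookrightarrow B^0_{1,p}(\rn)\hookrightarrow B^0_{1,1}(\rn)\hookrightarrow L_1(\rn)$, where $p<1$ is essential for $B^0_{1,p}\hookrightarrow B^0_{1,1}$. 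The cases $s=0$ follow from $B^0_{p,q}(\rn)\hookrightarrow F^0_{p,2}(\rn)$ for $q\le\min(p,2)$ and $F^0_{p,q}(\rn)\hookrightarrow F^0_{p,2}(\rn)$ for $q\le2$, together with the identifications above, including $F^0_{\infty,2}(\rn)=\bmo(\rn)$ for $p=\infty$.

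For the \emph{necessity} I would argue by contraposition: for every triple outside the stated ranges one exhibits $f\in\As(\rn)$ that is not a regular distribution. The standard device is a series $f=\sum_{j\ge0}\lambda_j\,\psi(2^j\cdot)$ (or its wavelet analogue) with $\psi\in S(\rn)$, $\psi(0)\neq0$, all terms concentrated at the origin; the atomic characterisation shows $f\in\As(\rn)$ precisely when the normalised coefficients belong to $\ell_q$, while pairing $f$ with a test function $\varphi\in D(\rn)$, $\varphi(0)=1$, shows that $f$ cannot coincide with an $L^{\loc}_1$-function as soon as $s<\sigma^n_p$ (or $s=\sigma^n_p$ with $q$ too large), because the would-be values $\int f\varphi$ behave like the divergent series $\sum_j 2^{j(\sigma^n_p-s)}|\lambda_j|$ up to normalisation. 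Refinements — lacunary Fourier series for the borderline $s=0$, $p>1$, and the dual descriptions of $\bmo(\rn)$ and $h_1(\rn)$ — pin down the sharp fine-index thresholds $q\le1$, $q\le\min(p,2)$ and $q\le2$.

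The main obstacle is precisely this necessity part in the borderline cases: separating the behaviour of the $B$- and $F$-scales at $s=\sigma^n_p$ (all $q$ admissible for $F$, only $q\le1$ for $B$) and at $s=0$ (threshold $\min(p,2)$ for $B$, threshold $2$ for $F$) requires counterexamples that genuinely exploit the Hardy-space and $\bmo$ structure rather than mere embeddings, and the value $p=\infty$ has to be handled separately throughout. All of this is, however, contained in the literature quoted above.
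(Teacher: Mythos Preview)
Your sketch is correct and aligns with the standard approach in the literature you cite; in fact the paper itself offers no proof of this proposition at all, treating it as a known result and simply referring in Remark~\ref{R5.6} to \cite[Theorem 2.4, pp.\,23--24]{T20} and \cite[Theorem~3.3.2]{SiT95}. So there is nothing to compare: your outline (sufficiency via embeddings into $L_{p_1}$, $h_1$, $\bmo$, $\Cc^\varepsilon$, and necessity via atomic/lacunary counterexamples) is precisely the method carried out in those references, and your reduction of the $F^s_{\infty,q}$ case via \eqref{2.16} and $F^s_{\infty,q}(\rn)\hookrightarrow \Cc^s(\rn)$ matches how \cite{T20} handles the extension from $p<\infty$ to $p=\infty$.
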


\begin{remark}   \label{R5.6}
This coincides with {\cite[Theorem 2.4, pp.\,23--24]{T20}} where already known assertions for the spaces $\As (\rn)$ with $p<\infty$ for
$F$--spaces, cf. \cite[Theorem~3.3.2]{SiT95}, had been complemented by $F^s_{\infty,q} (\rn)$. There one finds further references.
\end{remark}

The Slope--$n$--Rule, recall \new{Slope Rules~\ref{slope_rules}(ii)}, suggests that one has to replace the breaking line $s= \sigma^n_p$ in the 
$\big(\frac{1}{p}, s \big)$--diagram for the spaces $\As (\rn)$ by $s = \sigma^{|\vr|}_p$ for the spaces \eqref{2.44}, \eqref{2.45}
of the $\vr$--clan, $-n <\vr <0$. This is the case. But there is so far no final assertion what happens for the limiting spaces with
$s= \sigma^{|\vr|}_p$. This may justify that we concentrate on the non--limiting spaces with $s \not= \sigma^{|\vr|}_p$.

\begin{theorem}   \label{T5.7}
Let
\begin{\eq}   \label{5.17}
s \in \real, \quad 0<p<\infty, \quad 0<q \le \infty \quad \text{and} \quad -n<\vr <0.
\end{\eq}
Then
\begin{\eq}   \label{5.18}
\rhoAs(\rn) \subset L^{\loc}_1 (\rn)  \quad \text{if} \quad 
s>\sigma^{|\vr|}_p
\end{\eq}
and
\begin{\eq}   \label{5.19}
\rhoAs(\rn) \not\subset L^{\loc}_1 (\rn)  \quad \text{if} \quad 
s<\sigma^{|\vr|}_p,
\end{\eq}
$A\in \{B,F \}$.
\end{theorem}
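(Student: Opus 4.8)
The plan is to treat the two halves separately, in each case reducing to extremal members of the $\vr$-clan and then either quoting standard facts or exhibiting explicit examples.

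For \eqref{5.18}: by the elementary embeddings in Theorem~\ref{T2.16} (monotonicity in the fine index together with \eqref{2.53}, \eqref{2.56} and the coincidence \eqref{2.57}) every member of the $\vr$-clan with fixed $(s,p,q)$ is continuously embedded in $\La^{\vr} B^s_{p,\infty}(\rn)=\La_{\vr} B^s_{p,\infty}(\rn)$, so it suffices to prove $\La^{\vr}B^s_{p,\infty}(\rn)\subset L^{\loc}_1(\rn)$ when $s>\sigma^{|\vr|}_p$. If $p\ge1$, then $\sigma^{|\vr|}_p=0$ and $s>0$; for a bounded set $M$ I enclose it in a unit cube $Q$ and estimate, using $f=\sum_{j\ge0}(\vp_j\wh f)^\vee$, Hölder's inequality on $Q$ (legitimate since $p\ge1$) and the defining quasi-norm \eqref{2.26} with $J=0$,
\begin{equation*}
\int_M|f|\,\di x\le\sum_{j\ge0}\big\|(\vp_j\wh f)^\vee\,|\,L_p(Q)\big\|\le\sum_{j\ge0}2^{-js}\,\|f\,|\,\La^{\vr}B^s_{p,\infty}(\rn)\|<\infty ,
\end{equation*}
which together with the usual Lebesgue-point identification gives $f\in L^{\loc}_1(\rn)$ and \eqref{5.11}. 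If $0<p<1$ this argument breaks down (Hölder goes the wrong way), and I would instead invoke the Sobolev-type embedding for Besov--Morrey spaces inside the fixed $\vr$-clan: in the $\Nc$-notation \eqref{2.24}, $\La^{\vr}B^s_{p,\infty}(\rn)=\Nc^s_{np/|\vr|,\,p,\,\infty}(\rn)\hra\Nc^{s_1}_{n/|\vr|,\,1,\,\infty}(\rn)=\La^{\vr}B^{s_1}_{1,\infty}(\rn)$ with $s_1=s-|\vr|(\tfrac1p-1)=s-\sigma^{|\vr|}_p>0$ (this preserves both the differential dimension $s-|\vr|/p$ and the ratio $p/u=|\vr|/n$ and raises the Morrey exponent; cf.\ \cite{SawTan}, \cite{YSY10}), and then the case $p=1$ already settled applies. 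Alternatively, \eqref{5.18} may simply be read off from the $\Nc$/$\Ec$ literature on regular distributions (in the spirit of \cite[Proposition~3.1]{HMS16}) via \eqref{2.24}, \eqref{2.25}, \eqref{2.29}, \eqref{2.30}.

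For \eqref{5.19}: here I want, for every $s<\sigma^{|\vr|}_p$, a singular distribution belonging to all three families simultaneously; by Theorem~\ref{T2.16} it is enough to place it in $\La_{\vr}B^s_{p,q'}(\rn)$ for every $q'\in(0,\infty]$, since this space embeds into $\La_{\vr}B^s_{p,q}$, $\La^{\vr}B^s_{p,q}$ and $\La_{\vr}F^s_{p,q}=\La^{\vr}F^s_{p,q}$. The natural candidates are, for $0<d<n$, the normalised $d$-dimensional Hausdorff measure $\mu_d$ on a compact $d$-regular set (a piece of a $d$-plane if $d\in\nat$, a self-similar Cantor-type set otherwise): $\mu_d$ is a finite measure singular with respect to Lebesgue measure, hence $\mu_d\notin L^{\loc}_1(\rn)$, and $\mu_d(B(x,r))\sim r^d$. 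Its Daubechies wavelet coefficients \eqref{3.8} satisfy $|\lambda^{j,G}_m(\mu_d)|\lesssim 2^{j(n-d)}$ uniformly, are non-zero only for $\sim2^{jd}$ values of $m$ at level $j$, and every cube $Q_{J,M}$ contains at most $\sim2^{(j-J)d}$ of the relevant $Q_{j,m}$. Inserting these bounds in the wavelet characterisation \eqref{3.11} and optimising the supremum over $J$ shows that $\mu_d\in\La_{\vr}B^s_{p,q'}(\rn)$ for all $q'$ provided $s<\theta(d)$, where $\theta(d)=|\vr|/p-(n-d)$ for $d\le n-|\vr|$ and $\theta(d)=(n-d)(\tfrac1p-1)$ for $d\ge n-|\vr|$; in either regime $\sup_{0<d<n}\theta(d)=\sigma^{|\vr|}_p$. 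So, given $s<\sigma^{|\vr|}_p$, choosing $d$ with $\theta(d)>s$ produces the required counterexample. (Again, \eqref{5.19} can alternatively be quoted from the literature and reformulated via the $\vr$-dictionary.)

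The easy half is \eqref{5.18}, which comes down to Hölder's inequality plus a geometric series once the Besov--Morrey Sobolev embedding is available for $p<1$. The main obstacle is \eqref{5.19}: one must pin down precisely which singular distributions are admissible, and the honest observation is that $\delta$ by itself only reaches $s<|\vr|/p-n$, so one genuinely needs the whole one-parameter family $\{\mu_d\}_{0<d<n}$ --- including fractal $d$-sets --- to sweep out the full strip $s<\sigma^{|\vr|}_p$, together with the somewhat delicate wavelet bookkeeping that converts the $d$-regularity of $\mu_d$ into the sharp threshold $\theta(d)$.
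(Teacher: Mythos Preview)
Your argument is essentially correct, and it is worth noting that the paper itself does not give a proof of Theorem~\ref{T5.7} at all: Remark~\ref{R5.8} simply records that both assertions are covered by \cite[Theorems~3.3, 3.6]{HMS16}, reformulated via \eqref{2.24} and \eqref{2.29}, \eqref{2.30}. So your write-up already goes well beyond what the paper provides.

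A few small points. In the $p\ge1$ branch of \eqref{5.18} you enclose a bounded set $M$ in a unit cube; strictly speaking one should cover $M$ by finitely many translates of $Q_{0,0}$, but this is harmless. The passage from the $S'$-convergent expansion $f=\sum_j(\vp_j\wh f)^\vee$ to the $L_1(M)$-estimate is justified once you know the series converges absolutely in $L_1(M)$, which is exactly what your geometric sum shows. For $0<p<1$ the embedding you quote (same differential dimension, fixed $\vr$-clan, $p_1<p_2$) is precisely Theorem~\ref{T6.5}(i) after passing through \eqref{2.57}; you might point to that instead of the $\Nc$-literature, since it is already in the paper.

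For \eqref{5.19}, your use of $d$-regular measures $\mu_d$ is the natural construction, and your threshold function $\theta(d)$ is computed correctly from \eqref{3.11}: the dichotomy at $d=n-|\vr|$ corresponds exactly to whether the supremum over $J$ is attained at $J=j$ or at $J=0$. The observation that $\sup_{0<d<n}\theta(d)=\sigma^{|\vr|}_p$ (a genuine supremum, not a maximum, when $p>1$) is precisely what is needed to sweep out the full range $s<\sigma^{|\vr|}_p$. This is a perfectly good self-contained substitute for the citation in Remark~\ref{R5.8}, and it has the advantage of making visible why the slope $|\vr|$ enters: it is the crossover value of the codimension $n-d$ at which the Morrey localisation in \eqref{3.11} becomes active.
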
 

\begin{remark}   \label{R5.8}
These assertions are covered by \cite[Theorems 3.3, 3.6, pp.\,228, 232]{HMS16} reformulated according to \eqref{2.24} and 
\eqref{2.29}, \eqref{2.30}. In \cite[Theorems 3.4, 3.8, pp.\,228, 233]{HMS16} there are detailed discussions what happens if $s=
\sigma^{|\vr|}_p$, at least partly comparable with $s= \sigma^n_p$ in Proposition~\ref{P5.5} but less final. \new{In \cite[Theorems~3.2, 3.4]{HMS20} the complete characterisation in case of $\rhoAs=\La_\vr\As$ could be obtained for the limiting case $s=\sigma^{|\vr|}_p$, it reads as
 \begin{\eq}\label{DDH-6} 
  \La_\vr B^{\sigma^{|\vr|}_p}_{p,q}(\rn) \subset \Lloc(\rn) \quad \text{if, and only if,}\quad 0<q\leq \min(\max(p,1),2),
\end{\eq}
and  
  \begin{\eq}\label{DDH-4}
  \La_\vr F^{\sigma^{|\vr|}_p}_{p,q}(\rn) \subset \Lloc(\rn) \quad \text{if, and only if,}\quad \begin{cases} \text{either}\quad p\geq 1 \quad \text{and}&  0<q\leq 2, \\ \text{or}\quad 0<p<1.\end{cases}
  \end{\eq}
  This is literally the extension of \eqref{5.13}, \eqref{5.14} and \eqref{5.15}, \eqref{5.16} when $s=\sigma^n_p$ in the classical case there. The coincidence \eqref{2.42} thus answers the question for $\La^\vr\Fs(\rn)$ in case of  $s=\sigma^{|\vr|}_p$, too, that is,
  \begin{\eq}\label{DDH-5}
  \La^\vr F^{\sigma^{|\vr|}_p}_{p,q}(\rn) \subset \Lloc(\rn) \quad \text{if, and only if,}\quad \begin{cases} \text{either}\quad p\geq 1 \quad \text{and}&  0<q\leq 2, \\ \text{or}\quad 0<p<1.\end{cases}
  \end{\eq}
  The only gap for the moment concerns the characterisation of $ \La^\vr B^{\sigma^{|\vr|}_p}_{p,q}(\rn) \subset \Lloc(\rn)$, i.e., the counterpart of \eqref{DDH-6}, we refer to \cite[Theorem 3.8(i)]{HMS16} for the details.
}
\end{remark}

\begin{minipage}{\textwidth}
\hfill  \input{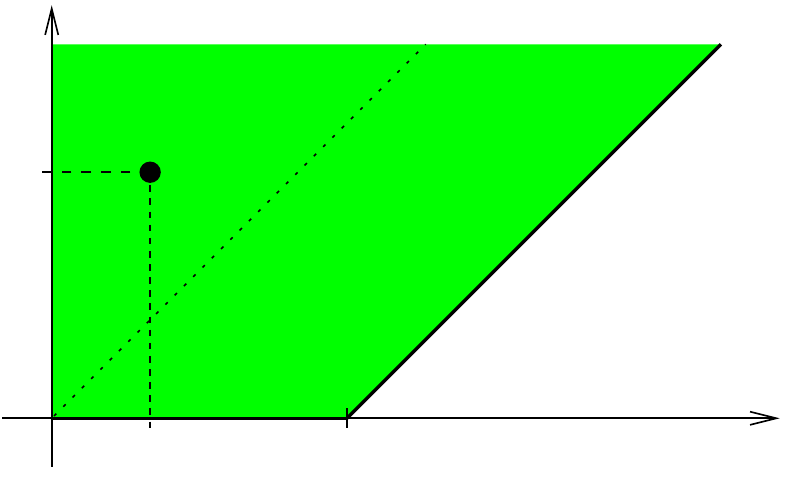_t}\hfill~\\  
~\hspace*{\fill}\unterbild{fig-1b}
\end{minipage}

\subsection{Multiplication algebras}   \label{S5.3}
Recall that a quasi--Banach space $A(\rn)$ on $\rn$ with
\begin{\eq}   \label{5.20}
S(\rn) \hra A(\rn) \hra S' (\rn), \qquad A(\rn) \subset L^{\loc}_1 (\rn),
\end{\eq}
is said to be a multiplication algebra if $f_1 f_2 \in A(\rn)$ whenever $f_1 \in A(\rn)$, $f_2 \in A(\rn)$ and if there is a constant
$c>0$ such that
\begin{\eq}  \label{5.21}
\| f_1 f_2 \, | A(\rn) \| \le c \, \|f_1 \, | A(\rn) \| \cdot \| f_2 \, | A(\rn) \|
\end{\eq}
for all $f_1 \in A(\rn)$, $f_2 \in A(\rn)$. This coincides with {\cite[Definition 2.37, p.\,44]{T20}}. There one finds related
 (historical)
references, also about possible applications in the theory of distinguished non--linear PDEs, including the Navier--Stokes equations.

\begin{proposition}   \label{P5.9}
Let $A(\rn)$ be a space covered by Definition~\ref{D2.11} satisfying in addition \eqref{5.20}. If $A(\rn)$ is a multiplication algebra, 
then
\begin{\eq}   \label{5.22}
A(\rn) \hra L_\infty (\rn).
\end{\eq}
\end{proposition}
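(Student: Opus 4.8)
The plan is to run the standard argument linking the multiplication-algebra property with boundedness, which in the present generality rests on two elementary facts. The first is that the inclusion $A(\rn)\subset L^{\loc}_1(\rn)$ in \eqref{5.20} is continuous: its graph is closed, since $A(\rn)\hra S'(\rn)$ while convergence in $L^{\loc}_1(\rn)$ implies convergence in $D'(\rn)$, so the two candidate limits agree as distributions; hence the closed graph theorem yields, for every $R\in\nat$, a constant $c_R>0$ with
\[
\int_{Q_R}|g(x)|\,\di x\le c_R\,\|g\,|A(\rn)\|\qquad\text{for all }g\in A(\rn),\quad Q_R=(-R,R)^n.
\]
The second is that the algebra estimate \eqref{5.21} iterates: for $f\in A(\rn)$ the pointwise powers $f^k$, $k\in\nat$, again belong to $A(\rn)$ — here one uses that the product in \eqref{5.21} is pointwise multiplication, which is forced by $A(\rn)\subset L^{\loc}_1(\rn)$ — and an induction on $k$ gives $\|f^k\,|A(\rn)\|\le c^{k-1}\|f\,|A(\rn)\|^k$, where $c$ is the constant of \eqref{5.21}.

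Next I would combine these. Since $f^k$ is the pointwise power we have $|f^k(x)|=|f(x)|^k$ for almost every $x$, so for all $k,R\in\nat$
\[
\int_{Q_R}|f(x)|^k\,\di x=\int_{Q_R}|f^k(x)|\,\di x\le c_R\,\|f^k\,|A(\rn)\|\le\frac{c_R}{c}\bigl(c\,\|f\,|A(\rn)\|\bigr)^k.
\]
We may assume $\|f\,|A(\rn)\|>0$, the case $f=0$ being trivial by \eqref{5.20}. Fix $\lambda>c\,\|f\,|A(\rn)\|$ and $R\in\nat$, and put $E_{R,\lambda}=\{x\in Q_R:|f(x)|>\lambda\}$. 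By Chebyshev's inequality $|E_{R,\lambda}|\,\lambda^k\le\frac{c_R}{c}\bigl(c\,\|f\,|A(\rn)\|\bigr)^k$, hence $|E_{R,\lambda}|\le\frac{c_R}{c}\bigl(c\,\|f\,|A(\rn)\|/\lambda\bigr)^k\to0$ as $k\to\infty$, so $|E_{R,\lambda}|=0$. Letting $R\to\infty$ and then $\lambda\downarrow c\,\|f\,|A(\rn)\|$ shows that $|f(x)|\le c\,\|f\,|A(\rn)\|$ for almost every $x\in\rn$; thus $f\in L_\infty(\rn)$ with $\|f\,|L_\infty(\rn)\|\le c\,\|f\,|A(\rn)\|$, which (recalling that $A(\rn)$ and $L_\infty(\rn)$ sit inside $S'(\rn)$) is precisely the embedding \eqref{5.22}.

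I do not expect a genuine obstacle here. The only points needing care are definitional — that the abstract product in \eqref{5.21} coincides with pointwise multiplication, so that $|f^k|=|f|^k$ and the iterated powers stay in $L^{\loc}_1(\rn)$ — and the use of the closed graph theorem to turn $A(\rn)\subset L^{\loc}_1(\rn)$ into a quantitative local estimate. Once these are granted, the comparison of $\lambda^k$ with $c^k$ finishes the proof at once, and one even reads off that the embedding constant in \eqref{5.22} may be taken to be the algebra constant $c$. Should one wish to avoid the closed graph theorem, one can instead test $f^k$ against a fixed nonnegative $\vp\in D(\rn)$ with $\vp\equiv1$ on $Q_R$, after noting that complex conjugation acts boundedly on $A(\rn)$, so that $|f|^2\in A(\rn)$ and one may argue with the nonnegative powers $|f|^{2k}$.
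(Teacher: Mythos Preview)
Your argument is correct and is precisely the classical one: iterate the algebra inequality to control $\|f^k\,|A(\rn)\|$, feed this into the continuous inclusion $A(\rn)\hra L^{\loc}_1(\rn)$, and let $k\to\infty$ via Chebyshev. The paper does not spell out a proof but simply refers to \cite[Proposition 2.41, p.\,90]{T13}, whose short argument is exactly this one, so your proposal matches the intended approach.
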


\begin{remark}  \label{R5.10}
The short proof of this well--known assertion in \cite[Proposition 2.41, p.\,90]{T13} applies to all spaces in question. In other 
words, the search for possible multiplication algebras is now shifted from Proposition~\ref{P5.5}, Theorem~\ref{T5.7} to Proposition
\ref{P5.1}, Theorem~\ref{T5.3}. First we recall the final assertion for the classical spaces $\As (\rn)$.
\end{remark}

\begin{proposition}   \label{P5.11}
Let $\As (\rn)$ be the classical spaces in \eqref{2.38}, the $n$--clan, satisfying in addition \eqref{5.20} with $A(\rn) = \As(\rn)$.
Then $\Bs (\rn)$ is a multiplication algebra if, and only if,
\begin{\eq}   \label{5.23}
\begin{cases}
\text{either} & \text{$s>n/p$ where $0<p,q \le \infty$,} \\
\text{or}     & \text{$s=n/p$ where $0<p<\infty$, $0<q\le 1$},
\end{cases}
\end{\eq}
and $\Fs (\rn)$ is a multiplication algebra if, and only if,
\begin{\eq}   \label{5.24}
\begin{cases}
\text{either} & \text{$s> n/p$ where $0<p,q \le \infty$}, \\
\text{or}     & \text{$s = n/p$ where $0<p\le 1$, $0<q\le \infty$.}
\end{cases}
\end{\eq}
\end{proposition}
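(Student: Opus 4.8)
The proof splits into the easy \emph{necessity} part, which is immediate from the material already assembled, and the classical \emph{sufficiency} part, which I would run through the Bony paraproduct machinery.

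\textbf{Necessity.} Suppose $\As(\rn)$ is a multiplication algebra; by assumption it also satisfies \eqref{5.20}, so $S(\rn)\hra\As(\rn)\hra S'(\rn)$ and $\As(\rn)\subset L^{\loc}_1(\rn)$. Then Proposition~\ref{P5.9} applies and yields $\As(\rn)\hra L_\infty(\rn)$. By Proposition~\ref{P5.1} this embedding holds for $A=B$ if, and only if, \eqref{5.3} is satisfied, and for $A=F$ if, and only if, \eqref{5.5} is satisfied. But \eqref{5.3} and \eqref{5.5} are verbatim the conditions \eqref{5.23} and \eqref{5.24}. Hence these conditions are necessary, at no further cost.

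\textbf{Sufficiency, set-up.} Assume now \eqref{5.23} for $A=B$, respectively \eqref{5.24} for $A=F$. In every admitted case one has $s\ge n/p>0$, hence $s>\sigma^n_p$ with $\sigma^n_p$ as in \eqref{5.12}, and by Proposition~\ref{P5.1} again $\As(\rn)\hra L_\infty(\rn)$, so that $\As(\rn)=\As(\rn)\cap L_\infty(\rn)$ with equivalent quasi-norms. It therefore suffices to establish the bilinear estimate \eqref{5.21} with $A(\rn)=\As(\rn)$, interpreting the product through the Littlewood--Paley decomposition. Writing $f_j=(\vp_j\wh f)^\vee$ and $S_mf=\sum_{k\le m}f_k$, one decomposes $fg=\Pi_1(f,g)+\Pi_2(f,g)+\Pi_3(f,g)$, where $\Pi_1(f,g)=\sum_j (S_{j-2}f)\,g_j$ collects the products in which $g$ sits at the higher frequency, $\Pi_3(f,g)=\Pi_1(g,f)$ is the symmetric term, and $\Pi_2(f,g)=\sum_j f_j\,(g_{j-1}+g_j+g_{j+1})$ is the diagonal part. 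Since $S(\rn)$ need not be dense in $\As(\rn)$ one works directly with $f,g\in\As(\rn)$; the convergence of the three series in $\As(\rn)$ is controlled by the Fatou property (Theorem~\ref{T3.12}) when $q=\infty$ and by a Cauchy argument when $q<\infty$.

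\textbf{Sufficiency, the three pieces and the obstacle.} The off-diagonal terms are harmless for \emph{every} $s\in\real$: each summand $(S_{j-2}f)\,g_j$ has Fourier transform supported in an annulus $\{|\xi|\sim 2^j\}$, so the Littlewood--Paley building blocks reassemble in $\As(\rn)$ with no restriction on $s$; estimating the low-frequency factor by $\|S_{j-2}f\,|L_\infty(\rn)\|\le c\,\|f\,|L_\infty(\rn)\|$ and collecting the pieces $\{2^{js}\|g_j\,|L_p(\rn)\|\}$ (respectively the $L_p(\ell_q)$-expression in the $F$-case) gives $\|\Pi_1(f,g)\,|\As(\rn)\|\le c\,\|f\,|L_\infty(\rn)\|\cdot\|g\,|\As(\rn)\|$, and symmetrically for $\Pi_3$. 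The \emph{main obstacle is the diagonal term} $\Pi_2$: now each summand $f_j\,(g_{j-1}+g_j+g_{j+1})$ has Fourier transform supported only in a ball $\{|\xi|\le C2^j\}$, so reassembling costs smoothness and this is exactly where the hypothesis $s>0$ is used; one combines the sharp ``sum of ball-supported pieces'' estimate --- available for $B^s_{p,q}$ once $s>\sigma^n_p$, and in its (more delicate) $F$-variant --- with the uniform pointwise bound $|f_j(x)|\le c\,\|f\,|L_\infty(\rn)\|$ to obtain $\|\Pi_2(f,g)\,|\As(\rn)\|\le c\,\|f\,|L_\infty(\rn)\|\cdot\|g\,|\As(\rn)\|$. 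The limiting algebra cases $s=n/p$ (with $0<q\le1$ for $B$, $0<p\le1$ for $F$) are precisely those in which this diagonal estimate is tight, and they are handled by the refined reassembly/extrapolation results; here I would quote the relevant building-block estimates from \cite{T13} together with the classical multiplication-algebra statements recorded in \cite{T92} and \cite{T20}. Adding the three bounds and absorbing $\|\cdot\,|L_\infty(\rn)\|\le c\,\|\cdot\,|\As(\rn)\|$ yields \eqref{5.21}, which completes the sufficiency part.
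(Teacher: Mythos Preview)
Your necessity argument contains a genuine gap. You assert that ``\eqref{5.3} and \eqref{5.5} are verbatim the conditions \eqref{5.23} and \eqref{5.24}'', but this is false for the $B$-scale: \eqref{5.3} admits $p=\infty$, $s=n/p=0$, $0<q\le 1$, whereas \eqref{5.23} explicitly excludes $p=\infty$ in the limiting line. In other words, $B^0_{\infty,q}(\rn)$ with $0<q\le 1$ embeds into $L_\infty(\rn)$ by Proposition~\ref{P5.1} (and into $L^{\loc}_1(\rn)$ by Proposition~\ref{P5.5}), so your argument via Proposition~\ref{P5.9} does \emph{not} rule it out as a multiplication algebra; yet Proposition~\ref{P5.11} says it is not one. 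The paper itself flags exactly this discrepancy in Remark~\ref{R5.15}: ``The conditions \eqref{5.3} and \eqref{5.23} differ for $p=\infty$. This somewhat tricky point has its own history and was clarified eventually in \cite[Remark~3.3.2, p.\,114 and Corollary~4.3.2, Remark~4.3.5, p.\,120]{SiT95}.'' So an extra argument is needed to exclude $B^0_{\infty,q}(\rn)$, $0<q\le 1$. (For the $F$-scale your claim is correct: \eqref{5.5} and \eqref{5.24} do coincide.)

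As for comparison with the paper: the paper does not supply its own proof of Proposition~\ref{P5.11}; it simply records the result and refers (Remark~\ref{R5.12}) to \cite[Theorem~2.41]{T20} and the history culminating in \cite{SiT95}. Your paraproduct sketch for sufficiency is the standard route taken in those references and is broadly fine, though the borderline cases $s=n/p$ require the sharper estimates you allude to rather than just $s>0$.
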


\begin{remark}   \label{R5.12}
This coincides with \cite[Theorem 2.41, p.\,45]{T20} where the related assertion for the spaces $\As (\rn)$ with $p<\infty$ for
$F$--spaces has been extended to $F^s_{\infty,q} (\rn)$. In {\cite[Remark 2.42, p.\,46]{T20}} one finds also detailed references about
the substantial history of this problem, including \cite{Mar87} as one of the earliest contributions.
\end{remark}

We ask for a counterpart of Proposition~\ref{P5.11} for the spaces \eqref{2.44} of the $\vr$--clan with $-n < \vr <0$. Let again
\begin{\eq}   \label{5.25}
\sigma^{|\vr|}_p = |\vr| \Big( \max \big( \frac{1}{p},1 \big) - 1 \Big), \qquad 0<p \le \infty,
\end{\eq}
be as in \eqref{5.12} with $t = |\vr|$.

\begin{theorem}   \label{T5.13}
Let $A(\rn)$ be one of the spaces in \eqref{2.44}, \eqref{2.45} of the $\vr$--clan with $-n< \vr <0$, satisfying in addition
\eqref{5.20}.
\bli
\item Then $A(\rn) = \La^{\vr} \As (\rn)$, $A \in \{B,F \}$, is a multiplication algebra if, and only if, $s> \frac{|\vr|}{p}$.
\item If $A(\rn) = \La_{\vr} \Bs (\rn)$ is a multiplication algebra, then either $s > \frac{|\vr|}{p}$, $0<q \le \infty$, or
$s= \frac{|\vr|}{p}$, $0< q \le 1$. Conversely, if $s > \frac{|\vr|}{p}$, $0<q \le \infty$, then $\La_{\vr} \Bs (\rn)$ is a 
multiplication algebra.
\new{\item Then $A(\rn) = \La_{\vr} \Fs (\rn)$ is a multiplication algebra if, and only if, $s> \frac{|\vr|}{p}$.}
\eli
\end{theorem}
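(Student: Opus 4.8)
The plan is to split the equivalence into an easy necessity direction, uniform in the three families, and a sufficiency direction obtained by a paraproduct estimate; the family $\La_{\vr}\Fs(\rn)$ will then drop out for free. For necessity, I would argue as follows: if $A(\rn)$ is any of the spaces in (i)--(iii) and is a multiplication algebra, then Proposition~\ref{P5.9} forces $A(\rn)\hra L_\infty(\rn)$, and Theorem~\ref{T5.3} reads off the admissible parameters --- $s>|\vr|/p$ when $A(\rn)=\La^{\vr}\As(\rn)$, $A\in\{B,F\}$, and when $A(\rn)=\La_{\vr}\Fs(\rn)$; ``either $s>|\vr|/p$, or $s=|\vr|/p$ with $0<q\le1$'' when $A(\rn)=\La_{\vr}\Bs(\rn)$. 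This is the whole ``only if'' part of (i), (iii) and the necessity half of (ii). Moreover, by the coincidence \eqref{2.42} one has $\La_{\vr}\Fs(\rn)=\La^{\vr}\Fs(\rn)$, so the sufficiency in (iii) is literally the $F$-case of the sufficiency in (i); hence it only remains to prove that $s>|\vr|/p$ implies that each of $\La^{\vr}\Bs(\rn)$, $\La^{\vr}\Fs(\rn)$, $\La_{\vr}\Bs(\rn)$ is a multiplication algebra.

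For sufficiency I would fix $f_1,f_2$ in such a space (both are in $L_\infty(\rn)$ by Theorem~\ref{T5.3}, so the pointwise product makes sense) and use the standard paraproduct decomposition. Writing $f_i^j=(\vp_j\wh{f_i})^\vee$ and $S^\ell h=\sum_{i\le\ell}h^i$ for the dyadic resolution $\{\vp_j\}$,
\[
f_1f_2=\sum_j f_1^j\,S^{j-2}f_2+\sum_{|j-k|\le1}f_1^jf_2^k+\sum_k S^{k-2}f_1\,f_2^k=:\Pi_1+\Pi_2+\Pi_3 ,
\]
where the summands of $\Pi_1,\Pi_3$ have Fourier support in annuli $\{c_12^j\le|\xi|\le c_22^j\}$ and, after grouping by $j$, those of $\Pi_2$ in balls $\{|\xi|\le c2^j\}$. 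Since $S^\ell$ and the blocks $h\mapsto h^\ell$ are convolutions with fixed $L_1$-normalised kernels, the embedding $A(\rn)\hra L_\infty(\rn)$ gives $\|S^\ell f_i\,|\,L_\infty\|+\|f_i^\ell\,|\,L_\infty\|\le c\,\|f_i\,|\,A(\rn)\|$ uniformly in $\ell$. Hence, on every dyadic cube $Q_{J,M}$, a Hölder estimate bounds the $L_p(Q_{J,M})$-quasi-norm of each $\Pi_1$-summand by $c\,\|f_2\,|\,A(\rn)\|$ times that of the corresponding $f_1^j$, and symmetrically for $\Pi_2,\Pi_3$; in the $F$-case one gets the analogous pointwise domination of $\bigl(\sum_j2^{jsq}|\cdot|^q\bigr)^{1/q}$.

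Feeding these bounds into the quasi-norms \eqref{2.21}, \eqref{2.22} (with $L_p$ replaced by the Morrey norm $\La^{\vr}_p$, for $\La_{\vr}\Bs$) resp. \eqref{2.26}, \eqref{2.27} (for $\La^{\vr}\As$), and invoking the Fourier-analytic building-block characterisation of the respective Morrey smoothness space --- the annulus version (no restriction on $s$) for $\Pi_1,\Pi_3$, and the ball version (valid for $s>\sigma^{|\vr|}_p$) for $\Pi_2$ --- I would obtain $\|\Pi_i\,|\,A(\rn)\|\le c\,\|f_1\,|\,A(\rn)\|\,\|f_2\,|\,A(\rn)\|$ for $i=1,2,3$, i.e. \eqref{5.21}. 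The ball version is available since $s>|\vr|/p>\sigma^{|\vr|}_p$ (the gap $|\vr|/p-\sigma^{|\vr|}_p$ equals $|\vr|/p$ for $p\ge1$ and $|\vr|$ for $0<p<1$). For the $F$-spaces the same scheme runs with the $\ell_q$-sum taken inside $L_p(Q_{J,M})$ and the $F$-analogues of the two building-block lemmas.

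The main obstacle I foresee is making the building-block input for $\Pi_2$ precise: the ball-support decomposition (equivalently, the atomic/molecular decomposition) of the spaces in the $\vr$-clan must carry the threshold $\sigma^{|\vr|}_p$, and not the classical $\sigma^n_p$ --- exactly the phenomenon behind Theorem~\ref{T5.7}, and the point where the normalisation $u\vr+np=0$ is felt. Here one would lean on the decomposition theory already in the literature for $\La^{\vr}\As(\rn)=A^{s,\tau}_{p,q}(\rn)$, $\La_{\vr}\Bs(\rn)=\Nc^s_{u,p,q}(\rn)$ and $\La_{\vr}\Fs(\rn)=\Ec^s_{u,p,q}(\rn)$, cf. \eqref{2.24}, \eqref{2.25}, \eqref{2.29}, \eqref{2.30}. (A tempting shortcut for $\La_{\vr}\Bs(\rn)$ would transfer the algebra property from part (i) along the real interpolation \eqref{3.14}, but bilinear real interpolation is not free enough in general, so the direct paraproduct is the safer route.) Finally, I do not expect this scheme to yield anything at the endpoint $s=|\vr|/p$, $0<q\le1$ of $\La_{\vr}\Bs(\rn)$, which is precisely why (ii) claims only necessity there.
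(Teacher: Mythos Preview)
Your necessity argument is exactly the paper's: Proposition~\ref{P5.9} gives \eqref{5.22}, and Theorem~\ref{T5.3} reads off the parameter conditions. Your treatment of (iii) via the coincidence \eqref{2.42} also matches the paper.

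For sufficiency, however, the routes diverge. The paper does not run a paraproduct argument at all: for $\La^{\vr}\As(\rn)$ it simply invokes \eqref{2.31} together with \cite[Theorem~3.60]{T14} and \cite[Theorem~2.43]{T13}, whose proofs are wavelet-based (local versions of Proposition~\ref{P3.1} and the sequence space \eqref{3.10}); for $\La_{\vr}\Bs(\rn)$ it observes that the same wavelet argument goes through verbatim when \eqref{3.10} is replaced by \eqref{3.11}, as long as $s>|\vr|/p$. The point is that compactly supported Daubechies wavelets carry as many vanishing moments as one wishes, so no ``atoms without moments'' threshold enters.

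Your paraproduct scheme is the standard alternative, and for $\Pi_1,\Pi_3$ it is unproblematic. The obstacle you flag for $\Pi_2$ is genuine and is precisely the content of Remark~\ref{R4.14}: the currently available atomic/ball-support theory for $\La^{\vr}\As(\rn)$ (via \cite{T13,T14}) still carries the classical threshold $\sigma^n_p$, and whether it can be lowered to $\sigma^{|\vr|}_p$ is stated there as an open question. Since for $0<p<1-|\vr|/n$ one has $|\vr|/p<\sigma^n_p$, your $\Pi_2$ step would fail for such $p$ unless that improvement is supplied---so ``leaning on the literature'' does not close the gap. The wavelet route in the paper avoids this entirely, which is what it buys; your route would buy a self-contained Fourier-analytic proof, but only after the missing ball-support lemma with threshold $\sigma^{|\vr|}_p$ is established.
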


\begin{proof}
{\em Step 1.} If $A(\rn)$ is a multiplication algebra, then the indicated conditions for $s,p,q$ follow from \eqref{5.22} and Theorem
\ref{T5.3}.
\cm
{\em Step 2.} If $s > \frac{|\vr|}{p}$, then it follows from \eqref{2.31} and \cite[Theorem 3.60, p.\,95]{T14} with a reference to
\cite[Theorem 2.43, p.\,91]{T13} that the spaces $\La^{\vr} \As (\rn)$ are multiplication algebras. This proves part (i). The
corresponding proof of the (local version) of the spaces $\La^{\vr} \Bs (\rn)$ in \cite[pp.\,91--93]{T13} relies on (the local 
version) of the wavelet expansion according to Proposition~\ref{P3.1} with the right--hand side of \eqref{3.10} as the related
sequence space. But it works also for the right--hand side of \eqref{3.11} at least as long as $s > \frac{|\vr|}{p}$.  This proves 
part (ii). \new{Part (iii) is a consequence of \eqref{2.42} and (i).}
\end{proof}

\begin{corollary}   \label{C5.14}
A space $\La^{\vr} \As (\rn)$, $A \in \{B,F \}$, of the $\vr$--clan according to Definition~\ref{D2.11}{\em (ii)} with $-n <\vr <0$ is
a multiplication algebra if, and only if, it is continuously embedded in $C(\rn)$.
\end{corollary}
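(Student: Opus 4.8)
The plan is to notice that both properties appearing in the corollary are, for the two $\La^{\vr}$ families of a $\vr$-clan with $-n<\vr<0$, governed by one and the same inequality $s>|\vr|/p$, so the equivalence reduces to chaining two results already in hand. Concretely, I would run both implications through this pivot condition rather than argue them directly against each other.

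For the ``only if'' direction, assume $\La^{\vr}\As(\rn)$ is a multiplication algebra. By Theorem~\ref{T5.13}(i) this forces $s>|\vr|/p$. Now invoke the embedding part of Theorem~\ref{T5.3}: the equivalence \eqref{5.7} together with its closing sentence (which permits $C(\rn)$ in place of $L_\infty(\rn)$) gives $\La^{\vr}\As(\rn)\hra C(\rn)$. For the converse, assume $\La^{\vr}\As(\rn)\hra C(\rn)$; since $C(\rn)\hra L_\infty(\rn)$ this yields $\La^{\vr}\As(\rn)\hra L_\infty(\rn)$, and the ``only if'' half of \eqref{5.7} in Theorem~\ref{T5.3} shows that this is possible only when $s>|\vr|/p$. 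Applying Theorem~\ref{T5.13}(i) once more, we conclude that $\La^{\vr}\As(\rn)$ is a multiplication algebra. (One could also shorten the first step of the converse by quoting Proposition~\ref{P5.9}, which already gives $A(\rn)\hra L_\infty(\rn)$ for any multiplication algebra among the spaces of Definition~\ref{D2.11}; but routing both implications through $s>|\vr|/p$ is the cleanest presentation.)

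There is no genuine obstacle here: the entire mathematical content sits in the two cited theorems, and the corollary is the observation that for the $\La^{\vr}$ scale the multiplication-algebra property and the embedding into $C(\rn)$ share the identical threshold $s>|\vr|/p$. I would round off the discussion by pointing out that this clean dichotomy is special to the $\La^{\vr}$ families: for $\La_{\vr}\Bs(\rn)$ the limiting case $s=|\vr|/p$, $0<q\le 1$ still produces an embedding into $C(\rn)$ via \eqref{5.8}, whereas its multiplication-algebra status is left open in Theorem~\ref{T5.13}(ii), which is precisely why the corollary is stated only for the spaces $\La^{\vr}\As(\rn)$, $A\in\{B,F\}$.
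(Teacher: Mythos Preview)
Your proof is correct and follows essentially the same route as the paper's own argument, which simply says ``This follows immediately from Theorem~\ref{T5.3} and part (i) of the above theorem.'' You have spelled out the two implications through the pivot condition $s>|\vr|/p$, which is exactly what the one-line proof in the paper leaves implicit; your closing remark about the $\La_{\vr}\Bs$ limiting case also anticipates the content of Remark~\ref{R5.15}.
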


\begin{proof} 
This follows immediately from Theorem~\ref{T5.3} and part (i) of the above theorem.
\end{proof}

\begin{remark}   \label{R5.15}
The above-mentioned somewhat sketchy proof in \cite[pp.\,91--93]{T13} relies decisively on $s > \frac{|\vr|}{p}$. But it is not clear
whether the related arguments can be extended to the spaces $\La_{\vr} \Bs (\rn)$ with $s = \frac{|\vr|}{p}$ and $0<q \le 1$. If this
is the case (as we expect), then Corollary~\ref{C5.14} can be extended to all spaces in \eqref{2.44} of the $\vr$--clan with $-n < \vr
<0$. But this is not obvious as the situation for the $n$--clan shows. The conditions \eqref{5.3} and \eqref{5.23} differ for $p=
\infty$. This somewhat tricky point has its own history and was clarified eventually in \cite[Remark 3.3.2, p.\,114 and Corollary
4.3.2, Remark 4.3.5, p.\,120]{SiT95}. In any case, Theorem~\ref{T5.13}, compared with Proposition~\ref{P5.11} is a further example 
of the Slope--$n$--Rule, see \new{Slope Rules~\ref{slope_rules}(ii)}.
\end{remark}

\subsection{The $\delta$--distribution}    \label{S5.4}
As a preparation of our later arguments we clarify to  which spaces covered by Definition~\ref{D2.11} the $\delta$--distribution,
$\delta (\vp) = \vp (0)$, $\vp \in S(\rn)$, belongs. First we deal with the $n$--clan, consisting of the spaces in \eqref{2.38},
complementing related very classical assertions for these spaces with $p<\infty$ for the $F$--spaces by $F^s_{\infty,q} (\rn)$.

\begin{proposition}  \label{P5.16}
Let $n\in \nat$ and let $\As (\rn)$ be the spaces according to \eqref{2.38}. Then
\begin{\eq}   \label{5.26}
\delta \in \Bs (\rn) \quad \text{if, and only if,} \quad
\begin{cases}
0<p \le \infty, & s< n (\frac{1}{p} -1), \ 0<q \le \infty,\quad\text{or} \\
0<p \le \infty, & s= n (\frac{1}{p} -1), \ q= \infty, 
\end{cases}
\end{\eq}
and 
\begin{\eq}   \label{5.27}
\delta \in \Fs (\rn) \quad \text{if, and only if,} \quad
\begin{cases}
0<p < \infty, & s< n (\frac{1}{p} -1), \ 0<q \le \infty,\quad\text{or} \\
p = \infty, & s \le -n, \ 0<q \le \infty.
\end{cases}
\end{\eq}
\end{proposition}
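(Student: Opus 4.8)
The plan is to compute the relevant quasi-norms of $\delta$ almost explicitly, exploiting that $\wh{\delta} = (2\pi)^{-n/2}$ is constant. Fix the resolution of unity $\{\vp_j\}$ from Section~\ref{S2.1}. Then $\vp_j(x) = \psi(2^{-j}x)$ for $j \ge 1$ with $\psi = \vp_0 - \vp_0(2\,\cdot\,) \in S(\rn)$ supported in an annulus, hence $(\vp_j\wh{\delta})^\vee(x) = c_n\,2^{jn}\psi^\vee(2^j x)$ for $j\ge 1$ and $(\vp_0\wh{\delta})^\vee = c_n\vp_0^\vee$; here $\psi^\vee,\vp_0^\vee \in S(\rn)$ and $\psi^\vee\not\equiv 0$. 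In particular $\big\|(\vp_j\wh{\delta})^\vee \,|\, L_p(\rn)\big\| \sim 2^{jn(1-1/p)}$ for $j\ge 0$, $0<p\le\infty$. Since $\delta\in S'(\rn)$ always and the quasi-norms are independent of the chosen resolution of unity (Remark~\ref{R2.2}), all membership questions reduce to summing or integrating such elementary expressions.

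For the $B$-scale I would insert the above into \eqref{2.10}, getting $\|\delta\,|\,\Bs(\rn)\| \sim \big(\sum_{j\ge 0} 2^{j(s - n(1/p-1))q}\big)^{1/q}$ (usual modification for $q=\infty$). This geometric series is finite precisely when $s < n(1/p-1)$, $0<q\le\infty$, or $s = n(1/p-1)$ and $q=\infty$, which is exactly \eqref{5.26}.

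For $\Fs(\rn)$ with $p<\infty$ I would estimate the $L_p$-norm of $g(x) = \big(\sum_{j\ge 0} 2^{jsq}|(\vp_j\wh{\delta})^\vee(x)|^q\big)^{1/q}$. Using $|\psi^\vee(y)|\le c_N(1+|y|)^{-N}$ for arbitrary $N$ (and that $\psi^\vee$ is bounded below on a fixed ball, for the lower bound), on the dyadic shell $|x|\sim 2^{-J}$, $J\in\no$, only the terms with $j\lesssim J$ contribute, so $g(x)\sim 2^{J(s+n)}$ when $s+n>0$ (with an extra $J^{1/q}$ if $s+n=0$, and $g(x)\sim$ const if $s+n<0$), while $g$ decays rapidly for $|x|\gtrsim 1$. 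Hence, when $s+n>0$, $\|g\,|\,L_p(\rn)\|^p \sim \sum_{J\ge 0} 2^{-Jn}2^{Jp(s+n)} = \sum_{J\ge 0} 2^{Jp(s-n(1/p-1))}$, finite iff $s<n(1/p-1)$; at $s=n(1/p-1)>-n$ one has $s+n=n/p>0$ and the series diverges, so there is no endpoint membership; and when $s+n\le 0$ one automatically has $s<n(1/p-1)$ and $g\in L_p(\rn)$ by the rapid decay. This gives \eqref{5.27} for $p<\infty$.

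For $F^s_{\infty,q}(\rn)$ I would proceed in three steps. If $s>-n$: by \eqref{2.49}, \eqref{2.50} with $p=\infty$ one has $F^s_{\infty,q}(\rn)\hra B^s_{\infty,\infty}(\rn)=\Cc^s(\rn)$, and $\delta\notin B^s_{\infty,\infty}(\rn)$ by the $B$-case, so $\delta\notin F^s_{\infty,q}(\rn)$. If $s<-n$: again by \eqref{2.49}, \eqref{2.50}, $B^s_{\infty,q}(\rn)\hra F^s_{\infty,q}(\rn)$ and $\delta\in B^s_{\infty,q}(\rn)$, so $\delta\in F^s_{\infty,q}(\rn)$. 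The endpoint $s=-n$ I would check by hand from \eqref{2.13}: the decisive supremum is attained (up to constants) at $M=0$ and $J\to\infty$; substituting $y=2^jx$ gives $\int_{Q_{J,0}}\sum_{j\ge J}2^{jsq}|(\vp_j\wh{\delta})^\vee(x)|^q\,\di x \sim \sum_{j\ge J}2^{j((s+n)q-n)}$, which at $s=-n$ equals $\sum_{j\ge J}2^{-jn}\sim 2^{-Jn}$, so $2^{Jn/q}\big(\cdots\big)^{1/q}\sim 1$ uniformly in $J$, and the remaining cubes ($J\le 0$, or $M\ne 0$ where $\psi^\vee$ decays fast) contribute only boundedly; hence $\delta\in F^{-n}_{\infty,q}(\rn)$. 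Together with the $q=\infty$ coincidence \eqref{2.16} these give \eqref{5.27} for $p=\infty$. The main obstacle is precisely this last step: organising the supremum over all dyadic cubes in \eqref{2.13}, verifying that the extremal cube sits at the origin, and handling carefully the isolated $j=0$ term and the $s=-n$ endpoint; the $B$-scale and the $p<\infty$ part of the $F$-scale are essentially bookkeeping with geometric series.
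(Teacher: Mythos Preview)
Your argument is correct. The $B$-case matches the paper exactly. For the $F$-case you diverge from the paper in two places, and both alternatives are legitimate.

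For $\Fs(\rn)$ with $p<\infty$ you compute the $L_p$-norm of the square function directly via a dyadic shell decomposition. The paper instead invokes the Franke--Jawerth embeddings \eqref{5.32}, sandwiching $\Fs(\rn)$ between $B^{s_0}_{p_0,p}(\rn)$ and $B^{s_1}_{p_1,p}(\rn)$ along the line of constant differential dimension and reducing everything to the already-proved $B$-case. Their route is shorter and avoids the mild technicality you flag (that $\psi^\vee$ must be bounded away from zero on some set away from the origin to get disjoint shells for the lower bound; this is fine since $\psi^\vee$ is real-analytic and not identically zero, but it does need a sentence). Your route is more self-contained.

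For $F^s_{\infty,q}(\rn)$ at the endpoint $s=-n$ you evaluate the quasi-norm \eqref{2.13} directly on the cubes $Q_{J,0}$, which is clean and elementary. The paper takes an indirect path: it quotes $\chi_Q\in F^0_{\infty,q}(\rn)$ from \cite[Proposition~2.43]{T20}, applies $\partial^n$ using the derivative characterisation \eqref{5.35}, and reads off $\delta\in F^{-n}_{\infty,q}(\rn)$ from $\partial^n\chi_Q=\delta+\cdots$. Their argument ties the result to other structural facts; yours is a direct computation that stands on its own. Both the non-endpoint cases ($s>-n$ via $F^s_{\infty,q}\hra\Cc^s$ and $s<-n$ via $B^s_{\infty,q}\hra F^s_{\infty,q}$) you handle by the elementary embeddings \eqref{2.49}--\eqref{2.50}, which is essentially what the paper does as well.
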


\begin{proof} Let $\{ \vp_j \}^\infty_{j=0}$ be the usual dyadic resolution of unity according to \eqref{2.3}--\eqref{2.5} and let
$\vp_j (\xi) = \vp (2^{-j} \xi)$, $j \in \nat$, $\xi \in \rn$. Using $\wh{\delta} = c \not= 0$ it follows from
\begin{\eq}   \label{5.28}
\big( \vp_j \wh{\delta} \big)^\vee (x) = c \, 2^{jn} \vp^\vee (2^j x), \qquad j\in \nat, \quad x\in \rn,
\end{\eq}
that
\begin{\eq}   \label{5.29}
\| \big( \vp_j \wh{\delta} \big)^\vee | L_p (\rn) \| \sim 2^{jn (1 - \frac{1}{p})}, \qquad j \in \nat, \quad 0<p \le \infty.
\end{\eq}
Inserted  in \eqref{2.10} one obtains
\begin{\eq}    \label{5.30}
\big\| \delta \, | \Bs (\rn) \big\| \sim \Big( \sum^\infty_{j=0} 2^{jq (s+n- \frac{n}{p} )} \Big)^{1/q}.
\end{\eq}
This  proves \eqref{5.26}. Let $0<p_0 <p< p_1 \le \infty$, $0<q \le \infty$ and
\begin{\eq}   \label{5.31}
s_0 - \frac{n}{p_0} = s - \frac{n}{p} = s_1 - \frac{n}{p_1}.
\end{\eq}
Then \eqref{5.27} follows from \eqref{5.26} and the well--known sharp embeddings
\begin{\eq}   \label{5.32}
B^{s_0}_{p_0,u} (\rn) \hra \Fs (\rn) \hra B^{s_1}_{p_1,v} (\rn)
\end{\eq}
with $0<u \le p \le v \le \infty$, often called Franke-Jawerth embedding nowadays to honour the contributions by Franke \cite{Franke86}, concerning the left-hand embedding, and Jawerth \cite{Jaw77} concerning the right-hand embedding. The sharp result is can be found \cite{SiT95}. Let us also refer to the more recent, elegant new proof of Vyb{\'\i}ral \cite{Vyb08}. We recommend 
{\cite[Theorem 2.5, p.\,25]{T20}} for further discussion. Let again $\chi_Q$ be the 
characteristic function of the unit cube $Q =(0,1)^n$. One has according to \cite[Proposition 2.43, p.\,46]{T20} 
\begin{\eq}   \label{5.33}
\chi_Q \in F^0_{\infty,q} (\rn) \qquad \text{for all} \quad 0<q \le \infty.
\end{\eq}
Let $\pa^n = \pa_1 \cdots \pa_n$. Then \eqref{5.27} with $p=\infty$ follows from
\begin{\eq}   \label{5.34}
\pa^n \chi_Q = \delta + \ldots \in F^{-n}_{\infty, q} (\rn),
\end{\eq}
where $+ \ldots$ indicates $\delta$--distributions with the remaining corners of $Q$ as off--points. Here we used in addition the 
special case
\begin{\eq}   \label{5.35}
\| f \, | F^0_{\infty,q} (\rn) \| \sim \sup_{0 \le |\alpha| \le n} \| \Dd^\alpha f \, | F^{-n}_{\infty, q} (\rn) \|
\end{\eq}
of \eqref{3.30}, $F^s_{\infty,q} (\rn) \hra \Cc^s (\rn)$, \cite[Theorem 2.9, p.\,26]{T20},  and \eqref{5.26}.
\end{proof}

\begin{remark}   \label{R5.17}
  We incorporated $F^s_{\infty,q} (\rn)$ into the otherwise classical assertion for the spaces $\As (\rn)$.
  But we have no reference at
hand. We inserted the above proposition because it will illuminate again the difference between the spaces in
\eqref{2.38} of the $n$--clan and the spaces in \eqref{2.44}, \eqref{2.45} of the $\vr$--clans, $-n<\vr<0$, in limiting situations.
\end{remark}

Next we deal with the $\vr$--clans according to Definition~\ref{D2.11}(ii) consisting of the three families \eqref{2.44} with $-n <
\vr <0$, $0<p<\infty$, $0<q \le \infty$ and $s\in \real$.

\begin{theorem}   \label{T5.18}
Let $n\in \nat$, $-n <\vr <0$, $0<p<\infty$. Let $0<q \le \infty$ and $s\in \real$. Then
\begin{\eq}    \label{5.36}
\delta \in \La^{\vr} \As (\rn) \quad \text{if, and only if,} \quad s \le \frac{|\vr|}{p} -n, \ 0<q \le \infty,
\end{\eq}
where $A \in \{B, F \}$,
\begin{\eq}   \label{5.37}
\delta \in \La_{\vr} \Bs (\rn) \quad \text{if, and only if,} \quad
\begin{cases}
s< \frac{|\vr|}{p} -n, &0<q \le \infty, \quad\text{or}\\
s= \frac{|\vr|}{p} -n, &q = \infty,
\end{cases}
\end{\eq}
\new{and
\begin{\eq}    \label{5.37a}
  \delta \in \La_{\vr} \Fs (\rn) \quad \text{if, and only if,} \quad s \le \frac{|\vr|}{p} -n, \ 0<q \le \infty.
\end{\eq}}
\end{theorem}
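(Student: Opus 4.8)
The plan is to compute all three quasi-norms of $\delta$ explicitly from the Fourier-analytic definitions. Fix a convenient dyadic resolution of unity $\{\vp_j\}_{j=0}^\infty$ with $\vp_j(\xi)=\Psi(2^{-j}\xi)$ for $j\ge 1$, where $\Psi$ is smooth, supported in an annulus around the origin, and chosen so that $\Psi^\vee(0)\neq 0$. Since $\wh\delta$ is a non-zero constant, for $j\ge1$
\[
(\vp_j\wh\delta)^\vee(x)=c\,2^{jn}\,\Psi^\vee(2^jx),\qquad x\in\rn,
\]
with $\Psi^\vee$ a fixed Schwartz function, while $(\vp_0\wh\delta)^\vee$ is a fixed Schwartz function that contributes only a bounded amount to every quasi-norm below and hence never affects convergence. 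The first step is the elementary local estimate
\[
\sup_{M\in\zn}\big\|(\vp_j\wh\delta)^\vee\,\big|\,L_p(Q_{J,M})\big\|\ \sim\ 2^{jn}\min\!\big(2^{-Jn/p},\,2^{-jn/p}\big),\qquad J\in\ganz,\ j\in\nat ,
\]
whose upper bound is $\min\big(\|(\vp_j\wh\delta)^\vee\,|\,L_p(\rn)\|,\ \|(\vp_j\wh\delta)^\vee\,|\,L_\infty(\rn)\|\cdot|Q_{J,M}|^{1/p}\big)$ uniformly in $M$, and whose matching lower bound comes from the cube $Q_{J,0}$, which has a corner at the origin where $|(\vp_j\wh\delta)^\vee|\sim 2^{jn}$.

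Inserting this into \eqref{2.17} and using $n+\vr>0$ and $|\vr|>0$ to locate the supremum over $J$ at $J=j$ (from both branches of the minimum), I obtain
\[
\big\|(\vp_j\wh\delta)^\vee\,\big|\,\La^\vr_p(\rn)\big\|\ \sim\ 2^{j(n-\frac{|\vr|}{p})},\qquad j\in\nat .
\]
Substituting this into \eqref{2.21} gives
\[
\big\|\delta\,\big|\,\La_\vr\Bs(\rn)\big\|\ \sim\ \Big(\sum_{j\ge0}2^{jq(s+n-\frac{|\vr|}{p})}\Big)^{1/q}
\]
(usual modification for $q=\infty$), which is finite exactly in the two cases listed in \eqref{5.37}. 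Since $\La_\vr\Fs(\rn)=\La^\vr\Fs(\rn)$ by \eqref{2.42}, the assertion \eqref{5.37a} for $\La_\vr\Fs$ will follow once \eqref{5.36} is proved.

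For $\La^\vr\Bs(\rn)$ I would feed the local estimate into \eqref{2.26}. In the inner sum one has $j\ge J^+\ge J$, so the minimum always equals $2^{-jn/p}$, and the rapid decay of $(\vp_j\wh\delta)^\vee$ away from the origin reduces the supremum over $M$ to the cube $Q_{J,0}$; this leaves
\[
\big\|\delta\,\big|\,\La^\vr\Bs(\rn)\big\|\ \sim\ \sup_{J\in\ganz}2^{\frac Jp(n+\vr)}\Big(\sum_{j\ge J^+}2^{jq(s+n-\frac np)}\Big)^{1/q}.
\]
For $J\le 0$ (so $J^+=0$) the bracket is finite iff $s+n-\frac np<0$, or $=0$ with $q=\infty$, and then the supremum over $J\le0$ is attained at $J=0$; for $J\ge1$ the geometric sum is $\sim 2^{Jq(s+n-\frac np)}$, so the $J$-term behaves like $2^{J(s+n-\frac{|\vr|}{p})}$ and stays bounded as $J\to\infty$ iff $s\le\frac{|\vr|}{p}-n$. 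Because $-n<\vr<0$ forces $s\le\frac{|\vr|}{p}-n\Rightarrow s+n-\frac np\le\frac{|\vr|-n}{p}<0$, the $J\le0$ condition is automatic, and one concludes $\delta\in\La^\vr\Bs(\rn)$ iff $s\le\frac{|\vr|}{p}-n$, for every $0<q\le\infty$; this is \eqref{5.36} with $A=B$. The case $A=F$ then follows from the sandwich \eqref{2.54}, $\La^\vr B^s_{p,\min(p,q)}(\rn)\hra\La^\vr\Fs(\rn)\hra\La^\vr B^s_{p,\max(p,q)}(\rn)$, since the $B$-answer is independent of the fine index; alternatively, the necessity half of \eqref{5.36} is immediate from \eqref{4.34} together with $\delta\in\Cc^\sigma(\rn)\Leftrightarrow\sigma\le-n$.

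The genuinely routine part is the block estimate for $(\vp_j\wh\delta)^\vee$. The main obstacle is the bookkeeping in the $\La^\vr$-case: correctly identifying which power realises the minimum over the summation range $j\ge J^+$, reducing the supremum over $M$ to a single cube near the origin, handling the limiting exponents when $q=\infty$, and recognising that the constraint coming from the coarse cubes ($J\le0$) is already subsumed by $s\le\frac{|\vr|}{p}-n$ precisely because $|\vr|<n$.
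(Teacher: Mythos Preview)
Your proof is correct and follows essentially the same route as the paper: both compute the local $L_p$-mass of $(\vp_j\wh\delta)^\vee$ on dyadic cubes (your $\min$-formulation is equivalent to the paper's two-case display \eqref{5.39}), feed this into the Fourier-analytic definitions \eqref{2.51}/\eqref{2.52}, and finish via the sandwich \eqref{2.54} for $\La^\vr F$ and the coincidence \eqref{2.42} for $\La_\vr F$. Your handling of the $J\le 0$ versus $J\ge 1$ split is a bit more explicit than the paper's, but the substance is identical.
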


\begin{proof}
{\em Step 1.} We prove \eqref{5.37}. This means by \eqref{2.51} that we have to check whether 
\begin{\eq}   \label{5.38}
\| \delta \, | \La_{\vr} \Bs (\rn) \| = \Big( \sum^\infty_{j=0} 2^{jsq} \sup_{J \in \ganz, M \in \zn} 2^{\frac{J}{p} (n+\vr)q}
\big\| \big( \vp_j \wh{\delta} \big)^\vee | L_p (Q_{J,M} ) \big\|^q \Big)^{1/q}
\end{\eq}
is finite or not (usual modification if $q=\infty$). By \eqref{5.28} one has
\begin{\eq}   \label{5.39}
\sup_{M\in \zn} \big\| \big( \vp_j \wh{\delta} \big)^\vee \, | L_p (Q_{J,M}) \big\| \sim
\begin{cases}
2^{jn(1 - \frac{1}{p})} &\text{if $j \ge J$}, \\
2^{jn} 2^{- \frac{Jn}{p}} & \text{if $j<J$}.
\end{cases}
\end{\eq}
Using $-n<\vr <0$ it follows that
\begin{\eq}   \label{5.40}
\| \delta \, | \La_{\vr} \Bs (\rn) \| \sim \Big( \sum^\infty_{j=0} 2^{jq (s+n+ \frac{\vr}{p})} \Big)^{1/q}
\end{\eq}
(again with the usual modification if $q=\infty$). This proves \eqref{5.37}.
\cm
{\em Step 2.} We prove \eqref{5.36} for $A=B$. This means by \eqref{2.52} that we have to check whether
\begin{\eq}   \label{5.41}
\| \delta \, | \La^{\vr} \Bs (\rn) \| = \sup_{J\in \ganz, M\in \zn} 2^{\frac{J}{p}(n + \vr)} \Big( \sum_{j \ge J^+} 2^{jsq}
\big\| \big( \vp_j \wh{\delta} \big)^\vee | L_p (Q_{J,M}) \big\|^q \Big)^{1/q}
\end{\eq}
is finite or not (usual modification if $q= \infty$). By \eqref{5.39} (with $M=0$) one has
\begin{\eq}    \label{5.42}
\| \delta \, | \La^{\vr} \Bs (\rn) \| \sim \sup_{J \in \ganz} 2^{\frac{J}{p}(n + \vr)} \Big( \sum_{j \ge J^+} 2^{jq(s+n -
\frac{n}{p})} \Big)^{1/q}.
\end{\eq}
Since $|\vr| <n$ it is sufficient to justify \eqref{5.36} for $s < \frac{n}{p} - n$. Using in addition $n+\vr >0$ it follows that
\begin{\eq}    \label{5.43}
\| \delta \, | \La^{\vr} \Bs (\rn) \| \sim \sup_{J\in \nat} 2^{\frac{J}{p}(n + \vr)} 2^{J(s+n- \frac{n}{p})}.
\end{\eq}
This proves \eqref{5.36} for $A=B$. One obtains the corresponding assertion for the spaces $\La^{\vr} \Fs (\rn)$ from \eqref{2.54}. \new{This finally covers \eqref{5.37a} in view of the coincidence \eqref{2.42}.}
\end{proof}

\begin{remark}   \label{R5.19}
The arguments of Step 2 of the above proof apply also to the $0$--clan according to Definition\ref{D2.11}(iii) with the outcome that
\begin{\eq}   \label{5.44}
\delta \in \La^0 \Bs (\rn) \quad \text{with $0<p,q<\infty$ if, and only if, $s \le -n$}.
\end{\eq}
\new{The parallel result holds for $\La^0 \Fs (\rn)$ due to  \eqref{ftbt} and \eqref{2.35}, as then \cite[Example~3.2]{HMS16} provides 
\[
\delta \in \La^0 \Fs (\rn) \quad \text{with $0<p,q<\infty$ if, and only if, $s \le -n$.}
\]
  }
\end{remark}

\begin{remark}  \label{R5.20}
The above theorem and also the comments in Remark~\ref{R5.19} are already covered by \cite[Example 3.2, pp.\,225--227]{HMS16},
appropriately reformulated. This applies also to the proof, relying as above, on the Fourier--analytical definition of these spaces. \new{However, comparing the corresponding diagrams in \cite{HMS16} with the above Figure~\ref{fig-1a} on p.~\pageref{fig-1a} which indicate the parameters $(\frac1p,s)$ such that $\delta\in\rhoAs(\rn)$, the advantage of the new setting is obvious.}
\end{remark}

\begin{remark}   \label{R5.21}
Theorem~\ref{T5.18} compared with Proposition~\ref{P5.16} is again an example of the Slope--$n$--Rule, recall \new{Slope Rules~\ref{slope_rules}(ii)}, where $\frac{n}{p}$ is replaced by $\frac{|\vr|}{p}$. But there are also some differences in limiting situations, this
means $s= \frac{n}{p} -n$ in Proposition~\ref{P5.16} and $s= \frac{|\vr|}{p} -n$, $0<|\vr| <n$ in Theorem~\ref{T5.18}. This applies
also to \eqref{5.44} compared with $B^{-n}_{\infty,q} (\rn)$. Combined with the lifts as described in \eqref{3.25}, \eqref{3.29} it
shows again that the first embedding in \eqref{2.34} is strict. Furthermore if $q<\infty$, then it follows from the above theorem 
combined with the lifts as described in Theorem~\ref{T3.8} that the spaces $\La^{\vr} \Bs (\rn)$ and $\La_{\vr} \Bs (\rn)$, $-n
<\vr <0$, do not coincide. This  had already been used in Step 2 of the proof of Theorem~\ref{T2.16} to justify that the embedding
\eqref{2.56} is strict.
\end{remark}

\subsection{The characteristic function $\chi_Q$}     \label{S5.5}
Expansions in term of Haar wavelets in suitable spaces $\As (\rn)$ attracted a lot of attention up to our time. A description 
including impressive recent results may be found in {\cite[Section 3.5, pp.\,98--103]{T20}}. It is quite natural to ask for 
corresponding expansions in appropriate spaces according to \eqref{2.44}, \eqref{2.45}, the $\vr$--clan, $-n <\vr <0$. These spaces
are not separable: The corresponding proof for the Morrey spaces $\La^{\vr}_p (\rn)$ with $-n<\vr <0$ according to Definition
\ref{D2.3} in \cite[Section 2.3.4, pp.\,23--24]{T14} can be extended to these spaces. Then the related Haar wavelets cannot be a
basis. But, of course, expansions within the dual pairing $\big( S(\rn), S'(\rn) \big)$ make sense, quite similar as the wavelet 
expansions in Section~\ref{S3.1}. First steps in this direction have been done quite recently in \cite{YSY20}, discussing the
question to which spaces the characteristic functions $\chi_Q$ of the unit cube $Q= (0,1)^n$ belongs and in which spaces $\chi_Q$
generates a linear and bounded functional. We deal with these topics  in the context of our preceding considerations.

First we recall to which classical spaces $\As (\rn)$ the characteristic function $\chi_Q$ of $Q =(0,1)^n$, $n\in \nat$, belongs.

\begin{proposition}    \label{P5.22}
Let $\As (\rn)$ be the spaces according to \eqref{2.38}, the $n$--clan. Then
\begin{\eq}   \label{5.45}
\chi_Q \in B^{1/p}_{p,q} (\rn) \qquad \text{if, and only if, $q=\infty$}
\end{\eq}
and
\begin{\eq}   \label{5.46}
\chi_Q \in F^{1/p}_{p,q} (\rn) \qquad \text{if, and only if, $p=\infty$}.
\end{\eq}
Furthermore, $\chi_Q \in \As (\rn)$ if, and only if, either $s<1/p$ or as in \eqref{5.45}, \eqref{5.46}.
\end{proposition}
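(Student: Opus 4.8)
The plan is to determine membership of $\chi_Q$ in $\As(\rn)$ from the wavelet characterisation of Section~\ref{S3.1}, exploiting the product structure $\chi_Q(x)=\prod_{l=1}^n\chi_{(0,1)}(x_l)$; this is classical (for $s>\sigma^n_p$ one may argue alternatively via the modulus of smoothness, since $\omega_k(\chi_Q,t)_p\sim t^{1/p}$), see \cite{T20} and the references therein, but the wavelet route handles all parameters at once. I would fix a Daubechies pair $\psi_F,\psi_M$ as in \eqref{3.1}, \eqref{3.2} with $u$ large, supported in $[0,N]$, normalised so that $c_F:=\int_{\real}\psi_F\neq0$ and $\int_0^k\psi_M\neq0$ for some integer $0<k<N$ (which holds for a suitable choice of the pair). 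By \eqref{3.8}, $\lambda^{j,G}_m(\chi_Q)=2^{jn}\prod_{l=1}^n\int_0^1\psi_{G_l}(2^jx_l-m_l)\,\di x_l$. By the vanishing moments \eqref{3.2}, a factor with $G_l=M$ is zero unless $\mathrm{supp}\,\psi_M(2^j\cdot-m_l)$ meets both $(0,1)$ and its complement, forcing $m_l$ into an $O(1)$-set near $0$ or near $2^j$; a factor with $G_l=F$ equals $2^{-j}c_F$ for the $\sim 2^j$ interior indices and is $O(2^{-j})$ otherwise. Hence at each level $j\geq1$ there are $\sim 2^{j(n-1)}$ pairs $(G,m)$ with $\lambda^{j,G}_m(\chi_Q)\neq0$ — precisely those whose wavelet straddles $\partial Q$ — all of modulus $\leq c$, while $\sim 2^{j(n-1)}$ of them (those with a single $M$-component, lying along a face of $Q$) have modulus $\geq c'>0$.

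Next I would insert these two-sided estimates into the $B$-sequence space \eqref{3.9} to obtain, for all $0<p\le\infty$, $0<q\le\infty$, $s\in\real$,
\[
\big\|\chi_Q\,\big|\,\Bs(\rn)\big\|^q\ \sim\ \sum_{j\geq0}2^{j(s-\frac np)q}\big(2^{j(n-1)}\big)^{q/p}\ =\ \sum_{j\geq0}2^{jq(s-\frac1p)}
\]
(usual modification if $q=\infty$, modulo a finite level-$0$ term), which is finite if and only if either $s<1/p$ or $s=1/p$ and $q=\infty$; this is \eqref{5.45} and the $B$-part of the last assertion. For the $F$-scale with $0<p<\infty$ the analogous wavelet characterisation (the $F$-counterpart of \eqref{3.9}; cf. \cite{T14}) reads $\|\chi_Q\,|\,\Fs(\rn)\|\sim\|g\,|\,L_p(\rn)\|$ with $g(x)^q=\sum_{j,G,m}2^{jsq}|\lambda^{j,G}_m(\chi_Q)|^q\chi_{Q_{j,m}}(x)$. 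By the coefficient estimate $g$ is bounded off every neighbourhood of $\partial Q$, while for $x$ near $\partial Q$ only the $\sim\log_2(1/\delta)$ scales with $2^{-j}\gtrsim\delta:=\dist(x,\partial Q)$ contribute an $O(1)$-sized coefficient, so $g(x)\sim\delta^{-s}$ for $s>0$ and $g(x)=O\big((1+\log_+(1/\delta))^{1/q}\big)$ for $s\leq0$. As $\partial Q$ has finite $(n-1)$-measure, $\int_{\rn}g^p$ is a finite quantity plus essentially $\int_0^{c}\delta^{-sp}\,\di\delta$, which converges exactly when $s<1/p$; hence $\chi_Q\in\Fs(\rn)$, $0<p<\infty$, iff $s<1/p$, which is \eqref{5.46} for $p<\infty$. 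For $p=\infty$ one has $1/p=0$: $\chi_Q\in F^0_{\infty,q}(\rn)$ for all $q$ is \eqref{5.33}, $\chi_Q\in F^s_{\infty,q}(\rn)$ for $s<0$ follows from $\chi_Q\in L_\infty(\rn)\hra\Cc^0(\rn)=F^0_{\infty,\infty}(\rn)\hra F^s_{\infty,q}(\rn)$ (recall \eqref{2.16}), and $\chi_Q\notin F^s_{\infty,q}(\rn)$ for $s>0$ since $F^s_{\infty,q}(\rn)\hra\Cc^s(\rn)$ consists of continuous functions. This exhausts all cases.

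The step I expect to be the main obstacle is the lower bound on the wavelet coefficients — that $\sim2^{j(n-1)}$ of them per scale stay bounded away from $0$, i.e.\ that the jump of $\chi_Q$ along $\partial Q$ is genuine and not cancelled out. This is exactly where the product structure and the choice of a Daubechies pair with a non-vanishing partial integral $\int_0^k\psi_M$ are used; note that the Haar system itself is too rough to characterise $\As(\rn)$ at the endpoint $s=1/p$, so one genuinely needs a higher-order pair here. A secondary, more structural point is that for the $F$-scale with $p<\infty$ the critical line $s=1/p$ is not reachable from the $B$-scale: $\chi_Q$ lies strictly between $B^{1/p}_{p,p}(\rn)\hra F^{1/p}_{p,\infty}(\rn)\hra B^{1/p}_{p,\infty}(\rn)$, belonging to the last but not the first, so the direct $F$-argument above (equivalently the $L_p$-ball-means characterisation) is unavoidable — and this is precisely why "$q=\infty$" in \eqref{5.45} turns into "$p=\infty$" in \eqref{5.46}.
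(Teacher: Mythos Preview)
Your argument is correct and in the same spirit as the paper's: wavelet expansion plus the product structure $\chi_Q(x)=\prod_l\chi_{(0,1)}(x_l)$, yielding $\sim 2^{j(n-1)}$ relevant coefficients of size $\sim 1$ at each scale. The paper itself only cites \cite[Proposition~2.50]{T20} and records the two--step scheme (Remark~\ref{R5.23}): first $n=1$, then reduce higher dimensions to the line. That reduction is carried out explicitly in the proof of Theorem~\ref{T5.24}, where the identity \eqref{5.53}, $\lambda^{j,(F,\ldots,F,M)}_m(\chi_Q)=\lambda^{j,M}_{m_n}(\chi_I)$, collapses the $n$--dimensional count to the one--dimensional one, exactly as in your computation.

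The one genuine difference is how the one--dimensional endpoint is handled. You extract the lower bound directly from the wavelet coefficients and therefore need $\int_0^k\psi_M\neq 0$ for some integer $0<k<N$; you rightly single this out as the delicate point. The paper's route bypasses it: in Step~1 of the proof of Theorem~\ref{T5.24} one uses $\chi_I' = c(\delta-\delta_1)$ together with Theorem~\ref{T3.10} (its classical version \eqref{3.30}) to transfer the question to Proposition~\ref{P5.16}, so no wavelet lower bound is needed in one dimension. Either way the $F$--case at $s=1/p$, $p<\infty$, requires an argument beyond the $B$--embeddings, and both your pointwise estimate $g(x)\sim\delta^{-s}$ and the paper's device (see \eqref{5.57}, which reduces $\La^{\vr}F^{1/p}_{p,\infty}(\rn)$ to $F^{1/p}_{p,\infty}(\real)$) serve that purpose.
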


\begin{remark}   \label{R5.23}
This coincides with \cite[Proposition 2.50, p.\,50]{T20}. The proof consists of two steps. First one deals with $n=1$ and secondly
one reduces higher dimensions to one dimension. This method works also for the other spaces covered by Definition~\ref{D2.11}.
\end{remark}

\begin{theorem}   \label{T5.24}
Let $\La^{\vr} \As (\rn)$ \new{and $\La_{\vr} \As (\rn)$ with $A \in \{B,F \}$} be the spaces according to \eqref{2.44}, \eqref{2.45} of
the $\vr$--clan in Definition~\ref{D2.11}{\em (ii)} with $-n <\vr <0$. Then
\begin{\eq}   \label{5.47}
\chi_Q \in \La^{\vr} \As (\rn)
\end{\eq}
if, and only if,
\begin{\eq}   \label{5.48}
-\infty <s \le \frac{1}{p} \min (|\vr|, 1 ) \qquad \text{with $q=\infty$ if $A=B$ and $s= \frac{1}{p}$}.
\end{\eq}
Furthermore,
\begin{\eq}   \label{5.49}
\chi_Q \in \La_{\vr} \Bs (\rn)
\end{\eq}
if, and only if,
\begin{\eq}   \label{5.50}
-\infty <s \le \frac{1}{p} \min (|\vr|, 1 ) \qquad \text{with $q=\infty$ if $s= \frac{\min(|\vr|,1)}{p}$},
\end{\eq}
and
\new{
  \begin{\eq}   \label{5.49a}
\chi_Q \in \La_{\vr} \Fs (\rn)
\end{\eq}
if, and only if,
\begin{\eq}   \label{5.50a}
-\infty <s \le \frac{1}{p} \min (|\vr|, 1 ).
\end{\eq}}
\end{theorem}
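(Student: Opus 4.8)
The plan is to prove Theorem~\ref{T5.24} in three parts, corresponding to the three families of the $\vr$-clan, and to exploit the coincidences and embeddings among them so that only a minimum of direct work is needed. The skeleton follows Remark~\ref{R5.23}: first settle the case $n=1$ by hand using the Fourier-analytic (or wavelet) description of the relevant spaces, then reduce $n\ge 2$ to $n=1$ by writing $\chi_Q(x)=\prod_{j=1}^n\chi_{(0,1)}(x_j)$ and invoking the pointwise-multiplier and tensor-type arguments available for these scales. For the spaces $\La^{\vr}\As(\rn)$ with $A=B$, I would compute $\| \chi_Q \, | \La^{\vr} \Bs(\rn)\|$ directly from \eqref{2.26}: the Fourier data $(\vp_j\wh{\chi_Q})^\vee$ decay like those of the characteristic function, and the local $L_p$-norms over cubes $Q_{J,M}$ of side $2^{-J}$ are governed by the interface $\partial Q$, whose $(n-1)$-dimensional nature produces the factor $\min(|\vr|,1)$ rather than $|\vr|$ once the supremum over $J$ is taken. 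This is exactly the Slope-$1$-Rule as formulated in Slope Rules~\ref{slope_rules}(i). The endpoint behaviour at $s=\tfrac1p$ (needing $q=\infty$) mirrors \eqref{5.45} and is read off from the same computation; the case $A=F$ then follows from the sandwich \eqref{2.54}, $\La^{\vr} B^s_{p,\min(p,q)}\hra \La^{\vr}\Fs\hra \La^{\vr}B^s_{p,\max(p,q)}$, together with \eqref{2.55}, exactly as in the classical proof.

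For the family $\La_{\vr}\Bs(\rn)$ I would argue from \eqref{2.51}: here the sup over dyadic cubes sits \emph{inside} the $j$-sum, so the relevant estimate is $\sup_{J,M}2^{\frac{J}{p}(n+\vr)q}\|(\vp_j\wh{\chi_Q})^\vee\,|L_p(Q_{J,M})\|^q$ for each fixed $j$, and again the dominant contribution comes from cubes meeting $\partial Q$. The bookkeeping is slightly different from the $\La^{\vr}$ case because the roles of the $j$-sum and the $J$-sup are interchanged, which is precisely why the endpoint condition in \eqref{5.50} reads $q=\infty$ at $s=\tfrac{\min(|\vr|,1)}{p}$ rather than only at $s=\tfrac1p$; this is the analogue of the discrepancy between \eqref{2.53} and \eqref{2.54} pointed out in Remark~\ref{R2.38}. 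Alternatively, and more cheaply, I can obtain the sufficiency half of \eqref{5.50} from \eqref{2.53}, $\La_{\vr}B^s_{p,\min(p,q)}\hra\La_{\vr}\Fs\hra\La_{\vr}B^s_{p,\infty}$, combined with the wavelet characterisation \eqref{3.11}, and the necessity half from the strict embedding \eqref{2.56} and the $\La^{\vr}B$ result; I would use whichever is shorter after writing out the $n=1$ computation.

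Finally, the statement \eqref{5.49a}--\eqref{5.50a} for $\La_{\vr}\Fs(\rn)$ is essentially free: by the coincidence \eqref{2.42}, $\La_{\vr}\Fs(\rn)=\La^{\vr}\Fs(\rn)$ for $-n<\vr<0$, so \eqref{5.50a} is just the $A=F$ case of \eqref{5.47}--\eqref{5.48}, where the extra endpoint subtlety ($q=\infty$ at $s=\tfrac1p$) present for $B$-spaces disappears because the $F$-scale at $s=\tfrac1p$ behaves like \eqref{5.46}. Thus after the $n=1$ computation and the $n\ge2$ tensor reduction, parts for $\La^{\vr}F$ and $\La_{\vr}F$ are immediate corollaries of the $\La^{\vr}B$ part via \eqref{2.54}, \eqref{2.55}, \eqref{2.42}. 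The main obstacle I anticipate is the sharpness (the ``only if'') direction: proving $\chi_Q\notin\La^{\vr}\As$ for $s>\tfrac1p\min(|\vr|,1)$ requires a careful lower estimate on the local $L_p$-norms $\|(\vp_j\wh{\chi_Q})^\vee\,|L_p(Q_{J,M})\|$ for cubes $Q_{J,M}$ straddling a face of $\partial Q$, ruling out cancellation; the cleanest route is to isolate the Haar-type singular behaviour near the interface (as in the Haar-wavelet discussion of Section~\ref{S5.5}) or to differentiate, using Theorem~\ref{T3.10}, and reduce to the $\delta$-distribution statement of Theorem~\ref{T5.18}. The borderline value $|\vr|=1$, where $\min(|\vr|,1)$ switches regime, needs to be checked separately but is covered by the same inequalities taken with care.
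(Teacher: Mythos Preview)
Your overall plan --- settle $n=1$ first, reduce $n\ge 2$ to $n=1$, and then handle the $F$-families via \eqref{2.54} and the coincidence \eqref{2.42} --- matches the paper's proof exactly, and your closing suggestion to differentiate and invoke the $\delta$-result of Theorem~\ref{T5.18} is precisely what the paper does for $n=1$: it writes $\chi_I' = c(\delta-\delta_1)$ and applies Theorem~\ref{T3.10} to get the equivalences \eqref{5.51}, \eqref{5.52}, then reads off the answer from Theorem~\ref{T5.18}.

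The substantive difference is in the $n\ge 2$ reduction. You propose either a direct Fourier-analytic computation from \eqref{2.26}, or a ``tensor-type'' argument from $\chi_Q(x)=\prod_j\chi_{(0,1)}(x_j)$ together with pointwise-multiplier estimates. The paper does neither: it expands $\chi_Q$ in Daubechies wavelets and observes (equation \eqref{5.53}) that the prototype coefficients $\lambda^{j,G}_m(\chi_Q)$ with $G=(F,\ldots,F,M)$ equal the one-dimensional coefficients $\lambda^{j,M}_{m_n}(\chi_I)$. Counting the $\sim 2^{(n-1)(j-J)}$ lattice points $m'$ with $Q_{j,m}\subset Q_{J,M}$ in \eqref{3.10} collapses the $n$-dimensional sequence norm to a one-dimensional one (equation \eqref{5.54}), and the dichotomy appears at once: for $-n<\vr\le -1$ the supremum over $J$ is attained at $J=0$ and one lands in the \emph{classical} space $\Bs(\real)$ (see \eqref{5.55}, \eqref{5.58}), while for $-1<\vr<0$ one lands in $\La^{\vr}\Bs(\real)$ or $\La_{\vr}\Bs(\real)$ (see \eqref{5.56}, \eqref{5.59}). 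This explains transparently why $\min(|\vr|,1)$ appears and why, for $|\vr|\ge 1$, the endpoint constraint in \eqref{5.48} sits at $s=\tfrac1p$ (inherited from \eqref{5.45}) rather than at $s=\tfrac{|\vr|}{p}$. Your $\partial Q$-interface heuristic is pointing at the right phenomenon, but a genuine tensor-product or pointwise-multiplier argument is awkward here since these Morrey-smoothness spaces are not tensor products and $\chi_I$ is not a smooth multiplier; the wavelet-coefficient route sidesteps this and delivers both sufficiency and necessity simultaneously. One further point: for the exceptional corner $A=F$, $|\vr|\ge 1$, $s=\tfrac1p$, $q=\infty$, the sandwich \eqref{2.54} is inconclusive, and the paper handles it separately via the $F$-wavelet characterisation, obtaining \eqref{5.57} and then invoking the one-dimensional \eqref{5.46}; you should flag this case in your argument as well.
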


\noindent
One can sketch the parameter areas according to \eqref{5.48}, \eqref{5.50} as follows.\\

\noindent
\begin{minipage}{\textwidth}
~\hspace*{\fill}\input{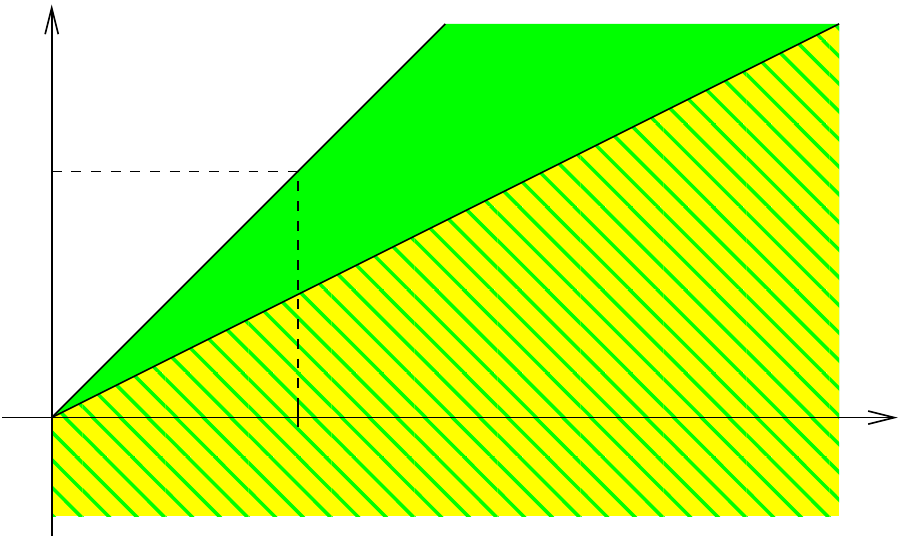_t}\hfill~\\[1ex]    
  ~\hspace*{\fill} $\chi_Q \in \rhoAs(\rn)$ \quad for $0<|\vr|<1$ and $|\vr|\geq 1$ \hfill~\\[1ex]
~\hspace*{\fill}\unterbild{fig-2}
\end{minipage}

\begin{proof}
{\em Step 1.} Let $n=1$ and $-1 <\vr <0$. Let $\chi_I$ be the characteristic function of the unit interval $I = (0,1)$. Then 
$\chi'_I = c (\delta - \delta_1)$, $c\not= 0$, where $\delta_1$ is the shifted $\delta$--distribution with the off--point 1. Now
Theorem~\ref{T3.10} shows that
\begin{\eq}   \label{5.51}
\chi_I \in \La^{\vr} \As (\real) \quad \text{if, and only if,} \quad \delta \in \La^{\vr} A^{s-1}_{p,q} (\real)
\end{\eq}
and
\begin{\eq}   \label{5.52}
\chi_I \in \La_{\vr} \Bs (\real) \quad \text{if, and only if,} \quad \delta \in \La_{\vr} B^{s-1}_{p,q} (\real).
\end{\eq}
Application of Theorem~\ref{T5.18} proves the one--dimensional case of the theorem.
\cm
{\em Step 2.}
Let $2 \le n \in \nat$. We prove \eqref{5.47}, \eqref{5.48} for $A=B$. For this purpose we expand $f= \chi_Q$ according to 
\eqref{3.7}, \eqref{3.8}. Then we are in the same position as in \cite[p.\,51]{T20}, especially
\begin{\eq}   \label{5.53}
\lambda^{j,G}_m (\chi_Q) = \lambda^{j,M}_{m_n} (\chi_I), \quad m= (m_1, \ldots, m_n), \quad G= (F, \ldots, F,M),
\end{\eq}
is the prototype to be considered,
relying on the same reasoning as there. In \eqref{3.10} and \eqref{3.11} with $f = \chi_Q$
we may assume $J \in \no$ instead of $J \in \ganz$. This
follows from $\vr +n >0$. For fixed $m_n$ in \eqref{5.53}
one has now $\sim 2^{(j-J)(n-1)}$ terms in \eqref{5.53} with $m= (m',m_n)$ and
$Q_{j,m} \subset Q_{J,M}$. Then
\begin{\eq}   \label{5.54}
2^{\frac{J}{p} (n+\vr) + j (s - \frac{n}{p})} \Big( \sum_{m': Q_{j,m} \subset Q_{J,M}} |\lambda^{j,G}_m (\chi_Q)|^p \Big)^{1/p} \sim
2^{\frac{J}{p} (1+ \vr) + j(s- \frac{1}{p})} |\lambda^{j,M}_{m_n} (\chi_I) |.
\end{\eq}
If $-n <\vr \le -1$, then $J=0$ is the largest term in \eqref{5.54}. This reduces \eqref{3.10} with $f= \chi_Q$ to the 
one--dimensional case of \eqref{3.9} with $f= \chi_I$. In other words,
\begin{\eq}   \label{5.55}
\| \chi_Q \, | \La^{\vr} \Bs (\rn) \| \sim \| \chi_I \, | \Bs (\real)\|, \qquad -n <\vr \le -1.
\end{\eq}
Then \eqref{5.47} with $A=B$ and \eqref{5.48} with $|\vr| \ge 1$ follows from Proposition~\ref{P5.22}. 
If $-1 <\vr <0$, then one has by \eqref{5.54} and \eqref{3.10} that
\begin{\eq}   \label{5.56}
\| \chi_Q \, | \La^{\vr} \Bs (\rn) \| \sim \| \chi_I \, | \La^{\vr} \Bs (\real) \|, \qquad -1<\vr<0.
\end{\eq}
This reduces \eqref{5.47}, \eqref{5.48} to Step 1.
\cm
{\em Step 3.} The embedding \eqref{2.54} and Step 2 prove \eqref{5.47}, \eqref{5.48} with $A=F$ for all relevant spaces $\La^{\vr}
\Fs (\rn)$ with exception of $|\vr| \ge 1$, $s =\frac{1}{p}$ and $q=\infty$. In this case one relies on the wavelet expansion 
\eqref{3.7}, \eqref{3.8} for $\La^{\vr} F^{1/p}_{p,\infty} (\rn)$ with a related counterpart of the right--hand side of \eqref{3.10}
as described in \cite[Proposition 1.16, p.\,11--12]{T20} based on a reference to \cite[Theorem 3.26, p.\,64]{T14}. Using again 
\eqref{5.53} one obtains
\begin{\eq}   \label{5.57}
\| \chi_Q \, | \La^{\vr} F^{1/p}_{p,\infty} (\rn) \| \sim \| \chi_I \, | F^{1/p}_{p, \infty} (\real)\|, 
\quad 0<p<\infty, \quad -n<\vr \le -1,
\end{\eq}
as the counterpart of \eqref{5.55}. The one--dimensional case of \eqref{5.46} shows that $\chi_Q$ does not belong to $\La^{\vr}
F^{1/p}_{p, \infty} (\rn)$.
\cm
{\em Step 4.} The proof of \eqref{5.49}, \eqref{5.50} follows the same scheme as in Step 2. 
One relies now on \eqref{3.11} and \eqref{5.54}.
Instead of \eqref{5.55}, \eqref{5.56} one has now
\begin{\eq}   \label{5.58}
\| \chi_Q \, | \La_{\vr} \Bs (\rn) \| \sim \|\chi_I \, | \Bs (\real) \|, \qquad -n <\vr \le -1,
\end{\eq}
and
\begin{\eq}   \label{5.59}
\| \chi_Q \, | \La_{\vr} \Bs (\rn) \| \sim \| \chi_I \, | \La_{\vr} \Bs (\real) \|, \qquad -1< \vr <0.
\end{\eq}
Then \eqref{5.49}, \eqref{5.50} follows from \eqref{5.45} and Step 1. \new{Again \eqref{5.49a} with \eqref{5.50a} is a consequence of \eqref{5.47} and \eqref{5.48} (with $A=F$) based on the coincidence \eqref{2.42}.}
\end{proof}

\begin{remark}   \label{R5.25}
  Proposition~\ref{P5.22} shows that the above theorem is a typical example of the Slope--1--Rule, see \new{Slope Rules~\ref{slope_rules}(i)}.
\end{remark}

\begin{remark}    \label{R5.26}
The above arguments can also be applied to the spaces in \eqref{2.41} of the $0$--clan in Definition~\ref{D2.11}(iii) with the outcome
that
\begin{\eq}   \label{5.60}
\chi_Q \in \La^0 \Bs (\rn) \qquad \text{with $0<p,q<\infty$ if, and only if, $s\le 0$}.
\end{\eq}
This follows from the above Step 1 now using \eqref{5.44} and \eqref{5.54} with $\vr =0$, based on Proposition~\ref{P3.1} extended
to $\vr =0$ according to a related comment in Remark~\ref{R3.2}. Compared with \eqref{5.45},
\begin{\eq}   \label{5.61}
\chi_Q \in B^0_{\infty,q} (\rn) \qquad \text{if, and only if, $q=\infty$,}
\end{\eq}
it illuminates again the first strict embedding in \eqref{2.34}.
\end{remark}

The assertion \eqref{5.47}, \eqref{5.48} with $A=B$ and also \eqref{5.60} go back to \cite[Theorem 2.1]{YSY20} reformulated according
to \eqref{2.29}, \eqref{2.30}. The second main topic of this paper addresses the problem in which spaces $\chi_Q$ generates a linear
and bounded functional. This means,
\begin{\eq}   \label{5.61a}
\big| (f, \chi_Q )\big| \le c \, \|f \, | A(\rn) \|, \qquad f \in A(\rn),
\end{\eq}
where $A(\rn)$ is a space covered by Definition~\ref{D2.11}. We deal first with the one--dimensional case.

\begin{proposition}   \label{P5.27}
Let $\La^{\vr} \As (\real)$ and $\La_{\vr} \As (\real)$ with $A \in \{B,F \}$  be the spaces according to \eqref{2.44}, \eqref{2.45}
of the $\vr$--clan $\rhoAs (\real)$
in Definition~\ref{D2.11}{\em (ii)} with $n=1$ and $-1 <\vr <0$. Then
the characteristic function $\chi_I$ of the unit interval $I= (0,1)$ generates a linear and bounded functional in
\begin{\eq}   \label{5.62}
\La^{\vr} \As (\real) \quad \text{if, and only if,} \quad s > \frac{|\vr|}{p} -1,
\end{\eq}
in
\begin{\eq}    \label{5.63}
\La_{\vr} \Bs (\real) \quad \text{if, and only if,} \quad
\begin{cases}
s> \frac{|\vr|}{p} -1, & 0<q \le \infty,\quad\text{or} \\
s= \frac{|\vr|}{p} -1, & 0< q \le 1,
\end{cases}
\end{\eq}
\new{and in
\begin{\eq}   \label{5.63a}
\La_{\vr} \Fs (\real) \quad \text{if, and only if,} \quad s > \frac{|\vr|}{p} -1.
\end{\eq}
}
\end{proposition}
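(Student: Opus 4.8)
The plan is to reduce everything to the $\delta$–distribution via the identity $\chi_I' = c(\delta - \delta_1)$ together with the characterisation of the spaces by derivatives, exactly as in Step~1 of the proof of Theorem~\ref{T5.24}. First I would observe that the pairing $(f,\chi_I)$ makes sense for all $f$ in the spaces in question precisely when $\chi_I$ represents a linear and bounded functional; by duality (in the sense of the dual pairing $(S(\real),S'(\real))$) this is equivalent to asking that $\chi_I$ belong to the dual space of $\La^{\vr}\As(\real)$, resp.\ $\La_{\vr}\As(\real)$. Since the relevant duals of the Morrey smoothness spaces are again spaces of the same type with the classical reflection of parameters — in particular $\big(\La^{\vr}B^s_{p,q}(\real)\big)' $ and $\big(\La_{\vr}B^s_{p,q}(\real)\big)'$ can be identified with spaces built on $\La$-type blocks with smoothness $-s$, integrability $p'$ and summability $q'$ (up to the usual care with $q=\infty$ and the block structure) — the question ``$\chi_I$ generates a bounded functional on $X$'' becomes ``$\chi_I\in X'$''. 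Alternatively, and perhaps more robustly, one avoids an explicit duality statement and argues directly: $|(f,\chi_I)| = |(g,\chi_I)|$ with $g$ a suitable antiderivative-type modification, reducing to the membership statement $\delta\in \La^{\vr}A^{s-1}_{p,q}$ etc.\ already settled in Theorem~\ref{T5.18}.

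Concretely, I would proceed as follows. Write $\chi_I = c\,I_1(\delta-\delta_1) + (\text{smoother remainder})$, where $I_1$ is the lift of Theorem~\ref{T3.8} (so that convolution with $\langle\xi\rangle^{-1}$ raises smoothness by one); equivalently use that $f\mapsto (f,\chi_I)$ factors, after integration by parts, through $\Dd f$ tested against an antiderivative of $\chi_I$, which is a Lipschitz hat-type function. Using Theorem~\ref{T3.10} (characterisation by derivatives) one gets that $\chi_I$ acts boundedly on $\La^{\vr}\As(\real)$ if and only if the hat function acts boundedly on $\La^{\vr}A^{s-1}_{p,q}(\real)$, and iterating/peeling off the smooth part this is equivalent to $\delta$ generating a bounded functional on $\La^{\vr}A^{s-1}_{p,q}(\real)$. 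But ``$\delta$ bounded functional on $X$'' just means $\varphi\mapsto\varphi(0)$ is continuous on $X$, i.e.\ $X\hookrightarrow C(\real)$ near the origin, equivalently $X\hookrightarrow L_\infty(\real)$; and this is exactly Theorem~\ref{T5.3} applied with smoothness $s-1$. Thus $\La^{\vr}\As(\real)$ admits $\chi_I$ as a bounded functional iff $s-1 > \frac{|\vr|}{p}-1$, i.e.\ $s>\frac{|\vr|}{p}-1$, which is \eqref{5.62}; for $\La_{\vr}\Bs(\real)$ one gets the additional limiting line $s-1=\frac{|\vr|}{p}-1$ with $0<q\le1$ from the $B$-part of Theorem~\ref{T5.3}, giving \eqref{5.63}; and \eqref{5.63a} for $\La_{\vr}\Fs(\real)$ follows from the coincidence \eqref{2.42} with \eqref{5.62} for $A=F$.

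For the converse (necessity) direction I would use a testing argument: if $s$ is below (or on, in the $B$/$q>1$ case) the critical line, then already $\delta$ fails to be a bounded functional on $\La^{\vr}A^{s-1}_{p,q}(\real)$ by Theorem~\ref{T5.3}, so one can pick $f_k$ with $\|f_k\,|\La^{\vr}A^{s-1}_{p,q}(\real)\|\le 1$ but $f_k(0)\to\infty$; transporting these through the antiderivative/lift construction (which is an isomorphism onto by Theorem~\ref{T3.8}) produces $g_k$ bounded in $\La^{\vr}\As(\real)$ with $|(g_k,\chi_I)|\to\infty$. Some care is needed that the boundary term at the endpoint $1$ of $I$ does not accidentally cancel the blow-up at $0$ — this is handled exactly as in \cite[p.\,51]{T20} and Step~1 of Theorem~\ref{T5.24}, by placing the test functions' singular support at $0$ only.

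The main obstacle I anticipate is making the ``integration by parts'' reduction fully rigorous in these non-separable spaces, where $S(\real)$ is not dense (Remark~\ref{R3.13}): the identity $(f,\chi_I) = -(\,F,\,\Dd f\,)$ with $F$ an antiderivative of $\chi_I$ must be justified for $f\in\La^{\vr}\As(\real)$ rather than just for $f\in S(\real)$. Here I would invoke the Fatou property (Theorem~\ref{T3.12}) as the substitute for density: approximate $f$ by $f_N = (\psi(2^{-N}\cdot)\wh f)^\vee$, which converge to $f$ in $S'$ and are bounded in $\La^{\vr}\As(\real)$, carry out the (legitimate) integration by parts for each $f_N$, and pass to the limit. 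Equally, one must confirm that the lift $I_1$ intertwines the two functionals correctly, i.e.\ $(I_1 g,\chi_I) = (g, I_1\chi_I)$ with $I_1\chi_I$ of the claimed regularity; this is routine once one knows $I_1$ is self-adjoint on the dual pairing. Beyond this technical point, everything is a bookkeeping exercise combining Theorems~\ref{T3.8}, \ref{T3.10}, \ref{T5.3}, \ref{T5.18} and the coincidence \eqref{2.42}.
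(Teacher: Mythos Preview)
Your overall strategy matches the paper's: reduce the question ``$\chi_I$ is a bounded functional on $\rhoAs(\real)$'' to the question ``$\delta$ is a bounded functional on a shifted space'', i.e.\ to an embedding into $C(\real)$, and then read off the answer from Theorem~\ref{T5.3}. You also correctly identify the non-density issue and the Fatou remedy, and the coincidence \eqref{2.42} for \eqref{5.63a}. So the architecture is right.

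However, your indices are off and the argument as written does not cohere. The integration by parts should go the \emph{other} way: for $g\in\La^{\vr}\As(\real)$ with compact support, write $g=f'$ with $f\in\La^{\vr}A^{s+1}_{p,q}(\real)$ compactly supported (this is where Theorem~\ref{T3.10} and the compact embedding of Theorem~\ref{T5.39} on a bounded interval give $\|f\,|\La^{\vr}A^{s+1}_{p,q}\|\sim\|f'\,|\La^{\vr}\As\|$); then $(\chi_I,g)=(\chi_I,f')=-(\chi_I',f)=-c(\delta-\delta_1,f)$. Thus $\chi_I$ bounded on $\La^{\vr}\As$ is equivalent to $\delta$ bounded on $\La^{\vr}A^{\,s+1}_{p,q}$, i.e.\ $\La^{\vr}A^{\,s+1}_{p,q}\hra C(\real)$, which by Theorem~\ref{T5.3} means $s+1>\frac{|\vr|}{p}$, hence $s>\frac{|\vr|}{p}-1$ (with the extra line $s+1=\frac{|\vr|}{p}$, $q\le 1$ for $\La_{\vr}B$). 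Your route --- pushing the derivative onto $f$ and testing the hat function $F$ against $\Dd f\in\La^{\vr}A^{s-1}_{p,q}$ --- lands you on a \emph{different} question (when is $F$, not $\delta$, a bounded functional), and your ``iterate/peel off'' step to get $\delta$ at smoothness $s-1$ is incorrect: differentiating the functional \emph{raises} the smoothness of the space it acts on, not lowers it. The fact that your final inequality $s-1>\frac{|\vr|}{p}-1$ still comes out right is an arithmetical accident, since Theorem~\ref{T5.3} at smoothness $s-1$ actually yields $s-1>\frac{|\vr|}{p}$; two sign slips have cancelled. Also, drop the duality detour entirely: the duals of these Morrey smoothness spaces are \emph{not} spaces of the same type with reflected parameters, and the paper's argument (like your ``more robust'' alternative) never needs them.
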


\begin{proof}
Recall that $\chi_I' = c (\delta - \delta_1)$, $c \not= 0$, where $\delta_1$ is the shifted $\delta$--distribution
with $1$ as off--point. Let $\vp$ be a smooth function on $\real$ with compact support near the origin $0$. Then
\begin{\eq}   \label{5.64}
(\chi_I, \vp') = - (\chi_I', \vp) = -c\,\vp (0) = - c \,(\delta, \vp).
\end{\eq}
This can be extended by completion to arbitrary continuous functions with compact support near the origin. Let $I_2 = (-2,2)$. By
Theorem~\ref{T3.10} one has
\begin{\eq}   \label{5.65}
\begin{aligned}
\|f \, | \La^{\vr} A^{s+1}_{p,q} (\real)\| & \sim \| f \, |\La^{\vr} \As (\real) \| + \| f' \, | \La^{\vr} \As (\real) \| \\
&\sim \| f' \, | \La^{\vr} \As (\real) \|, \quad f\in \La^{\vr} A^{s+1}_{p,q} (\real), \quad \supp f \subset I_2,
\end{aligned}
\end{\eq}
where the second equivalence can be obtained in the usual way based on the compact embedding of $\La^{\vr} A^{s+1}_{p,q} (I_2)$ into
$\La^{\vr} \As (I_2)$ being a special case of Theorem~\ref{T5.39} below. Similarly,
\begin{\eq}   \label{5.66}
\| f \, | \La_{\vr} B^{s+1}_{p,q} (\real) \| \sim \| f' \, | \La_{\vr} \Bs (\real) \|, \quad f \in \La_{\vr} B^{s+1}_{p,q} (\real), 
\quad \supp f \subset I_2.
\end{\eq}
Let $f\in \La^{\vr} A^{s+1}_{p,q} (\real)$, $\supp f \subset I_2$, with $s+1 > \frac{|\vr|}{p}$. Then it follows from \eqref{5.64},
\eqref{5.65} and Theorem~\ref{T5.3} that
\begin{\eq}   \label{5.66a}
\big| (\chi_I , f') \big| \le c \, \|f \, | C(\real) \| \le c' \|f\, | \La^{\vr} A^{s+1}_{p,q} (\real) \| \le c'' \| f' \, |
\La^{\vr} \As (\real) \|.
\end{\eq}
If $g$ is a smooth function with $\supp g \subset I_2$, then one has $g= f'$ in $I_2$
for a suitable smooth function with, say, $\supp f \subset
[-4, 4]$. Hence, one can replace $f'$ in \eqref{5.66a} by $g$. This can be extended by a Fatou argument to all $g\in \La^{\vr} \As
(\real)$ with $\supp g \subset I_2$. Then it follows that $\chi_I$ generates a linear and bounded functional in $\La^{\vr} \As
(\real)$ with $s > \frac{|\vr|}{p} -1$. Similarly for the spaces in \eqref{5.63} based again on Theorem~\ref{T5.3}. Conversely, if
$\chi_I$ generates a linear and bounded functional in some space $\La^{\vr} \As (\real)$, then it follows from \eqref{5.64} that
\begin{\eq}   \label{5.66b}
|f(0)| \le c \, \big| (\new{\chi_I}, f') \big| \le c \, \|f' \, | \La^{\vr} \As (\real) \| \le c \, \|f \, | \La^{\vr} A^{s+1}_{p,q}
(\real) \|
\end{\eq}
for compactly supported smooth functions. This requires
\begin{\eq}   \label{5.67}
\La^{\vr} A^{s+1}_{p,q} (\real) \hra C(\real).
\end{\eq}
Similarly for $\La_{\vr} B^{s+1}_{p,q} (\real)$. Then the only--if--parts in \eqref{5.62}, \eqref{5.63} follow again from Theorem
\ref{T5.3}.
\new{Assertion \eqref{5.63a} is covered by \eqref{5.62} with the coincidence \eqref{2.42}.}
\end{proof} 

\begin{remark}   \label{R5.28}
The counterpart of the above proposition  for the classical spaces in \eqref{2.38} with $n=1$ can be obtained if one uses Proposition~\ref{P5.1} with $n=1$ instead of Theorem~\ref{T5.3} with $n=1$. We do not formulate the more or less known outcome. But
it is quite clear that the above proposition is again an example of the Slope--1--Rule,  see \new{Slope Rules~\ref{slope_rules}(i)}.
\end{remark}

It is not clear how the above arguments can be extended from one dimension to higher dimensions. But the question in which spaces
$A(\rn)$ the characteristic function $\chi_Q$ of the unit cube $Q = (0,1)^n$ generates a linear and bounded functional according to
\eqref{5.61a} has been treated in detail in \cite{YSY20} with the following remarkable outcome. Let again
\begin{\eq}   \label{5.68}
\sigma^t_p = t \Big( \max \big( \frac{1}{p}, 1 \big) -1 \Big), \qquad 0<p \le \infty, \quad t \ge 0.
\end{\eq}

\begin{theorem}   \label{T5.29}
Let $n\in \nat$. Let $\La^{\vr} \As (\rn)$ \new{and $\La_{\vr} \As (\rn)$  with $A \in \{B,F \}$} be the spaces according to \eqref{2.44},
\eqref{2.45} of the $\vr$--clan $\rhoAs(\rn)$ in Definition~\ref{D2.11}{\em (ii)} with $-n < \vr <0$. 
Then $\chi_Q$  generates a linear and bounded functional in these spaces if, in addition,
\begin{\eq}   \label{5.69}
s > \frac{1}{p} \, \min \big( |\vr|, 1 \big) -1 + \sigma^{\max (|\vr|,1) -1}_p.
\end{\eq}
It does not generate a linear and bounded functional in these spaces if, in addition,
\begin{\eq}   \label{5.70}
s < \frac{1}{p} \, \min \big( |\vr|, 1 \big) -1 + \sigma^{\max (|\vr|,1) -1}_p.
\end{\eq}
\end{theorem}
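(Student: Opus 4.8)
The plan is to follow the scheme already used for Theorem~\ref{T5.24}, namely wavelet expansions, now applied to the pairing $(f,\chi_Q)$ rather than to $\chi_Q$ itself; alternatively, one may simply invoke the corresponding assertion of \cite{YSY20}, reformulated according to \eqref{2.24}, \eqref{2.29}, \eqref{2.30}. First I would reduce the number of cases: by \eqref{2.54} and \eqref{2.53} the $F$--spaces are sandwiched between the $B$--spaces with the same $(s,p,\vr)$ (and \eqref{5.69}, \eqref{5.70} do not involve $q$), so it suffices to treat $\La^{\vr}\Bs(\rn)$ and $\La_{\vr}\Bs(\rn)$; the coincidence \eqref{2.42} then also covers $\La_{\vr}\Fs(\rn)$. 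For the sufficiency of \eqref{5.69} one may in addition pass from $\La^{\vr}\Bs(\rn)$ to $\La_{\vr}\Bs(\rn)$ by the real interpolation \eqref{3.14}, while the necessity for $\La_{\vr}\Bs(\rn)$ is dealt with directly. Throughout, the $B$--spaces are handled via the wavelet characterisations of Proposition~\ref{P3.1}.

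For the sufficiency of \eqref{5.69}: since $\chi_Q$ is compactly supported, $(f,\chi_Q)$ depends only on $f$ near $Q$, so by the pointwise multiplier Theorem~\ref{T4.1} one may assume $f$ compactly supported and expand $f=\sum_{j,G,m}\lambda^{j,G}_m(f)\,\psi^j_{G,m}$ as in \eqref{3.7}, \eqref{3.8}, so that $(f,\chi_Q)=\sum_{j,G,m}\lambda^{j,G}_m(f)\,(\psi^j_{G,m},\chi_Q)$. The moment conditions \eqref{3.2} on $\psi_M$ force $(\psi^j_{G,m},\chi_Q)$ to vanish unless the support of $\psi^j_{G,m}$ straddles a face of $\pa Q$ with the $M$--component of $G$ transverse to that face; there are $\sim 2^{j(n-1)}$ such ``boundary'' wavelets at level $j$, and $|(\psi^j_{G,m},\chi_Q)|\sim 2^{-jn}$, exactly the situation behind \eqref{5.53}. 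Inserting this into the sequence norms \eqref{3.10}, \eqref{3.11} and distributing the boundary coefficients over the dyadic cubes $Q_{J,M}$ that cover the relevant face (a cube $Q_{J,M}$ with $0\le J\le j$ holds $\sim 2^{(j-J)(n-1)}$ of them), the estimate collapses, after optimising the free scale $J$, to the one--dimensional bound of Proposition~\ref{P5.27} enriched by the tangential count. This is exactly where the parameters appear: optimising over $J$ turns the leading power $2^{J/p}$ of the Morrey supremum into $2^{\frac{J}{p}\min(|\vr|,1)}$ (the breaking point $|\vr|=1$ reflecting the sign change of $1+\vr$), while the $\ell_p$--versus--$\ell_1$ loss over the $\sim 2^{(j-J)(n-1)}$ tangential terms, when $p<1$, produces the summand $\sigma_p^{\max(|\vr|,1)-1}$, its exponent being capped at $|\vr|$ effective directions for the same reason. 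Structurally, \eqref{5.69} is the (conjectural) trace threshold \eqref{4.62} shifted down by one unit of smoothness, which is natural since one integration by parts sends $\chi_Q$ to a sum of corner $\delta$--distributions, cf.\ \eqref{5.34} and Proposition~\ref{P5.27}.

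For the necessity (i.e.\ that \eqref{5.70} forbids boundedness) I would construct extremal sequences: take $f_k$ to be a lacunary sum of boundary wavelets $\psi^k_{G,m}$ sitting along one face (or edge, or corner) of $Q$, with coefficients normalised so that $\|f_k\,|\,\La^{\vr}\Bs(\rn)\|\sim 1$ (resp.\ $\|f_k\,|\,\La_{\vr}\Bs(\rn)\|\sim 1$) via \eqref{3.10}, \eqref{3.11}; then $(f_k,\chi_Q)$ collects the $\sim 2^{k(n-1)}$ contributions of size $\sim 2^{-kn}$ and grows like $2^{k\varepsilon}$ with $\varepsilon>0$ once \eqref{5.70} holds. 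The hard part will be the sharp bookkeeping of these exponents in both directions simultaneously --- identifying which part of $\pa Q$ is worst for given $(s,p,\vr)$, and tracking the interplay between the supremum over the cubes $Q_{J,M}$ in the Morrey norm and the $\ell_p$--summation over boundary wavelets, which is precisely what replaces the classical $1$ and $n-1$ by $\min(|\vr|,1)$ and $\max(|\vr|,1)-1$ and makes the low--slope spaces $0<|\vr|<1$ behave $n$--independently, in accordance with Slope Rules~\ref{slope_rules}(i). The borderline case $s=\tfrac1p\min(|\vr|,1)-1+\sigma_p^{\max(|\vr|,1)-1}$, where $q$ becomes decisive, is left open; the complete argument is carried out in \cite{YSY20}.
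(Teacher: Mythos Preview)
Your proposal is correct, and in fact you already name the paper's actual proof as your ``alternative'': the paper simply cites \cite[Theorems 2.3, 2.5]{YSY20} for the spaces $\La^{\vr}\Bs(\rn)=B^{s,\tau}_{p,q}(\rn)$ (via \eqref{2.29}, \eqref{2.30}) and then incorporates the remaining families of the $\vr$--clan by monotonicity in $s$, Theorem~\ref{T2.16} and the coincidence \eqref{2.42}. It does not redo the wavelet computation you outline; that is the content of \cite{YSY20} itself, and your sketch of how the boundary wavelets produce $\min(|\vr|,1)$ and $\sigma_p^{\max(|\vr|,1)-1}$ is a fair summary of what happens there. Your reductions are also slightly heavier than needed: for sufficiency one can pass from $\La^{\vr}\Bs$ to $\La_{\vr}\Bs$ directly by the embedding \eqref{2.56} rather than via the interpolation \eqref{3.14}, and for necessity on $\La_{\vr}\Bs$ one can avoid a separate extremal construction by using, for any $s<s'$ still below the threshold, the chain $\La^{\vr}B^{s'}_{p,p}=\La^{\vr}F^{s'}_{p,p}=\La_{\vr}F^{s'}_{p,p}\hra\La_{\vr}B^{s'}_{p,\infty}\hra\La_{\vr}\Bs$ from \eqref{2.55}, \eqref{2.42}, \eqref{2.53} and monotonicity in $s$, so that the unbounded test sequence from \cite{YSY20} for $\La^{\vr}B^{s'}_{p,p}$ already disproves boundedness on $\La_{\vr}\Bs$.
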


\begin{proof}
Both assertions are covered by \cite[Theorems 2.3, 2.5]{YSY20} for the spaces $B^{s, \tau}_{p,q} (\rn) = \La^{\vr} \Bs (\rn)$ 
according to \eqref{2.29}, \eqref{2.30}. The remaining spaces \eqref{2.44}, \eqref{2.45}
can be incorporated by elementary embedding based on the monotonicity of these spaces with respect to $s$, Theorem~\ref{T2.16} \new{and the equivalence \eqref{2.42}}.
\end{proof}

\noindent
We indicate the parameter area given by \eqref{5.69} according for the different cases of $\vr$.\\[1ex]
\noindent
\begin{minipage}{\textwidth}
\input{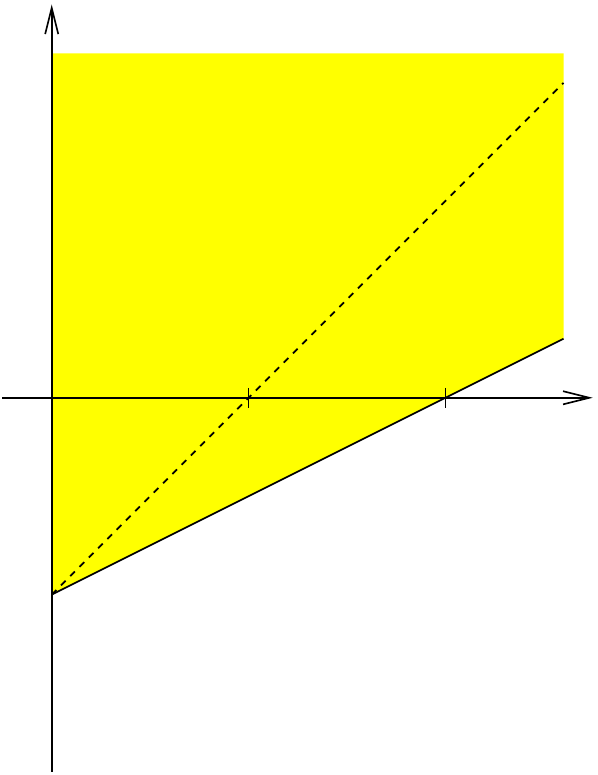_t}\hfill\input{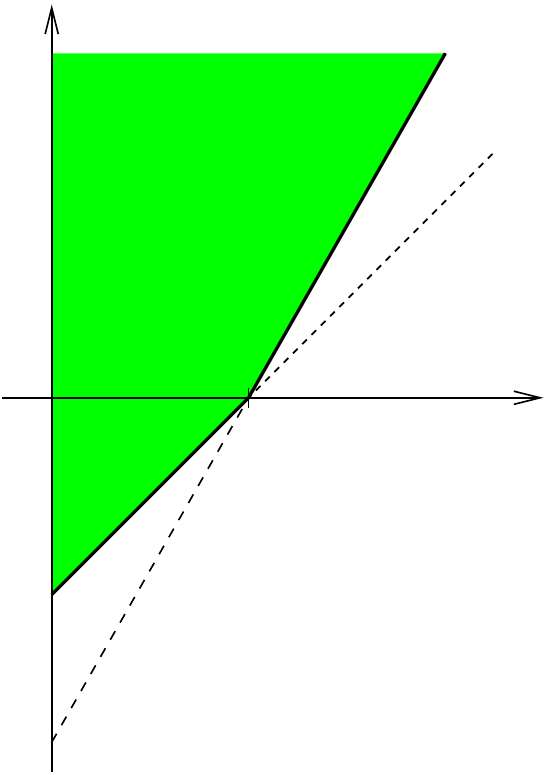_t}\\[2ex]
~\hspace*{\fill} $0<|\vr|<1$ \hfill\hfill $1< |\vr|<n$\hspace*{\fill}~\\[1ex]
~\hspace*{\fill}\unterbild{fig-2'}
\end{minipage}

\begin{remark}   \label{R5.30}
In \cite{YSY20} one finds also a detailed and (almost) final discussion what happens on the breaking line
\begin{\eq}   \label{5.71}
s = \frac{1}{p} \, \min \big( |\vr|, 1 \big) -1 + \sigma^{\max (|\vr|,1) -1}_p
\end{\eq}
for the spaces $\La^{\vr} \Bs (\rn)$. The arguments can also be applied to the spaces in \eqref{2.38} of the $n$--clan in Definition
\ref{D2.11}. Replacing $|\vr|$ in \eqref{5.71} by $n\in \nat$ one obtains the well--known breaking line
\begin{\eq}   \label{5.72}
s = \frac{1}{p} - 1 + \sigma^{n-1}_p = \max \Big( \frac{1}{p} - 1, n \big( \frac{1}{p} -1 \big)\Big)
\end{\eq}
for the classical spaces $\As (\rn)$. For $-1 \le \vr <0$ (as it is always in one dimension)  one has the breaking line $s =
\frac{|\vr|}{p} -1$ for these low--slope spaces, independently of $n\in \nat$. In any case \eqref{5.71} is an example  of the 
Slope--1--Rule, recall \new{Slope Rules~\ref{slope_rules}(i)}, at least in the understanding that $|\vr| =1$ is a breaking point.
\end{remark}

\subsection{Truncation}     \label{S5.6}
A (complex--valued) quasi--Banach space $A(\rn)$ on $\rn$ with
\begin{\eq}   \label{5.73}
S(\rn) \hra A(\rn) \hra S' (\rn), \qquad A(\rn) \subset L^{\loc}_1 (\rn),
\end{\eq}
is said to have the truncation property if $|f| \in A(\rn)$ for any real $f\in A(\rn)$ and if there is a constant $c>0$ such that
\begin{\eq}   \label{5.74}
\big\| \, |f| \, | A(\rn) \big\| \le c \, \|f \, | A(\rn) \|, \qquad f\in A(\rn) \quad \text{real}.
\end{\eq}
This innocent looking but nevertheless rather tricky problem attracted a lot of attention for the classical spaces in \eqref{2.38}, the
$n$--clan according to Definition~\ref{D2.11}. At least for the spaces $\Bs (\rn)$ one has the following satisfactory assertion. Let
$\sigma^t_p$ be again as in \eqref{5.68}.

\begin{proposition}   \label{P5.31}
The spaces
\begin{\eq}    \label{5.75}
\Bs (\rn) \quad \text{with} \quad 0<p,q \le \infty \quad \text{and} \quad s \in \Big( \sigma^n_p, 1 + \frac{1}{p} \Big) 
\end{\eq}
have the truncation property and the spaces
\begin{\eq}    \label{5.76}
\Bs (\rn) \quad \text{with} \quad 0<p,q \le \infty \quad \text{and} \quad s \not\in \Big[ \sigma^n_p, 1 + \frac{1}{p} \Big] 
\end{\eq}
do not have the truncation property.
\end{proposition}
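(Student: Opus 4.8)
The plan is to establish the truncation property for $\Bs(\rn)$ with $s\in(\sigma^n_p,1+\frac1p)$ by combining a paraproduct/Nikolskii-type argument for the upper bound $s<1+\frac1p$ with the embedding $\Bs(\rn)\hra L^{\loc}_1(\rn)$ (guaranteed by $s>\sigma^n_p$, cf.\ Proposition~\ref{P5.5}) which ensures $f$ is a genuine function so that $|f|$ makes sense. The core mechanism for the positive part is the well-known characterisation of $\Bs(\rn)$ in the range $0<s<1+\frac1p$ by first differences: for such $s$ one has
\[
\| f \, | \Bs(\rn) \| \sim \| f \, | L_p(\rn)\| + \Big(\int_{|h|\le 1} |h|^{-sq}\, \| \Delta^1_h f \, | L_p(\rn)\|^q \, \frac{\di h}{|h|^n}\Big)^{1/q}
\]
(usual modification for $q=\infty$), valid once $s<1+\frac1p$ rules out the need for second differences and $s>0$ makes the difference norm meaningful; here the restriction $s>\sigma^n_p$ is a bit stronger than $s>0$ when $p<1$ and is exactly what is needed to also have $f\in L_p^{\loc}$ and control the $L_p$ term. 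The pointwise inequality $\big|\,|f(x+h)|-|f(x)|\,\big|\le |f(x+h)-f(x)|$ gives $\| \Delta^1_h |f| \,|\,L_p\|\le \|\Delta^1_h f\,|\,L_p\|$ and $\|\,|f|\,|\,L_p\|=\|f\,|\,L_p\|$, so the difference characterisation immediately yields $\|\,|f|\,|\,\Bs(\rn)\|\le c\,\|f\,|\,\Bs(\rn)\|$. First I would invoke the precise statement of this difference norm equivalence from the monograph literature (e.g.\ \cite[Theorem 3.5.3, p.\,194]{T92} together with the discussion in \cite[Section 4.3.3]{T20}) and then spell out the two-line estimate above; the only subtlety is to verify the endpoint $s=\sigma^n_p$ and $s=1+\frac1p$ are genuinely excluded from the positive assertion, which is why \eqref{5.75} is an open interval.

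For the negative assertion \eqref{5.76}, the plan is to exhibit, for each $s$ outside the closed interval $[\sigma^n_p,1+\frac1p]$, an explicit real function $f\in\Bs(\rn)$ with $|f|\notin\Bs(\rn)$ — or, below $\sigma^n_p$, to note the space is not even contained in $L^{\loc}_1(\rn)$ so the property is vacuous/ill-posed. For $s>1+\frac1p$ the standard obstruction is that the modulus of a smooth function with a simple zero (say $f(x)=x_1\psi(x)$ with $\psi\in S$, $\psi(0)\ne0$) introduces a corner: $|f|$ behaves like $|x_1|$ near the hyperplane $\{x_1=0\}$, and one computes directly via the difference characterisation (now in the form requiring second differences, or via the Weierstrass-type/wavelet coefficients) that $|f|\in B^s_{p,q}$ forces $s\le 1+\frac1p$; a clean way is to test against Haar wavelets or to use that $\tr$ onto a transversal line of $|x_1|\cdot(\text{cutoff})$ lands in $B^{1+1/p-1/p}_{p,q}$-type spaces with the critical smoothness $1$, consistent with Proposition~\ref{P5.22}-style counting. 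For $\sigma^n_p<s<1+\frac1p$ not occurring, there is nothing to do; for $s$ below $\sigma^n_p$, one cites Proposition~\ref{P5.5}: $\Bs(\rn)\not\subset L^{\loc}_1(\rn)$, hence \eqref{5.73} fails and the truncation property is not even defined, which is the intended reading of ``do not have the truncation property.'' The endpoint $s=\sigma^n_p$ itself is deliberately left in the ``closed'' gap $[\sigma^n_p,1+\frac1p]$ and not claimed either way.

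The main obstacle I expect is the sharpness at the upper endpoint $s=1+\frac1p$: producing a real $f\in B^{1+1/p}_{p,q}(\rn)$ whose modulus fails to lie in the same space requires a careful construction (the naive $x_1\psi(x)$ has $|f|\in B^s$ precisely for $s<1+\frac1p$ and the borderline case $s=1+1/p$ is delicate, depending on $q$), and one must be sure the counterexample works uniformly in $q$ — or, if it does not at $q=\infty$, the statement as phrased with the half-open interval in \eqref{5.75} is still consistent because $1+\frac1p$ is excluded there too. So the honest route is: (i) prove the positive direction cleanly by the difference characterisation, (ii) for $s>1+\frac1p$ use the corner-of-$|x_1|$ counterexample with a wavelet-coefficient computation showing membership caps at smoothness $1+\frac1p$, and (iii) for $s<\sigma^n_p$ reduce to the failure of $\Bs(\rn)\subset L^{\loc}_1(\rn)$ via Proposition~\ref{P5.5}. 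A secondary technical point is keeping track of the $F$-case being deliberately omitted from this proposition — the statement is only about $B$-spaces, so no Fefferman–Stein / vector-valued maximal machinery is needed, which keeps the argument elementary.
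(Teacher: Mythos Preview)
The paper does not give its own proof of this proposition; the subsequent Remark simply cites \cite[Theorem~25.8, p.\,364]{T01}. So the relevant comparison is with the argument in \cite{T01}.

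Your positive argument has a genuine gap. The first-difference characterisation
\[
\|f\,|\,\Bs(\rn)\| \sim \|f\,|\,L_p(\rn)\| + \Big(\int_{|h|\le 1} |h|^{-sq}\,\|\Delta^1_h f\,|\,L_p(\rn)\|^q\,\frac{\di h}{|h|^n}\Big)^{1/q}
\]
holds only for $\sigma^n_p<s<1$, not for $\sigma^n_p<s<1+\frac1p$. This is exactly what \cite[Theorem~3.5.3]{T92} says: $M$-th differences characterise $\Bs$ for $\sigma^n_p<s<M$. For $s\ge 1$ the first-difference seminorm saturates --- any nonconstant smooth compactly supported $f$ has $\|\Delta^1_h f\,|\,L_p\|\sim |h|$ as $h\to 0$, so the integral diverges once $s>1$ --- and hence cannot give an equivalent quasi-norm. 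Your pointwise inequality $\big|\,|f(x+h)|-|f(x)|\,\big|\le |f(x+h)-f(x)|$ therefore yields the truncation property only on $(\sigma^n_p,1)$, and the substantial part of the proposition, namely the extension to $1\le s<1+\frac1p$, is precisely what is missing. Note that for $p\le\frac{n}{n+1}$ one has $\sigma^n_p\ge 1$, so in that regime your argument covers nothing at all. The proof in \cite{T01} treats the range $1\le s<1+\frac1p$ by an entirely different mechanism, based on the subatomic (quarkonial) decompositions developed in that book; earlier approaches to this harder range go back to Oswald and to Bourdaud--Meyer.

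Your negative arguments are essentially on target. For $s>1+\frac1p$ the choice $f(x)=x_1\psi(x)$ with smooth $\psi$, $\psi(0)\ne 0$, works: $|f|$ behaves like a localised $|x_1|$, which belongs to $\Bs(\rn)$ only for $s<1+\frac1p$ (or $s=1+\frac1p$, $q=\infty$), in line with Proposition~\ref{P5.22} after one integration. For $s<\sigma^n_p$ the failure of $\Bs(\rn)\subset L^{\loc}_1(\rn)$ from Proposition~\ref{P5.5} disposes of the precondition \eqref{5.73}.
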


\begin{remark}   \label{R5.31a}
If $\sigma^n_p \ge 1 + \frac{1}{p}$, then \eqref{5.75} is empty and does not apply to any $s$. If $\sigma^n_p > 1 + \frac{1}{p}$, then
the condition in \eqref{5.76} means $s\in \real$. The above result is covered by \cite[Theorem 25.8, p.\,364]{T01} where one finds
a similar assertion for the spaces $\Fs (\rn)$, being less final if $q<1$. 
\end{remark}

One may ask for counterparts for the other spaces covered
by Definition~\ref{D2.11}. According to Theorem~\ref{T5.7} the natural restriction $s> \sigma^{|\vr|}_p$ ensures \eqref{5.73}. 
Furthermore, $1 + \frac{1}{p}$ in \eqref{5.75}, \eqref{5.76} is a typical candidate for the Slope--1--Rule in \new{Slope Rules~\ref{slope_rules}(i)}. This
suggests to ask for the following counterpart of the above proposition for the $B$--spaces in \eqref{2.44}. Let $\sigma^t_p$
be as in \eqref{5.68}.

\begin{conjecture}   \label{C5.32}
The spaces
\begin{\eq}   \label{5.77}
\La_{\vr} \Bs (\rn) \ \text{and} \   \La^{\vr} \Bs (\rn) \ \text{with $0<p<\infty$, $0<q \le \infty$,  $-n < \vr <0$}
\end{\eq}
have the truncation property if
\begin{\eq}   \label{5.78}
s \in \Big( \sigma^{|\vr|}_p, 1 + \frac{1}{p} \min \big( |\vr|,1 \big) \Big)
\end{\eq}
and they do not have the truncation property if
\begin{\eq}   \label{5.79}
s \not\in \Big[ \sigma^{|\vr|}_p, 1 + \frac{1}{p} \min \big( |\vr|,1 \big) \Big].
\end{\eq}
\end{conjecture}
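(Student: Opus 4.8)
The plan is to mirror the classical argument for $\Bs(\rn)$ (Proposition~\ref{P5.31} and \cite[Section~25]{T01}), replacing every scale-dependent quantity according to the Slope Rules~\ref{slope_rules}: the lower end-point $\sigma^n_p$ of the classical truncation interval becomes $\sigma^{|\vr|}_p$ (Slope-$n$-Rule) and the upper end-point $1+\frac1p$ becomes $1+\frac1p\min(|\vr|,1)$ (Slope-$1$-Rule). \textbf{The easy range $\sigma^{|\vr|}_p<s<1$.} Here one uses a characterisation of $\rhoBs(\rn)$ by first differences, i.e.\ by the ball means $d^1_{t,1}f$ of \eqref{3.36a}, \eqref{3.36b} in the Slope-$n$-Rule form indicated in Remark~\ref{R3.11a} (this is exactly where the results of \cite{HoS20,Hov20b} must be available in the required generality, and it covers both families of \eqref{5.77}). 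The elementary pointwise inequality
\[
\big|\Delta^1_h|f|(x)\big|=\big||f(x+h)|-|f(x)|\big|\le|f(x+h)-f(x)|=\big|\Delta^1_h f(x)\big|
\]
gives $d^1_{t,1}|f|(x)\le d^1_{t,1}f(x)$ for all $x\in\rn$, $t>0$, whence $\big\|\,|f|\,|\rhoBs(\rn)\big\|\le c\,\|f\,|\rhoBs(\rn)\|$ at once.

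\textbf{The failure ranges.} If $s<\sigma^{|\vr|}_p$ then $\rhoBs(\rn)\not\subset L^{\loc}_1(\rn)$ by Theorem~\ref{T5.7}, so the truncation property is void. If $s>1+\frac1p\min(|\vr|,1)$ we exhibit a counterexample: put $f(x)=x_n\,\eta(x)$ with $\eta\in C^\infty_0(\rn)$, $\eta\equiv1$ near the origin, so that $f\in S(\rn)\hra\rhoBs(\rn)$ for every $s$, while near $0$ one has $|f|=|x_n|\,\eta$ and $\pa_n|f|=\operatorname{sgn}(x_n)\,\eta+|x_n|\pa_n\eta$, the last term being Lipschitz with compact support and $\operatorname{sgn}(x_n)\,\eta=2\chi_{\{x_n>0\}}\eta-\eta$. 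The one-dimensional reduction carried out in Steps~1--2 of the proof of Theorem~\ref{T5.24} applies verbatim to a single jump across a hyperplane and yields $\chi_{\{x_n>0\}}\eta\in\La^{\vr}B^{\tau}_{p,q}(\rn)$ if and only if $\tau\le\frac1p\min(|\vr|,1)$. By the characterisation by derivatives \eqref{3.31} with $m=1$ this gives $|f|\in\La^{\vr}\Bs(\rn)$ iff $s-1\le\frac1p\min(|\vr|,1)$, hence $|f|\notin\La^{\vr}\Bs(\rn)$ — and then $|f|\notin\La_{\vr}\Bs(\rn)$ as well, since the latter embeds into the former by \eqref{2.56}, \eqref{2.57}. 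This is also consistent with the empty-interval caveat of Remark~\ref{R5.31a}: if $\sigma^{|\vr|}_p\ge1+\frac1p\min(|\vr|,1)$ no space in \eqref{5.77} has the truncation property.

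\textbf{The hard range $1\le s<1+\frac1p\min(|\vr|,1)$.} First differences no longer suffice, and the second-difference identity $-\Delta^2_h|f|(x)=|f(x+h)|+|f(x-h)|-2|f(x)|$ is \emph{not} pointwise dominated by $|\Delta^2_h f(x)|$ near points where $f$ changes sign — which is precisely why an upper bound exists. One has to run the classical scheme: atomic (respectively Faber/Schauder) decomposition plus an estimate of the contribution of the ``bad'' cubes, namely those on which $f$ vanishes. The atomic decompositions of Proposition~\ref{P3.1} / \cite[Theorem~3.33]{T14} are at our disposal, and the slope parameter enters only through the normalising factors $2^{\frac Jp(n+\vr)}$; their interplay with the number $\sim2^{(n-1)(j-J)}$ of boundary cubes at each scale is what produces the threshold $\frac1p\min(|\vr|,1)$, exactly as in the geometric sums \eqref{4.50} and \eqref{5.54}, where the case $0<|\vr|<1$ was decisive. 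The real interpolation \eqref{3.14} together with interpolation of Lipschitz operators transfers the property among the three families of the $\vr$-clan once it is known for one of them, but cannot by itself cross the barrier $s=1$, so the bad-cube counting is unavoidable. I expect this last step — making that estimate precise with the correct slope-dependence, and settling the value of $q$ on the limiting line $s=1+\frac1p\min(|\vr|,1)$ in the spirit of \cite{YSY20} for $\chi_Q$ — to be the genuine obstacle, which is why the statement is phrased as a conjecture.
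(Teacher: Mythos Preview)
The statement you are asked to prove is labelled \emph{Conjecture}~\ref{C5.32} in the paper; there is no proof in the paper to compare against. The authors present it precisely as an open problem, motivated by the Slope Rules and by the classical Proposition~\ref{P5.31}. Remark~\ref{R5.33} records the only partial confirmation they are aware of, namely \cite[Theorem~2]{Hov20a} for $\La_{\vr}\Bs(\rn)$ with $1\le p<\infty$, $1\le q\le\infty$, and even that result carries a technical side condition for $s\ge 1$.

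Your sketch is therefore not to be judged against a paper proof but on its own merits as a programme. The easy range $\sigma^{|\vr|}_p<s<1$ via first differences and the failure for $s>1+\frac1p\min(|\vr|,1)$ via the $|x_n|$ counterexample are both sound in outline and align with the paper's heuristics (Theorems~\ref{T5.7}, \ref{T5.24} and~\ref{T3.10}). Your own final paragraph correctly identifies the genuine gap: the range $1\le s<1+\frac1p\min(|\vr|,1)$ requires the bad-cube counting from the classical argument to be carried out in the Morrey setting, and this is exactly what remains unsettled --- also in the literature the authors cite. In short, you have not proved the conjecture, and neither has the paper; your diagnosis of where the difficulty lies matches theirs.
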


\begin{remark}   \label{R5.33}
As above \eqref{5.78} is empty if $\sigma^{|\vr|}_p =a \ge b = 1+ \frac{1}{p} \min (|\vr|,1)$
and \eqref{5.79} means $s\in \real$ if $a>b$. If one replaces $|\vr|$ by $n$, then the above conditions  coincide with the related
ones in Proposition~\ref{P5.31}. If $0< |\vr| \le 1$, then \eqref{5.78}, \eqref{5.79} is a strip in the $\big( \frac{1}{p},s 
\big)$--diagram similarly as in Proposition~\ref{P5.31} with $n=1$.
First results about these problems for the spaces \eqref{2.24}
with $1\le p <\infty$, $1\le q \le \infty$ have been obtained recently in \cite[Theorem 2]{Hov20a}. There is a 
technical assumption for the spaces with $s \ge 1$ caused by the method but surely not by the topic. Neglecting this additional
condition one obtains by reformulation according to \eqref{2.24} that the spaces $\La_{\vr} \Bs (\rn)$ with $1\le p  <\infty$, $1\le q
\le \infty$ and $-n <\vr <0$ have the truncation property if, and only if,
\begin{\eq}   \label{5.80}
0<s< 1  + \frac{1}{p} \min \big( |\vr|, 1 \big).
\end{\eq}
This fits in the scheme of Conjecture~\ref{C5.32} (under the indicated additional technical assumption).
\end{remark}

\noindent
\begin{minipage}[b]{\textwidth}
~\input{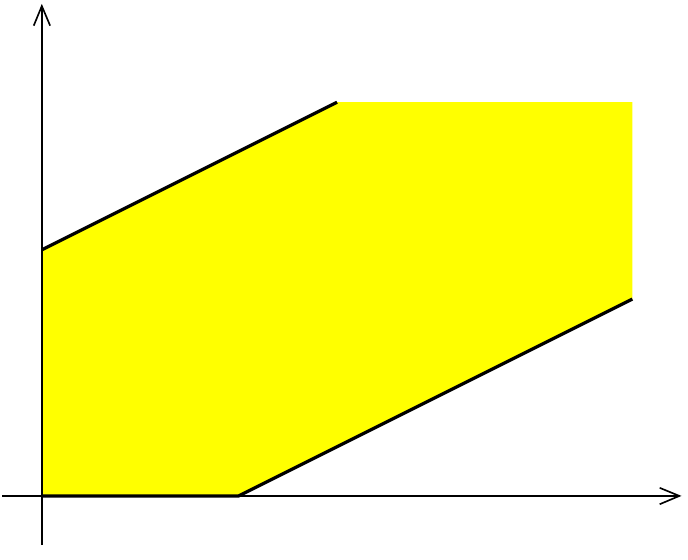_t}\hfill\input{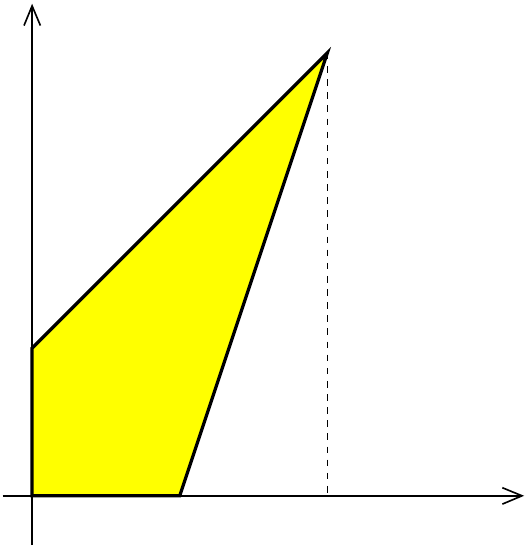_t}\\[1ex]    
~\hspace*{\fill} $0<|\vr|\leq 1$ \hfill\hspace*{\fill}~$1<|\vr|<n$ \hfill~\\[1ex]
~\hspace*{\fill}\unterbild{fig-4}
\end{minipage}

\begin{remark}   \label{R5.34}
As mentioned above there are less final counterparts of \eqref{5.75}, \eqref{5.76} for the spaces $\Fs (\rn)$, especially if $p<1$
or $q<1$. But at least under the restriction $p \ge 1$, $q\ge 1$ one would expect an $F$--version of the above Conjecture~\ref{C5.32}.
This has been confirmed in \cite[Theorem 3]{Hov20a} (again under some additional restrictions) for the spaces on the right--hand side
of \eqref{2.25} and their reformulations in terms of $\La_{\vr} \Fs (\rn)$.
\end{remark}

\subsection{Haar wavelets}   \label{S6.1}
We recalled in Proposition~\ref{P3.1} the characterization of the spaces $\La^{\vr}\Bs (\rn)$ and $\La_{\vr} \Bs (\rn)$ with $n\in 
\nat$, $-n<\vr <0$, $s\in \real$, $0<p<\infty$ and $0<q \le \infty$ in terms of compactly supported Daubechies wavelets. We used these
assertions as tools. But they are also of self--contained interest, including related $F$--spaces and other types of wavelets, above
all Haar wavelets, which attracted a lot of attention. In {\cite[Section 3.5, pp.\,98--103]{T20}} one finds what is known nowadays about
Haar expansions for the classical spaces $\As (\rn)$, including detailed related references. One has now natural restrictions for the
parameters $s,p,q$ under which Haar expansions in $\As (\rn)$ can be expected. This had been used in \cite[Theorem 3.41, p.\,74]{T14}
to prove Haar expansions for the spaces $\La^{\vr} \As (\rn)$ with the same restrictions for $s,p,q$ as for the spaces $\As(\rn)$
complemented by $s<|\vr|/p$. But for fixed $\vr$ with $-n <\vr <0$ one cannot expect that the same conditions for $s,p,q$ as for the
spaces $\As (\rn)$ are still natural for $\La^{\vr} \As (\rn)$. It is just the main aim of \cite{YSY20}, underlying the above 
Theorems~\ref{T5.24}, \ref{T5.29}, to collect basic ingredients indicating how suitable natural substitutes for the spaces $\La^{\vr}
\Bs (\rn)$ may look like. But we are not aware of resulting satisfactory Haar expansions for $\La^{\vr} \Bs (\rn)$ in terms of natural
restrictions for $s,p,q$. Clipping together the above ingredients we formulate the expected outcome as a conjecture. But first we 
collect some basic notation and fix what is known for the spaces $\Bs (\rn)$. We follow {\cite[Section 3.5, pp.\,98--103]{T20}} which in
turn is based on the related references mentioned there.

Let $y\in \real$ and
\begin{\eq}   \label{6.1}
h_M (y) =
\begin{cases}
1 &\text{if $0<y<1/2$},\\
-1 &\text{if $1/2 \le y <1$},\\
0 &\text{if $y \not\in (0,1)$}.
\end{cases}
\end{\eq}
Let $h_F (y) = |h_M (y)| = \chi_I (y)$ be the characteristic function of the unit interval $I =(0,1)$. This is the counterpart \eqref{3.1}, \eqref{3.2}. We now use the same construction as in \eqref{3.3}--\eqref{3.8}. Let again $n\in \nat$, and let
\begin{\eq}   \label{6.2}
G = (G_1, \ldots, G_n) \in G^0 = \{F,M \}^n
\end{\eq}
which means that $G_r$ is either $F$ or $M$. Let
\begin{\eq}   \label{6.3}
G = (G_1, \ldots, G_n) \in G^* = G^j \in \{F,M \}^{n*}, \qquad j \in \nat,
\end{\eq}
which means that $G_r$ is either $F$ or $M$, where $*$ indicates that at least one of the components of $G$ must be an $M$. Let
\begin{\eq}   \label{6.4}
h^j_{G,m} (x) =  \prod^n_{l=1} h_{G_l} \big( 2^j x_l - m_l \big), \quad G\in G^j, \quad m \in \zn, \quad x\in \rn,
\end{\eq}
where (now) $j\in \no$. It is well known that 
\begin{\eq}   \label{6.5}
\big\{ 2^{jn/2} \, h^j_{G,m}: \ j\in \no, \ G\in G^j, \ m\in \zn \big\}
\end{\eq}
is an orthonormal basis in $L_2 (\rn)$ and
\begin{\eq}   \label{6.6}
f = \sum_{j\in \no} \sum_{G\in G^j} \sum_{m\in \zn} \lambda^{j,G}_m \, h^j_{G,m}
\end{\eq}
with 
\begin{\eq}   \label{6.7}
\lambda^{j,G}_m = \lambda^{j,G}_m (f) = 2^{jn} \int_{\rn} f(x) \, h^j_{G,m} (x) \, \di x = 2^{jn} \big(f, h^j_{G,m} \big)
\end{\eq}
is the corresponding expansion. In contrast to \eqref{3.7}--\eqref{3.9} and Proposition~\ref{P3.1} we are interested now in expansions
by Haar wavelets not as a tool but as a self--contained topic. This suggests to offer a more careful formulation as already indicated
in Remark~\ref{R3.2}. In particular, the quasi--Banach space $b^s_{p,q} (\rn)$ with $s\in \real$ and $0<p,q \le \infty$ collects all
sequences
\begin{\eq}   \label{6.8}
\lambda = \big\{ \lambda^{j,G}_m \in \comp: \ j\in \no, \ G\in G^j, \ m\in \zn \big\}
\end{\eq}
such that
\begin{\eq}  \label{6.9}
\|\lambda \, |b^s_{p,q}(\rn) \| =
\Big( \sum^\infty_{j=0} 2^{j(s- \frac{n}{p})q}  \Big( \sum_{m \in \zn, G\in G^j} |\lambda^{j,G}_m|^p \Big)^{q/p} \Big)^{1/q}
\end{\eq}
is finite (with the usual modification if $\max(p,q) =\infty$). 

\begin{proposition}   \label{P6.1}
Let $n\in \nat$,
\begin{\eq}   \label{6.10}
0<p,q \le \infty \quad \text{and} \quad \max \Big( n \big(\frac{1}{p}-1), \ \frac{1}{p} - 1 \Big) <s< \min \big( \frac{1}{p}, 1 \big)
\end{\eq}
as indicated in the Figure~\ref{fig-3} on p.~\pageref{fig-3} below. 
Let $f \in S'(\rn)$. Then $f\in \Bs (\rn)$ if, and only if, it can be represented by
\begin{\eq}   \label{6.11}
f = \sum_{\substack{j\in \no, G\in G^j,\\ m\in \zn}} \lambda^{j,G}_m \, h^j_{G,m}, \qquad \lambda \in b^s_{p,q} (\rn),
\end{\eq}
the unconditional convergence being in $S' (\rn)$. The representation \eqref{6.11} is unique,
\begin{\eq}   \label{6.12}
\lambda^{j,G}_m =  \lambda^{j,G}_m (f) = 2^{jn} \, \big( f, h^j_{G,m} \big)
\end{\eq}
and
\begin{\eq}   \label{6.13}
I: \quad f \mapsto \big\{ \lambda^{j,G}_m (f) \big\}
\end{\eq}
is an isomorphic map of $\Bs (\rn)$ onto $b^s_{p,q} (\rn)$,
\begin{\eq}   \label{6.14}
\big\| f \, | \Bs (\rn) \big\| \sim \big\| \lambda (f) \, | b^s_{p,q} (\rn) \big\|.
\end{\eq}
\end{proposition}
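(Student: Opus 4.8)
Proposition~\ref{P6.1} is the (sharp) Haar-wavelet characterisation of the classical Besov spaces, and the plan is to prove it by splitting the isomorphism statement into three parts: (a) the \emph{synthesis} estimate --- if $\lambda\in\bs(\rn)$, then the series in \eqref{6.11} converges unconditionally in $S'(\rn)$ to some $f\in\Bs(\rn)$ with $\|f\,|\,\Bs(\rn)\|\lesssim\|\lambda\,|\,\bs(\rn)\|$; (b) the \emph{analysis} estimate --- if $f\in\Bs(\rn)$, then $\{2^{jn}(f,h^j_{G,m})\}\in\bs(\rn)$ with the reverse inequality; and (c) uniqueness of the representation together with the identification \eqref{6.12}. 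The role of the restriction \eqref{6.10} is that it delimits precisely the range of $s$ in which the (non-smooth) Haar functions work simultaneously as atoms on $\Bs(\rn)$ and as admissible analysing functionals: the upper bound $s<\min(1/p,1)$ reflects the limited H\"older/Sobolev smoothness of the building blocks $h_F=\chi_I$ and $h_M$ (recall $\chi_I\in B^s_{p,q}(\real)$ for $s<1/p$, cf.\ Proposition~\ref{P5.22}), while the lower bound $s>\max\big(n(\tfrac1p-1),\tfrac1p-1\big)$ guarantees that the single vanishing moment carried by every $h^j_{G,m}$ with $j\ge1$ (which has at least one $M$-factor) suffices to meet the tensor-product moment and integrability requirements for atoms on $\Bs(\rn)$.

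For part (a): up to a fixed normalising constant, each $h^j_{G,m}$ is a $1_{K}$-atom with $K=0$ (no differentiability), supported in the dyadic cube $Q_{j,m}$, bounded by $1$, and with moment order prescribed by the tensor structure (one vanishing moment in every $M$-direction, none in the $F$-directions). Under \eqref{6.10} the atomic decomposition theorem for $\Bs(\rn)$ in its sharp form admitting non-smooth ($K=0$) atoms --- cf.\ \cite{FrJ90}, \cite{T08}, \cite[Theorem 3.41]{T14} and the references collected in \cite[Section 3.5]{T20} --- applies, and one checks that the two conditions in \eqref{6.10} are exactly what is required for atoms of this type. Alternatively, and more self-containedly, one estimates $\|(\vp_k\wh{h}^j_{G,m})^\vee\,|\,L_p(\rn)\|$ directly, using that $\wh{h}_M$ vanishes to first order at the origin and that $\wh{h}_F,\wh{h}_M$ decay at infinity, then sums over $j$ with the weights from \eqref{6.9}; the cross-scale interaction is controlled by the near-orthogonality $(h^j_{G,m},h^{j'}_{G',m'})=0$ for non-nested dyadic cubes, and this argument also yields unconditional convergence of \eqref{6.11} in $S'(\rn)$ (the partial sums being bounded in $\Bs(\rn)$ form a Cauchy net in $S'(\rn)$).

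For part (b): when $p\ge1$ one dualises, observing that $h^j_{G,m}$ normalised in $L_\infty$ is, modulo constants, an atom for the dual space $B^{-s}_{p',q'}(\rn)$ within the corresponding admissible range (the two bounds of \eqref{6.10} swapping into the dual ones), so that $(f,h^j_{G,m})$ is controlled termwise and summation gives $\|\lambda(f)\,|\,\bs(\rn)\|\lesssim\|f\,|\,\Bs(\rn)\|$. For $p<1$, where there is no usable duality, one instead reads $2^{jn}(f,h^j_{G,m})$ as a local mean of $f$ against the compactly supported kernel $h^j_{G,m}$ and invokes the characterisation of $\Bs(\rn)$ by local means (or differences) together with the cancellation of $h^j_{G,m}$ in the $M$-directions; this is the route of \cite{T08} and can be transcribed verbatim. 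Part (c) is then routine: $\{2^{jn/2}h^j_{G,m}\}$ is orthonormal and complete in $L_2(\rn)$, so on $\Bs(\rn)\cap L_2(\rn)$ the coefficients are uniquely \eqref{6.12}; the general case follows by testing the already-established $S'(\rn)$-convergent expansion \eqref{6.11} against each $h^{j'}_{G',m'}$ and using the boundedness of $I$ from parts (a), (b).

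The main obstacle is the $K=0$ issue underlying (a) and (b): since the Haar functions are merely bounded with a jump, the textbook atomic decomposition theorem (which demands $C^K$ atoms with $K$ large relative to $s$) does not apply off the shelf, and one must quote or reprove its sharp non-smooth version --- and it is exactly this sharp version that forces the boundary of the admissible region \eqref{6.10} to be the broken line $s=\max(n(\tfrac1p-1),\tfrac1p-1)$ below and $s=\min(1/p,1)$ above, rather than a single half-plane. A secondary technicality is the behaviour at the endpoints (which are excluded in \eqref{6.10}), where the clean isomorphism already fails; confirming that the \emph{open} region is optimal is part of the work, and for the present purposes may be cited from \cite[Section 3.5]{T20}.
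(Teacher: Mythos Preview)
The paper does not actually give a proof of Proposition~\ref{P6.1}; it is stated as a known result and immediately followed by Remark~\ref{R6.2}, which simply cites it as the $B$--part of \cite[Theorem~3.18, p.\,99]{T20} (with further references there). So there is nothing to compare your argument \emph{against} in the paper itself.

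That said, your sketch is a correct outline of the standard proof and matches the strategy underlying the cited literature (in particular \cite{T10}, which the paper invokes a few lines later). Two small remarks. First, the split ``duality for $p\ge1$, local means for $p<1$'' in part~(b) is unnecessary: the local-means route works uniformly for all $0<p\le\infty$, and this is how it is done in \cite[Proposition~2.8]{T10} based on \cite[Theorem~1.15]{T10}; the paper itself points to this argument when discussing the restriction $s<1$ just after \eqref{6.18}. Second, your reference to \cite{T08} for the local-means step is slightly off --- \cite{T08} is about spaces on domains; the relevant $\rn$--version with Haar kernels is \cite{T10}. None of this affects the correctness of your outline.
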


\begin{remark}   \label{R6.2}
This is the $B$--part of {\cite[Theorem 3.18, p.\,99]{T20}}. There one finds also explanations and detailed references. The restriction
\eqref{6.10} is natural: If $\big( \frac{1}{p},s \big)$ does not belong to the set
\begin{\eq}   \label{6.15}
0<p \le \infty \quad \text{and} \quad \max \Big( n \big(\frac{1}{p}-1), \ \frac{1}{p} - 1 \Big) \le s \le \min \big( \frac{1}{p}, 1 \big)
\end{\eq}
in the $\big\{ \big[\frac{1}{p}, s\big): \ 0 \le \frac{1}{p} <\infty, \ s\in \real \big\}$ half--plane in the Figure~\ref{fig-3}, then
the above proposition is no longer valid. In recent times it has been studied in detail what can be said about Haar expansions in
$\Bs (\rn)$ (and also in $\Fs (\rn)$) if $\big( \frac{1}{p},s \big)$ belongs to the difference of the two sets in \eqref{6.15} and
\eqref{6.10} (the related boundary of the set in \eqref{6.15}).
\end{remark}

\noindent
\begin{minipage}{\textwidth}
  ~\hspace*{\fill}\input{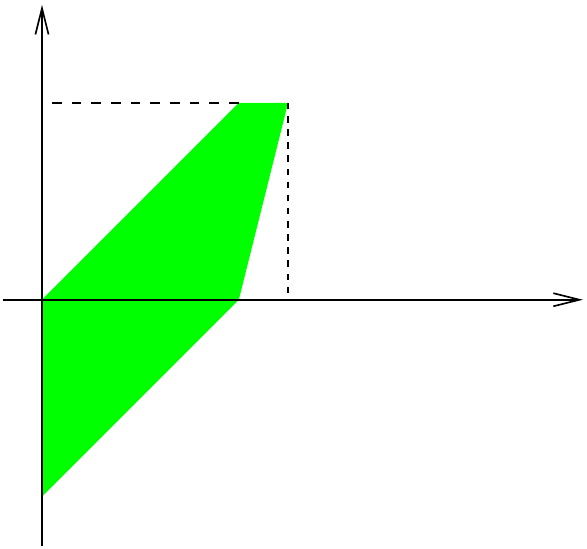_t}\hspace*{\fill}~\\[0ex]    
~\hspace*{\fill}\unterbild{fig-3}
\end{minipage}
\smallskip~

\noindent
One may ask for a counterpart of the above proposition for the $B$--spaces $\rhoBs (\rn)$ in \eqref{2.44} of the $\vr$--clan in
$\rn$ according to Definition~\ref{D2.11}. Let $n\in \nat$, $-n <\vr <0$, $s\in \real$, $0<p<\infty$ and $0<q \le \infty$. Let 
$\lambda$ be the same sequence as in \eqref{6.8}. Then the quasi--Banach space $\La^{\vr} b^s_{p,q} (\rn)$ collects all sequences
$\lambda$ such that
\begin{\eq}    \label{6.16}
\begin{aligned}
&\| \lambda \, | \La^{\vr} b^s_{p,q} (\rn) \| \\
&= \sup_{J\in \ganz, M\in \zn} 2^{\frac{J}{p} (n+ \vr)} \bigg( \sum_{j \ge J^+} 2^{j(s- \frac{n}{p})q} 
\Big( \sum_{\substack{m:Q_{j,m}
\subset Q_{J,M}, \\ G\in G^j}} \big| \lambda^{j,G}_m \big|^p \Big)^{q/p} \bigg)^{1/q}
\end{aligned}
\end{\eq}
is finite and the quasi--Banach space $\La_{\vr} b^s_{p,q} (\rn)$ collects all sequences $\lambda$ such that
\begin{\eq}    \label{6.17}
\begin{aligned}
&\| f \, | \La_{\vr} b^s_{p,q}(\rn) \| \\
&= \bigg( \sum^\infty_{j=0} 2^{j(s- \frac{n}{p})q} \sup_{J\in \ganz, M \in \zn} 2^{\frac{J}{p} (n+\vr)q}
\Big( \sum_{\substack{m:Q_{j,m}
\subset Q_{J,M}, \\ G\in G^j}} \big| \lambda^{j,G}_m \big|^p \Big)^{q/p} \bigg)^{1/q}
\end{aligned}
\end{\eq}
is finite. Here all notation have the same meaning as in Proposition~\ref{P3.1} and as already indicated in Remark~\ref{R3.2}. 
According to \cite[Theorem 3.41, p.\,74]{T14} one has the following counterpart of Proposition~\ref{P6.1} for the spaces $\La^{\vr} 
\Bs (\rn)$. Let $n\in \nat$, $-n <\vr <0$,
\begin{\eq}   \label{6.18}
0<p<\infty, \quad 0<q \le \infty, \quad \max \Big( n \big( \frac{1}{p} -1 \big), \frac{1}{p} - 1 \Big) <s< \min \big(
\frac{1}{p}, \frac{|\vr|}{p}, 1 \big).
\end{\eq}
Let $f\in S'(\rn)$. Then $f \in \La^{\vr} \Bs (\rn)$ if, and only if, it can be represented as
\begin{\eq}   \label{6.19}
f = \sum_{\substack{j\in \no, G\in G^j,\\ m\in \zn}} \lambda^{j,G}_m \, h^j_{G,m}, \qquad \lambda \in \La^{\vr}b^s_{p,q} (\rn),
\end{\eq}
the unconditional convergence being in $S'(\rn)$. The representation \eqref{6.19} is unique with $\lambda^{j,G}_m$ as in \eqref{6.12}
and $I$ in \eqref{6.13} is an isomorphic map of $\La^{\vr} \Bs (\rn)$ onto $\La^{\vr} b^s_{p,q} (\rn)$. Theorem~\ref{T5.24} shows that
the restriction $s< \frac{1}{p} \min (|\vr|,1)$ in \eqref{6.18}, compared with $s <\frac{1}{p}$ in \eqref{6.10}, is natural. The 
restriction $s<1$ in \eqref{6.10} comes from the use of local means with the Haar functions $h^j_{G,m}$ as kernels and their 
restricted cancellation properties. One may consult  the proof of \cite[Proposition 2.8, p.\,79]{T10} based on \cite[Theorem 1.15, 
p.\,7]{T10}. Although we have no reference one can expect that this argument applies also to the spaces $\La^{\vr}\Bs (\rn)$. In other
words, the right--hand sides of \eqref{6.10} and \eqref{6.18} are natural. If one asks for Haar expansions in $\Bs (\rn)$, then the
related conditions for $\chi_Q$ as described in Proposition~\ref{P5.22} and Remark~\ref{R5.30}, especially \eqref{5.72}, are 
necessary. According to Proposition~\ref{P6.1} they are also sufficient for the spaces $\Bs (\rn)$ at least as far the breaking lines
are concerned, in sharp contrast to Haar expansions for $\Fs (\rn)$ where $q$ comes in, \cite[Theorem 3.18, p.\,99]{T20}. One can
take this observation as a guide for $B$--spaces. Then it is clear that the left--hand side of \eqref{6.18} as natural restriction for
Haar expansions in $\La^{\vr} \Bs (\rn)$, $-n<\vr <0$, is questionable. Based on the Theorems~\ref{T5.24} and \ref{T5.29} the 
following expectation is at least reasonable. Let
\begin{\eq}   \label{6.20}
R_\sigma = \Big\{ \big( \frac{1}{p},s \big): \ 0<p<\infty, \ \frac{\sigma}{p} -1 <s< \min \big( 1, \frac{\sigma}{p} \big) \Big\},
\quad 0<\sigma \le 1,
\end{\eq}
\begin{\eq}   \label{6.21}
R_\sigma = \Big\{ \big(\frac{1}{p},s \big): \ 0<p<\infty, \ \max \Big( \frac{1}{p}-1, \sigma \big( \frac{1}{p} -1 \big) \Big) <s<
\min \big(1, \frac{1}{p} \big) \Big\}, \ \sigma >1,
\end{\eq}
and 
\begin{\eq}   \label{6.22}
\ol{R_\sigma} = \Big\{ \big( \frac{1}{p},s \big): \ 0<p<\infty, \ \frac{\sigma}{p} -1 \le s \le \min \big( 1, \frac{\sigma}{p} \big) \Big\}, \quad 0<\sigma \le 1,
\end{\eq}
\begin{\eq}   \label{6.23}
\ol{R_\sigma} = \Big\{ \big(\frac{1}{p},s \big): \ 0<p<\infty, \ \max \Big( \frac{1}{p}-1, \sigma \big( \frac{1}{p} -1 \big) \Big) 
\le s \le \min \big(1, \frac{1}{p} \big) \Big\}, \ \sigma >1,
\end{\eq}
in the $\big\{ \big[ \frac{1}{p}, s \big): \ 0 \le \frac{1}{p} <\infty, \ s\in \real \big\}$ half--plane.

\noindent
\begin{minipage}[b]{\textwidth}
~\input{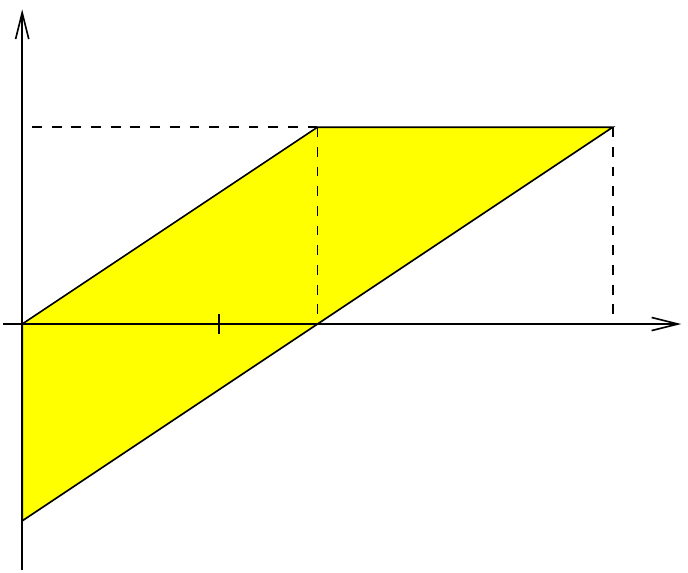_t}\hfill\input{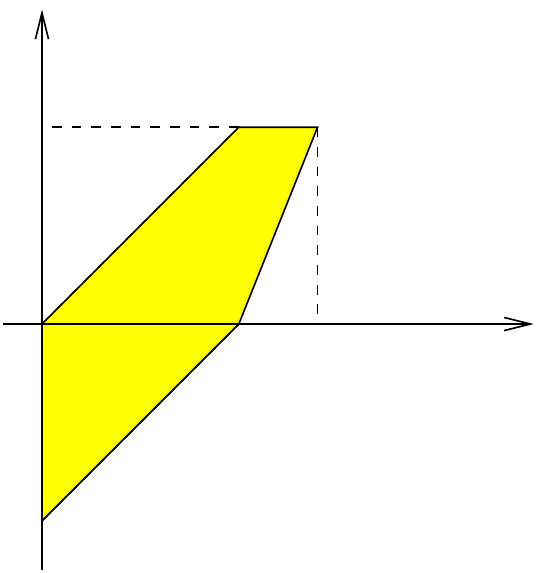_t}\\[1ex]    
~\hspace*{\fill} $R_{|\vr|}: 0<|\vr|\leq 1$ \hfill\hspace*{\fill}~~~$R_{|\vr|}: 1<|\vr|<n$ \qquad~\\[2ex]
~\hspace*{\fill}\unterbild{fig-5}
\end{minipage}

\begin{conjecture}   \label{C6.3}
Let $n\in \nat$, $-n <\vr <0$, $s\in \real$, $0<p<\infty$ and $0<q\le \infty$. Let $ \rhoBs (\rn)$ be the $B$--spaces $\La^{\vr}
\Bs (\rn)$ or $\La_{\vr} \Bs (\rn)$ of the $\vr$--clan according to Definition~\ref{D2.11}{\em (ii)} and let $ \rhobs(\rn)$ be the respective sequence spaces $\La^\vr \bs(\rn)$ or $\La_{\vr} \bs(\rn)$ as introduced above. Let $f\in 
S'(\rn)$ and $\big( \frac{1}{p},s\big) \in R_{|\vr|}$. Then $f\in  \rhoBs (\rn)$ if, and only if, it can be represented as
\begin{\eq}   \label{6.24}
f = \sum_{\substack{j\in \no, G\in G^j,\\ m\in \zn}} \lambda^{j,G}_m \, h^j_{G,m}, \qquad \lambda \in \rhobs(\rn),
\end{\eq}
the unconditional convergence being in $S'(\rn)$. The representation in \eqref{6.24} is unique with $\lambda^{j,G}_m$ as in 
\eqref{6.12} and $I$ in \eqref{6.13} is an isomorphic map of $ \rhoBs (\rn)$ onto $\rhobs(\rn)$. The spaces
$ \rhoBs (\rn)$ with $\big( \frac{1}{p},s \big) \not\in \ol{R_{|\vr|}}$ do not admit characterisations in terms of Haar wavelets.
\end{conjecture}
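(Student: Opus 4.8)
The plan is to prove the two halves of the conjecture separately. The \emph{negative} half --- no Haar characterisation when $(\tfrac1p,s)\notin\ol{R_{|\vr|}}$ --- can be settled with the material already assembled in the paper, so I would dispose of it first; the \emph{positive} half, a genuine Haar isomorphism inside $R_{|\vr|}$, is the substantial part. The first observation is that $\ol{R_{|\vr|}}$ is exactly the strip bounded above by the line $s=\min\!\big(1,\tfrac1p\min(|\vr|,1)\big)$ and below by the line $s=\tfrac1p\min(|\vr|,1)-1+\sigma^{\max(|\vr|,1)-1}_p$, i.e.\ by the `$\chi_Q\in\rhoAs$' line of Theorem~\ref{T5.24} (cut off at $s<1$) and the `$\chi_Q$ generates a bounded functional' line of Theorem~\ref{T5.29}. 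Recognising this makes the negative half a matter of three elementary remarks.

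For the negative half, let $(\tfrac1p,s)\notin\ol{R_{|\vr|}}$. (a) $\chi_Q=h^0_{(F,\dots,F),0}$ has a \emph{finite} Haar expansion, so its coefficient sequence lies in $\rhobs(\rn)$ for \emph{all} $(s,p,q)$; but if $s>\tfrac1p\min(|\vr|,1)$ then $\chi_Q\notin\rhoBs(\rn)$ by Theorem~\ref{T5.24}, which is incompatible with any Haar characterisation. (b) If $s\ge 1$, then for a fixed bump $\psi\in D(\rn)$ the single vanishing moment of the $M$-factors gives $|\lambda^{j,G}_m(\psi)|=|2^{jn}(\psi,h^j_{G,m})|\le c\,2^{-j}$ for the $\sim 2^{jn}$ relevant $m$, whence $\|\{\lambda^{j,G}_m(\psi)\}\,|\,\rhobs(\rn)\|\sim\big(\sum_{j\ge 0}2^{j(s-1)q}\big)^{1/q}=\infty$ although $\psi\in\rhoBs(\rn)$, so the analysis map cannot land in $\rhobs(\rn)$. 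Together (a) and (b) cover the whole region $s>\min\!\big(1,\tfrac1p\min(|\vr|,1)\big)$. (c) For the lower complement, from the definitions \eqref{6.16}, \eqref{6.17} of $\rhobs(\rn)$ (take the cube $Q_{0,0}$ and the level $j=0$) one has $|\lambda^{0,(F,\dots,F)}_0|\le\|\lambda\,|\,\rhobs(\rn)\|$ for every admissible $\lambda$; hence a Haar isomorphism would force $|(f,\chi_Q)|=|\lambda^{0,(F,\dots,F)}_0(f)|\le c\,\|f\,|\,\rhoBs(\rn)\|$, i.e.\ $\chi_Q$ would generate a bounded functional on $\rhoBs(\rn)$, contradicting Theorem~\ref{T5.29} when $s<\tfrac1p\min(|\vr|,1)-1+\sigma^{\max(|\vr|,1)-1}_p$ (the mild point of interpreting the pairing being absorbed by the Fatou property, Theorem~\ref{T3.12}). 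Remarks (a)--(c) apply verbatim to both families $\La^\vr\Bs$ and $\La_\vr\Bs$, with $\La^\vr b^s_{p,q}$ or $\La_\vr b^s_{p,q}$ respectively.

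For the positive half inside $R_{|\vr|}$ I would follow the classical scheme of Proposition~\ref{P6.1} and its $\La^\vr$-refinement \cite[Theorem~3.41]{T14}: (i) \emph{synthesis} --- $S\colon\lambda\mapsto\sum\lambda^{j,G}_m h^j_{G,m}$ maps $\rhobs(\rn)$ boundedly into $\rhoBs(\rn)$, with unconditional convergence in $S'(\rn)$, by viewing the Haar functions as atoms (smoothness below $1$; no moment condition for the $F$-factors, one for the $M$-factors) and applying an atomic decomposition theorem for $\La^\vr\As=L^{\vr/p}\As$, resp.\ $\La_\vr\Bs=\Nc^s_{u,p,q}$; (ii) \emph{analysis} --- $f\mapsto\{2^{jn}(f,h^j_{G,m})\}$ maps $\rhoBs(\rn)$ boundedly into $\rhobs(\rn)$ via local means with the $h^j_{G,m}$ as kernels, their restricted cancellation being harmless because $s<1$; (iii) biorthogonality of the Haar system then gives uniqueness of the representation, the formula $\lambda^{j,G}_m=\lambda^{j,G}_m(f)$ and that $I=S^{-1}$ is an isomorphism onto $\rhobs(\rn)$. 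For the part of $R_{|\vr|}$ lying below $\sigma^{|\vr|}_p$ --- which occurs exactly when $|\vr|\le1$, where $\rhoBs(\rn)\not\subset L^{\loc}_1(\rn)$ --- the pairings $(f,h^j_{G,m})$ have to be read off from the $S'$-expansion itself, and one argues in the low-smoothness (possibly negative $s$) range as in \cite[Theorem~1.37]{T13}, \cite{T14}, exploiting that these low-slope spaces are essentially dimension-insensitive (cf.\ Remark~\ref{R5.30}).

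The hard point is step (i). The atomic decomposition presently available for the spaces $\La^\vr\As$ (and the analogous one for $\La_\vr\As$), \cite[Theorem~3.33]{T14}, still requires $s>\sigma^n_p$ with the \emph{full} dimension $n$; but the lower boundary of $R_{|\vr|}$ is $s=\tfrac1p\min(|\vr|,1)-1+\sigma^{\max(|\vr|,1)-1}_p$, which for $|\vr|<n$ lies strictly below $\sigma^n_p$ and, when $|\vr|\le1$, even below $\sigma^{|\vr|}_p$. One therefore has to replace $\sigma^n_p$ by $\sigma^{|\vr|}_p$ in the atomic and local-means machinery --- precisely the replacement flagged as open in Remark~\ref{R4.14} under the Slope-$n$-Rule --- and, in the low-slope case, to descend further by a genuinely Morrey-adapted argument. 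Until this is done, only the proper subregion $\max\!\big(n(\tfrac1p-1),\tfrac1p-1\big)<s<\min\!\big(\tfrac1p,\tfrac{|\vr|}{p},1\big)$ of $R_{|\vr|}$ is reached (by \cite[Theorem~3.41]{T14}); closing the remaining strip at the bottom of $R_{|\vr|}$ is the essential open ingredient, while the boundary $\partial R_{|\vr|}$ --- where, just as for $\chi_Q\in B^{1/p}_{p,q}$, one expects additional restrictions on $q$ --- is deliberately left outside the statement.
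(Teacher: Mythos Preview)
The statement is a \emph{conjecture}; the paper does not prove it. What the paper offers (Remarks~\ref{R6.4} and~\ref{R6.5}) is a discussion of the state of affairs: the positive part is established only in the subregion \eqref{6.18}, and the essential missing ingredient is precisely the replacement of $\sigma^n_p$ by $\sigma^{|\vr|}_p$ in the atomic machinery, flagged as open in Remark~\ref{R4.14}. You have identified this gap accurately, and your honest concluding paragraph matches the paper's position.

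Your negative half goes further than the paper does explicitly. The paper motivates the shape of $R_{|\vr|}$ by pointing to Theorems~\ref{T5.24} and~\ref{T5.29}, but does not spell out the obstruction argument. Your (a)--(c) is essentially correct: (a) and (c) are clean, and (b) is right with the small caveat that $\sum_j 2^{j(s-1)q}=\infty$ requires $s>1$ (or $s=1$, $q<\infty$), not $s\ge1$; since $s=1$ lies on $\partial R_{|\vr|}\subset\ol{R_{|\vr|}}$, this does not affect the claim. So your negative half could be turned into an actual proof of that direction of the conjecture.

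For the positive half you are in the same position as the paper: a genuine ingredient is missing. One remark the paper makes that you do not (Remark~\ref{R6.5}): by \eqref{2.55} and the real interpolation \eqref{3.14}, any Haar isomorphism for $\La^{\vr}\Bs(\rn)$ transfers to $\La_{\vr}\Bs(\rn)$ via \eqref{6.27} and its sequence-space counterpart, and the sharpness for $\La_{\vr}\Bs(\rn)$ then follows from \eqref{3.13} with $q=\infty$ and \eqref{2.57}. So it suffices to attack the $\La^{\vr}$-case, which somewhat simplifies your programme.
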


\begin{remark}   \label{R6.4}
This is the $\vr$--counterpart of the corresponding assertion for the spaces $\Bs (\rn)$ as described in Proposition~\ref{P6.1} and
Remark~\ref{R6.2}.

\noindent
\begin{minipage}{0.45\textwidth}
  ~\hspace*{\fill}\input{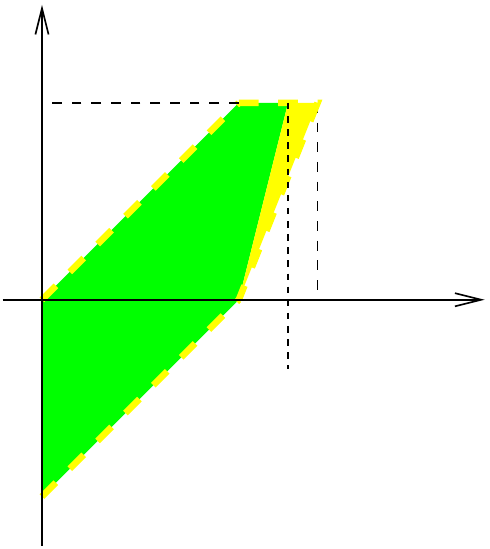_t}\hspace*{\fill}~\\[0ex]    
~\hspace*{\fill}$R_n$ and $R_{|\vr|}$ for $1<|\vr|<n$ \hspace*{\fill}~\\[1ex]
\unterbild{fig-6}
\end{minipage}\hfill
\begin{minipage}{0.53\textwidth}
Taking for granted that $s\le 1$ is always necessary for Haar expansions, then the above--quoted assertion, based 
on \eqref{6.18}, and the preceding considerations confirm the Conjecture~\ref{C6.3} for the spaces
\begin{align}
\nonumber
 \La^{\vr} \Bs (\rn), &  \\
 n \ge 2, \quad & -n<\vr \le -1, \label{6.25}\\
 \nonumber  0<q \le \infty & \quad  \text{if} \quad 1\le p < \infty,
\end{align}

whereas $R_n$ and $R_{|\vr|}$ differ if $p<1$, see Figure~\ref{fig-6} aside.\\
\end{minipage}
\medskip~

\noindent
The situation for the low--slope spaces $\La^{\vr} \Bs (\rn)$, $n\in \nat$,
$-1 <\vr <0$ is different and related Haar expansions are ensured so far only in the region
\begin{\eq}   \label{6.26}
0<p<\infty, \ 0<q\le \infty \quad \text{and} \quad \max \Big(n \big( \frac{1}{p}-1\big), \frac{1}{p} -1 \Big) <s< \min \big(
\frac{|\vr|}{p}, 1 \big)
\end{\eq}
instead of $R_{|\vr|}$ as conjectured (depending on $n$ and $|\vr|$ and not only on $|\vr|$ exclusively).
\end{remark}

\begin{remark}    \label{R6.5}
It is sufficient to confirm the above conjecture for the spaces $\La^{\vr}\Bs (\rn)$. Using \eqref{2.55} and \eqref{3.14} it follows from
\begin{\eq}   \label{6.27}
\La_{\vr} \Bs (\rn) = \big( \La^{\vr} B^{s_1}_{p,p} (\rn), \La^{\vr} B^{s_2}_{p,p} (\rn) \big)_{\theta,q}
\end{\eq}
and its sequence counterpart based on the indicated wavelet isomorphisms, that any affirmative assertion about Haar expansions for the
spaces $\La^{\vr} \Bs (\rn)$ can be transferred to the spaces $\La_{\vr} \Bs (\rn)$. The sharpness of the breaking lines in the above
conjecture for the spaces $\La_{\vr} \Bs (\rn)$ follows afterwards from \eqref{3.13} with $q=\infty$ and \eqref{2.57}.
\end{remark}

\subsection{A proposal: Faber expansions}   \label{S6.2}
Expansions by Haar systems is a central topic in the theory of function spaces. It is of interest for its own sake. But they may also 
serve as a starting point for numerous other observations. In one dimension one can step from Haar systems to related Faber systems
on the unit interval $I = (0,1)$ or on $\real$, roughly speaking, by integration. This includes Faber expansions in suitable function
spaces on $\real$, sampling and so--called numerical integration. We refer to \cite{T10} and, in particular, to \cite[Section 
3.2]{T12}. One may ask what happens if one replaces $\Bs (\real)$ with
\begin{\eq}   \label{6.28}
0<p,q \le \infty, \qquad \frac{1}{p} <s< 1 + \min \big( \frac{1}{p}, 1 \big)
\end{\eq}
by, say, $\La^{\vr} \Bs (\real)$, $-1<\vr <0$. Taking for granted that one has related affirmative answers for the above Conjecture~\ref{C6.3}, then
\begin{\eq}   \label{6.29}
0<p<\infty, \quad 0<q\le \infty, \quad \frac{|\vr|}{p} <s< 1 + \min \big(1, \frac{|\vr|}{p} \big)
\end{\eq}
is the counterpart of \eqref{6.28}, as indicated in Figure~\ref{fig-6a} below.\\

\noindent
\begin{minipage}{\textwidth}
  ~\hspace*{\fill}\input{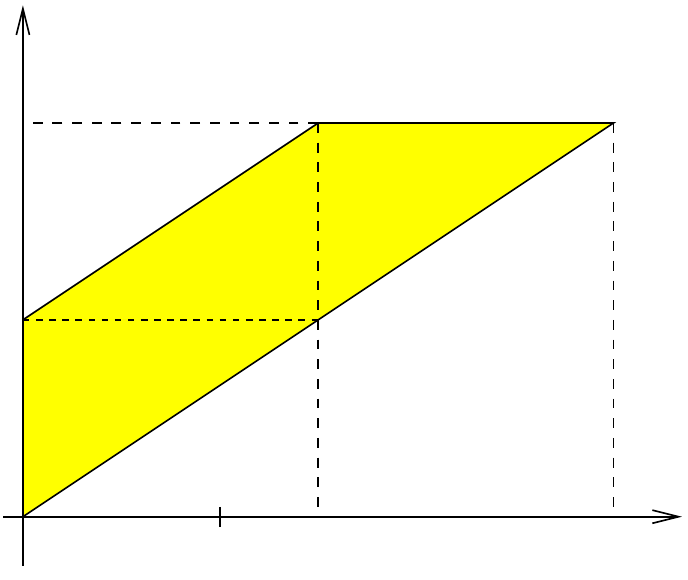_t}\hspace*{\fill}~\\[1ex]    
~\hspace*{\fill} $(0,1) + R_{|\vr|}$ for $0<|\vr|\leq 1$\hspace*{\fill}~\\[0ex]
~\hspace*{\fill}\unterbild{fig-6a}
\end{minipage}
\smallskip~

\noindent
Faber systems can be extended from one dimension to higher dimensions by tensor products in the
context of spaces with dominating mixed smoothness
\begin{\eq}   \label{6.30}
S^r_{p,q} B (\rn) \quad \text{with} \quad 0<p,q \le \infty \quad \text{and} \quad r \in \real.
\end{\eq}
This, in turn, can be used for numerical integration and discrepancy. As for basic definitions and related properties  one may consult
\cite{ST87}, \cite{T10}, \cite{T12}, \cite{T19} and the references within. One may ask for Morrey versions 
\begin{\eq}   \label{6.31}
 \vr\text{-} \! S^r_{p,q} B(\rn), \quad -1<\vr<0, \ 0<p<\infty, \ 0<q \le \infty \ \text{and} \ r\in \real,
\end{\eq}
of these spaces and whether they are of some use in connection with sampling, discrepancy and numerical integration. We are not aware of any efforts in these directions so far.

\section{Embeddings -- revisited}   \label{S7}

In Sections~\ref{S5.1}, \ref{S5.2} we already studied embeddings of spaces $\rhoAs(\rn)$ into certain target spaces, in addition to the more basic embeddings result which can be found in Section~\ref{S2}, in particular in Proposition~\ref{P2.9} and Theorem~\ref{T2.16}. It is our intention now to return to the topic and modify it appropriately. At first we pose the question of continuous embeddings within the same $\vr$-clan to spaces, defined on bounded domains. As is well known, then we may also ask for compact embeddings -- unlike in case of spaces on $\rn$. This will be our first goal. Secondly we shed some light on the so-called growth envelope functions, some tool to `measure' unboundedness of tempered distributions by very classical tools. It turns out, that again the Slope Rules~\ref{slope_rules} can be observed. Finally, in a last part, we leave the realm of embeddings {\em within one and the same $\vr$-clan} and briefly discuss some phenomena when `crossing borders' (between $\vr$-clans).

\subsection{Spaces on domains}    \label{S5.7}
Similarly as in Definition~\ref{D4.7} one introduces spaces on domains $\Om$ in $\rn$ by restriction. As usual, $D'(\Om)$ denotes the
set of all distributions on \Om. Furthermore, $g|\Om \in D'(\Om)$ stands for the restriction of $g\in S'(\rn)$ to \Om.

\begin{definition}   \label{D5.35}
Let $\Om$ be an {\em (}arbitrary{\em )} domain {\em (}= open set{\em )} on \rn, $n\in \nat$. Let $A(\rn)$ be a space covered by
Definition~\ref{D2.11}. Then
\begin{\eq}  \label{5.81}
A(\Om) = \big\{ f\in D'(\Om): \ \text{$f= g|\Om$ for some $g\in A(\rn)$} \big\},
\end{\eq}
\begin{\eq}    \label{5.82}
\| f \, |A(\Om) \| = \inf \| g \, | A(\rn) \|
\end{\eq}
where the infimum is taken over all $g \in A(\rn)$ with $g|\Om = f$.
\end{definition}

\begin{remark}   \label{R5.36}
It follows from standard arguments that $A(\Om)$ is a quasi--Banach space (and a Banach space if $p\ge 1$, $q\ge 1$), continuously 
embedded in $D'(\Om)$ and in the restriction of $S'(\rn)$ to \Om.
\end{remark}

The classical spaces $\As (\Om)$ are the restrictions  of the spaces in \eqref{2.38}, the $n$--clan, now with respect to \Om. They
have been studied in numerous papers and books, including {\cite[Section 2.6, pp.\,62--74]{T20}} where one finds some recent results
(covering also $F^s_{\infty,q} (\Om)$) and (historical) references. One may ask for counterparts of related assertions for the spaces
\begin{\eq}   \label{5.83}
\La_{\vr} \Bs(\Om), \quad \La^{\vr} \Bs (\Om) \quad \text{and} \quad \La_{\vr} \Fs (\Om) = \La^{\vr} \Fs (\Om)
\end{\eq}
with
\begin{\eq}   \label{5.84} 
s\in \real, \quad   0<p<\infty, \quad 0<q \le \infty,
\end{\eq}
and $-n <\vr <0$, the $\vr$--clan according to Definition~\ref{D2.11}(ii) now with respect to \Om. But this is not our topic. We are
interested here exclusively in the description of compact embeddings between these spaces measured in terms of entropy numbers.

Let $A$ and $B$ be two quasi--Banach spaces and let $U_A = \{a \in A: \ \|a \, | A \| \le 1 \}$ and $U_B = \{b \in B: \ \|b \, | B \| \le 1 \}$ be the related unit balls. Let $T: A \hra B$ be a linear and compact map of $A$ into $B$. Then the $k$th entropy number
$e_k (T)$, $k\in \nat$, is the infimum of all $\ve >0$ such that
\begin{\eq}  \label{5.85}
T(U_A) \subset \bigcup^{2^{k-1}}_{j=1} (b_j + \ve U_B ) \quad \text{for} \quad \{b_l \}^{2^{k-1}}_{l=1} \subset B.
\end{\eq}
Basic properties and references may be found \new{in the monographs \cite{CS,EE,Konig,Pie-s} (restricted to the case of Banach spaces), and \cite{ET96} for some extensions to quasi-Banach spaces}, see also \cite[Section 1.10, pp.\,55--58]{T06}.

\begin{proposition}   \label{P5.37}
Let $\Om$ be a bounded domain in \rn, $n\in \nat$. Let $0<p_1, p_2, q \le \infty$ and $s_1 \in \real$, $s_2 \in \real$.
\bli
\item
Then there is a continuous embedding
\begin{\eq}   \label{5.86}
\id_{\Om}: \quad B^{s_1}_{p_1,q} (\Om) \hra B^{s_2}_{p_2,q} (\Om)
\end{\eq}
if, and only if,
\begin{\eq}   \label{5.87}
s_1 - s_2 \ge n \cdot \max \Big( \frac{1}{p_1} - \frac{1}{p_2}, 0 \Big).
\end{\eq}
\item
The embedding $\id_{\Om}$ is compact if, and only if,
\begin{\eq}   \label{5.88}
s_1 - s_2 > n \cdot \max \Big( \frac{1}{p_1} - \frac{1}{p_2}, 0 \Big).
\end{\eq}
Furthermore,
\begin{\eq}   \label{5.89}
e_k (\id_{\Om}) \sim k^{- \frac{s_1 - s_2}{n}}, \qquad k\in \nat.
\end{\eq}
\eli
\end{proposition}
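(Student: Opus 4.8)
The statement to be proved is Proposition~\ref{P5.37}, the classical result on continuous and compact embeddings and entropy numbers for Besov spaces $B^s_{p,q}(\Om)$ on a bounded domain $\Om\subset\rn$. Since this is a well-known classical assertion, the proof is really a matter of assembling standard facts rather than inventing anything; I would present it as a guided reduction to known results.

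\textbf{Plan of proof.} First, I would reduce everything to the case where $q_1=q_2=q$ is already assumed in the statement, so no juggling of the fine index is needed. The continuous embedding assertion in part (i) is classical: for spaces on $\rn$ one has $B^{s_1}_{p_1,q}(\rn)\hra B^{s_2}_{p_2,q}(\rn)$ iff $s_1\ge s_2$ and $p_1\le p_2$, or more precisely iff $s_1-s_2\ge n(\frac1{p_1}-\frac1{p_2})^+$ together with the same $q$; restriction to a bounded domain $\Om$ only makes the scale of spaces larger and, crucially, gives the extra room coming from boundedness of $\Om$ (Hölder's inequality in the $L_p$-direction on a set of finite measure), so the condition becomes exactly \eqref{5.87}. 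For the necessity of \eqref{5.87} I would test with suitably normalized compactly supported atoms (or wavelets) supported in a small ball inside $\Om$ and scaled to frequency $2^j$: the norm in $B^{s_i}_{p_i,q}$ of a single such atom behaves like $2^{j(s_i-n/p_i)}$ in the relevant quasi-norm, and letting $j\to\infty$ forces $s_1-n/p_1\le s_2-n/p_2$; combined with the trivially necessary $s_1\ge s_2$ (use a fixed bump function dilated by $2^{-j}$) this gives \eqref{5.87}. All of this is in \cite{T06} (and \cite{ET96}); I would simply cite {\cite[Section 2.6]{T20}} or the classical references there.

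\textbf{Compactness and entropy numbers.} For part (ii), the equivalence of compactness of $\id_\Om$ with the strict inequality \eqref{5.88} is again classical: if equality holds in \eqref{5.87} the embedding is bounded but not compact (one produces a bounded non-precompact sequence of translated-and-dilated atoms exploiting the "limiting" nature of the embedding), while strict inequality yields compactness. The sharp two-sided estimate \eqref{5.89} for the entropy numbers, $e_k(\id_\Om)\sim k^{-(s_1-s_2)/n}$, is the Edmunds--Triebel result; the standard route is (a) a discretization step, reducing the embedding $B^{s_1}_{p_1,q}(\Om)\hra B^{s_2}_{p_2,q}(\Om)$ via atomic/wavelet decompositions and the fact that $\Om$ is bounded to an embedding of sequence spaces of the form $\ell_{q}(2^{j\delta}\ell_{p_1}^{2^{jn}})\hra \ell_q(\ell_{p_2}^{2^{jn}})$ with $\delta=s_1-s_2-n(\frac1{p_1}-\frac1{p_2})>0$, and (b) the known entropy estimates for identity maps between finite-dimensional $\ell_p$-spaces (Schütt's theorem) together with the general "block-diagonal" technique for summing contributions over dyadic levels $j$. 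I would invoke these building blocks from \cite{ET96} (and \cite{T06}) rather than reprove them, stating only that the exponent $-(s_1-s_2)/n$ emerges because the number of active coefficients at level $j$ is $\sim 2^{jn}$ and the decay between levels is governed by $\delta$.

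\textbf{Main obstacle.} The genuinely technical part — and the only place where I would expect to "grind" — is the sharp entropy-number estimate \eqref{5.89}, in particular the lower bound, which requires a careful choice of a well-separated system of $2^{k}$ functions realizing the claimed order; the upper bound follows more smoothly from the diagonal estimate plus Schütt's inequalities. However, since this is a textbook result, the honest proof here is a one- or two-line citation to {\cite[Section 2.6, pp.\,62--74]{T20}} and \cite{ET96}, noting that $B^s_{p,q}(\Om)$ for a bounded domain $\Om$ falls squarely within the scope of those references, and that the case of general $p_1,p_2,q\le\infty$ (including $p=\infty$ and $q=\infty$) is covered there as well. So my proof proposal is essentially: state the reduction, cite the classical continuous-embedding criterion for the "if" direction, give the atom-testing argument for the "only if" direction and for non-compactness in the limiting case, and then cite the Edmunds--Triebel entropy result for \eqref{5.89}, indicating the discretization + finite-dimensional $\ell_p$ scheme as the mechanism behind the exponent.
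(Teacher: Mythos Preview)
Your proposal is correct and matches the paper's treatment: the paper does not give a self-contained proof of Proposition~\ref{P5.37} but instead, in Remark~\ref{R5.38}, refers to \cite[Section 2.6.5]{T20}, \cite[Section 3.3]{ET96} and \cite[Theorem 1.97]{T06}, noting that everything reduces to the embedding \eqref{5.90} and hence to wavelet/sequence-space arguments --- exactly the discretization-plus-Sch\"utt mechanism you describe. One small slip: when testing with a level-$j$ atom you obtain $2^{j(s_2-n/p_2)}\le C\,2^{j(s_1-n/p_1)}$, so the necessary inequality is $s_1-n/p_1\ge s_2-n/p_2$, not $\le$ as you wrote.
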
 

\begin{remark}   \label{R5.38}
Detailed discussions  and references about embeddings of the spaces $\As (\Om)$ may be found in {\cite[Section 2.6.5, 
pp.\,69--72]{T20}}. The equivalence \eqref{5.89} for bounded $C^\infty$ domains $\Om$ goes back to \cite[Section 3.3, 
pp.\,105--118]{ET96} and the references given there. Its extension to arbitrary bounded domains $\Om$ in $\rn$ may be found in
\cite[Theorem 1.97, Remark 1.98, p.\,61]{T06} and related references. It is based on the observation that the quality of the bounded
domain $\Om$ does not play any r\^ole. Everything can be reduced to the embedding
\begin{\eq} \label{5.90}
\big\{ f \in B^{s_1}_{p_1,q} (\rn) : \ \supp f \subset [0,1]^n \big\} \hra B^{s_2}_{p_2,q} (\rn).
\end{\eq}
This applies also to the counterparts of the above assertions for the spaces in \eqref{5.83}, \eqref{5.84} as discussed below. 
Nowadays everything can be reduced to wavelet representations for these spaces and mapping properties between related sequence
spaces. In this way one can also justify immediately the only--if--parts of the above proposition. They are more or less obvious but
not explicitly mentioned in the quoted literature. 
\end{remark}

The above proposition can be extended from $\Bs (\Om)$ to the spaces $\La_{\vr} \Bs (\Om)$ and $\La^{\vr} \Bs (\Om)$ with $-n <\vr
<0$ in \eqref{5.83}, \eqref{5.84} as follows.

\new{
\begin{theorem}  \label{T5.39}
Let $\Om$ be a bounded domain {\em (=} open set{\em )} in \rn, $n\in \nat$. Let $-n < \vr <0$, $0<q \le \infty$,
\begin{\eq}   \label{5.91}
0<p_1, p_2 <\infty \quad \text{and} \quad s_1 \in \real, \quad s_2 \in \real.
\end{\eq}
\bli
\item
Then there is a continuous embedding
\begin{\eq}   \label{5.92}
\id_{\Om}: \quad  \La_{\vr} A^{s_1}_{p_1,q} (\Om) \hra \La_{\vr} A^{s_2}_{p_2,q} (\Om) 
\end{\eq}
if, and only if,
\begin{\eq}   \label{5.93}
s_1 - s_2 \ge  |\vr| \cdot\max \Big( \frac{1}{p_1} - \frac{1}{p_2}, 0 \Big).
\end{\eq}
The same is true for the embedding
\begin{\eq}   \label{DDH-1}
\id_{\Om}: \quad  \La^{\vr} F^{s_1}_{p_1,q} (\Om) \hra \La^{\vr} F^{s_2}_{p_2,q} (\Om). 
\end{\eq}
\item The embedding
  \begin{equation}
 \new{   \id_{\Om} : \rhoAx{s_1}{p_1}{q}(\Omega) \hookrightarrow \rhoAx{s_2}{p_2}{q}(\Omega)}
 \end{equation}
    is compact if, and only if,
\begin{\eq}   \label{5.94}
s_1 - s_2 > |\vr| \cdot \max \Big( \frac{1}{p_1} - \frac{1}{p_2}, 0 \Big).
\end{\eq}
Furthermore,
\begin{\eq}   \label{5.95}
e_k (\id_{\Om} ) \sim k^{- \frac{s_1 - s_2}{n}}, \qquad k\in \nat.
\end{\eq}
\eli
\end{theorem}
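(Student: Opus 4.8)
The plan is to reduce everything, as indicated in Remark~\ref{R5.38}, to wavelet isomorphisms and mapping properties between the associated sequence spaces, exactly as for the classical case in Proposition~\ref{P5.37}. First I would fix the wavelet characterisations: for $\La^{\vr}\Bs(\rn)$ and $\La_{\vr}\Bs(\rn)$ these are Proposition~\ref{P3.1}, with the sequence spaces $\La^{\vr}b^s_{p,q}(\rn)$, $\La_{\vr}b^s_{p,q}(\rn)$ from \eqref{6.16}, \eqref{6.17}; for the coinciding $F$-scale $\La_{\vr}\Fs(\rn)=\La^{\vr}\Fs(\rn)$ one uses the corresponding (referenced) $F$-wavelet characterisation. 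As in \eqref{5.90}, the quality of $\Om$ is irrelevant: bounded $\Om$ can be squeezed between two cubes, so it suffices to treat $\id$ acting on those $f$ whose wavelet coefficients are supported (in the obvious lattice sense) in a fixed cube $Q_{J_0,M_0}$. This turns the embedding into a diagonal-type operator between sequence spaces indexed by $j\ge J_0$ and $m$ with $Q_{j,m}\subset Q_{J_0,M_0}$, so roughly $\sim 2^{(n-1)(j-J_0)}\cdot 2^{j}$, i.e.\ $\sim 2^{nj}$ coefficients at level $j$; the supremum over $J,M$ in \eqref{6.16}, \eqref{6.17} then effectively collapses, and one is left with a weighted $\ell_{p}(\ell_{q})$-type space where the weight $2^{j(s-\frac{n}{p})}$ combines with $2^{\frac{J_0}{p}(n+\vr)}$.

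For part (i), the key computation is to see that in this restricted (compactly supported) setting the relevant ``effective dimension'' governing the Sobolev-type embedding is $|\vr|$ rather than $n$. Concretely, when one compares the $\La_{\vr}A^{s_1}_{p_1,q}$-norm with the $\La_{\vr}A^{s_2}_{p_2,q}$-norm of a single block of $\sim 2^{nj}$ coefficients living in one cube $Q_{J_0,M_0}$ of side $2^{-J_0}$, the worst case is a single spike (for the ``$\ge$'' direction of \eqref{5.93}) versus a spread-out configuration. Balancing $2^{j(s_1-\frac{n}{p_1})}2^{\frac{J_0}{p_1}(n+\vr)}$ against $2^{j(s_2-\frac{n}{p_2})}2^{\frac{J_0}{p_2}(n+\vr)}$ on configurations where $j$ and $J_0$ are tied together through the ``one cube'' constraint produces precisely the exponent $|\vr|(\frac1{p_1}-\frac1{p_2})$, because $n-(n+\vr)=|\vr|$. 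This is the Slope-$n$-Rule manifesting itself, and the necessity of \eqref{5.93} follows by testing on such extremal coefficient sequences (a single wavelet, respectively a sequence equidistributed over one dyadic cube at a fine scale), which are the same test functions that establish sharpness in the classical case. The coinciding $F$-case \eqref{DDH-1} is handled by the same argument applied to the $F$-sequence spaces, or alternatively by the sandwiching \eqref{2.54} together with the $B$-result and the elementary fact that the $B$-conditions on either side agree.

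For part (ii), compactness: the ``only if'' direction is immediate since equality in \eqref{5.93} gives a non-compact embedding already in the classical case (restrict to a single dyadic subcube and a translation-invariant family of spikes). For the ``if'' direction and the two-sided entropy estimate \eqref{5.95}, the plan is to invoke the known entropy-number estimates for the restricted sequence-space embeddings — these are precisely of the Edmunds--Triebel type used for \eqref{5.89}, and the Morrey modification only changes the smoothness gap seen by the weights from $s_1-s_2-n(\frac1{p_1}-\frac1{p_2})$ to $s_1-s_2-|\vr|(\frac1{p_1}-\frac1{p_2})$. The striking point, already familiar from the $\La^{\vr}$-literature, is that the \emph{rate} in \eqref{5.95} is still $k^{-(s_1-s_2)/n}$, with $n$ and not $|\vr|$ in the exponent: the number of relevant coefficients at level $j$ is still $\sim 2^{nj}$ (the cube is $n$-dimensional), so the counting that drives the entropy asymptotics is unchanged; only the threshold for compactness moves. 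I would assemble the upper bound from the standard dyadic decomposition of $\id_{\Om}$ into level-blocks plus the multiplicativity and additivity of entropy numbers, and the lower bound by a volume argument on a single well-chosen block, exactly as in \cite[Section~3.3]{ET96} and \cite[Theorem~1.97]{T06}.

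The main obstacle I anticipate is bookkeeping rather than conceptual: one has to be careful that the supremum over $(J,M)$ in the Morrey sequence norms \eqref{6.16}, \eqref{6.17} genuinely reduces, after localisation to a fixed cube, to a manageable expression — in particular that for $\La_{\vr}$ (the $\sup$ inside the $\ell_q$-sum) versus $\La^{\vr}$ (the $\sup$ outside) one obtains the \emph{same} embedding and entropy behaviour, which is plausible precisely because on a single cube the outer and inner positions of the supremum coincide. Establishing this reduction cleanly, and quoting the correct form of the $F$-wavelet characterisation to cover \eqref{DDH-1} and the general $\rhoAx{s_1}{p_1}{q}$ statement in part (ii) uniformly (using \eqref{2.42}), is where the real work lies; once the sequence-space picture is in place, the rest is the classical Edmunds--Triebel machinery with $n+\vr$ replacing $n$ in the weights.
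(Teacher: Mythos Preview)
The paper's ``proof'' of this theorem is purely by citation: the continuity and compactness for $\La_{\vr}\As(\Om)$ are taken from \cite{HaS13,HaS14}, the entropy asymptotic \eqref{5.95} from \cite{HaS20}, and the $\La^{\vr}$ case from \cite{GHS21,GHS21b}. Your proposal is, in spirit, a sketch of what those papers actually do --- wavelet reduction to sequence spaces, localisation, and then Edmunds--Triebel type counting --- so the overall route is the right one. But there is a genuine conceptual slip that would derail the argument as written.

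You say that after localising to a fixed cube $Q_{J_0,M_0}$, ``the supremum over $J,M$ in \eqref{6.16}, \eqref{6.17} then effectively collapses'', and later that ``on a single cube the outer and inner positions of the supremum coincide''. Neither is true. Even for coefficient sequences supported inside one fixed cube, the Morrey norm still involves a genuine supremum over \emph{all} dyadic subcubes $Q_{J,M}\subset Q_{J_0,M_0}$ at all scales $J\ge J_0$; this is exactly what distinguishes $\La_{\vr}b^s_{p,q}$ and $\La^{\vr}b^s_{p,q}$ from the classical $b^s_{p,q}$, and it does not disappear under localisation. In particular, $\La_{\vr}\Bs(\Om)$ and $\La^{\vr}\Bs(\Om)$ remain different spaces on bounded $\Om$ when $q<\infty$ (this is the strictness in \eqref{2.56}, which is a local phenomenon), so the inner and outer suprema do \emph{not} coincide. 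That the two scales nonetheless share the same continuity/compactness threshold and the same entropy rate is a theorem, not a consequence of the norms matching; the cited papers prove it by separate (though parallel) sequence-space estimates for each scale.

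Your extremal-sequence computation for necessity is correct --- a single wavelet at level $j$ has $\vr$-norm $\sim 2^{j(s-|\vr|/p)}$, which is where the $|\vr|$ enters --- and your explanation of why the entropy exponent keeps $n$ (the coefficient count at level $j$ is still $\sim 2^{nj}$) is exactly right. What needs fixing is the sufficiency/upper-bound side: you must keep the Morrey supremum alive throughout and control the sequence-space embedding \emph{with} that supremum present, rather than arguing it away. This is the technical heart of \cite{HaS20,GHS21}, and it is not a bookkeeping matter.
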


\begin{proof}
The assertions about the continuity and the compactness of the embedding 
$\id_{\Om}$ in \eqref{5.92} in case of $\rhoAs(\Omega)=\La_{\vr}\As(\Omega)$
are covered by \cite[Theorems 3.1, 4.1, pp.\,126, 135]{HaS13} (when $A=B$) and \cite[Theorem~5.2]{HaS14} (when $A=F$), always reformulated according to \eqref{2.24}, \eqref{2.25}. The asymptotic \eqref{5.95} for spaces  $\rhoAs(\Omega)=\La_{\vr}\As(\Omega)$ can be found in  \cite[Case (a), Theorem 4.2, pp.\,3, 19--20]{HaS20}. The compactness and entropy number result for $\rhoAs(\Omega)=\La^{\vr}\As(\Omega)$ can be found in \cite[Theorems 3.2, 4.1]{GHS21}. Finally, the characterisation for the continuity of $\id_\Omega$ in case of $\La^{\vr}\Fs(\Omega)$ is a special case of \cite[Corollary~4.14]{GHS21b}.
\end{proof}
}

\begin{remark}   \label{R5.40}
 \new{As far as the $B$-counterpart of \eqref{DDH-1} is concerned, i.e., 
\begin{\eq}  
\id_{\Om}: \quad  \La^{\vr} B^{s_1}_{p_1,q} (\Om) \hra \La^{\vr} B^{s_2}_{p_2,q} (\Om), 
\end{\eq}
we have no final answer yet. More precisely, there is a gap concerning the limiting situation of \eqref{5.93} when $p_1<p_2$, that is,
\begin{\eq}   \label{DDH-3}
s_1-s_2 = |\vr| \Big( \frac{1}{p_1} - \frac{1}{p_2} \Big) > 0.
\end{\eq}
Then, dealing with different parameters $q_1$ and $q_2$, we could prove in \cite[Theorem~4.9]{GHS21b} that
\begin{\eq}   \label{DDH-2}
\id_{\Om}:  \La^{\vr} \Be (\Om) \hra \La^{\vr} \Bz (\Om) 
\end{\eq}
is continuous in that case, if $q_1 \leq \frac{p_1}{p_2}q_2$ which excludes $q_1=q_2$. Conversely, the continuity of $\id_\Omega$ given by \eqref{DDH-2} in the limiting situation \eqref{DDH-3} implies $q_1\leq q_2$, as desired. So apart from this small gap in this particular limiting case \eqref{DDH-3} for spaces of type $\La^{\vr}\Bs(\Omega)$, there is good reason to assume that in all settings according to Definition~\ref{D2.11}, 
\[
\id_{\Om} : \rhoAx{s_1}{p_1}{q}(\Omega) \hookrightarrow \rhoAx{s_2}{p_2}{q}(\Omega)
\]
is continuous  if, and only if, \eqref{5.93} is satisfied. }
  \ignore{
By elementary embeddings based on \eqref{2.44} and their restrictions to $\Om$ one can extend \eqref{5.95} to
\begin{\eq}   \label{5.97}
\id_{\Om}: \quad \La^{\vr} A^{s_1}_{p_1,q_1} (\Om) \hra \La^{\vr} A^{s_2}_{p_2,q_2} (\Om)
\end{\eq}
and 
\begin{\eq}   \label{5.98}
\id_{\Om}: \quad \La_{\vr} B^{s_1}_{p_1,q_1} (\Om) \hra \La_{\vr} B^{s_2}_{p_2,q_2} (\Om)
\end{\eq}
with $-n < \vr <0$, \eqref{5.91}, \eqref{5.94}
and $0<q_1, q_2 \le \infty$.}
  
  The above theorem \new{(together with our preceding remarks)} compared with Proposition~\ref{P5.37}
shows that continuity and compactness of $\id_{\Om}$
within a $\vr$--clan obeys the Slope--$n$--Rule, recall \new{Slope Rules~\ref{slope_rules}(ii)}.
But it is very remarkable  that \eqref{5.95} = \eqref{5.89} is the same for all $\vr$--clans, $-n <\vr <0$ and also the $n$--clan.
 The assertions in \new{the papers
\cite{HaS13,HaS20,GHS21,GHS21b}} are proved in the context of the spaces on the right--hand side of \eqref{2.24}, covering also continuous
and compact embeddings between spaces belonging to different $\vr$--clans. There are also some estimates of the
entropy numbers of corresponding compact embeddings, as well as their approximation numbers, cf. \cite{HaS19,GHS21}. They are \new{in some sense remarkable, but} less final and \new{more technical} 
than in the above cases within a fixed
$\vr$--clan. Crossing the border between different clans is also in the theory of function spaces a brave undertaking, we return to this topic in Section~\ref{S6.3} below.
\end{remark}

\noindent
\begin{minipage}{\textwidth}
  ~\hspace*{\fill}\input{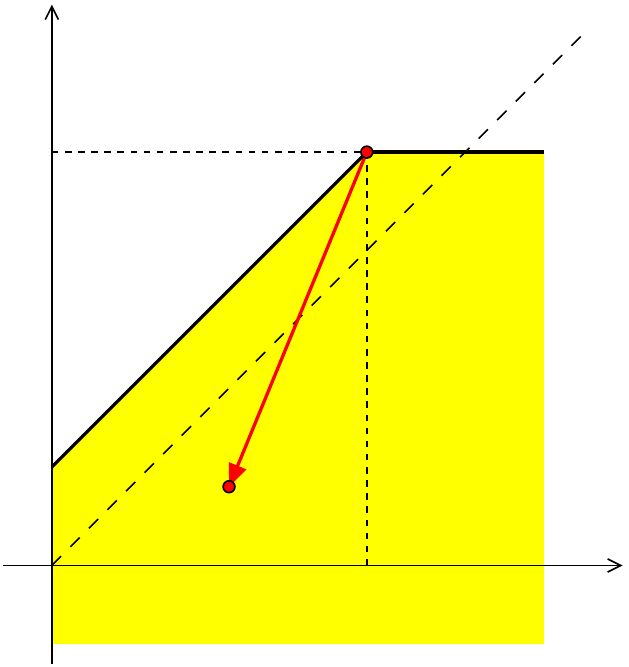_t}\hspace*{\fill}~\\[0ex]    
~\hspace*{\fill}\unterbild{fig-8}
\end{minipage}

\subsection{Growth envelope functions}
\new{
As mentioned in the beginning, we shall prove some specific unboundedness feature of function spaces of Morrey type. This will be done in the context of growth envelope functions. First we briefly recall this concept, before we come to our main results.

If $f$ is an extended complex-valued measurable function on
$\rn$ which is finite a.e., then the decreasing
rearrangement of $f$ is the function defined on $[0,\infty)$ by
\begin{equation}  \label{f*}
f^{*}(t):= \inf \{\sigma > 0: \mu (f,\sigma)\leq t\},\quad t \geq
0,
\end{equation}
with $\mu (f,\sigma)$ being the distribution function given by
$$
\mu (f,\sigma):= \big|\{x\in\rn:|f(x)|>\sigma\}\big|,
\quad \sigma \geq 0.
$$
As usual, the convention $\inf \emptyset =\infty$ is assumed. 

\begin{definition}
\label{defi_eg}
Let $X=X(\rn)$ be some quasi-normed function space on $\rn$. 
The {\em growth envelope function} $\egX : (0,\infty) \rightarrow [0,\infty]$ of $\ X\ $
is defined by
\[
\egX(t) := \sup_{\|f\mid X\|\leq
1} f^*(t), \qquad t\in (0,\infty).
\]
\end{definition}

This concept was first introduced and  studied in \cite[Chapter~2]{T01} and
\cite{HaHabil}; see also \cite{Ha-crc}. In a light abuse of notation we shall not distinguish between representative and equivalence class (of growth envelope functions) and stick at the notation introduced above. We refer to the above references for further details and only recall that  $X(\rn)\hookrightarrow L_{\infty}(\rn)$ holds if, and only if, $\egX$ is bounded. This is the case we are not further interested in. Let us also mention that in rearrangement-invariant Banach function spaces $X(\rn)$ there exists the notion of the fundamental function $\fundfn{X}$, that is, for $t>0$, and $ A_t
\subset \rn $ with $|A_t| = t$, then  $\ 
\fundfn{X}(t) = \big\| \chi_{A_t} \big| X\big\| $. In
\cite[Sect.~2.3]{Ha-crc} it is proved that in this case
\[\egX(t) \sim  \frac{1}{\fundfn{X}(t)}, \quad t>0.\]

\begin{example}\label{exm-Lpq}
Basic examples for spaces $X$ are the well-known Lorentz spaces $L_{p,q}(\rn)$; for
  definitions and further details we refer to \cite[Ch.~4, 
  Defs.~4.1]{BS}, for instance. This scale represents a natural refinement of the scale of Lebesgue spaces. The following result is proved in \cite[Sect. 2.2]{Ha-crc}. Let $0<p<\infty$, $0<q\leq \infty$. Then
\begin{equation}
\egv{L_{p,q}(\rn)}(t) \;\sim\; t^{-\frac1p} \; ,\qquad t >0.
\label{Lp-global}
\end{equation}
\label{Prop-Lpq}
\end{example}

  In general, if we consider spaces of distributions, the concept of growth envelopes makes sense only for spaces $ X(\rn) \subset \Lloc(\rn) $, i.e., 
when we deal with locally integrable functions. For such spaces  we shall thus assume $X(\rn) \subset \Lloc(\rn)$ and, moreover, $X(\rn)
\not\hookrightarrow L_\infty(\rn)$, as we are interested in questions of unboundedness.

\begin{example}\label{envg-A}
Let $X=\As(\rn)$ and recall Propositions~\ref{P5.1} and \ref{P5.5}. Then the results for growth envelope functions in spaces $\As(\rn)$ for $0<q\leq \infty$, $0<p < \infty$,  and $s> \sigma_p^n$ read as follows.
\begin{itemize}
\item[{\bfseries (i)}]
Assume $s<\frac{n}{p}$. Then
\begin{equation}\label{eg-A}
\egv{\As(\rn)}(t) \ \sim \  t^{- \frac1p + \frac{s}{n}}\ ,\quad t\to 0. 
\end{equation}
\item[{\bfseries (ii)}]
Let $ 1<q\leq \infty  $ and $  \frac1q + \frac{1}{q'} =1$, as usual. Then
\begin{equation}
\egv{B^{n/p}_{p,q}(\rn)}(t) \ \sim \ \left|\log t\right|^\frac{1}{q'}, \quad t\to 0.
\end{equation}
\item[{\bfseries (iii)}]
Let $ 1<p< \infty  $ and $  \frac1p + \frac{1}{p'} =1$, as usual. Then
\begin{equation}
\egv{F^{n/p}_{p,q}(\rn)}(t) \ \sim \ \left|\log t\right|^\frac{1}{p'}, \quad t\to 0.
\end{equation}
\end{itemize}
For proofs and further discussion  we refer to \cite[Thms.~8.1, 8.16]{Ha-crc}, \cite[Sects. 13, 15]{T01}. Results for the case $s=\sigma^n_p$ can be found in \cite[Props.~8.12, 8.14, 8.24]{Ha-crc}, \cite{vybiral-4} and \cite{Se-Tr}.
\end{example}

In contrast to Examples~\ref{exm-Lpq}  and \ref{envg-A} it turns out that we have
\begin{equation}\label{eg_M_rn}
\egv{\La^\vr_p(\rn)}(t)= \infty, \quad t>0,\quad -n<\vr<0,
\end{equation}
for the Morrey spaces $\La^\vr_p(\rn)$ according to Definition~\ref{D2.3} with 
 $0<p<\infty$ and $-n<\vr<0$, recall notation \eqref{2.20}, cf. \cite[Thm.~3.7]{Ha-SM-3}, in sharp contrast to
\[
\egv{\La^{-n}_p(\rn)}(t) \sim t^{-\frac1p}, \quad t>0,\quad \vr=-n, 
\]
using \eqref{2.18}. 
Similarly, whenever $\rhoAs(\rn)\subset \Lloc(\rn)$ and $\rhoAs(\rn)\not\hra L_\infty(\rn)$, we proved in \cite[Thms.~4.9, 4.11, Cor.~4.14]{HMS16} (with some forerunner in \cite{Ha-SM-3}) that 
\begin{equation}\label{eg_MA_rn}
\egv{\rhoAs(\rn)}(t)= \infty,\quad t>0, \quad -n<\vr<0.
\end{equation}
This is in good agreement with the earlier result in \cite[Sect.~13.7]{T01} that
\[
\egv{\bmo(\rn)}(t)=\infty, \quad t>0,
\]
recall the coincidence $\bmo(\rn)=\La^0 F^0_{p,2}(\rn)=F^0_{\infty,2}(\rn)$ in \eqref{2.35} and Remark~\ref{bmo-def}. However, for a bounded domain $\Omega$ one has 
\[
\egv{\bmo(\Omega)}(t) \sim |\log t|, \quad t\to 0,
\]
see \cite[Sect.~13.7]{T01}. So these assertions are connected with the underlying $\rn$.

In \cite{HSS-morrey,HMSS-morrey} we studied the growth envelope function for spaces on bounded domains, e.g.
\begin{equation}\label{la0}
\egv{\La^\vr_p(\Omega)}(t) \sim t^{-\frac1p}, \quad t\to 0, \quad -n\leq \vr<0,
\end{equation}
cf. \cite{HSS-morrey} (where some care is needed to define the spaces $\La^\vr_p(\Omega)$, also in dependence on the quality of $\Omega$). Recall that we have parallel results to Theorem~\ref{T5.3} and \ref{T5.7} now. So we are left to consider the strip
\[ \left\{\Big(\frac1p,s\Big): 0<p<\infty, \sigma_p^{|\vr|} \leq s\leq \frac{|\vr|}{p}\right\}
\]
as shown below, recall Figures~\ref{fig-1a} on p.~\pageref{fig-1a} and \ref{fig-1b} on p.~\pageref{fig-1b}.\\
}

\noindent
\begin{minipage}{\textwidth}
  ~\hspace*{\fill}\input{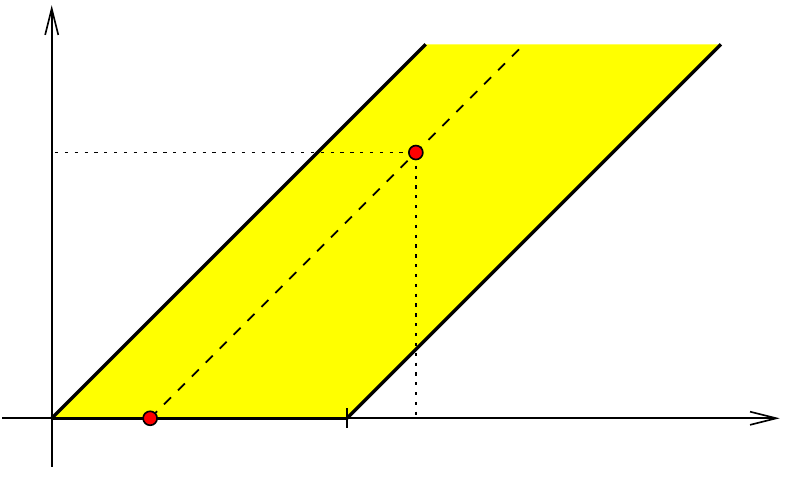_t}\hspace*{\fill}~\\[1ex]    
~\hspace*{\fill}\unterbild{fig-15}
\end{minipage}
\smallskip~

\noindent
\new{
For convenience, we restrict ourselves to the so-called {\em sub-critical case} of spaces $\rhoAs(\Omega)$ omitting the limiting situations $s=\sigma_p^{|\vr|}$ and $s=\frac{|\vr|}{p}$ here. Then we could prove in \cite{HMSS-morrey} the following.

\begin{theorem}\label{eg-rhoAs}
  Let $-n< \vr<0$, $0<p<\infty$, $0<q\leq\infty$, $\sigma_p^{|\vr|}<s<\frac{|\vr|}{p}$, $\Omega\subset\rn$ a bounded  smooth domain. Then
\begin{equation}\label{lar}
\egv{\rhoAs(\Omega)}(t) \sim t^{-\frac 1  p  + \frac{s}{|\vr|}},\quad t\to 0.
\end{equation}
\end{theorem}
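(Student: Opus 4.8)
The plan is to reduce the growth-envelope estimate \eqref{lar} to the interplay between the embedding structure within the $\vr$-clan on bounded domains (Theorem~\ref{T5.39}) and the behaviour of the basic Morrey spaces $\La^\vr_r(\Om)$ on bounded domains, whose growth-envelope function is already known from \eqref{la0}. The point of the sub-critical restriction $\sigma^{|\vr|}_p < s < |\vr|/p$ is precisely that it lets us sandwich $\rhoAs(\Om)$ between two function spaces whose growth envelopes are governed by a simple power of $t$, with the exponents matching up at the target rate $-\frac1p + \frac{s}{|\vr|}$.

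First I would establish the upper bound $\egv{\rhoAs(\Om)}(t) \le c\, t^{-\frac1p + \frac{s}{|\vr|}}$. For this, pick $r$ with $1/r = 1/p - s/|\vr|$, so that $0 < r < \infty$ (this uses $s < |\vr|/p$, i.e. $1/r > 0$, and $s > \sigma^{|\vr|}_p$, which — after a short computation — forces $r \ge 1$ or at least keeps $r$ in the admissible range and guarantees $\rhoAs(\Om) \subset L_1^{\loc}$ via Theorem~\ref{T5.7}). By \eqref{5.93} of Theorem~\ref{T5.39} with $s_2 = 0$, $p_2 = r$, $s_1 = s$, $p_1 = p$, the condition $s - 0 \ge |\vr|(1/p - 1/r) = s$ holds with equality, so one gets the continuous embedding $\rhoAs(\Om) \hra \La^\vr_r(\Om)$ (for the $B$- and $F$-families this is exactly the content of part~(i) of that theorem, and for the third family it follows by \eqref{2.53}, \eqref{2.54} combined with monotonicity in $q$). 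Since growth-envelope functions are monotone under continuous embeddings, $\egv{\rhoAs(\Om)}(t) \le c\, \egv{\La^\vr_r(\Om)}(t) \sim c\, t^{-1/r} = c\, t^{-\frac1p + \frac{s}{|\vr|}}$ by \eqref{la0}, which is the desired upper estimate as $t \to 0$.

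For the lower bound I would exhibit, for each small $t>0$, a test function $f_t \in \rhoAs(\Om)$ with $\|f_t \mid \rhoAs(\Om)\| \le C$ (uniformly in $t$) and $f_t^*(t) \ge c\, t^{-\frac1p + \frac{s}{|\vr|}}$. The natural candidates are suitably normalised atoms or single wavelet building blocks supported in a cube $Q_\tau \subset \Om$ of side length $\tau \sim t^{1/n}$: using the wavelet characterisation of Proposition~\ref{P3.1} (or the atomic decomposition referenced in the proof of Theorem~\ref{T4.12}) one computes the $\rhoAs$-quasi-norm of a normalised bump at scale $\tau$ and reads off the $L_\infty$-height it is allowed to have, which scales like $\tau^{-(n/p')}$-type factors; choosing the normalisation so the quasi-norm is $1$ gives a sup-value $\sim t^{-\frac1p + \frac{s}{|\vr|}}$, and since $f_t$ lives on a set of measure $\sim t$, its rearrangement at $t$ is comparable to that height. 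One should do this carefully for one representative family (say $\La^\vr B^s_{p,q}(\Om)$) and transfer to the others by the embeddings \eqref{2.53}--\eqref{2.55}, \eqref{2.58} and the identification \eqref{2.42}, exactly as in the proofs of Theorems~\ref{T5.24} and \ref{T5.29}; alternatively one can invoke that the fundamental-function heuristic $\egX(t) \sim 1/\fundfn X(t)$ together with the knowledge of $\chi_Q$-norms from Theorem~\ref{T5.24} pins down the rate.

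The main obstacle is the lower bound in the sub-critical strip: one must verify that the local extremal configuration (a single normalised atom at the scale $t^{1/n}$) is genuinely extremal for the rearrangement at level $t$, i.e. that no cleverer spreading of mass over $\Om$ beats the power $t^{-\frac1p + \frac{s}{|\vr|}}$. This is where the hypothesis $s < |\vr|/p$ (sub-critical, so the space is genuinely unbounded and the envelope is a negative power rather than a logarithm) and $s > \sigma^{|\vr|}_p$ (so that we stay safely inside $L_1^{\loc}$ and the atomic calculus with no moment conditions applies) are both essential; the limiting lines $s = \sigma^{|\vr|}_p$ and $s = |\vr|/p$ are deliberately excluded because there the envelope picks up logarithmic corrections, as the classical Example~\ref{envg-A}(ii),(iii) already signals. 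I would therefore phrase the lower-bound step as a direct quasi-norm computation on wavelet/atomic building blocks, citing \cite{HMSS-morrey} for the technical optimality argument, and note that the matching of exponents is automatic from the choice $1/r = 1/p - s/|\vr|$, which is exactly the Slope-$n$-Rule replacement of $n$ by $|\vr|$ in the classical formula \eqref{eg-A}.
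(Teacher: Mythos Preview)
The paper does not give its own argument here but defers to \cite{HMSS-morrey}. Your upper-bound strategy matches that reference: the limiting embedding $\rhoAs(\Omega)\hra \La^\vr_r(\Omega)$ with $\tfrac1r=\tfrac1p-\tfrac{s}{|\vr|}$ (via Theorem~\ref{T5.39} together with the identification $\La_\vr F^0_{r,2}=\La^\vr_r$ for $r>1$, recalled after \eqref{elem}) combined with \eqref{la0} gives $\egv{\rhoAs(\Omega)}(t)\lesssim t^{-1/r}$.

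The lower bound, however, has a genuine gap: a single atom at scale $2^{-j}\sim t^{1/n}$ does \emph{not} reach the rate $t^{-1/r}$. From \eqref{3.10} a single wavelet at level $j$ with coefficient $1$ has $\La^\vr\Bs$-norm $\sim 2^{j(s-|\vr|/p)}$ (the supremum over $J$ is attained at $J=j$ since $n+\vr>0$); after normalisation the height is $2^{j(|\vr|/p-s)}$ on a set of measure $2^{-jn}$, so $f^*(t)\sim t^{-(|\vr|/p-s)/n}$. Since
\[
\Big(\tfrac1p-\tfrac{s}{|\vr|}\Big)-\tfrac{1}{n}\Big(\tfrac{|\vr|}{p}-s\Big)=\tfrac{n-|\vr|}{n}\Big(\tfrac1p-\tfrac{s}{|\vr|}\Big)>0,
\]
this exponent is strictly too small. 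The Morrey norm is a local supremum, so one may place $N\sim 2^{j(n-|\vr|)}$ well-separated atoms at level $j$ inside a unit cube without increasing the $\rhoBs$-norm; the total support then has measure $N\cdot 2^{-jn}=2^{-j|\vr|}$, and taking $t=2^{-j|\vr|}$ gives $f^*(t)\sim t^{-(|\vr|/p-s)/|\vr|}=t^{-1/r}$. It is precisely this lattice construction (not a single bump) that is carried out in \cite{HMSS-morrey}. Your fundamental-function alternative via Theorem~\ref{T5.24} does not rescue the argument either: that theorem only decides membership of $\chi_Q$, not its norm, and in any case the relation $\egX\sim 1/\fundfn{X}$ is valid only for rearrangement-invariant spaces, which Morrey smoothness spaces are not.
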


We refer to \cite[Proposition~4.1, Corollary~4.2]{HMSS-morrey}. There one can also find further results on the borderline cases $s=\sigma_p^{|\vr|}$ and $s=\frac{|\vr|}{p}$.

Let us denote the number $r\in (1,\infty)$ given by $\frac1r=\frac1p-\frac{s}{|\vr|}$, when $(\frac1p,s)$ belongs to the above strip. Then our results \eqref{la0} and \eqref{lar} yield that
\begin{equation}
  \egv{\La^\vr_r(\Omega)}(t) \sim \egv{\rhoAs(\Omega)}(t) \sim t^{-\frac 1  p  + \frac{s}{|\vr|}} = t^{-\frac1r}, \quad t\to 0,\quad -n< \vr<0,
\end{equation}
that is, all function spaces $\rhoAs(\Omega)$ depicted on the line $s=|\vr|(\frac1p-\frac1r)>0$ with foot point $\frac1r$ possess the same growth envelope function -- and in that sense the same `quality' of unboundedness -- like the space $\La^\vr_r(\Omega)$. This reflects exactly the outcome in case of spaces $\As(\rn)$ and $L_r(\rn)$, that is, when $\vr=-n$, as described in \eqref{eg-A} which remains valid if one replaces $\rn$ by a bounded domain $\Omega$. In other words, this result represents another fine incidence of the Slope--$n$--Rule as formulated in \new{Slope Rules~\ref{slope_rules}(ii)}.

\begin{remark}
  In the monographs \cite{Ha-crc,T01} as well as ion the above mentioned papers \cite{HSS-morrey,HMSS-morrey} we studied, in addition, the so-called {\em growth envelope} $\envg(X) = (\egX(t), \uGx)$, consisting of the growth envelope functions $\egX(t)$, together with some fine index $\uGx\in (0,\infty]$, refining the characterisation. In case of $L_{p,q}(\rn)$ the result reads, for instance, as $\envg(L_{p,q}(\rn))= (t^{-\frac1p}, q)$, recall Example~\ref{exm-Lpq}, and in case  of spaces $\As(\rn)$ we have usually $\uGindex{\Bs(\rn)}=q$ and $\uGindex{\Fs(\rn)}=p$; we refer to the above mentioned literature for further details.
  \end{remark}
}

  \subsection{Crossing borders}\label{S6.3}

  As already announced, our idea is to leave the situation of (only) one $\vr$-clan and ask for new phenomena then. But first we would like to complement the sharp embeddings in the Sections~\ref{S5.1}, \ref{S5.2} as follows.

\new{
\begin{theorem}   \label{T6.5}
  Let $n\in \nat$ and $-n <\vr <0$. Let $0<p_1,p_2<\infty$, $0<q_1, q_2 \le \infty$ and $s_1 \in \real$, $s_2 \in \real$.
\bli
\item[{\upshape\bfseries (i)}]
Then there is a continuous embedding
\begin{\eq}   \label{6.33}
\id_{\rn}: \rhoBe(\rn) \hra \rhoBz(\rn)
\end{\eq}
if, and only if,
\begin{\eq}   \label{6.34}
p_2 \ge p_1 \quad \text{and} \quad
\begin{cases}
\text{either} & s_1 - \frac{|\vr|}{p_1} > s_2 - \frac{|\vr|}{p_2}, \quad 0<q_1, q_2 \le \infty, \\
\text{or} & s_1 - \frac{|\vr|}{p_1} = s_2 - \frac{|\vr|}{p_2}, \quad 0<q_1\le q_2.
\end{cases}
\end{\eq}
\item[{\upshape\bfseries (ii)}]
Then there is a continuous embedding
\begin{\eq}   
  \id_{\rn}: \rhoFe(\rn) \hra \rhoFz(\rn)
\end{\eq}
if, and only if,
\begin{\eq}   
p_2 \ge p_1 \quad \text{and} \quad
\begin{cases}
\text{either} & s_1 - \frac{|\vr|}{p_1} > s_2 - \frac{|\vr|}{p_2}, \quad 0<q_1, q_2 \le \infty, \\
\text{or} & s_1 - \frac{|\vr|}{p_1} = s_2 - \frac{|\vr|}{p_2}, \quad p_1<p_2, \quad 0<q_1, q_2 \le \infty, \\
\text{or} & s_1=s_2, \quad p_1=p_2,\quad  0<q_1\le q_2.
\end{cases}
\end{\eq}
\ignore{
\item[{\upshape\bfseries (iii)}]
Then
\begin{\eq}   \label{DDH-1}
\La^{\vr} B^{s_1}_{p_1,q_1} (\rn) \hra \La^{\vr} B^{s_2}_{p_2,q_2} (\rn)
\end{\eq}
if, and only if,
\begin{\eq}   
p_2 \ge p_1 \quad \text{and} \quad
\begin{cases}
\text{either} & s_1 - \frac{|\vr|}{p_1} > s_2 - \frac{|\vr|}{p_2}, \quad 0<q_1, q_2 \le \infty, \\
 \text{or} & s_1 - \frac{|\vr|}{p_1} = s_2 - \frac{|\vr|}{p_2}, \quad \text{and}\quad  q_1\leq q_2.
  \end{cases}
\end{\eq}
\item[{\upshape\bfseries (iv)}]
  Then
\begin{\eq}   \label{6.33}
\La^{\vr} F^{s_1}_{p_1,q_1} (\rn) \hra \La^{\vr} F^{s_2}_{p_2,q_2} (\rn)
\end{\eq}
if, and only if,
\begin{\eq}   
p_2 \ge p_1 \quad \text{and} \quad
\begin{cases}
\text{either} & s_1 - \frac{|\vr|}{p_1} > s_2 - \frac{|\vr|}{p_2}, \quad 0<q_1, q_2 \le \infty, \\
\text{or} & s_1 - \frac{|\vr|}{p_1} = s_2 - \frac{|\vr|}{p_2}, \quad p_1<p_2, \quad 0<q_1, q_2 \le \infty, \\
\text{or} & s_1=s_2, \quad p_1=p_2,\quad  0<q_1\le q_2.
\end{cases}
\end{\eq}
}
\eli
There are no  compact embeddings $\rhoAx{s_1}{p_1}{q_1}(\rn)\hra\rhoAx{s_2}{p_2}{q_2}(\rn)$.
\end{theorem}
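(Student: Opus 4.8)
The plan is to split the argument into the easy non-compactness part and the characterisation of the continuous embeddings, the latter being essentially a reformulation of known results plus the standard reductions available in Sections~\ref{S2}--\ref{S3}. I would first dispose of the last sentence. Every space of the $\vr$-clan is invariant, with equal quasi-norm, under translations by vectors of $\zn$, since such a translation maps the dyadic lattice $\{Q_{J,M}: J\in\ganz,\, M\in\zn\}$ onto itself and commutes with the Fourier-analytic building blocks $(\vp_j\wh f)^\vee$. Fix $0\neq\vp\in S(\rn)$, normalised so that $\|\vp \, | \rhoAx{s_1}{p_1}{q_1}(\rn)\|=1$ (possible since $S(\rn)\hra\rhoAx{s_1}{p_1}{q_1}(\rn)$), and put $f_k(x)=\vp(x-k e_1)$. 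Then $\|f_k \, | \rhoAx{s_2}{p_2}{q_2}(\rn)\|=\|\vp \, | \rhoAx{s_2}{p_2}{q_2}(\rn)\|=:c>0$ for all $k$, while $f_k\to 0$ in $S'(\rn)$. If $\id_{\rn}$ were compact, a subsequence $f_{k_j}$ would converge in $\rhoAx{s_2}{p_2}{q_2}(\rn)$; its limit would be $0$ by $\rhoAx{s_2}{p_2}{q_2}(\rn)\hra S'(\rn)$, whence $\|f_{k_j} \, | \rhoAx{s_2}{p_2}{q_2}(\rn)\|\to 0$, contradicting $\|f_{k_j} \, | \rhoAx{s_2}{p_2}{q_2}(\rn)\|=c>0$. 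Hence $\id_{\rn}$ is never compact.

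For the sufficiency of the stated conditions I would invoke the wavelet characterisations of Proposition~\ref{P3.1}, which turn \eqref{6.33} and its $F$-counterpart into elementary inequalities between the associated sequence spaces $\La^{\vr} b^{s}_{p,q}(\rn)$ and $\La_{\vr} b^{s}_{p,q}(\rn)$. In those inequalities the dimension $n$ is automatically replaced by $|\vr|$: at a fixed cube $Q_{J,M}$ the level-$j$ sum in \eqref{3.10}, \eqref{3.11} involves $\sim 2^{(j-J)n}$ subcubes, the normalising factor is $2^{\frac{J}{p}(n+\vr)}$, and the two combine so that the relevant exponents are $s_i-|\vr|/p_i$; passing from $\ell_{p_1}$ to $\ell_{p_2}$ is free on each index set when $p_1\le p_2$, and the behaviour in $j$ yields precisely the dichotomy of \eqref{6.34}. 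Equivalently, one may reformulate the already available embedding theorems for $B^{s,\tau}_{p,q}(\rn)$ and $\mathcal N^s_{u,p,q}(\rn)$ through \eqref{2.24}, \eqref{2.29}, \eqref{2.30} and use the lift \eqref{3.27}, \eqref{3.28} to normalise $s_1,s_2$; the $\La_{\vr}B$-version follows from the $\La^{\vr}B$-version by the real interpolation formula \eqref{3.14}. In the $F$-case one adds the coincidence \eqref{2.42} and the sandwich \eqref{2.54} (together with its sharp Franke-Jawerth refinement) to account for the line $p_1<p_2$ on which all $q_1,q_2$ are admitted.

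The necessity part is where the real work lies, though each ingredient is classical. The inequality $s_1-|\vr|/p_1\ge s_2-|\vr|/p_2$ I would extract from Theorem~\ref{T5.18}: applying the lift of Theorem~\ref{T3.8} to both spaces with a common shift $\sigma$ chosen so that $s_1+\sigma=|\vr|/p_1-n$, the distribution $\delta$ lies in the (shifted) source, hence in the (shifted) target, which forces $s_2+\sigma\le|\vr|/p_2-n$, i.e. the asserted inequality; for the $\La_{\vr}$-spaces one lets $\sigma$ tend to this value from below. The condition $p_2\ge p_1$ reflects the fact that on $\rn$ these spaces do not gain integrability; it is obtained by the classical spreading (low-frequency) argument and is already contained in the embedding theorems reformulated above, exactly as for $\vr=-n$. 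Finally the fine-index dichotomy at equal differential dimension is the usual one: a lacunary sum $\sum_j a_j\psi^{j}_{G,m_j}$ plugged into Proposition~\ref{P3.1} gives $q_1\le q_2$ in the $B$-case, and combining this with \eqref{2.54}, \eqref{2.55} and the spreading argument yields the sharper $F$-statement. The main obstacle throughout is the careful sequence-space bookkeeping showing that $n$ is uniformly replaced by $|\vr|$ — but this is exactly the Slope--$n$--Rule and has been carried out in the sources reformulated above.
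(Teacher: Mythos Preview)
Your proposal is correct, but it takes a markedly different route from the paper. The paper's own proof consists entirely of citations: part~(i) for $\La_{\vr}B$ is quoted from \cite[Theorem~3.3]{HaS12}, for $\La^{\vr}B$ from \cite[Theorem~2.5]{YHSY15} and \cite[Corollary~4.21]{GHS21b}, and part~(ii) from \cite[Theorem~3.1]{HaS14} and \cite[Corollary~5.9]{YHSY15}, all reformulated via \eqref{2.24}, \eqref{2.29}, \eqref{2.30} and the coincidence \eqref{2.42}. The non-compactness claim is not even addressed in the paper's proof.

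Your approach is more self-contained: you give an explicit translation argument for non-compactness, and for the characterisation you sketch the underlying sequence-space computations through Proposition~\ref{P3.1} together with the standard test functions ($\delta$ via Theorem~\ref{T5.18} plus lifting for the differential-dimension inequality, spreading for $p_2\ge p_1$, lacunary sums for the $q$-condition). This buys independence from the cited sources and makes the Slope-$n$-Rule visible in the proof itself, at the cost of having to carry out the bookkeeping. One minor point: your remark that ``the $\La_{\vr}B$-version follows from the $\La^{\vr}B$-version by \eqref{3.14}'' is slightly loose for sufficiency, since \eqref{3.14} interpolates at fixed $p$ and the source and target here have different $p$'s; the direct wavelet route through \eqref{3.11} that you also mention is the cleaner way to handle $\La_{\vr}B$.
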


\begin{proof} Part (i) in case of $\rhoBs(\rn)=\La_\vr\Bs(\rn)$ is a special case of \cite[Theorem~3.3]{HaS12}, whereas  the case $\rhoBs(\rn)=\La^\vr\Bs(\rn)$ is covered by \cite[Theorem~2.5]{YHSY15} and \cite[Corollary~4.21]{GHS21b}. The result (ii) is covered by \cite[Theorem~3.1]{HaS14} when $\rhoFs(\rn)=\La_\vr\Fs(\rn)$, and by
\cite[Corollary~5.9]{YHSY15} in case of $\rhoFs(\rn)=\La^\vr\Fs(\rn)$ and also by \eqref{2.42}. 
\end{proof}

\begin{remark}
  We refer to the papers \cite{HaS12,HaS14,YHSY15,YHMSY15,GHS21b} for further embedding results of $\rhoAs$ spaces on $\rn$, in particular when $\vr_1\neq \vr_2$ (as is always assumed in our setting here). The difference between  Theorem~\ref{T6.5} for spaces on $\rn$ and Theorem~\ref{T5.39} for spaces on a bounded domain $\Omega$ concerns (apart from the obviously missing compactness assertion) essentially the extension to parameters $p_1>p_2$ which is a matter of H\"older's inequality. This is well known from the classical spaces already and also reflected in the comparison of the corresponding diagrams Figure~\ref{fig-8} on p.~\pageref{fig-8} and Figure~\ref{fig-7} below.  
\end{remark}  
}

\noindent
\begin{minipage}{\textwidth}
  ~\hspace*{\fill}\input{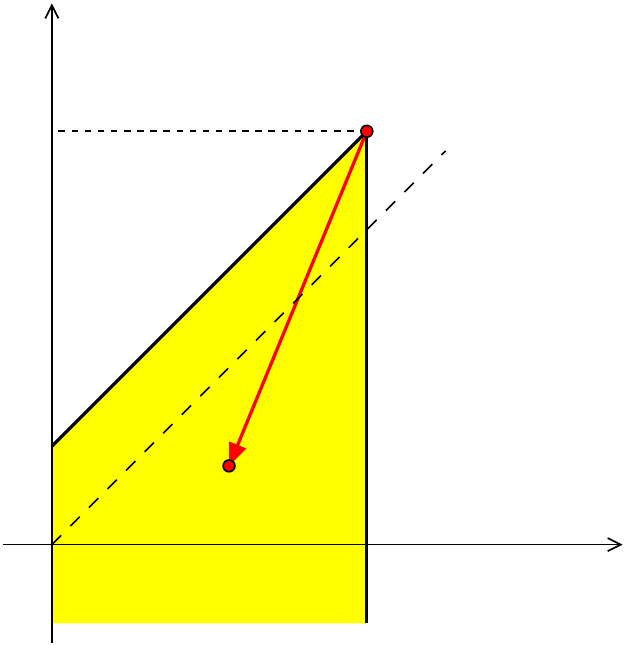_t}\hspace*{\fill}~\\[1ex]    
~\hspace*{\fill}\unterbild{fig-7}
\end{minipage}
\smallskip~

\new{
  \begin{remark}\label{R-non-Lr}
  Let us briefly mention some phenomenon which essentially distinguishes the $\vr$-clan $\rhoAs(\rn)$ with $\varrho>-n$ from its classical counterpart $\As(\rn)$, corresponding to $\vr=-n$, recall Proposition~\ref{P2.9}(i).
 Whenever $\vr>-n$, then for the embedding into a Lebesgue space $L_r(\rn)$, $0<r\leq\infty$, we get 
  \[
  \rhoAs(\rn) \hra L_r(\rn) \quad \text{if, and only if,}\quad r=\infty,
  \]
for appropriately chosen $s$, $p$ and $q$. This somewhat surprising result has been proved in \cite[Corollary 3.6]{HaS14}  for spaces of type $\La_\vr\As(\rn)$, and in \cite[Proposition~5.7]{YHSY15}  for spaces of type $\La^\vr\As(\rn)$.

In contrast to this situation, we have embeddings $\rhoAs(\rn)\hookrightarrow \bmo(\rn)$, recall Remark~\ref{bmo-def} for the definition of $\bmo(\rn)$. Then
for $-n\leq \vr<0$,
\[
\La_\vr \As(\rn) \hra \bmo(\rn)\quad \text{if, and only if,}\quad s\geq \frac{|\vr|}{p},
\]
cf. \cite[Corollary~5.1]{HMS20}. The parallel result for the case $\rhoAs(\rn)=\La^\vr \As(\rn)$ can be found in \cite[Propositions 5.12, 5.13]{YHSY15}. 
\end{remark}

  So far we dealt with properties of the spaces $\rhoAs(\rn)$ according to Definition~\ref{D2.11} {\em within a fixed $\vr$-clan}. One may ask how spaces belonging to different $\vr$-clans are related to each other. Here we summarise our results extending Theorem~\ref{T6.5} for possibly different parameters $\vr_1$ and $\vr_2$.

  \begin{theorem}   \label{T7.1}
  Let $n\in \nat$ and $-n < \vr_1,\vr_2 <0$. Let $0<p_1, p_2<\infty$ and $0< q_1, q_2 \le \infty$. Let $s_1, s_2 \in \real$.
  \bli
\item[{\upshape\bfseries (i)}]
Then there is a continuous embedding
\begin{\eq}
  \id_{\rn}: 
\La_{\vr_1} B^{s_1}_{p_1,q_1} (\rn) \hra \La_{\vr_2} B^{s_2}_{p_2,q_2} (\rn)
\end{\eq}
if, and only if,
\begin{\eq}   \label{DDH-7}
  |\vr_2|\leq |\vr_1|,  \quad \frac{|\vr_1|}{p_1} \geq \frac{|\vr_2|}{p_2},
  \end{\eq}
  \text{and}
  \begin{\eq}
  \begin{cases}
\text{either} & s_1 - \frac{|\vr_1|}{p_1} > s_2 - \frac{|\vr_2|}{p_2}, \quad 0<q_1, q_2 \le \infty, \\
\text{or} & s_1 - \frac{|\vr_1|}{p_1} = s_2 - \frac{|\vr_2|}{p_2}, \quad 0<q_1\le q_2.
\end{cases}
\end{\eq}
  \item[{\upshape\bfseries (ii)}]
  Then there is a continuous embedding
\begin{\eq}   
 \La_{\vr_1} F^{s_1}_{p_1,q_1} (\rn) \hra \La_{\vr_2} F^{s_2}_{p_2,q_2} (\rn) 
\end{\eq}
if, and only if, \eqref{DDH-7} is satisfied and
\begin{\eq}\label{DDH-8}
\begin{cases}
\text{either} & s_1 - \frac{|\vr_1|}{p_1} > s_2 - \frac{|\vr_2|}{p_2}, \quad 0<q_1, q_2 \le \infty, \\
\text{or} & s_1 - \frac{|\vr_1|}{p_1} = s_2 - \frac{|\vr_2|}{p_2}, \quad p_1<p_2, \quad 0<q_1, q_2 \le \infty, \\
\text{or} & s_1=s_2, \quad p_1=p_2,\quad  0<q_1\le q_2.
\end{cases}
\end{\eq}
\item[{\upshape\bfseries (iii)}]
  Then there is a continuous embedding
\begin{\eq}\label{DDH-9}
 \La^{\vr_1} B^{s_1}_{p_1,q_1} (\rn) \hra \La^{\vr_2} B^{s_2}_{p_2,q_2} (\rn) 
\end{\eq}
if \eqref{DDH-7} is  satisfied and
\begin{\eq}
  s_1 - \frac{|\vr_1|}{p_1} > s_2 - \frac{|\vr_2|}{p_2}.   
\end{\eq}
Conversely, if \eqref{DDH-9} is continuous, then
\eqref{DDH-7} is satisfied and $s_1\geq s_2$, $s_1 - \frac{|\vr_1|}{p_1}\geq s_2 - \frac{|\vr_2|}{p_2}$ and $q_1\leq q_2$ if $s_1=s_2$.
\item[{\upshape\bfseries (iv)}]
  Then there is a continuous embedding
\begin{\eq}   
 \La^{\vr_1} F^{s_1}_{p_1,q_1} (\rn) \hra \La^{\vr_2} F^{s_2}_{p_2,q_2} (\rn) 
\end{\eq}
if, and only if, \eqref{DDH-7} and \eqref{DDH-8} are satisfied.
\eli
  \end{theorem}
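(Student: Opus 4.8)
\textbf{Proof proposal for Theorem~\ref{T7.1}.}

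The plan is to reduce everything to wavelet sequence spaces and to the already-known "same-clan" embedding Theorem~\ref{T6.5}, together with simple scaling arguments that produce the extra constraints \eqref{DDH-7}. First I would invoke the wavelet characterisation of Proposition~\ref{P3.1} (and its $F$-counterpart, referenced there) to transfer each of the four assertions into a statement about the sequence spaces $\La_{\vr}b^s_{p,q}(\rn)$, $\La^{\vr}b^s_{p,q}(\rn)$ and their $F$-analogues, whose (quasi-)norms are written out explicitly in \eqref{3.10}, \eqref{3.11}, \eqref{6.16}, \eqref{6.17}. On the sequence side the role of $\vr$ is completely transparent: $n+\vr$ appears as the exponent of the dyadic scaling factor $2^{\frac{J}{p}(n+\vr)}$, and the combination $s-\tfrac{n}{p}$ sits alongside it, so the differential dimension $s-\tfrac{|\vr|}{p}$ and the "secondary" quantity $\tfrac{|\vr|}{p}$ emerge naturally as the two invariants controlling inclusion.

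For the \emph{sufficiency} directions I would argue in two steps. Step one: factor the claimed embedding through an intermediate space in one of the two clans. Given \eqref{DDH-7}, one can first pass from $\La_{\vr_1}A^{s_1}_{p_1,q_1}(\rn)$ to $\La_{\vr_1}A^{\tilde s}_{\tilde p,q_1}(\rn)$ with $\tilde p=p_2$ (this is a "same-clan" move handled by Theorem~\ref{T6.5}, using $p_2\ge p_1$ and preserving differential dimension), and then compare $\La_{\vr_1}A^{\tilde s}_{p_2,q_1}(\rn)$ with $\La_{\vr_2}A^{s_2}_{p_2,q_2}(\rn)$, where now only $\vr$ changes. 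For the pure change-of-$\vr$ comparison I would estimate directly on the sequence spaces: fixing $p$ and comparing $\sup_J 2^{\frac{J}{p}(n+\vr_1)}(\cdots)$ with $\sup_J 2^{\frac{J}{p}(n+\vr_2)}(\cdots)$, the inequality $|\vr_2|\le|\vr_1|$, i.e. $n+\vr_1\le n+\vr_2$, is exactly what lets one dominate one supremum by the other after absorbing the loss into the smoothness gap $s_1-\tfrac{|\vr_1|}{p}\ge s_2-\tfrac{|\vr_2|}{p}$ (strict gap, or equal gap with $q_1\le q_2$ in the $B$-case; the $F$-case uses in addition \eqref{2.53}/\eqref{2.54} to trade $q$ against $p$ in the usual Jawerth–Franke manner). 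The coincidences \eqref{2.42} should be used throughout to deduce the $F_{\vr}$-statements from the $F^{\vr}$-statements wherever convenient, and the elementary embeddings \eqref{2.53}, \eqref{2.54}, \eqref{2.56} to handle the fine indices.

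For \emph{necessity} I would test the embedding on three families of explicit functions/distributions, exactly as in the classical theory. Localised smooth bumps, scaled as $2^{Js}\varphi(2^J\cdot)$ supported in a single dyadic cube, force $s_1-\tfrac{|\vr_1|}{p_1}\ge s_2-\tfrac{|\vr_2|}{p_2}$; "spread-out" configurations of many such bumps distributed over a large cube $Q_{J,M}$ (so that the Morrey supremum is attained at a coarse scale) force the pair of conditions in \eqref{DDH-7} — this is the place where $|\vr_2|\le|\vr_1|$ and $\tfrac{|\vr_1|}{p_1}\ge\tfrac{|\vr_2|}{p_2}$ genuinely come from, and it is the step I expect to be the main obstacle, since one must choose the multiplicities and the ratio $2^{j-J}$ of fine to coarse scales so that the two Morrey norms decouple cleanly. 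Testing with $\chi_Q$ or $\delta$-type examples, via Theorems~\ref{T5.18} and \ref{T5.24}, pins down the borderline fine-index conditions ($q_1\le q_2$ when differential dimensions agree in the $B$-case; $p_1<p_2$ or $(s_1,p_1)=(s_2,p_2)$ in the $F$-case); note that in part (iii) the characterisation is only "if, and only if" up to the stated asymmetry, reflecting the known gap in the $\La^{\vr}B$ scale (cf. Remark~\ref{R5.40} and Theorem~\ref{T2.16}(iv)), so I would be careful to claim exactly the one-sided statement given. The non-existence of compact embeddings is immediate: translation invariance of all these spaces on $\rn$ gives a bounded non-precompact sequence (translates of a fixed bump), so no $\id_{\rn}$ between them can be compact, and this is quoted rather than reproved.
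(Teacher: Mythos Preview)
The paper does not prove Theorem~\ref{T7.1} at all: it is stated as a reformulation of already-published results, and Remark~\ref{R6.16} simply records the sources---part (i) is \cite[Theorem~3.3]{HaS12}, part (ii) is \cite[Theorem~3.1]{HaS14}, part (iii) is \cite[Theorem~2.5]{YHSY15} together with \cite[Corollary~4.21]{GHS21b}, and part (iv) is \cite[Corollary~5.9]{YHSY15} combined with \eqref{2.42}. Your proposal attempts an actual argument; the overall strategy (transfer to wavelet sequence spaces for sufficiency, test with localised and spread-out bump families for necessity) is exactly what those references do, so you are on the right track in spirit.

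There is, however, a genuine gap in your sufficiency step. You assert that under \eqref{DDH-7} one has $p_2\ge p_1$ and then invoke Theorem~\ref{T6.5} for the ``same-clan'' move. But \eqref{DDH-7} does \emph{not} force $p_2\ge p_1$: take for instance $|\vr_1|=2$, $p_1=2$, $|\vr_2|=1$, $p_2=3/2$; then $|\vr_2|<|\vr_1|$ and $|\vr_1|/p_1=1>2/3=|\vr_2|/p_2$, yet $p_2<p_1$. In such cases your two-step factorisation breaks down whichever order you try (change $p$ first within the $\vr_1$-clan, or change $\vr$ first at fixed $p$ and then change $p$ within the $\vr_2$-clan), because Theorem~\ref{T6.5} never allows $p$ to decrease on $\rn$. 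The sufficiency proofs in \cite{HaS12,HaS14,YHSY15} do not factor through a same-clan result; they work directly on the sequence norms \eqref{3.10}, \eqref{3.11}, and when $p_2<p_1$ the argument uses H\"older's inequality \emph{inside each cube} $Q_{J,M}$, with the resulting volume factor $2^{-Jn(1/p_2-1/p_1)}$ absorbed jointly by the Morrey exponent and the smoothness shift---this is precisely where both conditions in \eqref{DDH-7} are consumed simultaneously, not sequentially. Your ``pure change-of-$\vr$ at fixed $p$'' estimate is only one ingredient; the full computation must vary $p$ and $\vr$ together. (A minor point: Theorem~\ref{T7.1} contains no compactness claim, so your final paragraph addresses something that belongs to Theorem~\ref{T6.5} rather than here.)
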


  \begin{remark}\label{R6.16}
    The above result (i) coincides with \cite[Theorem~3.3]{HaS12}, and (ii) with \cite[Theorem~3.1]{HaS14}. In particular, there are no continuous embeddings if $|\vr_2|>|\vr_1|$. Part (iii) is covered by \cite[Theorem~2.5]{YHSY15}. In fact, there are further sufficient conditions for the continuity of \eqref{DDH-9} in the limiting case which we omitted here for simplicity, we also refer to some extension of this result in \cite[Corollary~4.21]{GHS21b}. Finally, part (iv) coincides with \cite[Corollary~5.9]{YHSY15}, now also covered by part (ii) and \eqref{2.42}. Note that Theorem~\ref{T6.5} is just a special case of Theorem~\ref{T7.1} for the case $\vr=\vr_1=\vr_2$ where,  in addition, we have a complete result also for the case (iii) above. 
    So -- apart from the small gap in the limiting case of (iii) -- it is known that
    \[
    \id_{\rn} : \rhoeAe(\rn) \hra \rhozAz(\rn)
    \]
    if, and only if, \eqref{DDH-7} is satisfied and
    \[
     s_1 - \frac{|\vr_1|}{p_1} \geq s_2 - \frac{|\vr_2|}{p_2},
    \]
    with some additional assumptions in the limiting case. 
    We illustrate the situation in Figure~\ref{fig-12} below. Clearly this diagram coincides with the above Figure~\ref{fig-7} on p.~\pageref{fig-7} in case of $\vr_1=\vr_2=\vr$.
  \end{remark}

  \noindent
\begin{minipage}{\textwidth}
  ~\hspace*{\fill}\input{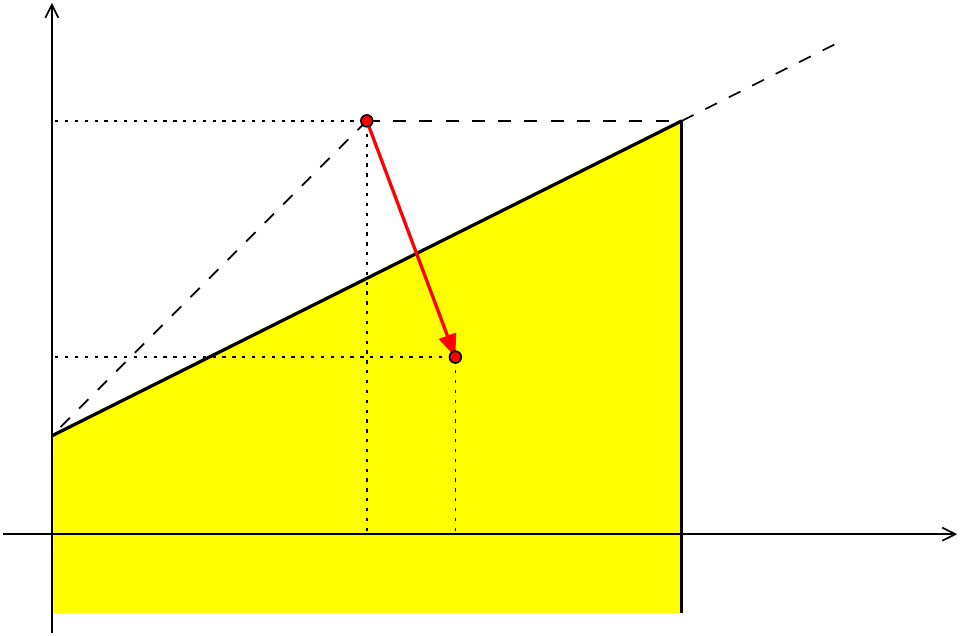_t}\hspace*{\fill}~\\[1ex]    
~\hspace*{\fill}$\id_{\rn}$~ for the case $|\vr_1|\geq |\vr_2|$~\hspace*{\fill}\unterbild{fig-12}
\end{minipage}
\smallskip~

\begin{remark}\label{R6.18}
  We excluded so far $\As(\rn)$ according to \eqref{2.38}, that is, the $n$-clan in terms of Definition~\ref{D2.11}(i), as target spaces. However, in view of the first condition in \eqref{DDH-7}, one may have doubts whether
there is a continuous embedding of type $\rhoAe(\rn) \hookrightarrow \Az(\rn)$ when $-n<\vr<0$. In fact, this turns out to be wrong, more precisely,  
  \begin{equation}
\rhoAe(\rn) \not\subseteq \Az(\rn)
   \end{equation} 
if $-n<\vr<0$, $0<p_1,p_2<\infty$, $0<q_1,q_2\leq\infty$, $s_1,s_2\in\real$. This -- at first glance surprising -- result is covered by the references given in Remark~\ref{R6.16}.  
\end{remark}

\begin{remark}
  The case $F^s_{\infty,q}(\rn)$ can be incorporated by embeddings, but is also partly covered by \eqref{ftbt} together with \cite[Corollary~5.8]{YHSY15} and \cite[Corollary~4.23]{GHS21b}.    
  \end{remark}

We return to Section~\ref{S5.7}, in particular to Theorem~\ref{T5.39}, and ask for continuous and compact embeddings between different $\vr$-clans. For convenience we restrict ourselves to the question of compactness only.

\begin{theorem}  \label{T5.39a}
Let $\Om$ be a bounded domain in \rn, $n\in \nat$. Let $-n < \vr_1, \vr_2 <0$, $0<p_1, p_2<\infty$, $0< q_1, q_2 \le \infty$, and $s_1, s_2\in\real$.
\bli
\item
Assume $|\vr_1|\leq |\vr_2|$. Then 
  \begin{equation}
   \id_{\Om} : \rhoeAe(\Omega) \hookrightarrow \rhozAz(\Omega)
 \end{equation}
    is compact if, and only if,
\begin{\eq}   
s_1 - s_2 > |\vr_1| \cdot \max \Big( \frac{1}{p_1} - \frac{1}{p_2}, 0 \Big).
\end{\eq}
\item
Assume $|\vr_1|\geq |\vr_2|$. Then 
  \begin{equation}
   \id_{\Om} : \rhoeAe(\Omega) \hookrightarrow \rhozAz(\Omega)
 \end{equation}
    is compact if, and only if,
\begin{\eq}   
s_1 - s_2 > \max\Big( \frac{|\vr_1|}{p_1} -\frac{|\vr_2|}{p_2}, 0 \Big).
\end{\eq}
\eli
\end{theorem}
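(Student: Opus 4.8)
The plan is to reduce everything, by factorisation, to results already established: the within-clan compact embeddings on $\Om$ from Theorem~\ref{T5.39}, the cross-clan continuous embeddings on $\rn$ from Theorem~\ref{T7.1} (restricted to $\Om$ according to Definition~\ref{D5.35}), and the elementary embeddings inside a fixed clan from Theorem~\ref{T2.16}. Two preliminary reductions simplify the bookkeeping. First, since $\La_{\vr}\Fs=\La^{\vr}\Fs$ by \eqref{2.42}, the sandwich \eqref{2.53}, \eqref{2.54} survives restriction to $\Om$, and every inequality appearing in the statement is \emph{strict}, one may pass — at the cost of an arbitrarily small change of the smoothness parameter — freely between the $B$- and $F$-scales and between $\La_{\vr}B$ and $\La^{\vr}B$ (using \eqref{2.56}, \eqref{2.57}); thus it suffices to treat one of the four scales, say $\La^{\vr}B$. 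Secondly, the same strictness makes the fine indices irrelevant: within a clan one lifts $q_i$ to $\infty$ at fixed smoothness and then spends an arbitrarily small amount of smoothness, so one may assume $q_1=q_2=\infty$ throughout.

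For the sufficiency in part (i), where $|\vr_1|\le|\vr_2|$, I would factor
\[
\rhoeAe(\Om)\ \hra\ \La^{\vr_1}A^{s_2+\varepsilon}_{p_2,\infty}(\Om)\ \hra\ \rhozAz(\Om),
\]
with $\varepsilon>0$ so small that $s_1-(s_2+\varepsilon)>|\vr_1|\max(1/p_1-1/p_2,0)$, which is possible by hypothesis. The first arrow is then compact by Theorem~\ref{T5.39}. The second arrow has fixed integrability $p_2$ and goes from the $\vr_1$-clan into the $\vr_2$-clan with $|\vr_1|\le|\vr_2|$; on the bounded domain $\Om$ this is merely the Hölder-type monotonicity $\La^{\vr_1}_{p_2}(\Om)\hra\La^{\vr_2}_{p_2}(\Om)$ of Morrey spaces (in the notation \eqref{2.20}, $u_1\ge u_2$), applied on each frequency band via the wavelet description of Proposition~\ref{P3.1} restricted to $\Om$; hence it is continuous. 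A composition of a compact and a continuous map is compact, which settles the ``if''-part; necessity is addressed below.

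For the sufficiency in part (ii), $|\vr_1|\ge|\vr_2|$, I would factor through a space of the $\vr_2$-clan,
\[
\rhoeAe(\Om)\ \hra\ \La^{\vr_2}A^{s'}_{p',\infty}(\Om)\ \hra\ \rhozAz(\Om),
\]
with $p'$ and $s'$ chosen according to two cases. If $|\vr_1|/p_1\ge|\vr_2|/p_2$, take $p'$ with $|\vr_2|p_1/|\vr_1|\le p'\le\min(p_1,p_2)$ (possible precisely because $|\vr_2|\le|\vr_1|$ and $|\vr_1|/p_1\ge|\vr_2|/p_2$) and $s'$ slightly above $s_2$ but below $s_1-|\vr_1|/p_1+|\vr_2|/p'$; then the first arrow is continuous by Theorem~\ref{T7.1} and the second is compact by Theorem~\ref{T5.39} (here $p'\le p_2$, so the only remaining constraint is $s'>s_2$, which the hypothesis $s_1-s_2>|\vr_1|/p_1-|\vr_2|/p_2$ permits). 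If instead $|\vr_1|/p_1<|\vr_2|/p_2$, one first uses Hölder's inequality on $\Om$ to pass to the smaller integrability $\tilde p=|\vr_1|p_2/|\vr_2|<p_1$ inside the $\vr_1$-clan and is thereby reduced to the first case with $|\vr_1|/\tilde p=|\vr_2|/p_2$; here the hypothesis reads $s_1-s_2>0$, which again leaves the slack needed for the within-clan factor.

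In both parts the necessity is obtained by testing $\id_{\Om}$ on the standard extremal families built from the wavelet (or Haar) building blocks localised inside a fixed ball of $\Om$, cf. Remark~\ref{R5.38}: a single normalised block at one scale already forces the non-strict differential-dimension inequalities coming from mere continuity, while a lacunary superposition over dyadic scales $j_1<j_2<\cdots$ yields, exactly at the critical exponent, a sequence that is bounded in $\rhoeAe(\Om)$ but whose images remain $\delta$-separated in $\rhozAz(\Om)$ (since each scale contributes comparably on the bounded domain), hence admits no convergent subsequence. I expect the main obstacle to be precisely this sharpness at the borderline — arranging the extremal family so that it is simultaneously bounded in the source and separated in the target uniformly in all admissible indices — but the required blocks and estimates are exactly those already used in \cite{HaS13,HaS20,GHS21,GHS21b} for the within-clan and cross-clan embeddings, so in effect the argument consists in reformulating those results according to \eqref{2.24}, \eqref{2.25}, \eqref{2.29}, \eqref{2.30} and assembling them through the two factorisations above.
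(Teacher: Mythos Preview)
The paper does not prove Theorem~\ref{T5.39a} at all; it merely records in Remark~\ref{R6.20} that the assertions are covered by \cite{HaS13,HaS14,HaS20,GHS21,GHS21b}, adding only that the extension to $p_2<p_1$ comes from H\"older's inequality on the bounded domain. Your proposal therefore goes well beyond the paper in giving an actual argument, and your sufficiency factorisations are sound. Two points deserve comment.

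For the second arrow in part~(i) you invoke a ``H\"older-type monotonicity $\La^{\vr_1}_{p_2}(\Om)\hra\La^{\vr_2}_{p_2}(\Om)$'' when $|\vr_1|\le|\vr_2|$ and appeal to Proposition~\ref{P3.1} ``restricted to $\Om$''. This embedding is \emph{false} on $\rn$ (cf.\ Remark~\ref{R6.18} and the first condition in \eqref{DDH-7}), so one cannot simply restrict. The correct justification is: multiply by a smooth cut-off via Theorem~\ref{T4.1} to reduce to compactly supported $g$, apply Proposition~\ref{P3.1} on $\rn$, observe that for such $g$ the supremum over $J\in\ganz$ in \eqref{3.10} may be restricted to $J\ge J_0$ (a fixed constant depending on $\supp g$), and then use that $2^{J(n+\vr_2)/p}\le C\,2^{J(n+\vr_1)/p}$ for $J\ge J_0$ since $\vr_1\ge\vr_2$. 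Your idea is right, but the phrasing is misleading.

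Your necessity sketch contains a genuine inaccuracy. You write that ``a single normalised block at one scale already forces the non-strict differential-dimension inequalities''. A single wavelet block $\psi^j_{G,m}$ has norm $\sim 2^{j(s-|\vr|/p)}$ in $\rhoAs$, so it only yields $s_1-s_2\ge |\vr_1|/p_1-|\vr_2|/p_2$; in part~(i) with $|\vr_1|<|\vr_2|$ and $p_1<p_2$ this is strictly weaker than the required $s_1-s_2\ge|\vr_1|(1/p_1-1/p_2)$. The sharp lower bound in this regime needs the genuinely Morrey-specific extremal families from \cite{HaS13,HaS20}: roughly, $\sim 2^{j(n-|\vr_1|)}$ blocks at level $j$ arranged on an $(n-|\vr_1|)$-dimensional lattice inside $\Om$, so that the supremum in \eqref{3.10} is attained at $J=j$ for the source but at $J=0$ for the target. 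Your final sentence correctly defers to those references, but your description of which test functions do the work is wrong.
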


\begin{remark}\label{R6.20}
The above results are covered by \cite{HaS13,HaS14,HaS20,GHS21,GHS21b}, including also further limiting cases for continuous embeddings. The extension to parameters $p_2<p_1$ is due to H\"older's inequality and the boundedness of $\Omega$. 
\end{remark}

\noindent
\begin{minipage}{\textwidth}
  ~\hspace*{\fill}\input{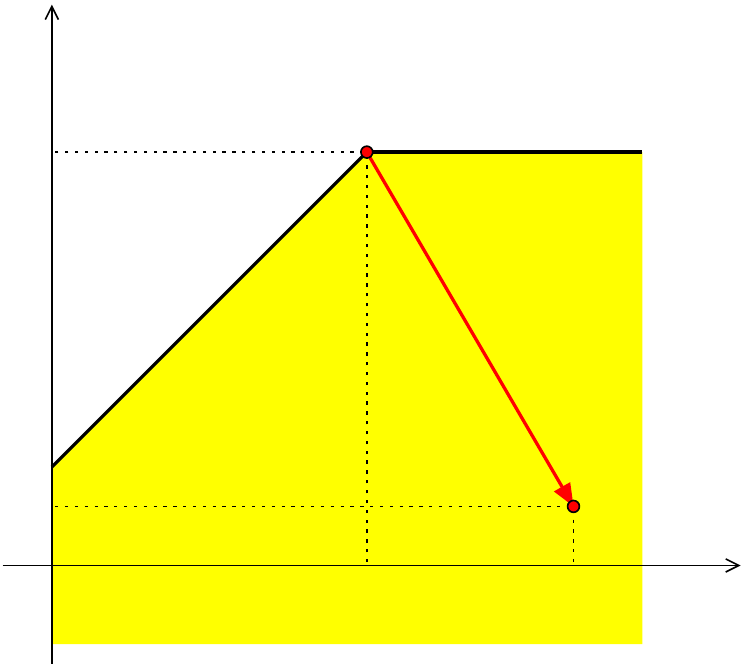_t}\hspace*{\fill}~\\[1ex]    
~\hspace*{\fill}$\id_\Omega$~ in case of $|\vr_1|\leq |\vr_2|$~\hspace*{\fill} \unterbild{fig-14}
\end{minipage}
\smallskip~

\noindent
In case of $|\vr_1|\leq |\vr_2|$ the picture remains essentially the same, compare Figure~\ref{fig-8} on p.~\pageref{fig-8} and Figure~\ref{fig-14}. However, when $|\vr_1|\geq |\vr_2|$, then we meet again Figure~\ref{fig-12} on p.~\pageref{fig-12}, extended to the right by H\"older's inequality again.\\

  \noindent
\begin{minipage}{\textwidth}
  ~\hspace*{\fill}\input{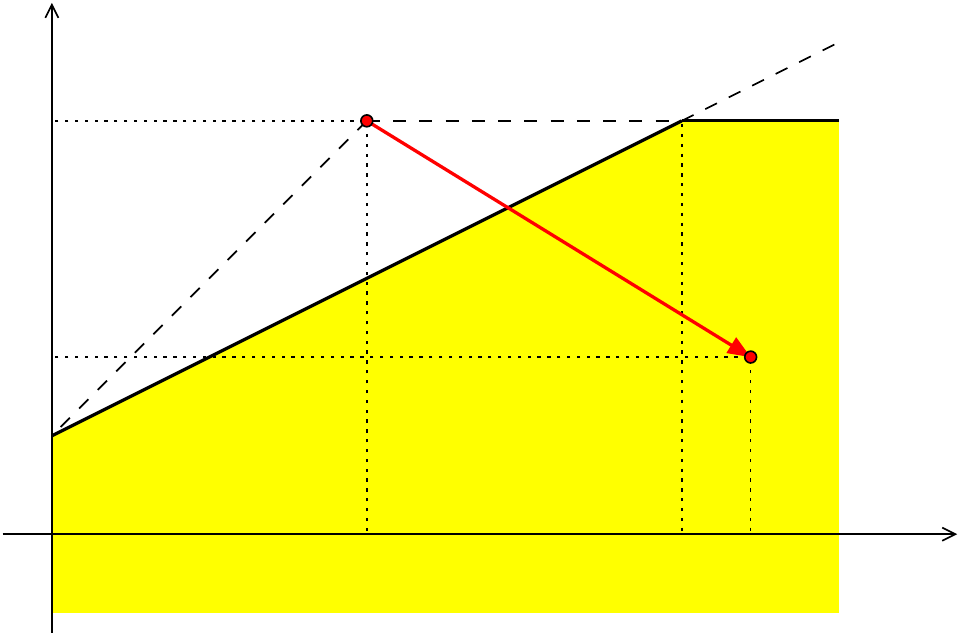_t}\hspace*{\fill}~\\[1ex]    
~\hspace*{\fill}$\id_\Omega$~ in case of $|\vr_1|\geq |\vr_2|$~\hspace*{\fill} \unterbild{fig-13}
\end{minipage}
\medskip~
}

  In contrast to Remark~\ref{R6.18} we may now replace the target spaces $\rhozAz(\Omega)$ by their classical counterparts $\Az(\Omega)$. This corresponds to Theorem~\ref{T5.39a}(i) and is covered by the references listed in Remark~\ref{R6.20}.

  \begin{corollary}
Let $\Om$ be a bounded domain in \rn, $n\in \nat$. Let $-n < \vr <0$, $0<p_1, p_2<\infty$, $0< q_1, q_2 \le \infty$, and $s_1, s_2\in\real$. Then 
  \begin{equation}
   \id_{\Om} : \rhoAe(\Omega) \hookrightarrow \Az(\Omega)
 \end{equation}
    is compact if, and only if,
\begin{\eq}   
s_1 - s_2 > |\vr| \cdot \max \Big( \frac{1}{p_1} - \frac{1}{p_2}, 0 \Big).
\end{\eq}    
  \end{corollary}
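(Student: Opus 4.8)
The statement is the special case $\vr_1=\vr_2=\vr$ of Theorem~\ref{T5.39a}(i), combined with the identification of the classical space $\Az(\Omega)$ as a target. My plan is to reduce it to results already at hand rather than to re-run the wavelet machinery. First I would observe that when $\vr_1=\vr_2=\vr$ the condition $|\vr_1|\le|\vr_2|$ is trivially satisfied, so Theorem~\ref{T5.39a}(i) applies and yields: for $\rhoAe(\Omega)\hookrightarrow\rhozAz(\Omega)$ compactness holds if, and only if, $s_1-s_2>|\vr|\max(\tfrac{1}{p_1}-\tfrac{1}{p_2},0)$. This is exactly Theorem~\ref{T5.39}(ii) rephrased, so the within-clan part is immediate. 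The only genuine point is therefore the replacement of the target $\rhozAz(\Omega)=\rhoAz(\Omega)$ by $\Az(\Omega)$.

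Second, for the ``if'' direction I would factor the embedding. Recall from Proposition~\ref{P2.9}, Remark~\ref{R2.6}, and \eqref{2.31} that on a bounded domain the classical space $\Az(\Omega)$ is the $\vr=-n$ member; but more directly, I would use the elementary embedding on bounded domains relating different slope parameters, Theorem~\ref{T5.39a}(i) (or the references in Remark~\ref{R6.20}), which gives a \emph{continuous} embedding $\rhoAz(\Omega)\hookrightarrow$ a classical space $\Az(\Omega)$ whenever the slope passes from $|\vr|$ to $n$; alternatively one notes $\La^{\vr}A^{s_2}_{p_2,q}(\Omega)\hookrightarrow A^{s_2}_{p_2,q}(\Omega)$ holds continuously for $-n<\vr<0$ by Hölder's inequality on the bounded set $\Omega$ (the $\La^{\vr}_p$-norm controls the $L_p$-norm on a fixed cube, hence on $\Omega$), and this passes to the smoothness spaces by the wavelet/atomic sequence-space description. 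Composing the compact embedding $\rhoAe(\Omega)\hookrightarrow\rhoAz(\Omega)$ from Theorem~\ref{T5.39}(ii) with this continuous one gives a compact map $\rhoAe(\Omega)\hookrightarrow\Az(\Omega)$, since the composition of a compact and a bounded operator is compact.

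Third, for the ``only if'' direction I would argue by contradiction. Suppose $s_1-s_2\le|\vr|\max(\tfrac{1}{p_1}-\tfrac{1}{p_2},0)$ but $\id_\Omega\colon\rhoAe(\Omega)\hookrightarrow\Az(\Omega)$ is compact (in particular bounded). Then, composing with a suitable \emph{continuous} embedding in the other slope direction, or simply testing on the explicit extremal family used in the proof of Theorem~\ref{T5.39}(ii) (lacunary wavelet sums $f_J=\sum_{Q_{j,m}\subset Q_{0,0}}\lambda^{j,G}_m h^j_{G,m}$ concentrated at a fixed scale $j=J$ with $\sim 2^{Jn}$ terms), one computes $\|f_J\mid\rhoAe(\Omega)\|\sim 2^{J(s_1-\frac{|\vr|}{p_1})}$ while $\|f_J\mid\Az(\Omega)\|\sim 2^{J(s_2-\frac{n}{p_2})}$; since the $f_J$ form a bounded-away-from-zero, mutually ``spread-out'' family in $\Az(\Omega)$, no subsequence can converge, contradicting compactness unless the exponents force decay, i.e. unless $s_1-\frac{|\vr|}{p_1}<s_2-\frac{n}{p_2}$, which combined with $p_1\le p_2$ (forced by boundedness) recovers the strict inequality. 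In fact it is cleanest to cite the references in Remark~\ref{R6.20} (\cite{HaS13,HaS14,HaS20,GHS21,GHS21b}), which already contain exactly the sharp characterisation for $\rhoAe(\Omega)\hookrightarrow\Az(\Omega)$ as the $|\vr_2|=n$ instance of Theorem~\ref{T5.39a}(i); then the whole corollary is a one-line specialisation. The main (and only real) obstacle is bookkeeping: making sure that the sharp ``only if'' part is genuinely covered by the cited literature in the mixed-slope form with one endpoint classical, and that Hölder's inequality on $\Omega$ is invoked in the correct direction ($|\vr|<n$, so $\rhoAz(\Omega)$ is the \emph{larger} space and the continuous embedding runs $\rhoAz(\Omega)\hookrightarrow$ nothing classical directly — rather one needs the reverse, which is why the lacunary-family computation, not a naive norm comparison, is the right tool).
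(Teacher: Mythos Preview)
Your framing is off from the start: this is \emph{not} the case $\vr_1=\vr_2=\vr$ of Theorem~\ref{T5.39a}(i) followed by a ``replacement''. The classical target $\Az(\Omega)$ is the $n$--clan, so the corollary is directly the case $|\vr_1|=|\vr|<n=|\vr_2|$ of Theorem~\ref{T5.39a}(i) (or rather of its underlying references in Remark~\ref{R6.20}, which do cover $\vr_2=-n$). That is the paper's one--line argument, and you eventually arrive at it in your last paragraph.

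Your detour has real errors. For the ``if'' direction your factorisation $\rhoAe(\Omega)\hra\rhoAz(\Omega)\hra\Az(\Omega)$ does work, because on a bounded domain the Morrey--smoothness space is the \emph{smaller} one: taking $J=0$, $M=0$ in \eqref{3.10} (with support in $Q_{0,0}$) gives $\|\lambda\,|b^s_{p,q}\|\le\|\lambda\,|\La^{\vr}b^s_{p,q}\|$, hence $\rhoAz(\Omega)\hra\Az(\Omega)$ continuously. But your final parenthetical contradicts this, calling $\rhoAz(\Omega)$ the ``larger'' space and claiming the embedding runs the other way --- that is backwards.

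Your ``only if'' computation is genuinely wrong. A single wavelet $\psi^J$ satisfies $\|\psi^J\,|\rhoAe\|\sim 2^{J(s_1-|\vr|/p_1)}$ and $\|\psi^J\,|\Az\|\sim 2^{J(s_2-n/p_2)}$, so the condition you extract is $s_1-\frac{|\vr|}{p_1}>s_2-\frac{n}{p_2}$, which is strictly \emph{weaker} than the required $s_1-s_2>|\vr|\max(\tfrac{1}{p_1}-\tfrac{1}{p_2},0)$ (take $p_1=p_2$: your condition allows $s_1-s_2>\frac{|\vr|-n}{p}<0$). Single--scale wavelets do not see the sharp Morrey obstruction; the correct extremal sequences are the Morrey--adapted ones from \cite{HaS13,HaS20}. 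Moreover, your claim ``$p_1\le p_2$ (forced by boundedness)'' is false on bounded domains --- H\"older's inequality permits $p_1>p_2$, as the paper notes explicitly in Remark~\ref{R6.20}. So the only valid route for the necessity is the citation you give at the end, which is exactly what the paper does.
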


\begin{remark}
  One can further derive entropy number results  parallel to \eqref{5.95} from \cite{HaS20,GHS21}, for instance, for $\id_\Omega:\rhoeAe(\Omega)\hra \rhozAz(\Omega)$ we have
  \[
  e_k(\id_\Omega) \sim k^{-\frac{s_1-s_2}{n}},\quad k\in\nat,
  \]
  in all cases except 
  \[
  |\vr_1|<|\vr_2| \quad\text{and}\quad 0<|\vr_1|\left(\frac{1}{p_1}-\frac{1}{p_2}\right) < s_1-s_2 \leq n\left(\frac{1}{p_1}-\frac{1}{p_2}\right).
  \]
  In that latter case we obtained in \cite{HaS20,GHS21} that there exists some $c>0$ and for any $\varepsilon>0$ a number $c_\varepsilon>0$ such that
  \[
c\ k^{-\alpha} \leq   e_k(\id_\Omega) \leq c_\varepsilon \ k^{-\alpha+\varepsilon},\quad k\in\nat,
  \]
  with
  \[ \alpha = \frac{1}{n-|\vr_1|}\left(s_1-s_2-|\vr_1|\left(\frac{1}{p_1}-\frac{1}{p_2}\right)\right)>0.
  \]
  Note that this case cannot appear in the classical setting when $\vr_1=-n$, recall Proposition~\ref{P5.37}. Further results in the context of Theorem~\ref{T5.39a} can be found in   \cite{HaS20,GHS21}.
\end{remark}


  

\ignore{
\subsection{Some further approaches / Lorentz smoothness spaces}   \label{S6.4}
This paper deals with the extension of distinguished properties  of the classical spaces $\As (\rn)$ in \eqref{2.38} to the Morrey
smoothness spaces $\rhoAs(\rn)$ in Definition~\ref{D2.11}. If one replaces the  Lebesgue spaces $L_p (\rn)$ in \eqref{2.10},
\eqref{2.12} by the Morrey spaces $\La^{\vr}_p (\rn)$, then one obtains the spaces $\La_{\vr} \As (\rn)$ as introduced in Definition
\ref{D2.5}. But instead of $\La^{\vr}_p (\rn)$ as basic spaces one may choose other distinguished spaces, especially the very 
classical well--known Lorentz spaces $L_{p,r} (\rn)$ resulting in,
\begin{\eq}   \label{6.35}
B^s_q L_{p,r} (\rn) \ \text{and} \ F^s_q L_{p,r} (\rn) \quad \text{where $0<p<\infty$, $s\in \real$ and $0<q,r \le \infty$}.
\end{\eq}
This attracted some attention over the years. Substantial embeddings results have been obtained recently in \cite{SeT19}. The first
three of the four key problems as described in Section~\ref{S4.1} for these spaces have been treated recently in \cite{BCT21}. 
However traces for these spaces on hyper--planes seems to be a rather tricky subject. First assertions may be found in \cite[Section 
4]{BHT21}. We do not go into details and refer the reader for definitions, (historical) references and results to the quoted papers.
Final assertions about atomic and wavelet expansions for the spaces in \eqref{6.35} have been derived recently in \cite{BeC21}. 
Furthermore it comes out that these spaces can be obtained by iterated real interpolation of the spaces $\Fs (\rn)$. This ensures that
breaking lines of distinguished properties for the these spaces are the same as for the classical spaces $\As (\rn)$, in sharp 
contrast to the Morrey smoothness spaces.

\open{remarks and some references about weighted spaces, generalised Besov-Morrey spaces \cite{HMS20a} \dots ?}
}

\newpage
\addcontentsline{toc}{section}{References}
\bibliographystyle{alpha}
\bibliography{HT_6}

\addcontentsline{toc}{section}{List of figures}
\listoffigures

\end{document}